\newcommand{\II}{I\hspace{-0.1cm}I}
\newcommand{\III}{I\hspace{-0.1cm}I\hspace{-0.1cm}I}
\DeclareMathOperator{\trace}{tr}
\DeclareMathOperator{\Hess}{Hess}
\DeclareMathOperator{\Real}{Re}
\DeclareMathOperator{\Imagin}{Im}
\newtheorem{theorem}{\rm\bf Theorem}[section]
\newtheorem{proposition}[theorem]{\rm\bf Proposition}
\newtheorem{lemma}[theorem]{\rm\bf Lemma}
\newtheorem{corollary}[theorem]{\rm\bf Corollary}
\newtheorem{definition}[theorem]{\rm\bf Definition}
\newtheorem{remark}[theorem]{\rm\bf Remark}
\newcommand{\N}{{\mathbb N}}
\newcommand{\R}{{\mathbb R}}
\newcommand{\cCP}{{\mathcal{CP}}}
\newcommand{\cDS}{{\mathcal{DS}}}
\newcommand{\cHE}{{\mathcal{HE}}}
\newcommand{\cT}{{\mathcal T}}
\newcommand{\cML}{{\mathcal{ML}}}
\newcommand{\Mod}{\mbox{Mod}}
\def\interieur#1{\mathord{\mathop{\kern 0pt #1}\limits^\circ}}
\title[Hyperbolic ends with particles]{Hyperbolic ends with particles and grafting on singular surfaces}
\author{Qiyu Chen}
\address{Qiyu Chen:
School of Mathematics, Sun Yat-Sen University,
510275, Guangzhou, P. R. China;
University of Luxembourg, UR en Mathématiques
Maison du nombre, 6 avenue de la Fonte
L-4364 Esch-sur-Alzette, Luxembourg}
\email{chenqy0121@gmail.com}
\thanks{Q. Chen was partially supported by NSFC, No.11271378, and the International Program for Ph.D. Candidates, Sun Yat-Sen University. }
\author{Jean-Marc Schlenker}
\address{Jean-Marc Schlenker:
University of Luxembourg, UR en Mathématiques
Maison du nombre, 6 avenue de la Fonte
L-4364 Esch-sur-Alzette, Luxembourg}
\email{jean-marc.schlenker@uni.lu}
\thanks{J.-M. S. was partially supported by University of Luxembourg IRP GeoLoDim and by FNR project DynGeo, INTER/ANR/15/11211745. J.-M. S. also acknowledges support from U.S. National Science Foundation grants DMS-1107452, 1107263, 1107367 ``RNMS: GEometric structures And Representation varieties'' (the GEAR Network).}
\date{\today}
\begin{document}


    \begin{abstract}
    We prove that any hyperbolic end with particles (cone singularities along infinite curves of angles less than $\pi$) admits a unique foliation by constant Gauss curvature surfaces. Using a form of duality between hyperbolic ends with particles and convex globally hyperbolic maximal (GHM) de Sitter spacetime with particles, it follows that any convex GHM de Sitter spacetime with particles also admits a unique foliation by constant Gauss curvature surfaces. We prove that the grafting map from the product of Teichm\"uller space with the space of measured laminations to the space of complex projective structures is a homeomorphism for surfaces with cone singularities of angles less than $\pi$, as well as an analogue when grafting is replaced by ``smooth grafting''.
    \bigskip

	\noindent Keywords: hyperbolic ends, particles, complex projective structures, cone singularities, constant Gauss curvature, foliations.
	\end{abstract}

	\maketitle
	
    \section{Introduction}

    Let $\theta=(\theta_{1},...,\theta_{n_0})\in (0,\pi)^{n_0}$. In this paper we consider an oriented closed surface $\Sigma$ of genus $g$ with $n_0$ marked points $p_{1},...,p_{n_0}$ and suppose that

    \begin{equation}\label{Topological condition}
        2\pi(2-2g)+\sum_{i=1}^{n_0}(\theta_{i}-2\pi)<0.
    \end{equation}

    This ensures that $\Sigma$ can be equipped with a hyperbolic metric with cone singularities of angle $\theta_{i}$ at the marked points $p_{i}$ for $i=1,...,n_0$ (see e.g. \cite{Troyanov,McOwen}). We denote by $\mathcal{T}_{\Sigma,\theta}$ the Teichm\"uller space of hyperbolic metrics on $\Sigma$ with fixed cone angles, which is the space of hyperbolic metrics on $\Sigma$ with cone singularities of angle $\theta_{i}$ at $p_{i}$, considered up to isotopies fixing each marked point (see more precisely Section 2.1). We also denote $\mathfrak{p}=(p_1,\cdots, p_{n_0})$, and let $\cML_\mathfrak{p}$ be the space of measured laminations on $\Sigma_{\mathfrak{p}}=\Sigma\setminus \{ p_1,\cdots, p_{n_0}\}$. It is well-known that for all $g\in \cT_{\Sigma, \theta}$, any $l\in \cML_\mathfrak{p}$ can be uniquely realized as a geodesic measured lamination on $(\Sigma,g)$.

\subsection{Hyperbolic ends with particles}

We are interested in non-complete 3-dimensional hyperbolic manifolds homeomorphic to $\Sigma\times \R$, with cone singularities of angle $\theta_i$ along $\{ p_i\}\times \R$, for all $i\in \{ 1,\cdots, n_0\}$. A relatively simple space of metrics of this type is provided by the {\em quasifuchsian metrics with particles} studied e.g. in \cite{MoS,LS14}: complete cone-manifolds containing a non-empty, compact, convex subset.

Those quasifuchsian manifolds with particles contain a smallest non-empty convex subset, called their {\em convex core}. The complement of the convex core is the disjoint union of two non-complete manifolds, each homeomorphic to $\Sigma\times (0,+\infty)$, complete on the $+\infty$ side, but bounded on the $0$ side by a concave pleated surface orthogonal to the particles. Moreover their boundary at infinity is endowed with a complex projective structure, with cone singularities of angle $\theta_i$ at the endpoint at infinity of the particle $\{ p_i\}\times (0,+\infty)$.

Here we are interested in more general hyperbolic ends with cone singularities, called {\em non-degenerate hyperbolic ends with particles}: non-complete hyperbolic manifolds homeomorphic to $\Sigma\times (0,+\infty)$, with cone singularities of angle $\theta_i$ along $\{ p_i\}\times (0,+\infty)$, complete on the $+\infty$ side, and bounded by a concave pleated surface orthogonal to the particles 
(see Definition \ref{def of hyperbolic ends with particles} for more details). We call $\cHE_\theta$ the space of those non-degenerate hyperbolic ends with particles, up to isotopy.

Our first result is a one-to-one correspondence between those hyperbolic ends and complex projective structures on $\Sigma$ with cone singularities of prescribed angle at the $p_i$.

\begin{theorem} \label{tm:projective}
For each hyperbolic end $M\in \cHE_\theta$, the boundary at infinity $\partial_\infty M$ is equipped with a complex projective structure with cone singularities of angle $\theta_i$ at the $p_i$. Conversely, any complex projective structure on $\Sigma$ with cone singularities of angle $\theta_i$ at the $p_i$ is obtained at infinity 
from a unique hyperbolic end $M\in \cHE_\theta$.
\end{theorem}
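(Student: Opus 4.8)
The plan is to construct an explicit map $\Phi$ from $\cHE_\theta$ to $\cCP_\theta$, the space of complex projective structures on $\Sigma$ with cone singularities of angle $\theta_i$ at the $p_i$, and then to produce an inverse. Everything rests on the identification $\partial_\infty \mathbb{H}^3 = \mathbb{CP}^1$, under which the isometry group $\mathrm{PSL}(2,\C)$ of $\mathbb{H}^3$ acts as the full group of projective automorphisms of $\mathbb{CP}^1$, so that any developing map into $\mathbb{H}^3$ automatically induces a projective developing map at infinity.

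\textbf{Forward direction.} Given $M\in\cHE_\theta$, I would remove the particles, pass to the universal cover $\tilde M$ of $M_\mathfrak{p}$, and use that a hyperbolic cone-manifold carries a developing map $D\colon \tilde M\to\mathbb{H}^3$ equivariant under the holonomy $\rho\colon \pi_1(\Sigma_\mathfrak{p})\to \mathrm{PSL}(2,\C)$. Since $M$ is complete on the $+\infty$ side and bounded by a concave pleated surface orthogonal to the particles, $D$ extends continuously to the boundary at infinity $\partial_\infty\tilde M$, and the restriction $\mathrm{dev}:=D|_{\partial_\infty\tilde M}\colon \partial_\infty\tilde M\to\mathbb{CP}^1$ is an equivariant local homeomorphism, i.e. a complex projective structure on $\partial_\infty M$. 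The behaviour at the cone points is read off the local model: near a particle of angle $\theta_i$ the metric is modelled on the singular hyperbolic metric around a geodesic of cone angle $\theta_i$, whose asymptotic boundary is a cone point of the \emph{same} angle $\theta_i$, so that $\mathrm{dev}$ looks like $z\mapsto z^{\theta_i/2\pi}$ in suitable coordinates and the projective structure acquires a cone singularity of angle $\theta_i$ at $p_i$. This defines $\Phi$.

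\textbf{Inverse direction.} Given $\sigma\in\cCP_\theta$ with developing map $\mathrm{dev}\colon \tilde\Sigma_\mathfrak{p}\to\mathbb{CP}^1$ and holonomy $\rho$, I would apply the Kulkarni--Pinkall maximal-ball construction: to each point $x$ associate the largest round disk in $\mathbb{CP}^1$ into which $\mathrm{dev}$ admits a chart near $x$; each round disk bounds a totally geodesic plane in $\mathbb{H}^3$, and as $x$ varies these planes envelope an equivariant, locally convex pleated surface $\tilde S\subset\mathbb{H}^3$. The region $\tilde E$ between $\tilde S$ and $\partial_\infty\mathbb{H}^3$ is preserved by $\rho(\pi_1(\Sigma_\mathfrak{p}))$, and I would check that $\tilde E/\rho$ is homeomorphic to $\Sigma\times(0,+\infty)$, complete at $+\infty$, and bounded by a concave pleated surface, hence an element of $\cHE_\theta$ whose boundary at infinity is exactly $\sigma$ by construction.

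\textbf{The main obstacle} is the analysis near the cone points, and this is where I expect most of the work. A cone angle $\theta_i<\pi$ forces the local holonomy around $p_i$ to be elliptic of rotation angle $\theta_i$, and I would need to show that the maximal disks degenerate in a controlled way toward the fixed axis of this elliptic element, so that the pleated surface closes up into a particle of angle $\theta_i$ meeting it orthogonally. The hypothesis $\theta_i<\pi$ is precisely what rules out pathological overlapping of the maximal disks and forces the limiting particle to carry the correct angle; establishing this local convexity and the orthogonality is the delicate step. Granting it, bijectivity follows readily: injectivity of $\Phi$ holds because a hyperbolic end with particles is determined by its developing map and holonomy, which are reconstructed from the boundary projective structure, so two ends with the same boundary structure are isotopic; surjectivity and the fact that the construction inverts $\Phi$ follow since $\partial_\infty(\tilde E/\rho)$ returns $\sigma$. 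Combining the two directions yields the claimed one-to-one correspondence.
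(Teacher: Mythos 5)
Your forward direction is essentially the paper's: Proposition \ref{CP1 sturctures at boundary} also extends the developing map of the regular part $M_r$ to $\partial_\infty\widetilde{M_r}\to\mathbb{C}P^1$ and then invokes the local model $V_{\theta_i}$ near the endpoints at infinity of the particles (Lemma \ref{lm:geometry at infinity}) to get cone points of angle $\theta_i$. For the converse, however, you take a genuinely different route. The paper never constructs the locally convex pleated surface from the maximal disks; instead it works entirely at infinity: Lemma \ref{lm:cp to qd} shows that the Schwarzian derivative of the natural conformal map from $\sigma$ to its Fuchsian structure $\sigma_F$ is a meromorphic quadratic differential $q$ with at worst simple poles at the $p_i$, and Proposition \ref{prop:qd to he} reconstructs the end from $q$ by writing down the explicit collar metric $g_0=dr^2+\tfrac12(e^{2r}I^*+2\II^*+e^{-2r}\III^*)$ with $\II^*=\tfrac12 I^*+\Real q$ on $\Sigma\times[r_0,+\infty)$, verifying Gauss--Codazzi and the cone angles there, and then taking the unique maximal concave extension (Proposition \ref{existence and uniqueness of maximal extension}). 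In that route the whole cone-point analysis reduces to a power-series computation (the simple pole of $q$ forces $B^*\to\tfrac12 E$ at $p_i$), which is much softer than controlling maximal round disks near an elliptic fixed point. Your Kulkarni--Pinkall route, if completed, would have the advantage of producing the bending data $(h,\lambda)$ on $\partial_0M$ directly, which the paper has to establish separately (Proposition \ref{he and ml}).

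That said, two points in your converse are genuine gaps rather than routine verifications. First, the step you ``grant'' --- that for $\theta_i<\pi$ the maximal disks degenerate onto the axis of the elliptic holonomy so that the enveloped surface closes up into a particle of angle $\theta_i$ meeting it orthogonally --- is the entire technical content of your approach, and it is not available off the shelf: the Kulkarni--Pinkall construction is set up for unbranched structures, and near a cone point the notion of ``maximal round disk carrying a chart'' already needs to be redefined (charts take values in $\C_{\theta_i}$, not $\C$), so both local convexity and orthogonality of the resulting surface to the singular axis require a dedicated argument. Second, your injectivity argument is too quick: a hyperbolic end with particles is \emph{not} determined by its developing map and holonomy alone (these only determine the germ of the structure), but by being a specific maximal domain. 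To conclude that two ends with the same boundary structure coincide you need precisely the uniqueness of the maximal concave extension together with the fact that the definition of $\cHE_\theta$ (pleated metric boundary, hence no extremal points) forces every end to \emph{be} that maximal extension; this is what Proposition \ref{existence and uniqueness of maximal extension} supplies in the paper, and your proposal has no substitute for it.
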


We will denote by $\cCP_\theta$ the space of complex projective structures on $\Sigma$ with cone singularities of angle $\theta_i$ at the $p_i$, considered up to isotopy fixing the marked points.

\subsection{Grafting on hyperbolic surfaces with cone singularities}

Given a hyperbolic end $M\in \cHE_\theta$, its concave pleated 
boundary is equipped with a hyperbolic metric $m$ with cone singularities of angle $\theta_i$ at the $p_i$. Moreover, it is pleated along a measured geodesic lamination $l$. We prove in Section 3.9 that its complex projective structure at infinity $\sigma$ is obtained by a grafting operation, applied along $l$ to the Fuchsian complex projective structure associated to $(\Sigma, m)$. Moreover, we will show that it follows from Theorem \ref{tm:projective} that any complex projective structure $\sigma\in \cCP_\theta$ is obtained uniquely in this manner. The following statement, extending a classical result of Thurston (see e.g. \cite[Theorem 4.1]{Dumas})
to hyperbolic surfaces with cone singularities, will be a consequence.

\begin{theorem}
  \label{tm:grafting}
The grafting map defined for non-singular hyperbolic surfaces extends to a map $Gr_\theta:\cT_{\Sigma,\theta}\times \cML_\mathfrak{p}\to \cCP_\theta$. This map is a homeomorphism.
\end{theorem}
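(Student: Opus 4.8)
The plan is to deduce everything from Theorem~\ref{tm:projective} together with the grafting interpretation of the boundary at infinity established in Section 3.9. I introduce two maps with common domain $\cHE_\theta$: the \emph{projective boundary map} $\Phi:\cHE_\theta\to\cCP_\theta$, $M\mapsto\partial_\infty M$, which is a bijection by Theorem~\ref{tm:projective}; and the \emph{pleated boundary map} $\Psi:\cHE_\theta\to\cT_{\Sigma,\theta}\times\cML_\mathfrak{p}$, $M\mapsto(m,l)$, sending $M$ to the induced metric $m$ and bending lamination $l$ of its concave pleated boundary. The latter is well defined precisely because, by definition, every $M\in\cHE_\theta$ is bounded by such a pleated surface orthogonal to the particles. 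The assertion of Section 3.9 is exactly the factorization $\Phi=Gr_\theta\circ\Psi$, so the entire proof reduces to analyzing $\Psi$.

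Before that I would check that $Gr_\theta$ is well defined, i.e. that grafting the Fuchsian projective structure of $(\Sigma,m)$ along $l$ yields a structure in $\cCP_\theta$ with the \emph{same} cone angles. Because the $\theta_i$ are less than $\pi$, the geodesic realization of any $l\in\cML_\mathfrak{p}$ avoids the marked points and lies in a compact part of $\Sigma_\mathfrak{p}$; hence the projective (Euclidean) strips inserted by grafting are supported away from the $p_i$, leaving the cone angles unchanged. On the complement of the marked points this construction agrees verbatim with Thurston's grafting, which establishes the ``extension'' part of the statement.

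Bijectivity then follows formally. Surjectivity of $Gr_\theta$ is immediate from $\Phi=Gr_\theta\circ\Psi$ and surjectivity of $\Phi$. For injectivity it suffices to prove $\Psi$ is onto, since then $\Phi$ injective together with $\Phi=Gr_\theta\circ\Psi$ forces $\Psi$ to be bijective and $Gr_\theta=\Phi\circ\Psi^{-1}$: concretely, if $Gr_\theta(m_1,l_1)=Gr_\theta(m_2,l_2)$ and $(m_i,l_i)=\Psi(M_i)$, then $\Phi(M_1)=\Phi(M_2)$, so $M_1=M_2$ and $(m_1,l_1)=(m_2,l_2)$. Surjectivity of $\Psi$ is the bending construction: given $(m,l)$ I would bend the pleated surface $(\Sigma,m)$ along $l$ and thicken it towards $+\infty$ into a non-degenerate hyperbolic end with particles, verifying that the cone angles $\theta_i<\pi$ are preserved along the particles and that the end is complete on the $+\infty$ side.

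It remains to promote this bijection to a homeomorphism. The cleanest route is to note that $\cT_{\Sigma,\theta}\times\cML_\mathfrak{p}$ and $\cCP_\theta$ are topological manifolds of the same dimension $2(6g-6+2n_0)$, to prove that $Gr_\theta$ is continuous — which reduces to the continuous dependence of the inserted grafting strips on $(m,l)$, uniformly away from the particles — and then to invoke invariance of domain: a continuous injection between manifolds of equal dimension is open, so the continuous bijection $Gr_\theta$ is a homeomorphism. Alternatively one upgrades Theorem~\ref{tm:projective} to the statement that $\Phi$ is a homeomorphism and shows that $\Psi$ and $\Psi^{-1}$ are continuous, whence $Gr_\theta=\Phi\circ\Psi^{-1}$ is bicontinuous. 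I expect the continuity analysis to be the main obstacle: one must control the pleated boundary data $(m,l)$ continuously as a function of the projective structure $\sigma$, in particular as the support of $l$ varies or degenerates in $\cML_\mathfrak{p}$, while keeping all geometric estimates uniform near the particles, where the metric is singular.
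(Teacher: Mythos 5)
Your proposal is correct and follows essentially the same route as the paper: the factorization $Gr_\theta=\Phi\circ\Psi^{-1}$ is exactly the paper's Lemma \ref{lm:grafting} ($Gr_\theta=f_1\circ f$), the surjectivity of $\Psi$ is Proposition \ref{he and ml}, and the bicontinuity of $\Phi$ and $\Psi$ is Proposition \ref{prop: homeomorphism}, so the theorem follows just as you describe. The only cosmetic difference is that the paper verifies continuity of all maps in its diagram directly rather than invoking invariance of domain.
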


\subsection{Foliations of hyperbolic ends with particles by $K$-surfaces}

We also prove that our non-degenerate hyperbolic ends with particles have a unique foliation by surfaces of constant (Gauss) curvature, extending a result of Labourie \cite[Theorem 1]{Lab}.

\begin{theorem}
  \label{tm:foliation}
Let $M\in \cHE_\theta$ be a non-degenerate hyperbolic end with particles. There is a unique foliation of $M$ by surfaces of constant curvature $K$ with $K$ varying from $-1$ near the concave pleated boundary to $0$ near the boundary at infinity. Moreover, for each $K\in (-1,0)$, $M$ contains a unique closed surface of constant curvature $K$.
\end{theorem}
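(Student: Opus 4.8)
The plan is to follow the strategy of Labourie in the non-singular case, adapting each step so that it remains valid near the particles. The heart of the argument is to prove, for each fixed $K\in(-1,0)$, the existence and uniqueness of a closed surface $S_K\subset M$ of constant intrinsic curvature $K$ orthogonal to the particles, and then to show that the family $(S_K)_{K\in(-1,0)}$ is monotone and exhausts $M$. Recall that by the Gauss equation in a hyperbolic $3$-manifold a surface has intrinsic curvature $K$ if and only if the product of its principal curvatures equals $K+1\in(0,1)$; in particular a $K$-surface with $K\in(-1,0)$ is locally strictly convex, so the problem is a Monge--Amp\`ere type prescribed curvature equation that is uniformly elliptic on the locus of convex surfaces. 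As $K\to -1$ the product $K+1\to 0$, matching the extrinsically flat behaviour of the pleated boundary, while as $K\to 0$ the surfaces approach the umbilic, horosphere-like behaviour seen near the boundary at infinity.

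For existence I would argue by the barrier method combined with a priori estimates. The concave pleated boundary of $M$ and suitable equidistant (or horosphere-like) surfaces near the boundary at infinity serve as barriers confining any $K$-surface to a compact region of $M$, depending on $K$, away from both ends. On this region one solves the constant curvature equation by the continuity method (or a degree argument), the crucial input being uniform $C^{2,\alpha}$ estimates \emph{up to} the particles. Here the essential geometric fact is that a convex $K$-surface must meet each particle orthogonally; this orthogonality allows one to pass to a local model near each cone line and obtain the required curvature and gradient bounds there, while away from the particles the interior estimates for the prescribed Gauss curvature equation apply directly.

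Uniqueness and the foliation property would then both follow from the maximum principle. If $S_K$ and $S_K'$ are two $K$-surfaces, I would slide one toward the other until they first touch; at an interior point of first contact the comparison principle for the convex branch of the curvature operator forces them to coincide, and the same argument applied at a point lying on a particle — using the orthogonality and the local model — rules out first contact on the singular locus. For monotonicity, the same comparison shows that if $K_1<K_2$ then $S_{K_1}$ lies strictly on the pleated-boundary side of $S_{K_2}$ and the two are disjoint; together with the continuous dependence of $S_K$ on $K$, obtained from the implicit function theorem applied to the elliptic operator, this yields a foliation of an open subset of $M$. One finally checks that $S_K$ converges to the concave pleated boundary as $K\to -1$ and escapes to the boundary at infinity as $K\to 0$, so that the surfaces exhaust $M$ and the foliation is complete.

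The main obstacle I expect is precisely the analysis near the particles: obtaining uniform elliptic estimates for the constant curvature equation up to the cone singularities, and verifying that the maximum and comparison principles can be applied there. The orthogonality of a convex $K$-surface to each particle is what makes this tractable, since it lets one unfold the cone locally to a smooth model in which the standard theory, suitably interpreted, applies; making this rigorous, and checking that no point of first contact in the comparison argument can be concealed on the singular locus, is the technical core of the proof.
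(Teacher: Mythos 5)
Your uniqueness, monotonicity and exhaustion arguments essentially coincide with the paper's: slide one $K$-surface along the normal flow to a first contact point, apply the maximum principle there if it is regular (Theorem \ref{Maximum principle for regular points}) and a separate comparison of the limiting principal curvatures if it lies on a particle (Lemmas \ref{the property of the pushing surface}--\ref{lm:injective}, Proposition \ref{curvatures and positions of surfaces}), then let $K\to-1$ and $K\to 0$ (Lemma \ref{lm:convergence of k-surfaces}). The divergence is in the existence step, and that is where your proposal has a genuine gap. You propose to solve the prescribed Gauss curvature equation directly by the continuity method, and you correctly identify that everything hinges on uniform $C^{2,\alpha}$ estimates up to the particles --- but you do not supply them, and ``unfolding the cone to a smooth model'' does not produce them: for cone angle $\theta_i<2\pi$ the singularity is genuine, and elliptic regularity there must be carried out in the weighted H\"older spaces $\chi^{k,\gamma}_b$ of Section 2. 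You also leave open the starting point of the continuity path and the openness step (invertibility of the linearized operator in the singular setting), as well as the proof that the orthogonality to the particles is preserved along the deformation.

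The paper avoids the PDE entirely. For each $K$ it builds a map $\phi_K:\cT_{\Sigma,\theta}\times\cT_{\Sigma,\theta}\to\cHE_\theta$ sending a normalized pair $(h,h')$ --- related by a minimal Lagrangian diffeomorphism, which exists and is unique by Toulisse's theorem (Theorem \ref{Toulisse1}) --- to the hyperbolic end generated by the embedding data $I=(1/|K|)h$, $B=\sqrt{1+K}\,b$ (Lemma \ref{definition of the map}); Gauss and Codazzi hold by construction, and the behaviour at the cone points is controlled because both eigenvalues of $b$ tend to $1$ there. Existence of a $K$-surface in a \emph{given} end is then the surjectivity of $\phi_K$, obtained topologically: $\phi_K$ is continuous, injective (your maximum-principle argument), and proper (Proposition \ref{properness of the map}, via a length-spectrum comparison that uses the dual de Sitter spacetime to bound the third fundamental form), between connected manifolds of the same dimension, hence a homeomorphism (Proposition \ref{prop:parametrization map}). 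The hard analysis is thus outsourced to the existence of minimal Lagrangian maps between cone surfaces rather than to Schauder estimates for a Monge--Amp\`ere equation near a cone point. Your route may well be viable, but as written the analytic core of the existence proof is still missing.
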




\subsection{De Sitter spacetimes with particles}

Given a non-singular hyperbolic end $M$, there is a ``dual'' future-complete globally hyperbolic maximal de Sitter spacetime $M^d$. There are several ways to describe this duality, but one way is by noting that future-complete globally hyperbolic maximal de Sitter spacetimes are equipped with a complex projective structure at infinity (see \cite{Scannell}) that uniquely determines them. The complex projective structure defined at infinity by $M$ and $M^d$ are identical.

We extend this point of view to future-complete convex
globally hyperbolic maximal (GHM) de Sitter spacetimes with particles, as defined in Section 2.4.

We denote by $\cDS_\theta$ the space of future-complete 
convex GHM de Sitter spacetimes homeomorphic to $\Sigma\times (0,+\infty)$, with cone singularities of angle $\theta_i$ along $\{ p_i\}\times (0,+\infty)$.

\begin{theorem}
  \label{tm:projective-ds}
Any future-complete 
convex GHM de Sitter spacetime $M^d\in \cDS_\theta$ determines on $\Sigma$ a complex projective structure with cone singularities of angle $\theta_i$ at the $p_i$. Any complex projective structure $\sigma\in \cCP_\theta$ is obtained from a  unique $M^d\in \cDS_\theta$.
\end{theorem}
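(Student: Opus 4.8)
The plan is to
establish Theorem \ref{tm:projective-ds} by reducing it to Theorem
\ref{tm:projective} through the duality between hyperbolic ends with
particles and de Sitter spacetimes with particles. The key observation
is that both sides are governed by the same data, namely a complex
projective structure on $\Sigma$ with cone singularities of angle
$\theta_i$ at the $p_i$, so the strategy is to construct, for each
$M^d\in\cDS_\theta$, a canonical complex projective structure at
infinity, and conversely to build from each $\sigma\in\cCP_\theta$ a de
Sitter spacetime realizing it, checking that the two constructions are
mutually inverse.

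First I would recall the local duality between the hyperbolic space
$\mathbb{H}^3$ and the de Sitter space $dS_3$, both realized as
quadrics in Minkowski space $\R^{3,1}$, under which a point of
$\mathbb{H}^3$ corresponds to a totally geodesic (spacelike) plane in
$dS_3$ and vice versa. This duality is projective and hence extends to
the developing maps: a developing map for a hyperbolic end with values
in $\mathbb{H}^3\cup\partial_\infty\mathbb{H}^3 \subset \R P^3$ and a
developing map for a de Sitter spacetime with values in $dS_3\subset
\R P^3$ share the same boundary at infinity, which is exactly the
common ideal boundary $\partial_\infty\mathbb{H}^3=\partial dS_3$
carrying the $\C P^1$ structure. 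I would check that this correspondence
respects the cone singularities: a particle of angle $\theta_i$ in the
hyperbolic end is dual to a particle of the same angle $\theta_i$ in
the de Sitter spacetime, since the cone angle is read off from the same
holonomy around the singular locus, and that the convexity and
future-completeness conditions defining $\cDS_\theta$ correspond to the
non-degeneracy conditions defining $\cHE_\theta$. This yields a natural
bijection $\cHE_\theta\to\cDS_\theta$, $M\mapsto M^d$, intertwining the
two maps to $\cCP_\theta$.

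Granting this, the first assertion of the theorem follows: the complex
projective structure at infinity of $M^d$ is defined to be the one
assigned by Theorem \ref{tm:projective} to the dual hyperbolic end $M$,
equivalently read directly from the developing map of $M^d$ on the
ideal boundary. For the converse, given $\sigma\in\cCP_\theta$,
Theorem \ref{tm:projective} produces a unique $M\in\cHE_\theta$ with
$\partial_\infty M=\sigma$, and then $M^d$ is its dual; uniqueness of
$M^d$ follows from uniqueness of $M$ together with the bijectivity of
the duality map. Thus the bijection $\cCP_\theta\to\cDS_\theta$ is
precisely the composition of the inverse of the map in Theorem
\ref{tm:projective} with the duality $M\mapsto M^d$.

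The main obstacle, and the step requiring the most care, is verifying
that the projective duality is well-defined and global in the presence
of particles: one must confirm that the dual object $M^d$ is genuinely
a future-complete convex GHM de Sitter spacetime with particles of the
correct angles (rather than merely a local de Sitter structure), that
the global hyperbolicity and maximality match the geometry of the
hyperbolic end, and that the cone singularities transform correctly
under duality. I expect this to require a careful local analysis near
the particles, describing both the hyperbolic and de Sitter model
neighbourhoods of a cone line explicitly in $\R^{3,1}$ and checking
that the polarity exchanges them while preserving the angle. Once this
local model is pinned down, the rest of the argument is a formal
transport of Theorem \ref{tm:projective} across the duality.
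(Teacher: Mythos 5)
Your plan inverts the logical order of the paper, and this creates a genuine gap. In the paper the duality $\delta:\cHE_\theta\to\cDS_\theta$ is \emph{defined} (in Section \ref{subsection: dual relation between hE and dS}) as a consequence of Theorems \ref{tm:projective} and \ref{tm:projective-ds}: it sends a hyperbolic end to the unique de Sitter spacetime with the same complex projective structure at infinity. You propose to go the other way: first establish a bijection $\cHE_\theta\to\cDS_\theta$ by projective polarity and then transport Theorem \ref{tm:projective} across it. For that to be non-circular you must construct the duality by independent geometric means and prove it is a bijection onto $\cDS_\theta$ intertwining the maps to $\cCP_\theta$. The injectivity of $M\mapsto M^d$ and, especially, the surjectivity onto $\cDS_\theta$ (every convex GHM de Sitter spacetime with particles is the dual of some hyperbolic end) together amount to showing that $M^d$ is determined by its data at infinity --- which is precisely the uniqueness assertion you are trying to prove. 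Saying that ``uniqueness of $M^d$ follows from uniqueness of $M$ together with the bijectivity of the duality map'' therefore assumes the hard part. An additional difficulty you underestimate: in the presence of particles there is no global developing map into $\mathbb{H}^3$ or $dS_3$, so the polarity cannot simply be applied to developing maps; one has to dualize surface data $(I,B)\mapsto(\III,B^{-1})$ and invoke existence and uniqueness of maximal extensions, and one must check independence of the chosen surface.

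For contrast, the paper proves the two assertions directly and separately. The forward direction is Proposition \ref{prop: dS to complex}: the regular set $M^d_r$ carries a $(PSL_2(\C),\mathbb{DS}_3)$-structure whose boundary at infinity inherits a $\C P^1$-structure, and Lemma \ref{lm:geometry at infinity for dS} (proved using a future-convex Cauchy surface and its normal exponential map, the de Sitter analogue of Lemma \ref{lm:exp}) supplies the cone-point behaviour at infinity via the model $W_{\theta_i}$. The converse is Proposition \ref{prop:complex to dS}: from the Schwarzian derivative $q$ of $\sigma$ relative to $\sigma_F$ one writes down the explicit metric $g^d_0=-dt^2+\tfrac12(e^{2t}I^*-2\II^*+e^{-2t}\III^*)$, verifies the Gauss--Codazzi equations and the cone angles, and takes the unique maximal extension (Proposition \ref{prop:maximal extension of dS}); uniqueness comes from the fact that the data at infinity determines the spacetime. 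If you want to keep your duality-based outline, you would need to first carry out essentially the content of Section \ref{subsection for dS case} (parametrization of $\cDS_\theta$ by data at infinity satisfying Condition $(\star)$), at which point the direct proof is already in hand.
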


This result, along with Theorem \ref{tm:projective}, determines a natural map from $\cHE_\theta$ to $\cDS_\theta$ sending a hyperbolic end with particles to the unique future-complete 
convex GHM de Sitter spacetime with the same complex projective structure at infinity.

This duality extends to closed strictly concave surfaces (orthogonal to the particles) in those hyperbolic ends and closed strictly future-convex surfaces (orthogonal to the particles) in the corresponding de Sitter spacetimes.

\begin{theorem} \label{tm:dual-surfaces}
Let $M\in \cHE_\theta$ be a non-degenerate hyperbolic end with particles, and let $M^d\in \cDS_\theta$ be the dual future-complete 
convex GHM de Sitter spacetime with particles. Given a closed, strictly concave surface $S\subset M$, there is a unique strictly future-convex spacelike surface $S^d$ and a unique diffeomorphism $u:S\to S^d$ such that 
$u^*I^d=\III$  and 
$u^*\III^d=I$,
where $I,\III$ are the induced metric and third fundamental form on $S$, and 
$I^d$ and 
$\III^d$ are the induced metric and third fundamental form on $S^d$.
\end{theorem}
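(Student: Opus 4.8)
The plan is to construct $S^d$ explicitly as the polar dual (Gauss image) of $S$, to read off the two metric identities from the involutive nature of this duality, and then to control the construction near the particles, which is where the genuine difficulty lies. In the non-singular case this is the classical hyperbolic/de~Sitter polar duality, so the new content is entirely the compatibility with the cone singularities.

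First I would work in the universal cover. Lift $S$ to $\tilde S$ and use the developing map to realize a neighborhood of $\tilde S$ isometrically in $\mathbb{H}^3$, viewed as the upper sheet $\{\langle x,x\rangle=-1\}$ of the hyperboloid in Minkowski space $\R^{3,1}$, while $d\mathbb{S}^3$ is $\{\langle x,x\rangle=1\}$. At each $x\in\tilde S$ let $n(x)\in T_x\mathbb{H}^3$ be the oriented unit normal; since $\langle x,x\rangle=-1$, $\langle n,n\rangle=1$ and $\langle x,n\rangle=0$, the Gauss map $\tilde u(x)=n(x)$ takes values in $d\mathbb{S}^3$, and I set $\tilde S^d=\tilde u(\tilde S)$. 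The construction is manifestly involutive: differentiating $\langle n,x\rangle=0$ and $\langle n,n\rangle=1$ shows that $dn(X)=-BX$ is tangent to $\tilde S$ (where $B$ is the shape operator of $S$), so the tangent plane to $\tilde S^d$ at $n(x)$ coincides with $T_x\tilde S$, and the vector $x$ itself is the timelike unit normal $N^d$ to $\tilde S^d$. Because $\tilde u$ is equivariant for the common holonomy of $M$ and $M^d$ (Theorem~\ref{tm:projective-ds}), it descends to $u:S\to S^d\subset M^d$, and one checks that the image lies in the correct GHM spacetime by the uniqueness of GHM de~Sitter structures with prescribed holonomy.

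The two identities are then immediate from the symmetry $x\leftrightarrow n$. Since the third fundamental form of $S$ is by definition the pullback of the ambient metric under the Gauss map, $u^*I^d=\langle dn,dn\rangle=\III$. Dually, the third fundamental form of $S^d$ is the pullback of the ambient metric under \emph{its} Gauss map $N^d=x$, and because $N^d\circ \tilde u$ is the inclusion $x\mapsto x$ we get $u^*\III^d=\langle dx,dx\rangle=I$. Equivalently, the dual shape operator is $B^d=B^{-1}$, which gives the same conclusion. The hypothesis that $S$ is strictly concave forces $B$ to be definite, hence invertible, so $\tilde u$ is an immersion; the sign of the principal curvatures, together with a standard support-function/convexity argument, then shows that $S^d$ is a spacelike, strictly future-convex embedded surface and that $u$ is a diffeomorphism.

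The main obstacle is the behavior at the particles. Near a particle of $M$ the local model is a hyperbolic cone along a geodesic of angle $\theta_i$, and $S$ meets it orthogonally; under duality this geodesic corresponds to a singular line of $M^d$ of the same angle $\theta_i$. I would analyze the Gauss map in the local coordinates adapted to the cone (using the explicit hyperbolic/de~Sitter cone models) to show that $\tilde u$ extends continuously across the singular point, that $S^d$ again meets the dual particle orthogonally, and that the cone angle $\theta_i$ is preserved; the hypothesis $\theta_i<\pi$ is what keeps the relevant normal cone and the dual surface well behaved. Finally, uniqueness of $(S^d,u)$ follows because any strictly future-convex spacelike surface satisfying $u^*I^d=\III$ and $u^*\III^d=I$ must have $N^d$ equal to the position map of $S$, which recovers the polar dual and pins down both $S^d$ and $u$.
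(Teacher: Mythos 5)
Your overall strategy --- realizing $S^d$ as the Minkowski polar dual $x\mapsto n(x)$ of a lift of $S$ to the hyperboloid model, and reading off $u^*I^d=\III$, $u^*\III^d=I$ from the symmetry $x\leftrightarrow n$ and $B^d=B^{-1}$ --- is sound in the regular part and is genuinely different from the paper's route: the paper never invokes the hyperboloid model, but instead shows that $M$ and $M^d$ are parameterized by the same ``data at infinity'' $(I^*,\II^*)$ read off from equidistant foliations (Propositions \ref{prop:hyperbolic ends and infinite data} and \ref{prop:dS and infinite data}), builds a candidate de Sitter spacetime directly from the embedding data $(I^d,B^d)=(\III,B^{-1})$ via its maximal extension, and identifies it with $M^d$ by comparing asymptotic data; the map $u$ is then obtained as $(G^d)^{-1}\circ G$ through the common boundary at infinity.

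There is, however, a genuine gap at the step where you place $S^d$ inside the dual spacetime $M^d$. You assert that the quotient of $\tilde S^d$ lands in ``the correct GHM spacetime by the uniqueness of GHM de Sitter structures with prescribed holonomy.'' That uniqueness statement is false: by Scannell's classification (which is what the paper uses to define the duality), future-complete GHM de Sitter spacetimes correspond to complex projective structures at infinity, and the holonomy map on the space of projective structures is not injective in general (already in the closed case, distinct projective structures related by $2\pi$-grafting share a holonomy representation). So your argument only shows that $S^d$ sits in \emph{some} convex GHM de Sitter spacetime whose holonomy agrees with that of $M$, not in the specific $M^d$ dual to $M$, which is by definition the one carrying the \emph{same projective structure} at infinity. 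To close the gap you must match projective structures rather than holonomies: for instance, observe that the future endpoint at infinity of the timelike geodesic issuing from $n(x)$ in the direction $x$ is the same null ray $[x+n(x)]\in\mathbb{S}^2_+$ as the ideal endpoint of the normal geodesic issuing from $x$ in the direction $n(x)$, so the Gauss maps of $S$ and of $S^d$ to infinity have the same image and induce the same $\mathbb{C}P^1$-structure there; this is exactly what the paper's comparison of the two data at infinity accomplishes. Separately, your treatment of the cone points is only announced, not carried out: the facts that $S$ is orthogonal to the particles, that $B$ tends to a multiple of the identity at the singular points, and that $\theta_i<\pi$, are what guarantee that the Gauss image is again a spacelike surface orthogonal to singular lines of the same angles, and this verification is a necessary part of the proof rather than a remark to be deferred.
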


Conversely, given any space-like, strictly future-convex $S^d$ surface in $M^d$, there is a unique strictly concave surface $S$ in $M$ such that $S^d$ is the dual of $S$ in the sense of Theorem \ref{tm:dual-surfaces}.

\begin{proposition} \label{pr:dual}
Let $S$ be a strictly concave surface in $M$, and let $S^d$ be the dual surface in $M^d$. Then $S$ has constant curvature $K\in (-1,0)$ if and only if $S^d$ has constant curvature $K^d=K/(K+1)\in(-\infty,0)$.
\end{proposition}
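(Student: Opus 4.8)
The plan is to reduce the statement to a single pointwise identity between the extrinsic invariants of $S$ and $S^d$, and then to feed it into the two Gauss equations. Since the map $K\mapsto K/(K+1)$ is an increasing bijection from $(-1,0)$ onto $(-\infty,0)$, it suffices to show that the intrinsic curvature $K$ of $S$ and the intrinsic curvature $K^d$ of $S^d$ are related by $K^d=K/(K+1)$ at corresponding points on the regular parts (the particles form a lower-dimensional set and do not affect the meaning of ``constant curvature''). Throughout I would write $B$ for the shape operator of $S$ in $M$ and $B^d$ for that of $S^d$ in $M^d$, and use the diffeomorphism $u$ of Theorem \ref{tm:dual-surfaces} to transport every tensor living on $S^d$ back to $S$; in particular I set $\tilde B=(du)^{-1}\circ B^d\circ du$, an endomorphism of $TS$ conjugate to $B^d$.

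The first ingredient is the pair of Gauss equations. A surface in a hyperbolic $3$-manifold (ambient curvature $-1$) satisfies $K=\det B-1$, where $\det B$ is the product of the principal curvatures; strict concavity then gives $\det B=K+1\in(0,1)$ for $K\in(-1,0)$, so in particular $B$ is invertible. For a spacelike surface in a de Sitter spacetime (ambient curvature $+1$), the timelike normal produces a sign change in the Gauss equation, giving $K^d=1-\det B^d$. Hence the whole proposition reduces to the single relation $\det B\cdot\det B^d=1$.

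The heart of the argument is therefore to extract this determinant identity from the metric relations of Theorem \ref{tm:dual-surfaces}. Writing the third fundamental form as $\III=I(B\,\cdot,B\,\cdot)$ and combining $u^*I^d=\III$ with $u^*\III^d=I$, one computes
\[
I(X,Y)=(u^*\III^d)(X,Y)=\III(\tilde B X,\tilde B Y)=I\bigl(B\tilde B X,\,B\tilde B Y\bigr),
\]
so that $B\tilde B$ is an isometry of $(TS,I)$. Taking determinants of this orthogonality relation gives $\det(B\tilde B)^2=1$, and since both surfaces are strictly (concave, resp.\ future-convex) the operators $B$ and $B^d$ have positive determinant, forcing $\det(B\tilde B)=+1$, i.e.\ $\det B\cdot\det B^d=1$. (This is the determinant shadow of the sharper fact $B^d=B^{-1}$, the infinitesimal form of the polar duality between $H^3$ and $dS_3$, but only the determinant is needed here.) Substituting $\det B=K+1$ and $\det B^d=1/(K+1)$ into $K^d=1-\det B^d$ yields $K^d=1-\frac{1}{K+1}=\frac{K}{K+1}$, and the stated range $K\in(-1,0)\Leftrightarrow K^d\in(-\infty,0)$ follows.

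The step I expect to be the \emph{main obstacle} is pinning down the correct signs, rather than any hard estimate: one must be careful that the Lorentzian signature of the de Sitter normal is what flips $+\det B^d$ to $-\det B^d$ relative to the Riemannian Gauss equation, and that the orientations make $\det(B\tilde B)=+1$ rather than $-1$. One should also check that these identities, established on the regular part away from the particles, are exactly what is needed: constancy of $K$ is equivalent to constancy of $K^d$ precisely because $K\mapsto K/(K+1)$ is a pointwise bijection, so no global or analytic input beyond Theorem \ref{tm:dual-surfaces} is required.
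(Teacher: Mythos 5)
Your proposal is correct and follows essentially the same route as the paper: a pointwise computation on the regular part combining the duality relations $u^*I^d=\III$, $u^*\III^d=I$ with the Gauss equations in ${\mathbb{H}}^3$ and $dS_3$. The only cosmetic difference is that the paper reads off $K^d$ directly as the intrinsic curvature of $\III=I(B\bullet,B\bullet)$, namely $K_I/\det B=K/(K+1)$ via Proposition \ref{computation of connection and curvature}, whereas you route through the de Sitter Gauss equation $K^d=1-\det B^d$ together with the determinant identity $\det B\cdot\det B^d=1$ --- an identity the paper gets for free, since the dual surface is constructed there with embedding data $B^d=B^{-1}$.
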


\subsection{Foliation of de Sitter spacetimes with particles by $K$-surfaces}

As a consequence of Proposition \ref{pr:dual}, each foliation of a non-degenerate hyperbolic end with particles has a dual foliation of the dual future-complete 
convex GHM de Sitter space-time. We therefore obtain the following.

\begin{corollary}\label{corollary:k-foliation of dS}
Let $M^d\in \cDS_\theta$ be a future-complete 
convex GHM de Sitter spacetime with particles. There is a unique foliation of $M^d$ by surfaces of constant curvature $K^d$ with $K^d$ varying from $-\infty$ near the initial singularity to $0$ near the boundary at infinity. Moreover, for each $K^d\in (-\infty,0)$, $M^d$ contains a unique closed surface of constant curvature $K^d$.
\end{corollary}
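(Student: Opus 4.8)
The plan is to transport the $K$-surface foliation of the dual hyperbolic end $M$, furnished by Theorem \ref{tm:foliation}, to the de Sitter spacetime $M^d$ by applying the surface duality of Theorem \ref{tm:dual-surfaces} leaf by leaf. First I would fix the hyperbolic end $M\in\cHE_\theta$ dual to $M^d$ (provided by Theorem \ref{tm:projective-ds} together with Theorem \ref{tm:projective}), and write its foliation as a smooth family $(S_K)_{K\in(-1,0)}$ of closed surfaces of constant curvature $K$, orthogonal to the particles. Since each $S_K$ has $K\in(-1,0)$, the Gauss equation $K=-1+\det B$ forces $\det B=K+1\in(0,1)$, so $S_K$ is strictly concave and Theorem \ref{tm:dual-surfaces} applies: it has a well-defined dual surface $S_K^d\subset M^d$, strictly future-convex and spacelike.

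Next I would record the elementary but crucial observation that the map $K\mapsto K^d=K/(K+1)$ is a strictly increasing homeomorphism from $(-1,0)$ onto $(-\infty,0)$, with $K^d\to-\infty$ as $K\to-1^+$ and $K^d\to 0^-$ as $K\to 0^-$. By Proposition \ref{pr:dual}, $S_K^d$ has constant curvature exactly $K^d=K/(K+1)$, so the family $(S_K^d)_{K\in(-1,0)}$ realizes every value of $K^d$ in $(-\infty,0)$ exactly once, with $K^d\to 0$ toward the boundary at infinity (dual to the $K\to 0$ end of $M$) and $K^d\to-\infty$ toward the initial singularity (dual to the concave pleated boundary of $M$).

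It then remains to check that $(S_K^d)_K$ is genuinely a foliation of $M^d$. I would argue that the leaf-wise duality maps $u_K\colon S_K\to S_K^d$ assemble into a diffeomorphism $M\to M^d$: the $S_K^d$ are pairwise disjoint because the normal evolution of strictly convex constant curvature surfaces is monotone and the duality preserves this nesting, and they sweep out all of $M^d$ because $S_K^d$ limits to the initial singularity as $K^d\to-\infty$ and to $\partial_\infty M^d$ as $K^d\to 0$, the two ends of the spacetime. Smoothness of the resulting foliation follows from smoothness of the duality map and of the original foliation, while the converse statement following Theorem \ref{tm:dual-surfaces} guarantees that no leaf is missed, since every strictly future-convex spacelike surface in $M^d$ is the dual of a strictly concave surface in $M$.

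For the uniqueness statement, I would observe that any closed constant curvature surface $S^d\subset M^d$ with $K^d\in(-\infty,0)$ is automatically strictly future-convex, since the Lorentzian Gauss equation $K^d=1-\det B^d$ gives $\det B^d=1-K^d>1>0$. Hence $S^d$ is the dual of a strictly concave surface $S\subset M$, which by Proposition \ref{pr:dual} has constant curvature $K=K^d/(1-K^d)\in(-1,0)$; Theorem \ref{tm:foliation} provides a unique such $K$-surface in $M$, and the uniqueness of the dual in Theorem \ref{tm:dual-surfaces} then forces $S^d=S_K^d$. The main obstacle I anticipate is the global foliation claim rather than the leaf-wise construction: one must verify that the duality map is a diffeomorphism onto all of $M^d$, in particular that the dual leaves are properly nested, that they exhaust the spacetime toward both the initial singularity and the boundary at infinity, and that everything remains well-behaved along the particles, where the cone singularities require the surfaces to meet the singular lines orthogonally.
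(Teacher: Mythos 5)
Your proposal is correct and follows essentially the same route as the paper: the authors obtain the corollary by dualizing the $K$-surface foliation of the hyperbolic end (Theorem \ref{tm:foliation}) leaf by leaf via Theorem \ref{tm:dual-surfaces}, converting curvatures through $K^d=K/(K+1)$ as in Proposition \ref{pr:dual}, and invoking Corollary \ref{k-foliation} for the foliation property. Your write-up simply makes explicit several points the paper leaves implicit (strict concavity of the leaves, the homeomorphism $(-1,0)\to(-\infty,0)$, and the reduction of uniqueness to the hyperbolic case).
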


This gives an affirmative answer to Question 6.4 in \cite{KS}, and generalizes a result about constant Gauss curvature foliation of future-complete globally hyperbolic maximal compact de Sitter spacetimes (see \cite[Theorem 2.1]{BBZ2}) to the case with particles.

\subsection{Smooth grafting on hyperbolic surfaces with cone singularities}

Constant Gauss curvature surfaces in hyperbolic ends are related to the ``smooth grafting'' map $SGr:(0,1)\times \cT\times \cT\to \cCP$, see \cite[Section 1.2]{BMS1}. The properties of $K$-surfaces in hyperbolic ends with particles as described here show that this ``smooth grafting'' map is still well-defined on hyperbolic surfaces with cone singularities of angles less than $\pi$, as a map $SGr_{\theta}$ from $(0,1)\times \cT_{\Sigma,\theta}\times \cT_{\Sigma,\theta}$ to $\cCP_\theta$.

For each $K\in(-1,0)$, we prove that the parametrization map $\phi_{K}:\cT_{\Sigma,\theta}\times \cT_{\Sigma,\theta}\to \mathcal{HE}_{\theta}$ is a homeomorphism (see Proposition \ref{prop:parametrization map}) and $\mathcal{HE}_{\theta}$ is parameterized by a homeomorphism $f_1: \mathcal{HE}_{\theta}\to\cCP_\theta$ (see Proposition \ref{prop: homeomorphism}). For each $r\in (0,1)$, we define $SGr_{\theta}(r,\cdot, \cdot)$ to be $f_1\circ \phi_K: \cT_{\Sigma,\theta}\times\cT_{\Sigma,\theta}\to\cCP_\theta$, where $K=-4r/(1+r)^2$. The applications of constant Gauss curvature foliations in hyperbolic ends with particles and smooth grafting on hyperbolic surfaces with cone singularities are outlined in Section \ref{ssc:SGr}.

This implies that for all $r\in (0,1)$, the map $SGr_{\theta}(r,\cdot, \cdot)$ is a homeomorphism from $\cT_{\Sigma,\theta}\times \cT_{\Sigma,\theta}$ to $\cCP_\theta$. We do not elaborate on this point here, since it 
follows from the same arguments as in the non-singular case, see \cite{BMS1}. The relations among all the spaces we consider throughout this paper are presented in Figure \ref{fig:main diagram}, which is a combination of Figure \ref{fig:hE}, Figure \ref{fig:dS} and Figure \ref{fig:parametrization}.

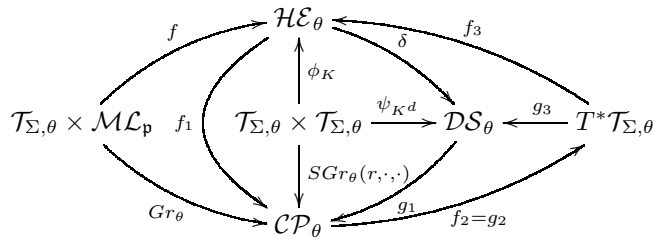
\begin{figure}
$\xymatrix{
  &\mathcal{HE}_{\theta}
  \ar@/^0.6pc/[dr]^{\delta}
  \ar@/_3pc/[dd]_{f_1}\\
 \mathcal{T}_{\Sigma,\theta}\times\mathcal{ML}_{\mathfrak{p}}
   \ar@/^0.8pc/[ur]^{f}
 \ar@/_0.8pc/[dr]_{Gr_{\theta}}
 &\mathcal{T}_{\Sigma,\theta}\times\mathcal{T}_{\Sigma,\theta}
 \ar[u]_{\phi_K}
 \ar[r]^{\quad\psi_{K^d}}
 \ar[d]^{SGr_{\theta}(r,\cdot,\cdot)}
 &\mathcal{DS}_{\theta}
 \ar@/^0.7pc/[dl]^{g_1}
 & T^*\mathcal{T}_{\Sigma,\theta}
 \ar[l]_{g_3}
   \ar@/_0.9pc/[ull]_{f_3}\\
 & \mathcal{CP}_{\theta}
  \ar@/_0.9pc/[urr]_{f_2=g_2}}$
       \caption{\small{A diagram showing the relations among all the spaces.}}
      \label{fig:main diagram}
    \end{figure}

\subsection{Outline of the paper.}

Section 2 contains the background material on various notions used in the paper.

In Section 3 we analyse the complex projective structure at infinity of a hyperbolic end with particles, and show that a hyperbolic end with particles is uniquely determined by either a complex projective structure with cone singularities, or a meromorphic quadratic differential with at worst simple poles at singularities with respect to the conformal class of a hyperbolic metric with cone singularities. We also describe the induced metric and pleating data on the ``compact'' boundary of a hyperbolic end with particles, and show that a hyperbolic end with particles is uniquely determined by a hyperbolic metric with cone singularities along with a measured lamination. As a consequence, we obtain at the end of Section 3 the proof of Theorem \ref{tm:grafting}, on the grafting map for surfaces with cone singularities.

The same analysis is conducted in Section 4 for convex GHM de Sitter spacetimes with particles. The two constructions, taken together, allow for the definition of the duality between hyperbolic ends with particles and convex GHM de Sitter spacetimes with particles, and some key properties of this duality are developed.

We then turn in Section 5 to constant Gauss curvature surfaces in hyperbolic ends with particles, and show how a pair of hyperbolic metrics with cone singularities uniquely determine a hyperbolic end with particles.

Finally, Section 6 deals with convex GHM de Sitter spacetimes with particles, develops the duality relation between hyperbolic ends with particles and convex GHM de Sitter spacetimes with particles, and obtains the results on constant Gauss curvature surfaces in those de Sitter spacetimes.


    \section{Background material}

    \subsection{Hyperbolic surfaces with cone singularities.}
    First we recall the local model of a hyperbolic metric with a cone singularity of angle $\theta_{0}$ on surfaces.

    Let $\mathbb{H}^{2}$ be the Poincar\'{e} model of the hyperbolic plane. Denote by $\mathbb{H}^{2}_{\theta_{0}}$ the space obtained by taking a wedge of angle $\theta_{0}$ bounded by two half-lines intersecting at the center 0 of $\mathbb{H}^{2}$ and gluing the two half-lines by a rotation fixing 0. We call $\mathbb{H}^{2}_{\theta_{0}}$ the \emph{hyperbolic disk with cone singularity of angle $\theta_{0}$}, which is a punctured disk with the induced metric
    \begin{equation*}
        g_{\theta_{0}}=dr^{2}+\sinh^{2}(r)d\alpha^{2},
    \end{equation*}
    where $(r,\alpha)\in\mathbb{R}_{>0}\times\mathbb{R}/\theta_{0}\mathbb{Z}$ is a polar coordinate of $\mathbb{H}^{2}_{\theta_{0}}$.



    Note that the hyperbolic metrics near the cone singularities throughout this paper are assumed to satisfy a regularity condition. This ensures the existence of harmonic maps from Riemann surfaces with marked points to hyperbolic surfaces with cone singularities at the marked points (see \cite[Theorem 2]{GR}), so that we can relate minimal Lagrangian maps (see Definition \ref{def:minimal Lagrangian}) to harmonic maps, and apply the result in \cite[Lemma 3.19]{CS} to show the continuity of the parametrization map $\phi_K$ of $\mathcal{HE}_{\theta}$ (see Section 4.3).  This regularity condition is defined by using the weighted H\"older spaces (see \cite[Section 2.2]{GR} and \cite[Definition 2.1]{toulisse:minimal}).

    \begin{definition}
        For $R>0$, let $D(R):=\{z\in\mathbb{C},0<|z|<R\}$. A function $f:D(R)\rightarrow\mathbb{C}$ is said to be in $\chi^{0,\gamma}_b(D(R))$ with $\gamma\in(0,1)$ if
        \begin{equation*}
            ||f||_{\chi^{0,\gamma}_{b}}:= \sup_{z\in D(R)}|f(z)|+\sup_{z,z'\in D(R)} \frac{|f(z)-f(z')|}
            {|\alpha-\alpha'|^{\gamma}+|\frac{r-r'}{r+r'}|^{\gamma}}
            <\infty,
        \end{equation*}
        where $z=re^{i\alpha}$ and $z'=r'e^{i\alpha'}$. Let $k\in\mathbb{N}$, we say that $f\in\chi^{k,\gamma}_b (D(R))$ if $(r\partial_r)^i\partial^{j}_{\alpha}f$ is in ${\chi^{0,\gamma}_{b}}(D(R))$ for all $i+j\leq k$. In particular, this implies that $f\in \mathcal{C}^{k}(D(R))$.
    \end{definition}

    \begin{definition}
        Let $\mathfrak{p}=(p_1,...,p_{n_0})$ and $\theta=(\theta_1,...,\theta_{n_{0}})\in (0,\pi)^{n_0}$. 
        A hyperbolic metric on $\Sigma$ with cone singularities of angle $\theta$ at $\mathfrak{p}$ is a (singular) metric $g$ on $\Sigma_{\mathfrak{p}}$ with the property that for each compact subset $K\subset \Sigma_{\mathfrak{p}}$, $g|_K$ is $\mathcal{C}^{2}$ and has constant curvature $-1$, and for each marked point $p_{i}$,  there exists a neighborhood $U_{i}\subset\Sigma$ with local conformal coordinates $z$ centered at $p_i$ and a local diffeomorphism $\psi\in\chi^{2,\gamma}_{b}(U_i\setminus\{p_i\})$ such that $g|_{U_i\setminus\{p_i\}}$ is the pull back by $\psi$ of the metric $g_{\theta_i}$. We denote by $\mathfrak{M}^{\theta}_{-1}$ the space of hyperbolic metrics on $\Sigma$ with cone singularities of angle $\theta$ at $\mathfrak{p}$.
    \end{definition}

    We say that $f$ is a \emph{diffeomorphism} of $\Sigma_{\mathfrak{p}}$ if for each compact subset $K\subset \Sigma_{\mathfrak{p}}$, $f|_{K}$ is of class $\mathcal{C}^{3}$ and for each marked point $p_i$, there exists a neighbourhood $U_i\subset\Sigma$ of $p_i$ such that $f|_{U_i\setminus\{p_i\}}\in\chi^{2,\gamma}_{b}(U_i\setminus\{p_i\})$. Denote by $\mathfrak{Diff}_{0}(\Sigma_\mathfrak{p})$ the space of diffeomorphisms on $\Sigma_\mathfrak{p}$ which are isotopic to the identity (fixing each marked point). They act by pull-back on $\mathfrak{M}^{\theta}_{-1}$. We say that two metrics $h_1,h_2\in\mathfrak{M}^{\theta}_{-1}$ are \emph{isotopic} if there exists a map $f\in\mathfrak{Diff}_{0}(\Sigma_\mathfrak{p})$ such that $h_1$ is the pull back by $f$ of $h_2$.

    Denote by $\mathcal{T}_{\Sigma,\theta}$ 
    the space of isotopy classes of hyperbolic metrics on $\Sigma$ with cone singularities of angle $\theta$ at $\mathfrak{p}$. Note that $\mathcal{T}_{\Sigma,\theta}=\mathfrak{M}^{\theta}_{-1}/\mathfrak{Diff}_{0}(\Sigma_\mathfrak{p})$ and $\mathfrak{M}^{\theta}_{-1}$ is a differentiable submanifold of the manifold consisting of all $\mathcal{H}^2$ symmetric (0,2)-type tensor fields.
    $\mathcal{T}_{\Sigma,\theta}$ is a finite-dimensional differentiable manifold which inherits a natural quotient topology.

    \subsection{Hyperbolic 3-dimensional manifolds with particles.}

    First we recall the related notations and terminology in order to define hyperbolic manifolds with particles.

    \subsubsection*{Hyperbolic 3-space}
    Let $\mathbb{R}^{3,1}$ be $\mathbb{R}^4$ with the quadratic form $q(x)=x_{1}^{2}+x_{2}^{2}+x_{3}^{2}-x_{4}^{2}$. The \emph{hyperbolic 3-sapce} is defined as the quadric:
    \begin{equation*}
        {\mathbb{H}}^{3}=\{x\in\mathbb{R}^{3,1}:q(x)=-1,\, x_4>0\}.
    \end{equation*}
    It is a 3-dimensional Riemannian symmetric space of constant curvature $-1$, diffeomorphic to a 3-dimensional open ball $B^3$. The group Isom$_{0}({\mathbb{H}}^{3})$ of orientation preserving isometries of ${\mathbb{H}}^{3}$ is $SO^{+}(3,1)\cong PSL_2(\mathbb{C})$.

    \subsubsection*{The singular hyperbolic 3-space.} Let $\theta_0>0$. We define the \emph{singular hyperbolic 3-space with cone singularities of angle $\theta_0$}  as the space
    \begin{equation*}
        {\mathbb{H}}^3_{\theta_0}:=\{(\rho,r, \alpha)\in\mathbb{R}\times \mathbb{R}_{>0}\times\mathbb{R}/\theta_{0}\mathbb{Z}\}
    \end{equation*}
     with the metric
    \begin{equation*}
      d\rho^{2}+\cosh^{2}(\rho)(dr^{2}+\sinh^{2}(r)d\alpha^{2}).
    \end{equation*}
    The set $\{r=0\}$ is called the \emph{singular line} in ${\mathbb{H}}^3_{\theta_0}$ and $\theta_0$ is called the \emph{total angle} around this singular line.

     A direct computation shows that ${\mathbb{H}}^3_{\theta_0}$ has constant curvature $-1$ outside the singular line. Indeed, it is obtained from the hyperbolic plane with a cone singularity of angle $\theta_{0}$ by taking a warped product with $\mathbb{R}$ (see e.g.\cite{KS}).

    \subsubsection*{Hyperbolic manifold with particles.} A \emph{hyperbolic manifold with particles} is a 3-manifold endowed with a metric for which each point has a neighbourhood isometric to a subset of ${\mathbb{H}}^3_{\theta_0}$ for some $\theta_{0}\in(0,\pi)$.

    In a hyperbolic manifold $M$ with particles, those points which have a neighborhood isometric to a neighborhood of a point of some ${\mathbb{H}}^3_{\theta_0}$ outside the singular line are called \emph{regular points}, while the others are called \emph{singular points}. We denote by $M_r$ the set of regular points and by $M_s$ the set of singular points. By definition, $M_s$ is a disjoint union of curves. To each of those curves is associated a number, which is equal at each point to the number $\theta_0$ in the definition, called the \emph{total angle} around the singular curve (see e.g. \cite{KS,KS1,LS14}).

    \begin{definition}
   We say that $B$ is a regular half-ball in ${\mathbb{H}}^3_{\theta_0}$ if it is 
isometric to the interior of a hyperbolic half-ball in ${\mathbb{H}}^3$. 
We say that $B$ is a singular half-ball in ${\mathbb{H}}^3_{\theta_0}$ if it can be written as the subset $\{x\in {\mathbb{H}}^3_{\theta_0}: \rho>0, d(x,O)< r_0\}$ for some $r_0>0$, where $O=(0,0,0)\in {\mathbb{H}}^3_{\theta_0}$ and $d$ is the hyperbolic distance induced by the metric on ${\mathbb{H}}^3_{\theta_0}$.
    \end{definition}

    \begin{definition}\label{def of orthogonality}
        Let $S\subset {\mathbb{H}}^3_{\theta_0}$ be a surface which intersects the singular line at a point $x$. $S$ is orthogonal to the singular line at $x$ if the distance from a point $y$ of $S$ to the totally geodesic plane $P$ orthogonal to the singular line at $x$ satisfies:
        \begin{equation*}
            \lim_{y\in S, y\rightarrow x}\frac{d(y,P)}{d_S(x,y)}=0,
        \end{equation*}
        where $d_S(x,y)$ is the distance between $x$ and $y$ with respect to the induced metric on $S$.

If now $S$ is a surface in a hyperbolic manifold $M$ with particles which intersects a singular line $l$ at a point $x'$, $S$ is said to be orthogonal to $l$ at $x'$ if there exists a neighborhood $U$ of $x'$ in $M$ which is isometric to a neighborhood of a singular point in ${\mathbb{H}}^3_{\theta_0}$ such that the isometry sends $S\cap U$ to a surface orthogonal to the singular line in ${\mathbb{H}}^3_{\theta_0}$. We say that $S$ is orthogonal to the singular locus if $S$ is orthogonal to the singular curve of $M$ at each intersection with the singular locus.
    \end{definition}



\begin{definition}\label{def of concavity of surfaces}
Let $M$ be a hyperbolic manifold with particles and let $\Omega$ be a subset of the metric completion $\bar{M}$ of $M$. We say $\Omega$ is {\em concave} if there is no geodesic segment in the interior of $\Omega$ with endpoints in $\partial \Omega$.
\end{definition}

Let $M$ be a hyperbolic manifold with particles which is homeomorphic to $\Sigma\times \mathbb{R}_{>0}$ and has a metric completion $\bar{M}$ homeomorphic to $\Sigma\times \mathbb{R}_{\geq 0}$. We will write that a closed, oriented surface $S\subset \bar M$ is {\em concave} if the connected component of $\bar M\setminus S$ on the positive side is concave. We also assume that the surfaces are orthogonal to the singular locus.

It follows from the definition that if $S$ is a concave surface and $x\in S$, there is at least one ``local support plane'' of $S$ at $x$ in the neighborhood of $x$, that is, a totally geodesic disk centered at $x$ and not intersecting the negative side of $S$. In particular, if $x$ is a singular point, then the totally geodesic support disk is orthogonal to the singular curve at $x$.

\subsection{Hyperbolic ends with particles}

In this section we consider a hyperbolic manifold with particles $M$ which is homeomorphic to $\Sigma\times \mathbb{R}_{>0}$ and has a metric completion $\bar{M}$ homeomorphic to $\Sigma\times \mathbb{R}_{\geq 0}$. For convenience, we denote by $\partial_{\infty}M$ the \emph{boundary at infinity} of $M$, and by $\partial_0M$ the \emph{metric boundary} $\bar M\setminus M$, which therefore corresponds to the surface $\Sigma\times \{ 0\}$ in the identification of $\bar M$ with $\Sigma \times \R_{\geq 0}$. We will suppose that $\partial_0M$ is concave, in the sense of Definition \ref{def of concavity of surfaces}, orthogonal to the particles, and that the particles start on $\partial_0M$ and end on the boundary at infinity of $M$.

Let $x\in \partial_0M$, and let $n\in T_xM$ be a non-zero vector. We will say that $n$ is {\em normal} to $\partial_0M$ if there is a half-ball centered at $x$ in $\bar M$ such that $n$ is normal to the totally geodesic part of the boundary. We denote by $N\partial_0M$ the space of vectors normals to $\partial_0M$, so that the fiber of $N\partial_0M$ over a point where $\partial_0M$ is totally geodesic is a line, while it is an angular sector over a point of a pleating line of $\partial_0M$. Given $v=(x,n)\in N\partial_0M$, we denote by $\exp(v)\in M$ the point $\gamma(1)$, where $\gamma:[0,1]\to M$ is the geodesic such that $\gamma(0)=x$ and $\gamma'(0)=n$, if it exists. This defines a map $\exp$ from a subset of $N\partial_0M$ to $M$.

\begin{lemma} \label{lm:exp}
$\exp$ is a homeomorphism from $N\partial_0M$ to $M$.
\end{lemma}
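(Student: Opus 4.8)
The plan is to show that $\exp$ is a homeomorphism from $N\partial_0 M$ to $M$ by establishing three things: that $\exp$ is well-defined on all of $N\partial_0 M$ (i.e. the normal geodesics exist for all time $t\in[0,1]$ and in fact can be extended to all positive times), that it is injective, and that it is surjective, after which continuity and openness give the homeomorphism. The underlying geometric picture is that a concave surface orthogonal to the particles behaves like the boundary of a convex set seen from the wrong side: normal geodesics shot off the concave side should diverge and sweep out the whole end exactly once. I would organize the argument around the \emph{normal exponential map} and exploit the concavity hypothesis (Definition \ref{def of concavity of surfaces}) together with the local support plane property noted just before the lemma.

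\textbf{Existence and the local structure.} First I would check that $\exp$ is defined on all of $N\partial_0 M$. For a regular point $x$ where $\partial_0 M$ is totally geodesic the fiber of $N\partial_0 M$ is a single ray, and at a pleating line it is an angular sector; at a singular point the normal is forced by orthogonality to the particle to lie along the support disk orthogonal to the singular curve. Since $M$ is complete on the $+\infty$ side and the geodesics are shot into the end (away from $\partial_0 M$, which is the only boundary), I would argue that these normal geodesics do not run into any obstruction and extend for all positive time; the completion $\bar M \cong \Sigma\times \mathbb{R}_{\geq 0}$ and the fact that the particles go off to infinity are what prevent escape in finite time. Near a singular point the local model $\mathbb{H}^3_{\theta_0}$ makes this explicit: geodesics orthogonal to the singular line at a boundary point stay in the regular part for positive time and are parametrized by the angular sector, matching the fiber description.

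\textbf{Injectivity and surjectivity.} This is where concavity does the real work, and I expect it to be the main obstacle. For injectivity I would suppose two distinct normal vectors $v_1=(x_1,n_1)$ and $v_2=(x_2,n_2)$ give $\exp(v_1)=\exp(v_2)=y$; then the two geodesic segments from $\partial_0 M$ to $y$, together with a path in $\partial_0 M$ joining $x_1$ to $x_2$, would produce a configuration contradicting concavity — concretely, one of these geodesics would have to re-enter the concave side or two distinct support planes at $y$ would be incompatible with the support-plane property of a concave surface. For surjectivity I would take an arbitrary $y\in M$ and consider the point of $\partial_0 M$ realizing the distance from $y$ to $\partial_0 M$ (using that $\bar M\setminus M=\partial_0 M$ is the only boundary and that the distance is attained); the minimizing geodesic from that foot point to $y$ is orthogonal to a support plane of $\partial_0 M$, hence its initial vector lies in $N\partial_0 M$ and maps to $y$. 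The care needed is to handle the points $y$ whose foot point is a pleating line or a singular point, where ``orthogonal to a support plane'' must be interpreted using the angular sector of normals; the concavity guarantees such a support plane exists and that the minimizer meets it orthogonally.

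\textbf{Homeomorphism.} Finally I would promote the continuous bijection to a homeomorphism. Continuity of $\exp$ follows from smooth (or at least continuous) dependence of geodesics on initial data, taking some care at pleating lines and singular points where the normal bundle has the angular-sector structure; I would verify continuity there directly in the local model $\mathbb{H}^3_{\theta_0}$. For the inverse, I would argue $\exp$ is an open map (so that a continuous bijection with open image is a homeomorphism), either by checking local invertibility in the regular part and handling the singular and pleated strata separately, or by invoking invariance of domain together with properness of $\exp$ — the latter being the cleaner route, since properness plus injectivity plus continuity of a map between manifolds of the same dimension already yields that it is a homeomorphism onto a closed-and-open, hence all of the connected target $M$. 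The subtle point throughout is that $N\partial_0 M$ is not a smooth manifold at the pleating locus, so I would treat it as a topological space glued from the regular fibers and the angular sectors, and verify the homeomorphism stratum by stratum, matching pieces along the pleating lines and the particles.
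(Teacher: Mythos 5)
Your overall skeleton (local homeomorphism near the zero section, plus global injectivity, plus properness to promote the bijection to a homeomorphism) is the same as the paper's, and the properness/surjectivity and final promotion steps are fine. The genuine gap is in the injectivity argument. If $\exp(x_1,n_1)=\exp(x_2,n_2)=y$ for two distinct normal vectors, the two geodesic arcs from $\partial_0M$ to $y$ will in general meet at $y$ at an angle, so their concatenation is \emph{not} a geodesic segment; since concavity in the sense of Definition \ref{def of concavity of surfaces} only forbids geodesic segments in the interior with endpoints on the boundary, an arbitrary coincidence $\exp(v_1)=\exp(v_2)$ yields no contradiction by itself, and there is no ``support plane'' structure at an interior point $y$ of $M$ to be incompatible with anything. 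The paper closes exactly this gap with a first-collision argument: it sets $N_r\partial_0M=\{(x,n)\in N\partial_0M : \|n\|<r\}$, lets $r_0$ be the supremum of the radii $r$ for which $\exp|_{N_r\partial_0M}$ is injective, and uses compactness of $\partial_0M$ to extract a collision with $\|v\|=\|w\|=r_0$. At the \emph{first} collision the two locally concave equidistant surfaces $\exp(\partial N_{r_0}\partial_0M)$ must be tangent (not transverse) at the common point, and it is this tangency that forces the two normal geodesics to be opposite continuations of one another, i.e.\ to concatenate into a genuine geodesic from $x$ to $y$ lying in the interior --- only then does concavity deliver the contradiction.

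A second omission: the collision point may involve the singular locus, and these cases require (and in the paper receive) separate treatment. If both feet $x,y$ are singular points, the contradiction comes not from concavity but from the requirements that distinct particles are disjoint and that each particle runs from $\partial_0M$ to infinity (ruling out a singular segment of length $2r_0$ joining $x$ to $y$). If one foot is singular and the other regular, the paper invokes an elementary geometric argument that uses the hypothesis that the cone angles are less than $\pi$; this is the only place in the lemma where the cone-angle restriction enters, so a proof sketch that never appeals to $\theta_i<\pi$ is missing a necessary ingredient. You should restructure the injectivity step around the infimum radius of failure and add the two singular cases explicitly.
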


\begin{proof}
Note first that since $\partial_0M$ is concave and $M$ is hyperbolic, $\exp$ is a local diffeomorphism from $N\partial_0M$ to $M$, sending the fibers of $N\partial_0M$ over the singular points to the cone singularities of $M$.

We will prove that $\exp$ is globally injective. Note that $\exp$ is 
injective in the neighborhood of the zero section, that is, there exists $r>0$ such that if we set
$$ N_r\partial_0M = \{ (x,n)\in N\partial_0M~|~ \| n\|<r\}~, $$
then the restriction $\exp_{|N_r\partial_0M}$ is injective. We call $r_0$ the supremum of all $r>0$ such that the restriction of $\exp$ to $N_r\partial_0M$ is injective, and we will prove that $r_0=\infty$.

Suppose by contradiction that $r_0$ is finite. It follows from the compactness of $\partial_0M$ that there exist $(x,v), (y,w)\in N\partial_0M$ such that $\|v\|=r_0$, $\| w\|\geq r_0$ and that $\exp(x,v)=\exp(y,w)$. Moreover, $\| w\|=r_0$, since otherwise the local injectivity of $\exp$ at $(x,v)$ and $(y,w)$ would imply that $\exp_{|N_r\partial_0M}$ stops being injective for $r<r_0$.

We now consider three cases, depending on whether $x$ and $y$ are regular or singular points of $\partial_0M$.
\begin{itemize}
\item If both $x$ and $y$ are singular points of $\partial_0M$, then either the cone singularities along the singular curves starting from $x$ and $y$ intersect --- this would contradict our definition of a hyperbolic manifold with particles, since the particles must be disjoint --- or those cone singularities are in fact the same singular line. In this second case, there is a singular segment of length $2r_0$ starting from $x$ and ending at $y$. This would again contradict our definition, since the particles are requested to start on $\partial_0M$ and end at infinity.
\item If both $x$ and $y$ are regular points, then the locally 
    concave surfaces $\exp(\partial(N_{r_0}\partial_0M)$ must have point of self-tangency at $\exp(x,v)=\exp(y,w)$, again by definition of $r_0$. It then follows that $\exp(\{ x\}\times [0,v])\cup \exp(\{ y\}\times [0,w])$ is a geodesic segment connecting $x$ to $y$, contradicting the concavity of $\partial_0M$.
\item If $x$ is a singular point and $y$ is regular point of $\partial_0M$. Then $\exp(\partial(N_{r_0}\partial_0M)$ intersects the singular curve starting from $y$ at $\exp(x,v)=\exp(y,w)$, and there is no such intersection for $r<r_0$. An elementary geometric argument shows that this is impossible when the cone angles are less than $\pi$, since otherwise $\exp(N_r\partial_0M)$ would already have self-intersections for $r<r_0$ close enough to $r_0$.
\end{itemize}

So we can conclude that $\exp:N\partial_0M\to M$ is globally injective. It is also proper, and since it is a local homeomorphism in the neighborhood of the zero section, we can conclude that it is a homeomorphism.
\end{proof}

     \begin{definition}\label{def of hyperbolic ends with particles}
     A non-degenerate hyperbolic end with particles is a non-complete hyperbolic manifold $M$ with particles which is homeomorphic to $\Sigma\times \mathbb{R}_{>0}$, where $\Sigma$ is a prescribed closed surface with marked points $\mathfrak{p}$, such that
     \begin{itemize}
       \item It has  a metric completion $\bar{M}$ homeomorphic to $\Sigma\times \mathbb{R}_{\geq 0}$, which is complete on the $+\infty$ side.
       \item The metric boundary $\Sigma\times \{0\}$, 
which we will denote by $\partial_0 M$, is pleated (i.e. for each $x\in \partial_0 M\setminus \bar{M}_s$, $x$ is contained in the interior of either a geodesic segment or a geodesic disk of $\bar{M}$ which is contained in $\partial_0 M$).
       \item The singular locus in $\bar{M}$ intersects $\partial_0 M$ orthogonally in totally geodesic regions.
     \end{itemize}
     \end{definition}

        The boundary at infinity $\partial_{\infty}M$ inherits a 
        complex projective structure with cone singularities (see Proposition \ref{CP1 sturctures at boundary}). The extended singular curves in $\bar{M}$ remain disjoint from each other. 

        Denote by $\mathfrak{Diff}_0(\Sigma\times \mathbb{R}_{>0})$ the space of diffeomorphisms on $\Sigma\times \mathbb{R}_{>0}$ isotopic to the identity among maps fixing each singular curve.  Two hyperbolic ends with particles $(M_1,g_1)$ and ($M_2, g_2)$ are \emph{isotopic} if there exists a map $f\in\mathfrak{Diff}_0(\Sigma\times \mathbb{R}_{>0})$ such that $g_1$ is the pull back by $f$ of $g_2$. Let $\mathcal{HE}_{\theta}$ be the space of non-degenerate hyperbolic ends with particles up to isotopy. For the sake of simplicity, we shall call the elements (as isotopy classes or their representatives) in $\mathcal{HE}_{\theta}$ \emph{hyperbolic ends with particles} henceforth.

        Let $L$ be the \emph{bending locus} of $\partial_0 M$, which is the complement of those points $x$ that admit a local support plane $P$ such that $P\cap\partial_0 M$ is a neighborhoods of $x$ in $\partial_0 M$.

   \begin{remark}\label{bending locus}
         If $L=\varnothing$, $\partial_0 M$ is totally geodesic (orthogonal to the singular locus) and we say that $M$ is Fuchsian. If $L\not=\varnothing$, it follows from the definition that $L$ is foliated by mutually disjoint complete geodesics of $\bar{M}$. Moreover, $L$ is the support of a measured lamination $\lambda$ on $\partial_0 M$, called the bending lamination, with the transverse measure recording the bending of $\partial_0M$ along $L$ (see e.g. \cite[Propositon 5.4]{BS}, \cite[Lemma A.15]{MoS}).
 \end{remark}

    Let $(M,g)$ be a hyperbolic end with particles. The shape operator $B:TS\rightarrow TS$ of an embedded surface $S\subset M$ with induced metric $I$ is defined as
    \begin{equation*}
        B(u)=\nabla_{u}n,
    \end{equation*}
    where $n$ is the positive-directed unit normal vector field on $S$ and $\nabla$ is the Levi-Civita connection of $(M,g)$. The second and third fundamental forms on $S$ are defined respectively as
    \begin{equation*}
        \II(u,v)=I(Bu,v),\qquad
        \III(u,v)=I(Bu,Bv).
    \end{equation*}

     If $S$ is smooth outside the intersection with singular locus in $M$, it is equivalent to say that $S$ is \emph{concave} (resp. \emph{strictly concave}) if the principal curvatures at each regular point of $S$ are both non-negative (resp. positive).

   \subsection{Convex GHM de Sitter spacetimes with particles}
     In order to define convex GHM de Sitter spacetimes with particles, we recall the related definitions.

     \subsubsection*{The de Sitter 3-space.}  Consider the same ambient space $\mathbb{R}^{3,1}$, similarly as for ${\mathbb{H}}^3$. The \emph{de Sitter 3-space} is defined as the quadric:
    \begin{equation*}
        {dS}_{3}=\{x\in\mathbb{R}^{3,1}:q(x)=1\}.
    \end{equation*}
    It is a 3-dimensional Lorentzian symmetric space of constant curvature $+1$, diffeomorphic to $\mathbb{S}^2\times \mathbb{R}$, where $\mathbb{S}^2$ is a 2-sphere. It is time-orientable and we choose the time orientation for which the curve $t\mapsto(\cosh t, 0,0, \sinh t)$ is future-oriented. The group Isom$_{0}({dS}_{3})$ of time-orientation and orientation preserving isometries of $dS_{3}$ is $SO^{+}(3,1)\cong PSL_2(\mathbb{C})$.

Consider the map $\pi:\mathbb{R}^{3,1}\backslash\{0\}\rightarrow \mathbb{S}^{3}$, where $\mathbb{S}^3$ is the double cover of $\R P^3$ and $\pi$ sends a point $x$ to the half-line from $0$ passing through $x$.
We define the \emph{Klein model} $\mathbb{DS}_{3}$ of de Sitter 3-space as the image $\mathbb{DS}_{3}=\pi({dS}_{3})$ (note that some authors define the Klein model as the projection of $dS_3$ to $\R P^3$ \cite[Section 2.3]{BB}, here we use $\mathbb{S}^3$ instead of $\R P^3$ in order to keep it time-orientable, see e.g. \cite[Section 5.2.1]{BBZ2}). The projection $\pi:dS_3\rightarrow \mathbb{DS}_3$ is a diffeomorphism. The boundary $\partial \mathbb{DS}_{3}$ is the image of the quadratic $Q=\{x\in\mathbb{R}^{3,1}:q(x)=0\}$ under $\pi$, which is a disjoint union of two 2-spheres: $\mathbb{S}^2_{+}=\pi(\{x\in Q:x_4>0\})$ and $\mathbb{S}^2_{-}=\pi(\{x\in Q:x_4<0\})$.

   A complete geodesic line in $\mathbb{DS}_{3}$ is \emph{spacelike} (resp. \emph{lightlike}, \emph{timelike}) if it is contained in $\mathbb{DS}_{3}$ (resp. if it is tangent to $\mathbb{S}^2_{+}$ and $\mathbb{S}^2_{-}$, if it has endpoints lying on $\mathbb{S}^2_{+}$ and $\mathbb{S}^2_{-}$ respectively).

    \subsubsection*{The singular de Sitter 3-space.} Let $\theta_0>0$. Define the \emph{singular de Sitter 3-space with cone singularities of angle $\theta_0$} as the space
    \begin{equation*}
        dS^3_{\theta_0}:=\{(t,\varphi,\alpha)\in\mathbb{R}\times [0,\pi]\times\mathbb{R}/\theta_{0}\mathbb{Z}\}
    \end{equation*}
     with the metric
    \begin{equation*}
      -dt^{2}+\cosh^{2}(t)(d\varphi^{2}+\sin^{2}(\varphi)d\alpha^{2}).
    \end{equation*}
    The set 
$\R\times \{0,\pi\}\times \mathbb{R}/\theta_{0}\mathbb{Z}$
    is called the \emph{singular line} in $dS^3_{\theta_0}$ and $\theta_0$ is called the \emph{total angle} around this singular line. One can check that $dS^3_{\theta_0}$ is a Lorentzian manifold of constant curvature $+1$ outside the singular line. Indeed, it is obtained from the spherical surface with two cone singularities of angle $\theta_0$ by taking a warped product with $\mathbb{R}$.

    An embedded surface in $dS^3_{\theta_0}$ is \emph{spacelike} if it intersects the singular line at exactly one point and it is spacelike outside the intersection with the singular locus.

    \subsubsection*{De Sitter spacetimes with particles.} A \emph{de Sitter spacetime with particles} is a (singular) Lorentzian 3-manifold in which any point $x$ has a neighbourhood isometric to a subset of $dS^3_{\theta_0}$ for some $\theta_{0}\in(0,\pi)$.

    Let $M^d$ be a de Sitter spacetime with particles which is homeomorphic to $\Sigma\times\mathbb{R}$. A closed embedded surface $S$ in $M^d$ is \emph{spacelike} if it is locally modelled on a spacelike surface in $dS^3_{\theta_0}$ for some $\theta_0\in(0,\pi)$. Similarly as the hyperbolic case, we can define the orthogonality of spacelike surfaces with respect to the singular locus in a de Sitter spacetime with particles.

    \begin{definition}
        Let $S\subset dS^3_{\theta_0}$ be a spacelike surface which intersects the singular line at a point $x$. $S$ is orthogonal to the singular line at $x$ if the causal distance from a point $y$ of $S$ to the totally geodesic plane $P$ orthogonal to the singular line at $x$ satisfies:
        \begin{equation*}
            \lim_{y\in S, y\rightarrow x}\frac{d(y,P)}{d_S(x,y)}=0,
        \end{equation*}
        where $d_S(x,y)$ is the distance between $x$ and $y$ with respect to the induced metric on $S$.

        If now $S$ is a spacelike surface in a de Sitter spacetime $M^d$ with particles which intersects a singular curve $l$ at a point $x'$. $S$ is said to be orthogonal to $l$ at $x'$ if there exists a neighborhood $U\subset M^d$ of $x'$ which is isometric to a neighborhood of a singular point in $dS^3_{\theta_0}$ such that the isometry sends $S\cap U$ to a surface orthogonal to the singular line in $dS^3_{\theta_0}$. We say that $S$ is orthogonal to the singular locus if $S$ is orthogonal to the singular curve of $M^d$ at each intersection with the singular locus.
    \end{definition}

    \begin{definition}
        Let $S$ be a spacelike surface orthogonal to the singular curves in a de Sitter spacetime with particles. We say that $S$ is future-convex if its future $I^{+}(S)$
        is geodesically convex. We say that $S$ is strictly future-convex if $I^{+}(S)$
        is strictly geodesically convex.
    \end{definition}

    \begin{definition}\label{def:ds spacetimes}
        A de Sitter spacetime $M^d$ with particles is convex GHM if
        \begin{itemize}
            \item $M^d$ is convex GH: it contains a future-convex spacelike surface $S$ orthogonal to the singular curves, which intersects every inextensible timelike curve exactly once.
            \item $M^d$ is maximal: if any isometric embedding of $M^d$ into a convex GH de Sitter spacetime is an isometry.
        \end{itemize}
    \end{definition}

     Note that by Definition \ref{def:ds spacetimes} a convex GHM de Sitter spacetime with particles is naturally future complete. Denote by $\mathfrak{Diff}_0(\Sigma\times\mathbb{R})$ the space of diffeomorphisms on $\Sigma\times\mathbb{R}$ isotopic to the identity fixing each singular line. Denote by $\mathcal{DS}_{\theta}$ the space of isotopy classes of (future-complete) convex GHM de Sitter metrics with cone singularities of angles $\theta_i$ along the singular curves $\{p_i\}\times \mathbb{R}$. Here two metrics $g_1,g_2$ are \emph{isotopic} if there exists a map $f\in\mathfrak{Diff}_0(\Sigma\times\mathbb{R})$ such that $g_1$ is the pull back by $f$ of $g_2$. For the sake of simplicity, we shall call the elements (as isotopy classes or their representatives) in $\mathcal{DS}_{\theta}$ \emph{(future-complete) convex GHM de Sitter spacetime with particles} henceforth.

    Let $(M^d,g)$ be a future-complete convex GHM de Sitter spacetime with particles. Let $S\subset M^d$ be a spacelike surface which is orthogonal to the singular locus with the induced metric $I$. The shape operator $B:TS\rightarrow TS$ of $S$ is defined as
    \begin{equation*}
        B(u)=\nabla_{u}n,
    \end{equation*}
    where $n$ is the future-directed unit normal vector field on $S$ and $\nabla$ is the Levi-Civita connection of $(M^d,g)$. The second and third fundamental forms of $S$ are defined respectively as
    \begin{equation*}
        \II(u,v)=I(Bu,v),\qquad
        \III(u,v)=I(Bu,Bv).
    \end{equation*}

    If $S$ is smooth outside the intersection with singular locus in $M^d$, it is equivalent to say that $S$ is \emph{future-convex} (resp. \emph{strictly future-convex}) if the principal curvatures at each regular point of $S$ are both non-negative (resp. positive).

    \subsection{Minimal Lagrangian maps between hyperbolic surfaces with cone singularities.}

    The construction of the parametrization of $\mathcal{HE}_{\theta}$ here depends strongly on minimal Lagrangian maps between hyperbolic surfaces with cone singularities.

    \begin{definition}\label{def:minimal Lagrangian}
        Given two hyperbolic metrics $h,h'$ on $\Sigma$  with cone singularities, a \emph{minimal Lagrangian map} $m:(\Sigma,h)\rightarrow(\Sigma,h')$ is an area-preserving and orientation-preserving diffeomorphism, sending cone singularities to cone singularities, such that its graph is a minimal surface in $(\Sigma\times\Sigma,h\oplus h')$.
    \end{definition}

    We introduce the following result (see \cite[Theorem 1.3]{Toulisse1}).

    \begin{theorem}[Toulisse]
        \label{Toulisse1}
        Let $h,h'\in \mathfrak{M}^{\theta}_{-1}$. Then there exists a unique minimal Lagrangian diffeomorphism $m:(\Sigma,h)\rightarrow (\Sigma,h')$ isotopic to the identity among maps sending each cone singularity of $h$ to the corresponding cone singularity of $h'$.
    \end{theorem}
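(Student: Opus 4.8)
The plan is to reduce the statement to harmonic map theory between hyperbolic surfaces with cone singularities, which is available here precisely because of the weighted H\"older regularity imposed at the cone points and the results of Gell-Redman \cite{GR}. The starting point is the classical equivalence, due to Schoen and Labourie in the non-singular case, between minimal Lagrangian maps and pairs of harmonic maps with opposite Hopf differentials. Concretely, I would prove that a diffeomorphism $m:(\Sigma,h)\to(\Sigma,h')$, isotopic to the identity among maps sending each cone point of $h$ to the corresponding cone point of $h'$, is minimal Lagrangian if and only if there exist a conformal structure $c$ on $\Sigma$ (with marked points $\mathfrak{p}$) and harmonic diffeomorphisms $f:(\Sigma,c)\to(\Sigma,h)$ and $f':(\Sigma,c)\to(\Sigma,h')$, both isotopic to the identity, such that $m=f'\circ f^{-1}$ and the Hopf differentials satisfy $\Phi(f)+\Phi(f')=0$. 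The geometric content is that the area-preserving condition on $m$ forces equality of the conformal factors of the two harmonic maps, while minimality of the graph in $(\Sigma\times\Sigma, h\oplus h')$ forces the imaginary parts to cancel, so that jointly the two Hopf differentials must be opposite.

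First I would set up the harmonic map theory in the singular setting. By \cite[Theorem 2]{GR}, for each conformal structure $c$ with marked points there is a unique harmonic diffeomorphism $(\Sigma,c)\to(\Sigma,h)$ isotopic to the identity, whose Hopf differential $\Phi_h(c)$ is a meromorphic quadratic differential with at worst simple poles at $\mathfrak{p}$; these differentials are exactly the cotangent space to $\cT_{\Sigma,\theta}$ at $c$. The role of the regularity hypothesis built into $\mathfrak{M}^{\theta}_{-1}$ is precisely to guarantee existence, uniqueness, and this simple-pole behaviour of the Hopf differential. Next, for existence I would argue variationally: define on $\cT_{\Sigma,\theta}$ the total energy $\mathcal{E}(c)=E(f_h(c))+E(f_{h'}(c))$, the sum of the energies of the two harmonic maps. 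Adapting the Tromba--Wolf computation to the singular case, the differential of $c\mapsto E(f_h(c))$ is a fixed multiple of $\Phi_h(c)$, so a critical point $c_0$ of $\mathcal{E}$ satisfies $\Phi_h(c_0)+\Phi_{h'}(c_0)=0$; composing the two harmonic maps at $c_0$ then produces a minimal Lagrangian map. The existence of such a critical point would follow from properness of $\mathcal{E}$ on $\cT_{\Sigma,\theta}$.

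I expect this properness to be the main obstacle. One must control the energy of a harmonic map into a fixed cone-singular hyperbolic surface as the domain conformal structure $c$ degenerates, and establish a lower bound forcing a minimizer to remain in the interior of $\cT_{\Sigma,\theta}$. This requires the asymptotic analysis of the energy density near the cone points together with a collar-type estimate as a curve is pinched, carried out in the weighted H\"older framework rather than the smooth one, where the usual elliptic estimates must be replaced by their weighted analogues.

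Finally, uniqueness would follow along one of two routes. The first is analytic: strict convexity of $\mathcal{E}$ along Weil--Petersson geodesics on $\cT_{\Sigma,\theta}$, in the spirit of the Tromba--Yamada convexity of harmonic map energy, extended across the cone points; a strictly convex proper function has a unique critical point. The second is geometric: a maximum-principle argument applied directly to two competing minimal Lagrangian maps, comparing them via a suitable function that must attain its extrema at interior regular points. Here the hypothesis $\theta_i\in(0,\pi)$ is essential, since cone angles less than $\pi$ make the singular points behave favourably and prevent the maximum principle from failing there, ruling out any additional freedom and yielding uniqueness.
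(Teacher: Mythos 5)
This statement is not proved in the paper: it is imported verbatim from Toulisse (cited as \cite[Theorem 1.3]{Toulisse1}), so there is no in-paper argument to compare yours against. At the level of strategy your outline reproduces the approach of the cited source and its antecedents in the non-singular case: the Schoen--Labourie equivalence between minimal Lagrangian diffeomorphisms and pairs of harmonic diffeomorphisms from a common conformal structure with opposite Hopf differentials, the existence and regularity of those harmonic maps for cone angles in $(0,\pi)$ from \cite{GR}, and existence of the minimal Lagrangian map from a critical point of the total energy functional $\mathcal{E}(c)=E(f_h(c))+E(f_{h'}(c))$ on $\cT_{\Sigma,\theta}$, using that $dE_h(c)$ is a fixed multiple of the Hopf differential.

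What you have written is, however, a plan rather than a proof, and the two steps you defer are exactly where the content lies. For properness of $\mathcal{E}$ you would need a quantitative lower bound on the energy of a harmonic map onto a fixed cone surface as $c$ degenerates; in this setting that rests on the collar lemma for hyperbolic cone surfaces with angles less than $\pi$ (\cite{DP}) together with an energy--length comparison carried out in the weighted H\"older framework, and none of that is supplied. For uniqueness, your first route (strict convexity of the energy along Weil--Petersson geodesics in the spirit of \cite{Tromba}) has not, to my knowledge, been established across cone points, so it should not be asserted as available; what is actually done in the literature for $\theta_i\in(0,\pi)$ is a version of your second route, but applied not to the minimal Lagrangian maps directly: one passes to the associated maximal surface in the GHM AdS spacetime with particles and runs the maximum principle there, using that the surface is asymptotically umbilic at the cone points (both eigenvalues of the bundle morphism $b$ tend to $1$, cf.\ Proposition \ref{Toulisse2}) so that the comparison function cannot attain an extremum at a singular point. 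To make your argument complete you would need either to carry out that maximum-principle argument, including the analysis at the cone points, or to actually prove the convexity and properness statements you invoke.
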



    Minimal Lagrangian maps between hyperbolic surfaces with metrics in $\mathfrak{M}^{\theta}_{-1}$ have an equivalent description in terms of morphisms between tangent bundles (see e.g.\cite[Proposition 6.3]{Toulisse1}, \cite[Proposition 2.12]{CS}).

    \begin{proposition}
        \label{Toulisse2}
        Let $h,h'\in \mathfrak{M}^{\theta}_{-1}$, and let $m:(\Sigma,h)\rightarrow (\Sigma,h')$ be a diffeomorphism fixing each singular point. Then $m$ is a minimal Lagrangian map if and only if there exists a bundle morphism $b:T\Sigma\rightarrow T\Sigma$ defined outside the singular locus which satisfies the following properties:
        \begin{itemize}
            \item $b$ is self-adjoint for $h$ with positive eigenvalues.
            \item $\det(b)=1$.
            \item $b$ satisfies the Codazzi equation: $d^{\nabla}b=0$, where $\nabla$ is the Levi-Civita connection of $h$.
            \item $h(b\bullet,b\bullet)=m^*h'$.
            \item Both eigenvalues of $b$ tend to 1 at the cone singularities.
        \end{itemize}
    \end{proposition}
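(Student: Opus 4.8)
The plan is to produce the operator $b$ directly from the geometry of $m$ and to reduce each of the five conditions to a familiar statement about the graph of $m$, with the genuinely new content concentrated at the cone points. On the regular part $\Sigma_\mathfrak{p}$ the map $m$ is an orientation-preserving diffeomorphism, so $m^*h'$ is a smooth Riemannian metric there. First I would introduce the unique $h$-self-adjoint, $h$-positive bundle morphism $a$ with $m^*h' = h(a\bullet,\bullet)$, and set $b := a^{1/2}$, the $h$-self-adjoint positive square root. With this definition the first and fourth bullet points hold tautologically: $b$ is self-adjoint with positive eigenvalues and $h(b\bullet,b\bullet) = h(a\bullet,\bullet) = m^*h'$. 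Thus $b$ is completely determined by $m$ away from the singular locus, and the proposition reduces to showing that, for this $b$, the three remaining conditions $\det b = 1$, $d^\nabla b = 0$, and $b\to \mathrm{Id}$ at the cone points are together equivalent to $m$ being minimal Lagrangian.

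Next I would treat the two conditions on the regular part. Since the $h$-volume and the $m^*h'$-volume forms differ by the factor $\det b$, the map $m$ is area-preserving exactly when $\det b \equiv 1$; combined with the orientation-preserving hypothesis this is precisely the condition that the graph $\Gamma = \{(x,m(x))\}$ be Lagrangian for the symplectic form $\pi_1^*\omega_h - \pi_2^*\omega_{h'}$ on $\Sigma_\mathfrak{p}\times\Sigma_\mathfrak{p}$. For minimality, I would use that the product $(\Sigma_\mathfrak{p}\times\Sigma_\mathfrak{p}, h\oplus h')$ is Kähler--Einstein (each factor having constant curvature $-1$), so that a Lagrangian graph is minimal if and only if its Lagrangian angle is constant; a direct computation of the mean curvature of $\Gamma$ identifies this with the Codazzi equation $d^\nabla b = 0$. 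This is the classical correspondence of Labourie \cite{Lab} in the non-singular case, and on $\Sigma_\mathfrak{p}$ the same computation applies verbatim because all the tensors involved are smooth there. Running this equivalence in both directions establishes the proposition over the regular part.

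It remains to match the asymptotic condition with the requirement that $m$ send cone points to cone points of the same angle, and this is where the main difficulty lies. In the forward direction I would argue that a minimal Lagrangian diffeomorphism between two metrics of $\mathfrak{M}^\theta_{-1}$ having the same cone angle $\theta_i$ at $p_i$ is asymptotically isometric near $p_i$, so that $m^*h'$ and $h$ agree to leading order and both eigenvalues of $b$ tend to $1$. This relies on the weighted Hölder regularity built into the definition of $\mathfrak{M}^\theta_{-1}$, which allows one to pass between minimal Lagrangian maps and harmonic maps via \cite{GR} and to invoke the local asymptotic analysis of such maps near a cone point as in \cite{Toulisse1,CS}; the point is that because the two cone angles coincide, no shear or scaling can survive in the limit, forcing $b\to \mathrm{Id}$. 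Conversely, given $b$ with all five properties, $\det b = 1$ and $h(b\bullet,b\bullet)=m^*h'$ make $m$ area- and orientation-preserving, $d^\nabla b = 0$ makes the graph minimal by the computation above, and $b\to\mathrm{Id}$ guarantees that $m$ extends continuously across each $p_i$ sending the cone point of $h$ to that of $h'$, so that $m$ is minimal Lagrangian in the sense of Definition \ref{def:minimal Lagrangian}.

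The main obstacle is the cone-point asymptotics: establishing that the eigenvalues of $b$ genuinely tend to $1$, rather than merely staying bounded, requires the precise regularity theory near the singularities together with the coincidence of the two cone angles, and is the only step that does not reduce to the classical non-singular argument. Everything else is local on $\Sigma_\mathfrak{p}$ and follows the Labourie correspondence, so the proof is essentially a verification that that correspondence is compatible with the weighted Hölder framework away from, and in the limit at, the marked points.
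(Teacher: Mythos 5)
The paper offers no proof of this proposition: it is imported as a known result, with references to \cite[Proposition 6.3]{Toulisse1} and \cite[Proposition 2.12]{CS}, so there is no in-paper argument to compare yours against. Your outline reconstructs essentially the argument of those cited sources: taking $b$ to be the positive $h$-self-adjoint square root of the operator representing $m^*h'$ (which settles the first and fourth bullets and shows any admissible $b$ is unique), identifying $\det b=1$ with the area-preserving/Lagrangian condition, identifying $d^\nabla b=0$ with minimality of the graph via the Labourie--Schoen mean-curvature computation on the regular part, and deriving the eigenvalue condition at the cone points from the asymptotics of the associated harmonic map. Two points are worth tightening. First, $(\Sigma\times\Sigma, h\oplus h')$ is K\"ahler--Einstein of negative curvature but not Calabi--Yau, so the ``constant Lagrangian angle'' formulation is only locally meaningful; the direct computation of the mean curvature of the graph, which you also invoke, is the correct and standard route. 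Second, the assertion that both eigenvalues of $b$ tend to $1$ (rather than merely stay bounded) is the one step you leave as an appeal to the literature; in the cited works it follows from the fact that the Hopf differential of the harmonic map underlying $m$ has at worst a simple pole at a cone point of angle less than $\pi$, which forces the traceless part of $b$ to vanish in the limit. You correctly isolate this as the only genuinely singular step and as the place where the weighted H\"older framework is actually used, so with those references supplied the plan is sound.
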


    \begin{corollary}\label{hyperbolic metrics and bundle morphism}
        Let $h,h'\in \mathfrak{M}^{\theta}_{-1}$. Then there exists a unique bundle morphism $b:T\Sigma\rightarrow T\Sigma$ defined outside the singular locus, which is self-adjoint for $h$ with positive eigenvalues, has determinant 1 and satisfies the Codazzi equation: $d^{\nabla}b=0$, where $\nabla$ is the Levi-Civita connection of $h$, such that $h(b\bullet,b\bullet)$ is isotopic to $h'$ and both eigenvalues of $b$ tend to 1 at the cone singularities. 
    \end{corollary}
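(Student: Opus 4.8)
The plan is to deduce the statement directly from Toulisse's existence and uniqueness theorem for minimal Lagrangian maps (Theorem \ref{Toulisse1}) together with its reformulation in terms of bundle morphisms (Proposition \ref{Toulisse2}), so that the corollary becomes essentially a transport of those two results through their dictionary.

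For existence, I would apply Theorem \ref{Toulisse1} to the pair $h,h'\in\mathfrak{M}^{\theta}_{-1}$ to obtain the unique minimal Lagrangian diffeomorphism $m:(\Sigma,h)\to(\Sigma,h')$ isotopic to the identity among maps sending each cone singularity to the corresponding one, and then invoke Proposition \ref{Toulisse2} to produce a bundle morphism $b$ that is self-adjoint for $h$ with positive eigenvalues, has $\det(b)=1$, satisfies the Codazzi equation $d^{\nabla}b=0$, has both eigenvalues tending to $1$ at the cone singularities, and satisfies $h(b\bullet,b\bullet)=m^{*}h'$. Since $m$ is isotopic to the identity (fixing the marked points), $m^{*}h'$ is isotopic to $h'$, so $h(b\bullet,b\bullet)$ is isotopic to $h'$, which gives the existence half of the statement.

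For uniqueness, suppose $b_1$ and $b_2$ both satisfy the listed conditions, and set $\tilde h_i:=h(b_i\bullet,b_i\bullet)$. I would first check that each $\tilde h_i$ lies in $\mathfrak{M}^{\theta}_{-1}$: the Gauss equation together with $\det(b_i)=1$ and $d^{\nabla}b_i=0$ forces $\tilde h_i$ to have constant curvature $-1$, while the hypothesis that both eigenvalues of $b_i$ tend to $1$ at each marked point guarantees that $\tilde h_i$ has a cone singularity of the same angle $\theta_j$ there, with the weighted-H\"older regularity required to belong to $\mathfrak{M}^{\theta}_{-1}$. Since a self-adjoint $h$-positive operator is uniquely determined by $h$ and the metric $h(b_i\bullet,b_i\bullet)$, the bundle morphism canonically associated (in the sense of Proposition \ref{Toulisse2}) to the identity map $\mathrm{id}:(\Sigma,h)\to(\Sigma,\tilde h_i)$ is precisely $b_i$; hence by the converse direction of Proposition \ref{Toulisse2} this identity map is minimal Lagrangian.

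It then remains to promote these two minimal Lagrangian maps to maps into $(\Sigma,h')$ and compare them. Because $\tilde h_i$ is isotopic to $h'$, there is an isotopy $f_i$ (isotopic to the identity and fixing the marked points) with $f_i^{*}h'=\tilde h_i$; viewing $f_i:(\Sigma,\tilde h_i)\to(\Sigma,h')$ as an isometry and composing, one sees that $f_i:(\Sigma,h)\to(\Sigma,h')$ is a minimal Lagrangian map isotopic to the identity, since post-composition with an isometry of the target preserves area-preservation, orientation, and minimality of the graph. By the uniqueness part of Theorem \ref{Toulisse1}, $f_1=f_2$, whence $\tilde h_1=f_1^{*}h'=f_2^{*}h'=\tilde h_2$, and the uniqueness of the positive self-adjoint square root relating $h$ to this common metric yields $b_1=b_2$. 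The main obstacle I anticipate is not the formal structure of the argument but the analytic bookkeeping at the cone points, namely verifying that $\tilde h_i\in\mathfrak{M}^{\theta}_{-1}$ with the correct regularity so that Proposition \ref{Toulisse2} genuinely applies.
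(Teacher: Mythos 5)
Your proposal is correct and follows essentially the route the paper intends: the corollary is stated there without proof as an immediate consequence of Theorem \ref{Toulisse1} and Proposition \ref{Toulisse2}, which is exactly the combination you use, with the uniqueness half carefully fleshed out via the converse direction of Proposition \ref{Toulisse2} and the uniqueness of the minimal Lagrangian map. Note also that the regularity worry you flag at the end largely dissolves, since the hypothesis that $h(b\bullet,b\bullet)$ is isotopic to $h'$ already means it is the pull-back of $h'\in\mathfrak{M}^{\theta}_{-1}$ by an element of $\mathfrak{Diff}_{0}(\Sigma_{\mathfrak{p}})$, hence lies in $\mathfrak{M}^{\theta}_{-1}$ by definition.
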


 \begin{definition}\label{defintion of normalized representatives}
        We say that a pair of hyperbolic metrics $(h,h')$ is normalized if there exists a bundle morphism $b:T\Sigma\rightarrow T\Sigma$ defined outside the singular locus, which is self-adjoint for $h$, has determinant $1$, and satisfies the Codazzi equation, such that $h'=h(b\bullet,b\bullet)$, or equivalently if the identity from $(\Sigma,h)$ to $(\Sigma,h')$ is a minimal Lagrangian diffeomorphism.
    \end{definition}

    \begin{remark}\label{rk: normalized representatives}
        By Corollary \ref{hyperbolic metrics and bundle morphism}, for any $(\tau,\tau')\in\mathcal{T}_{\Sigma,\theta}\times \mathcal{T}_{\Sigma,\theta}$, we can realize $(\tau,\tau')$ as a normalized representative $(h,h')$. Note that the normalized representative of $(\tau,\tau')$ is unique up to isotopies acting diagonally on both $h$ and $h'$.
    \end{remark}

We also introduce the following proposition (see e.g. \cite[Proposition 3.12]{KS}, \cite{Lab}), which provides a convenient formula to compute the (sectional) curvatures of certain metrics.

\begin{proposition}
        \label{computation of connection and curvature}
        Let $\Sigma$ be a surface with a Riemann metric $g$. Let $A:T\Sigma\rightarrow T\Sigma$ be a bundle morphism such that $A$ is everywhere invertible and $d^{\nabla}A=0$, where $\nabla$ is the Levi-Civita connection of $g$. Let $h$ be the symmetric $(0,2)$-tensor defined by $h=g(A\bullet,A\bullet)$. Then the Levi-Civita connection of $h$ is given by
        \begin{equation*}
            \nabla^{h}_{u}(v)=A^{-1}\nabla_{u}(Av),
        \end{equation*}
         and its curvature is given by
         \begin{equation*}
            K_{h}=\frac{K_{g}}{\det(A)}.
         \end{equation*}
    \end{proposition}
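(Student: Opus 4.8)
The plan is to verify the two assertions in turn: first to identify the operator $D_u v := A^{-1}\nabla_u(Av)$ as the Levi-Civita connection of $h$, and then to extract the Gauss curvature of $h$ from the resulting curvature tensor. The overall strategy is to reduce everything to the uniqueness of the Levi-Civita connection together with elementary $2\times 2$ linear algebra.

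For the connection formula, I would check that $D$ satisfies the three defining properties of the Levi-Civita connection of $h$, so that it must coincide with it by uniqueness. That $D$ is additive, tensorial in the lower slot $u$, and obeys the Leibniz rule $D_u(fv) = (u\cdot f)\,v + f\,D_u v$ follows at once from the corresponding properties of $\nabla$ and from $A$ being a bundle morphism (hence commuting with multiplication by functions). For compatibility with $h$, I would expand $u\cdot h(v,w) = u\cdot g(Av,Aw)$ using that $\nabla$ is $g$-compatible, and note that $A(D_u v) = \nabla_u(Av)$ by the very definition of $D$; this gives $u\cdot h(v,w) = h(D_u v,w) + h(v,D_u w)$ directly, and, notably, does \emph{not} use the Codazzi hypothesis. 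For vanishing torsion I would compute $D_u v - D_v u - [u,v] = A^{-1}\bigl(\nabla_u(Av) - \nabla_v(Au) - A[u,v]\bigr)$ and recognise the term in parentheses as $(d^{\nabla}A)(u,v)$, which vanishes by assumption. This is the one and only place the equation $d^{\nabla}A=0$ enters.

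For the curvature, I would substitute $\nabla^h_u v = A^{-1}\nabla_u(Av)$ into the definition of the curvature tensor of $h$. The intertwining relation $A\circ \nabla^h_u = \nabla_u\circ A$ makes the conjugating factors of $A$ telescope, yielding the clean identity $R^h(u,v)w = A^{-1}R^g(u,v)(Aw)$, where $R^g,R^h$ denote the $(1,3)$-curvature tensors of $g$ and $h$. To pass to the Gauss curvature I would use the surface identity $R^g(u,v)z = K_g\bigl(g(v,z)u - g(u,z)v\bigr)$ and evaluate $K_h = h(R^h(u,v)v,u)\big/\bigl(h(u,u)h(v,v) - h(u,v)^2\bigr)$ on a $g$-orthonormal frame $u,v$. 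The numerator collapses to $K_g\det(A)$ and the denominator to $\det(A)^2$, both through the elementary Gram identity $g(Au,Au)g(Av,Av) - g(Au,Av)^2 = \det(A)^2\bigl(g(u,u)g(v,v) - g(u,v)^2\bigr)$ (the map $A$ scales $g$-areas by $\det A$), and the quotient is exactly $K_h = K_g/\det(A)$.

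I do not expect a genuine obstacle here: the argument is a direct computation resting on the uniqueness of the Levi-Civita connection. The only point requiring care is tracking precisely where the Codazzi condition is used — it is indispensable for torsion-freeness, and hence for $\nabla^h$ to be the Levi-Civita connection at all, yet it plays no role in metric compatibility nor, once the conjugation identity $R^h = A^{-1}R^g A$ is in hand, in the curvature computation. I also note that although $A$ is self-adjoint for $g$ in the intended applications, neither claim needs this hypothesis: the Gram identity above holds for any invertible $A$, so the formula $K_h = K_g/\det(A)$ is valid in full generality.
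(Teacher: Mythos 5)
Your proof is correct and complete; the paper itself states this proposition without proof, citing \cite[Proposition 3.12]{KS} and \cite{Lab}, and your argument --- uniqueness of the Levi-Civita connection, with the Codazzi equation $d^{\nabla}A=0$ supplying exactly torsion-freeness, followed by the conjugation identity $R^h(u,v)w=A^{-1}R^g(u,v)(Aw)$ and the Gram-determinant computation on a $g$-orthonormal frame --- is the standard one given in those references. Your side remarks, that metric compatibility does not use the Codazzi hypothesis and that self-adjointness of $A$ is not needed for $K_h=K_g/\det(A)$, are also accurate.
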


    \section{Hyperbolic ends with particles and complex projective structures with cone singularities}

\subsection{Complex projective structure on $\Sigma$ with cone singularities}  Let $\Sigma$ be the prescribed surface with the marked points $\mathfrak{p}=(p_1,...,p_{n_0})$ and let $\theta=(\theta_1,...,\theta_{n_0})\in (0,\pi)^{n_0}$. We first give a definition of a complex projective structure on $\Sigma$ with cone singularities of fixed angles.

\begin{definition}
Let $\theta_0>0$. We call complex cone of angle $\theta_0$, and denote by $\mathbb{C}_{\theta_0}$, the quotient of the universal covering of $\mathbb{C}\setminus\{0\}$ by a rotation of angle $\theta_0$ centered at 0.
\end{definition}

\begin{definition}\label{def:hyperbolic ends with particles}
 A complex projective structure $\sigma$ on $\Sigma$ with cone singularities of angle $\theta$ at $\mathfrak{p}$ is a maximal atlas of charts from $\Sigma_{\mathfrak{p}}$ to $\mathbb{C}P^1$ such that all transition maps are restrictions of M\"obius transformations, and for each marked point $p_i$, there exists a neighborhood $\Omega_i$ of $p_i$ in $\Sigma$ and a complex projective map $u_i: \Omega_i\rightarrow \mathbb{C}_{\theta_{i}}$ sending $p_i$ to $0$, which is a diffeomorphism from $\Omega_i\setminus\{p_i\}$ to its image.
\end{definition}

Note that in the above definition $u_i$ is uniquely determined by the complex projective structure $\sigma$ up to composition on $\mathbb{C}_{\theta_i}$ with a rotation and a homothety.

Two complex projective structures $\sigma_1$, $\sigma_2$ with prescribed cone singularities are \emph{equivalent} if there is an orientation-preserving diffeomorphism $\tau:\Sigma_{\mathfrak{p}}\rightarrow\Sigma_{\mathfrak{p}}$ isotopic to the identity that pulls back the projective charts of $\sigma_2$ to projective charts of $\sigma_1$. We denote by $\mathcal{CP}_{\theta}$ the set of equivalence classes of complex projective structures on $\Sigma$ with cone singularities of angle $\theta$ at $\mathfrak{p}$.

Each complex projective structure $\sigma$ on $\Sigma$ with prescribed cone singularities defines a local diffeomorphism from the universal covering $\widetilde{\Sigma_{\mathfrak{p}}}$ to $\mathbb{C}P^1$, which is a complex projective diffeomorphism with respect to the complex projective structure on $\widetilde{\Sigma_{\mathfrak{p}}}$ and $\mathbb{C}P^1$. We call this map $f: \widetilde{\Sigma_{\mathfrak{p}}}\rightarrow \mathbb{C}P^1$ a \emph{developing map} of $\sigma$. There is a homomorphism $\rho:\pi_1(\Sigma_{\mathfrak{p}})\rightarrow PSL_2(\mathbb{C})$, called a \emph{holonomy representation} of $\sigma$, such that $f$ is $\rho$-equivariant. In particular, the image of the small loop around each marked point $p_i$ under the holonomy $\rho$ is an elliptic element of $PSL_2(\mathbb{C})$ of angle $\theta_i$. We call $(f,\rho)$ a \emph{development-holonomy pair} and it is uniquely determined by $\sigma$ up to the \emph{$PSL_2(\mathbb{C})$-action}  by $(f,\rho)\mapsto(A\circ f,\, \rho^{A})$, where $\rho^A(\gamma)=A\,\rho(\gamma)\,A^{-1}$.


\subsection{The cotangent bundle of $\mathcal{T}_{\Sigma,\theta}$}
Note that each conformal class of a metric on $\Sigma_{\mathfrak{p}}$ with marked points admits a unique hyperbolic metric with cone singularities of angle $\theta_i$ at $p_i$ (see \cite[Theorem A]{Troyanov} and \cite{McOwen}), $\mathcal{T}_{\Sigma,\theta}$ is also identified with the space of equivalence classes of conformal structures on $\Sigma_{\mathfrak{p}}$ with marked points. Two conformal structures $c_1$ and $c_2$ on $\Sigma_{\mathfrak{p}}$ are \emph{equivalent} if there is an orientation-preserving self-homeomorphism of $\Sigma_{\mathfrak{p}}$ isotopic to the identity that pulls back the conformal charts of $c_2$ to conformal charts of $c_1$. For the sake of simplicity, we shall denote a conformal structure $c$ and its equivalence class $[c]$ by $c$.

It is known that (see \cite[Proposition 2.14]{toulisse:minimal}) for each $c\in\mathcal{T}_{\Sigma,\theta}$, the cotangent space $T_{c}^{*}\mathcal{T}_{\Sigma,\theta}$ of $\mathcal{T}_{\Sigma,\theta}$ at $c$ is the space of meromorphic quadratic differentials (with respect to the conformal structure $c$) on $\Sigma$ with at worst simple poles at the marked points.

We denote by $T^{*}\mathcal{T}_{\Sigma,\theta}$ the cotangent bundle of $\mathcal{T}_{\Sigma,\theta}$, which is a 
complex $6g-6+2n$-dimensional vector space 
 of meromorphic quadratic differentials with respect to a 
conformal structure in $\mathcal{T}_{\Sigma,\theta}$, with at worst simple poles at the marked points. 

\subsection{The complex projective structure 
at infinity of a hyperbolic end $M\in\mathcal{HE}_{\theta}$}
We show that the boundary at infinity $\partial_{\infty}M$ of a hyperbolic end $M\in\mathcal{HE}_{\theta}$ admits a complex projective structure with prescribed cone singularities.


\textbf{The model space $V_{\alpha}$}. Let $\alpha>0$ and let $\Delta_0$ be a fixed, oriented complete hyperbolic geodesic in 
 $\mathbb{H}^3$. Denote by $U$ the universal cover of the complement of $\Delta_0$ in $\mathbb{H}^3$ and denote by $V$ the metric completion of $U$, such that $V\setminus U$ is canonically identified to $\Delta_0$, which is called the \emph{singular set of $V$}. We define $V_{\alpha}$ (see e.g. \cite[Section 3.1]{MoS}) as the quotient of $V$ by the rotation of angle $\alpha$ around $\Delta_0$. The image of the singular set of $V$ under this quotient is called the \emph{singular set of $V_{\alpha}$}.

Let $M$ be a hyperbolic end with particles. It is clear that each singular point $x$ of $M$ has a neighborhood isometric to a subset of $V_{\alpha}$ with $\alpha$ equal to the total angle around the singular curve through $x$. Now we describe the geometry property of $M$ near the endpoints at infinity of the singular curves in $M$ by using the model $V_{\alpha}$, as in the following lemma. 
With Lemma \ref{lm:exp}, the argument is similar to that in \cite[Lemma 3.1, Lemma A.10]{MoS} as the particular case of non-interacting particles.

\begin{lemma}\label{lm:geometry at infinity}
For each point $p_i\in\partial_{\infty}M$ which is the endpoint at infinity of a singular curve in $M$, $p_i$ has a neighborhood $\Omega_i$ isometric to a neighborhood of one of the endpoints at infinity of $\Delta_0$ in $V_{\theta_i}$, where $\theta_i$ is the total angle around that singular curve.
\end{lemma}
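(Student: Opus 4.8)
The plan is to transport the local model $V_{\theta_i}$ along the singular curve ending at $p_i$, all the way to its endpoint at infinity, using Lemma \ref{lm:exp} to guarantee that the transported map stays injective. This is the counterpart, for a single non-interacting particle, of the construction in \cite[Lemma 3.1, Lemma A.10]{MoS}.

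First I would identify the singular curve $c_i$ ending at $p_i$. By Lemma \ref{lm:exp}, $\exp:N\partial_0M\to M$ is a homeomorphism sending the normal fiber over the singular point $x_i\in\partial_0M$ to the particle through $x_i$; since $\partial_0M$ is orthogonal to the singular locus, this fiber is a single ray and $c_i$ is a geodesic ray in $\bar M$, complete on the side of $\partial_\infty M$ and reaching $p_i$ at infinity. Each point of $c_i$ has, by the local structure of singular points recalled just before the statement, a neighborhood isometric to a subset of $V_{\theta_i}$ in which $c_i$ corresponds to a sub-arc of the singular geodesic $\Delta_0$.

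Next I would glue these local isometries into a single map $\Phi$ defined on a tubular neighborhood of $c_i$. Concretely, lifting to the universal cover of the regular part and developing into $\mathbb{H}^3$, the ray $c_i$ develops onto a geodesic ray contained in a complete geodesic $\Delta_0$, and the holonomy of the meridian loop is the rotation of angle $\theta_i$ about $\Delta_0$; quotienting the metric completion of the universal cover of $\mathbb{H}^3\setminus\Delta_0$ by this rotation yields exactly $V_{\theta_i}$, and the developing map descends to a map $\Phi$ carrying $c_i$ into the singular set of $V_{\theta_i}$ and matching the cone angle. The tube obtained as the $\exp$-image of the fibers over a neighborhood of $x_i$ in $\partial_0M$ is embedded by Lemma \ref{lm:exp}, and $\Phi$ carries it isometrically onto the corresponding tube around $\Delta_0$ in $V_{\theta_i}$.

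The main step, and the main obstacle, is to push this identification all the way to the endpoint at infinity: I must show that $\Phi$ remains injective on a full neighborhood of $p_i$ and that its continuous extension maps such a neighborhood onto a neighborhood of the corresponding endpoint $\xi$ of $\Delta_0$ in $V_{\theta_i}$, with $p_i\mapsto\xi$. Injectivity is precisely what Lemma \ref{lm:exp} supplies: the normal exponential foliation of $M$ is globally embedded, so the tube around $c_i$ cannot self-intersect even as $c_i\to p_i$, and the mixed singular/regular analysis in that proof crucially uses the cone angle condition $\theta_i<\pi$, which is again what prevents the tube from folding near infinity. Completeness of $M$ on the $+\infty$ side then ensures that the tube opens up to cover a full neighborhood of $\xi$, so that $\Phi$ restricts to an isometry from some neighborhood $\Omega_i$ of $p_i$ onto a neighborhood of one of the endpoints at infinity of $\Delta_0$ in $V_{\theta_i}$, as claimed.
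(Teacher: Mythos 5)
Your proposal is correct and follows essentially the same route as the paper, which itself only indicates that the lemma follows from Lemma \ref{lm:exp} together with the argument of \cite[Lemma 3.1, Lemma A.10]{MoS} in the non-interacting case; your write-up is a reasonable expansion of exactly that strategy (develop the tube around the singular ray into $V_{\theta_i}$, use the global injectivity of $\exp$ to rule out self-intersection, and use completeness on the $+\infty$ side to capture a full neighborhood of the endpoint at infinity).
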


As an analog of the 
complex projective structure (resp. complex projective structure with cone singularities) induced on the boundary at infinity of a hyperbolic end (resp. a quasi-fuchsian manifold with particles), a hyperbolic end with particles also induces a 
complex projective structure with cone singularities on the boundary at infinity (see e.g. \cite[Section 3.2]{MoS}).

\begin{proposition}\label{CP1 sturctures at boundary}
Let $M\in \cHE_\theta$ be a hyperbolic end with particles. Then the boundary at infinity $\partial_\infty M$ is equipped with a complex projective structure with cone singularities of angle $\theta_i$ at the $p_i$.

\end{proposition}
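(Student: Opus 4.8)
The plan is to construct the complex projective structure on $\partial_\infty M$ in two stages: first on the regular part $\partial_\infty M\setminus\{p_1,\dots,p_{n_0}\}$ by the classical developing-map construction, and then on a neighborhood of each marked point $p_i$ using the local model $V_{\theta_i}$ provided by Lemma \ref{lm:geometry at infinity}. At the end one checks that the two families of charts are projectively compatible.

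First I would treat the regular part. Away from the singular curves $M$ is a genuine (non-singular) hyperbolic manifold, complete on the $+\infty$ side. For each $q\in\partial_\infty M$ distinct from the $p_i$, I choose a simply connected neighborhood $W$ of $q$ in $\partial_\infty M$ together with a collar of $W$ inside $M_r$; such a collar is isometric to a neighborhood inside $\mathbb{H}^3$ of an open subset $\Omega\subset\partial_\infty\mathbb{H}^3$. Identifying $\partial_\infty\mathbb{H}^3$ with $\mathbb{C}P^1$ and extending this isometry to the boundary at infinity yields a chart $W\to\mathbb{C}P^1$. On an overlap, two such charts differ by the boundary extension of an isometry of $\mathbb{H}^3$, hence by an element of $\mathrm{Isom}_0(\mathbb{H}^3)=PSL_2(\mathbb{C})$ acting on $\mathbb{C}P^1$ as a M\"obius transformation. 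This equips $\partial_\infty M\setminus\{p_i\}$ with a complex projective structure.

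Next I would handle a neighborhood of each $p_i$. By Lemma \ref{lm:geometry at infinity}, $p_i$ has a neighborhood $\Omega_i$ isometric to a neighborhood of an endpoint at infinity of $\Delta_0$ in $V_{\theta_i}$, so it suffices to identify the boundary at infinity of $V_{\theta_i}$ near that endpoint. Placing $\Delta_0$ so that its endpoints are $0$ and $\infty$ in the upper half-space model, the boundary at infinity of $\mathbb{H}^3\setminus\Delta_0$ is $\mathbb{C}P^1\setminus\{0,\infty\}=\mathbb{C}^\ast$; the universal cover $U$ of $\mathbb{H}^3\setminus\Delta_0$ has as boundary at infinity the universal cover of $\mathbb{C}^\ast$, and the rotation of angle $\theta_i$ around $\Delta_0$ acts there as $z\mapsto e^{i\theta_i}z$. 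Passing to the quotient and completing by the point over the endpoint $0$ thus gives exactly the quotient of the universal cover of $\mathbb{C}\setminus\{0\}$ by a rotation of angle $\theta_i$, which is the complex cone $\mathbb{C}_{\theta_i}$ by definition. Hence the isometry of Lemma \ref{lm:geometry at infinity} extends to a map $u_i:\Omega_i\to\mathbb{C}_{\theta_i}$ sending $p_i$ to $0$ and restricting to a diffeomorphism of $\Omega_i\setminus\{p_i\}$ onto its image, which is precisely the singular chart required by Definition \ref{def:hyperbolic ends with particles}.

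The main point, and where some care is needed, is the compatibility of the singular chart $u_i$ with the regular charts on the punctured neighborhood $\Omega_i\setminus\{p_i\}$. Here I would use that $V_{\theta_i}$, away from its singular set, is locally isometric to $\mathbb{H}^3\setminus\Delta_0\subset\mathbb{H}^3$ via the covering projection, and that this isometry extends on the boundary at infinity to a local complex projective map from $\mathbb{C}_{\theta_i}\setminus\{0\}$ into $\mathbb{C}P^1$. Therefore, restricting $u_i$ to a simply connected piece of $\Omega_i\setminus\{p_i\}$ and comparing with a regular chart built from the $\mathbb{H}^3$-developing map, the transition is again the boundary extension of an isometry of $\mathbb{H}^3$, hence M\"obius. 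The maximal atlas generated by the regular charts together with the $u_i$ then defines a complex projective structure on all of $\partial_\infty M$ with a cone singularity of angle $\theta_i$ at each $p_i$. The argument runs parallel to \cite[Section 3.2]{MoS}, the only new inputs being Lemma \ref{lm:exp} and Lemma \ref{lm:geometry at infinity}, which ensure that the geometry near the ends of the particles is modeled exactly on $V_{\theta_i}$.
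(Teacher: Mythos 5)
Your proposal is correct and follows essentially the same route as the paper: a $\mathbb{C}P^1$-structure on the regular part of $\partial_\infty M$ coming from the hyperbolic structure on $M_r$ (the paper phrases this via the developing map of $\widetilde{M_r}$ extended to $\partial_\infty\widetilde{M_r}$, you via local charts, which is equivalent), combined with Lemma \ref{lm:geometry at infinity} to produce the singular chart $u_i:\Omega_i\to\mathbb{C}_{\theta_i}$ at each $p_i$. Your explicit identification of the boundary at infinity of $V_{\theta_i}$ with $\mathbb{C}_{\theta_i}$ and the compatibility check are details the paper leaves implicit, but they do not change the argument.
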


\begin{proof}
Consider the regular set $M_r$ of $M$ and denote its universal cover by $\widetilde{M_r}$. Let $\partial_{\infty}\mathbb{H}^3$ be the boundary at infinity of $\mathbb{H}^3$. Note that $M_r$ admits a developing map $dev: \widetilde{M_r}\rightarrow \mathbb{H}^3$, which is locally isometric projection (unique up to composition on the left by an isometry of $\mathbb{H}^3$).

We define $\partial_{\infty}\widetilde{M_r}$ as the space of equivalence classes of geodesic rays in $\widetilde{M_r}$, where two geodesic rays are equivalent if and only if they are asymptotic. Then $dev$ has a natural extension $dev:\widetilde{M_r}\cup\partial_{\infty}\widetilde{M_r}\rightarrow \mathbb{H}^3\cup\partial_{\infty}\mathbb{H}^3$, which is a local homeomorphism. Note that $\partial_{\infty}\mathbb{H}^3$ can be identified to $\mathbb{C}P^1$ and the fundamental group of $M_r$ acts on $\widetilde{M_r}$ by hyperbolic isometries which extend to $\partial_{\infty}\widetilde{M_r}$ as M\"obius transformation. We can define the boundary at infinity of $M_r$, called $\partial_{\infty}M_r$, as the quotient of $\partial_{\infty}\widetilde{M_r}$ by the fundamental group of $M_r$. Then $\partial_{\infty}M_r$ carries a canonical $\mathbb{C}P^1$-structure.

It remains to consider the behavior of the $\mathbb{C}P^1$-structure on $\partial_{\infty} M$ near the endpoints of the singular locus in $M$. By Lemma \ref{lm:geometry at infinity}, there exists a complex projective map $u_i: \Omega_i\rightarrow \mathbb{C}_{\theta_{i}}$ sending $p_i$ to $0$, which is a diffeomorphism from $\Omega_i\setminus\{p_i\}$ to its image. By Definition \ref{def:hyperbolic ends with particles}, $\partial_{\infty}M$ has a $\mathbb{C}P^1$-structure with cone singularities (at the endpoints at infinity of the singular curves) of angle equal to the total angle around the corresponding singular curve.
\end{proof}

\subsection{The meromorphic quadratic differential induced by a complex projective structure in $\mathcal{CP}_{\theta}$}
As the non-singular case, we can relate 
$\mathcal{CP}_{\theta}$ to the space $T^{*}\mathcal{T}_{\Sigma,\theta}$
 by using Schwarzian derivatives with a special analysis near the cone singularities.

Note that M\"obius transformations are biholomorphic on $\mathbb{C}P^1$ and $\mathbb{C}P^1$ admits a unique complex structure, a complex projective structure on $\Sigma$ with cone singularities also determines a complex (or conformal) structure with marked points. Note also that a hyperbolic metric on $\Sigma$ with cone singularities is a special complex projective structure on $\Sigma$ with cone singularities (the M\"obius transformations as transition functions preserve the unit circle). There is also a natural forgetful map
\begin{equation*}
\pi:\mathcal{CP}_{\theta} \rightarrow \mathcal{T}_{\Sigma,\theta},
\end{equation*}
 which is continuous and surjective. If $\sigma\in\mathcal{CP}_{\theta}$ satisfies that $\pi(\sigma)=c$, we say that $\sigma$ is a 
 complex projective structure with the \emph{underlying conformal structure} $c$. 

Let $\sigma$ be a 
complex projective structure on $\Sigma$ with prescribed cone singularities with the underlying conformal structure $c$. Let $\sigma_F$ be the hyperbolic metric on $\Sigma$ with prescribed cone singularities in the conformal class $c$. We call $\sigma_F$ the \emph{Fuchsian} complex projective structure on $\Sigma$ associated to $\sigma$ with prescribed cone singularities. Note that the union of the $\mathbb{C}P^1$-atlas of $\sigma$ and the $\mathbb{C}P^1$-atlas  of $\sigma_F$ induces a complex atlas,  the identity map $id:(\Sigma_{\mathfrak{p}},\sigma_F)\rightarrow(\Sigma_{\mathfrak{p}},\sigma)$ is a conformal map, 
 but not necessary a complex projective map. For convenience, we call this identity map the \emph{ natural conformal map}  from $\sigma_F$ to $\sigma$. Similarly, we can consider a natural conformal map from $\sigma$ to $\sigma_F$.

In the non-singular case, the Schwarzian derivative measures the ``difference'' between a pair of complex projective structures on a Riemann surface. For the singular case, we can also use this tool to measure the difference between two complex projective structures in $\mathcal{CP}_{\theta}$ with the same underlying conformal structure, but one needs to analyze the behavior of the Schwarzian derivative at the cone singularities.

Let $\Omega$ is a connected open subset of $\mathbb{C}$ and let $f: \Omega\rightarrow \mathbb{C}P^1$ be a locally injective holomorphic map. Recall that the \emph{Schwarzian derivative} of $f$ is the holomorphic quadratic differential on $\Omega$.
\begin{equation*}
\mathcal{S}(f)=\left\{\left(\frac{f''(z)}{f'(z)}\right)'-\frac{1}{2}\left(\frac{f''(z)}{f'(z)}\right)^2\right\}dz^2
\end{equation*}


Recall that the Schwarzian derivative has two important properties:
\begin{enumerate}[(1)]
  \item The Schwarzian derivative of a M\"obius transformation is zero.
  \item The cocycle property: $\mathcal{S}(g\circ f)=\mathcal{S}(f)+f^{*}\mathcal{S}(g)$, where $f^{*}\mathcal{S}(g)$ is the pull back of the holomorphic quadratic differential $\mathcal{S}(g)$ under the map $f$.
\end{enumerate}

\begin{lemma}\label{lm:cp to qd}
Let $\sigma\in\mathcal{CP}_{\theta}$ be a complex projective structure. Then the Schwarzian derivative of the conformal map $id:(\Sigma_{\mathfrak{p}},\sigma)\rightarrow (\Sigma_{\mathfrak{p}},\sigma_F)$ is a meromorphic quadratic differential in $T_{c}^{*}\mathcal{T}_{\Sigma,\theta}$, where $c$ is the common underlying conformal structure of $\sigma$ and $\sigma_F$.
\end{lemma}

\begin{proof}
Let $\varphi$ be a local expression (which is a family of locally injective holomorphic functions with respect to the $\mathbb{C}P^1$-charts of $\sigma$ and $\sigma_F$) of the map $id:(\Sigma_{\mathfrak{p}},\sigma)\rightarrow (\Sigma_{\mathfrak{p}},\sigma_F)$.
Thanks to properties (1) and (2) above, the Schwarzian derivative of $\varphi$ remains compatible with the transition functions in the overlaps of two $\mathbb{C}P^1$-charts associated to $\sigma$ or $\sigma_F$, respectively. Thus $\mathcal{S}(\varphi)$ is a holomorphic quadratic differential on $\Sigma_{\mathfrak{p}}$.

It remains to consider the behavior of $\mathcal{S}(\varphi)$ near the cone singularities. By Definition \ref{def:hyperbolic ends with particles}, for each $p_i$ on the complex projective surface $(\Sigma,\sigma)$ (resp. $(\Sigma,\sigma_F)$) with cone singularities, there is a neighborhood $\Omega_i$ (resp. $\Omega_i^F$) of $p_i$ and a complex projective map $u:\Omega_i\rightarrow\mathbb{C}_{\theta_i}$ (resp. $u_F:\Omega_i^F\rightarrow\mathbb{C}_{\theta_i}$) sending $p_i$ to 0, which is a diffeomorphism from $\Omega_i\setminus \{p_i\}$ (resp. $\Omega_i^F\setminus\{p_i\}$) to its image. Note that there is a natural holomorphic local diffeomorphism from $\mathbb{C}_{\theta_i}$ to $\mathbb{C}$, defined by sending a point $u\in\mathbb{C}_{\theta_i}$ to $u^{2\pi/\theta_i}$. We denote by $z$, $z_F$ the complex coordinates on $\Omega_i$, $\Omega_i^F$, respectively. Let $f$ be the expression of $\varphi$ near $p_i$ under these coordinates with $f(0)=0$. It is clear that $f$ is a conformal map in a small punctured neighborhood of 0  with the puncture at 0 and it can be continuously extended to the point 0. Hence $f$ is conformal in a small neighbourhood of 0 and has the expansion:
\begin{equation*}
f(z)=a_1 z+a_2 z^2 + ...+ a_n z^n+...,
\end{equation*}
where $a_1\not=0$, $a_i\in \mathbb{C}$ for $i=1,2,...$.

Then the map $\varphi$ near $p_i$ has the following expression with respect to the complex projective coordinate $u$ via the complex coordinates $z$ and $z_F$:
\begin{equation*}
\varphi(u)=(f((u)^{\frac{2\pi}{\theta_i}}))^{\frac{\theta_i}{2\pi}}.
\end{equation*}
A direct computation shows that the Schwarzian derivative $\mathcal{S}(\varphi)(u)$ has the following expansion near $u(p_i)\in\mathbb{C}_{\theta_i}$:
\begin{equation*}
\mathcal{S}(\varphi)(u)={u}^{\frac{2\pi}{\theta_i}-2}
(b_1 + b_2\, {u}^{\frac{2\pi}{\theta_i}}+...
+b_n\, {u}^{\frac{2\pi}{\theta_i}(n-1)}+...) \, d{u}^2,
\end{equation*}
where $b_i\in\mathbb{C}$ for $i\geq 1$.

In the complex coordinate $z=u^{\frac{2\pi}{\theta_i}}$, the Schwarzian derivative $\mathcal{S}(\varphi)(u)$ is expressed as
\begin{equation*}
\begin{split}
 \mathcal{S}(\varphi)\circ z^{\frac{\theta_i}{2\pi}}(z)
& =z^{1-\frac{\theta_i}{\pi}}
(b_1 + b_2\, z+\cdots +b_n\,{z^{n-1}}+...) \, \left(dz^{\frac{\theta_i}{2\pi}}\right)^2\\
& =\left(\frac{\theta_i}{2\pi}\right)^2 \frac 1{z}(b_1 + b_2\, z+ \cdots +b_n\,{z^{n-1}}+\cdots) \, d z^2.
\end{split}
 \end{equation*}

 This implies that $\mathcal{S}(\varphi)$ is a meromorphic quadratic differential on $\Sigma$ with at worst simple poles at the cone singularities, with respect to the common underlying conformal structure of $\sigma$ and $\sigma_F$. The lemma follows.
\end{proof}

\subsection{Maximal concave extension of a hyperbolic structure near infinity}

To construct a hyperbolic end with particles from a complex projective structure with cone singularities, we first prove a proposition which ensures the existence and the uniqueness (up to isometry) of the maximal extension of a hyperbolic manifold with particles which has a concave metric boundary. Moreover, we show that this maximal extension is a hyperbolic end with particles, in the sense of Definition \ref{def of hyperbolic ends with particles}.

We first introduce two definitions.

\begin{definition}\label{def of extremal points}
        Let $M$ be a hyperbolic manifold with particles. Let $S$ be a surface in $\bar{M}$. We say that a regular (resp. singular) point $x\in S$ is extremal if there exists a half-ball $B$ in $\mathbb{H}^3$ (resp. ${\mathbb{H}}^3_{\theta}$ for some $\theta_0\in(0,\pi)$), and an isometric embedding $\varphi:B \rightarrow \bar{M}$ sending the center of $B$ to $x$, such that $\varphi(\bar{B})\cap S=\{x\}$.
\end{definition}

    For example, all the points of a strictly concave surface in a hyperbolic manifold with particles are extremal points. The metric boundary $\partial_0 M$ of a hyperbolic end $M$ with particles contains no extremal points, since $\partial_0 M$ is pleated (see Definition \ref{def of hyperbolic ends with particles}).

\begin{definition}\label{def of extension}
    Let $M$ be a hyperbolic manifold with particles which has a concave metric boundary. We say $M'$ is a concave extension of $M$ if $M'$ is a hyperbolic manifold with particles such that $\partial_0 M'$ is concave and $M$ can be isometrically embedded in $M'$. We say $M'$ is a maximal concave extension of $M$ if $M'$ is 
 a concave extension of $M$ and 
 any concave extension of $M'$ is isometric to $M'$.
\end{definition}

\begin{proposition}
        \label{existence and uniqueness of maximal extension}
            Let $M_0$ be a hyperbolic manifold with particles which has a concave metric boundary. Then there exists a unique (up to isometry) 
            maximal concave extension of $M_0$, called $M$, in which $M_0$ can be isometrically embedded. Moreover, $M$ is a hyperbolic end with particles.
\end{proposition}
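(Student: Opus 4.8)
The plan is to construct $M$ as a suitable completion of the union of all concave extensions of $M_0$, so the main tasks are existence (building a maximal object) and then verifying it is a hyperbolic end with particles in the sense of Definition \ref{def of hyperbolic ends with particles}. First I would fix the holonomy picture: lift $M_0$ to the developing map and think of $M_0$ as a quotient of a subset of $\mathbb{H}^3_{\theta}$-type model space, so that ``extension'' becomes the operation of adjoining points reachable along geodesics leaving $\partial_0 M_0$ on the convex (outward) side. The key tool is Lemma \ref{lm:exp}, which guarantees that the normal exponential map $\exp:N\partial_0 M\to M$ is a homeomorphism; this lets me parametrize any concave extension by how far one may travel along these outward normals before hitting the boundary at infinity or a singular obstruction. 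So I would define, for each outward normal direction $v=(x,n)\in N\partial_0 M_0$, the supremum $t(v)\in(0,+\infty]$ of times for which $\exp(x,tn)$ remains inside a concave extension, and let $M$ be the region swept out by all these geodesics. Concavity of $\partial_0 M_0$ is exactly what prevents two such normal geodesics from crossing, and the cone-angle hypothesis $\theta_i<\pi$ (used in the third case of Lemma \ref{lm:exp}) ensures the singular geodesics stay disjoint and orthogonal.

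Next I would establish \emph{existence} of a maximal extension. The natural approach is to take the union of all concave extensions of $M_0$ inside a fixed ambient universal model, glued along $M_0$; using the developing map these all sit compatibly inside $\mathbb{H}^3$ (resp. the singular model near particles), so their union is well-defined and is again a hyperbolic manifold with particles. One must check the union still has concave metric boundary: this follows because a local support plane at any boundary point persists in the union, and concavity is a local-support condition (a geodesic segment with endpoints in $\partial\Omega$ cannot enter the interior). Maximality is then automatic: any concave extension of the union is already contained in the union, hence isometric to it. I would take $M$ to be the metric completion of this union (restricted to the $+\infty$ side), so that $\bar M$ acquires its metric boundary $\partial_0 M$.

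For \emph{uniqueness up to isometry}, I would argue that any two maximal concave extensions $M$ and $M'$ both contain $M_0$ isometrically, and the isometry of $M_0$ extends: using $\exp$ as a canonical parametrization from $N\partial_0 M_0$, both $M$ and $M'$ are identified with the maximal swept region, and since $t(v)$ is intrinsically determined (it is the first time the outward geodesic leaves every concave extension), the two identifications agree. Concretely, the isometry $M_0\to M_0$ conjugates the developing maps and hence the swept-out regions, giving an isometry $M\to M'$.

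The main obstacle is proving that the maximal extension $M$ is genuinely a hyperbolic end with particles, i.e.\ that $\partial_0 M$ is \emph{pleated} (not merely concave), that $M$ is complete on the $+\infty$ side and homeomorphic to $\Sigma\times\mathbb{R}_{>0}$, and that the particles run all the way from $\partial_0 M$ to the boundary at infinity. The delicate point is the pleating: a priori $\partial_0 M$ could be concave without being a pleated surface. Here I would use the fact that maximality forces $\partial_0 M$ to contain no extremal points (Definition \ref{def of extremal points}) --- if $x\in\partial_0 M$ were extremal, one could push out a half-ball and strictly enlarge the extension, contradicting maximality --- and then argue that a concave surface with no extremal points, orthogonal to the particles, must be pleated along a geodesic lamination, so that each boundary point lies in the interior of a geodesic segment or a totally geodesic disk, exactly as required by Definition \ref{def of hyperbolic ends with particles}. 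Completeness on the $+\infty$ side and the homeomorphism type would then follow from Lemma \ref{lm:exp} together with the fact that the outward normal geodesics, being free to run to infinity once past the pleated boundary, foliate $M$ and identify it with $\Sigma\times\mathbb{R}_{>0}$, with the singular geodesics reaching the boundary at infinity by the model computation near the cone lines.
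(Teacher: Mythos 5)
Your proposal diverges from the paper's proof, which is a Choquet-Bruhat--Geroch-style argument: the paper builds the maximal concave extension by putting a partial order on the set $\mathcal{E}$ of all concave extensions, applying Zorn's Lemma twice (once to get a maximal ``common part'' of any two extensions, once to get a maximal element of $\mathcal{E}$), and then proving uniqueness by gluing two candidates along their maximal common part and checking the quotient is Hausdorff --- the key step being that the boundary of a maximal common part must lie on the metric boundaries, else it could be enlarged. The geometric input making the chain argument work is the Gauss--Bonnet lower bound on the area of a concave surface together with the exponential decay of area under the inward normal flow, which forces the union of a chain to retain a metric boundary homeomorphic to $\Sigma\times\{0\}$.

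The central gap in your argument is the assertion that all concave extensions ``sit compatibly inside a fixed ambient universal model'' via the developing map, so that their union is well defined. The developing map of a hyperbolic end with particles is only a local isometry on the universal cover of the regular part; it need not be injective (for exotic projective structures at infinity, obtained by grafting with large weights, it genuinely is not), so the extensions are not subsets of $\mathbb{H}^3$ and ``taking the union'' of their developed images does not produce a manifold containing them all. What actually has to be proved is that any two concave extensions of $M_0$ agree on a common part and that this common part cannot stop in the interior of either --- precisely the content of the paper's Steps 1 and 3, which you skip entirely. A second, related gap: your parametrization of extensions by $t(v)$ for $v\in N\partial_0 M_0$ via Lemma \ref{lm:exp} only sweeps the concave side of $\partial_0 M_0$, but the maximal concave extension generally also extends $M_0$ \emph{past} its metric boundary (this is exactly how it is used in Proposition \ref{prop:qd to he}, where $M_0$ has a smooth strictly concave boundary and the maximal extension pushes it back until it becomes pleated); your construction produces none of that region, and you give no reason why the extension on that side terminates in a concave metric boundary at all. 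Your extremal-point argument for pleatedness of $\partial_0 M$ is a sound observation (and fills in a point the paper leaves implicit), but it does not rescue the existence and uniqueness steps.
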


 \begin{proof}
    We show this proposition in the following three steps. The argument is an adaption of those for the corresponding results in globally hyperbolic spacetimes (see e.g. \cite[Theorem 3]{BGeroch}, \cite[Proposition 2.6]{BS}). The point is to use the concavity of the metric boundary of a hyperbolic manifold instead of the globally hyperbolicity of a spacetime.

   \textbf{ Step 1}:
   Let $\mathcal{E}$ be the set of all 
 concave extensions  of $M_0$. It is clear that $\mathcal{E}$ is non-empty since $M_0$ is 
 a concave extension of itself. Given $M_1,M_2 \in\mathcal{E}$, we consider the ordered pairs $(N_1, N_2)$ such that
   \begin{itemize}
     \item $N_i$ is a subset of $M_i$ in which $M_0$ can be isometrically embedded, for $i=1,2$.
     \item There is an isometric embedding from $M_0$ to $M_2$ which extends to an isometric embedding from $N_1$ to $M_2$ sending $N_1$ to $N_2$.
   \end{itemize}

   Denote by $\mathcal{C}(M_1,M_2)$ the set consisting of all such pairs for $M_1,M_2 \in\mathcal{E}$. It is clear that $\mathcal{C}(M_1,M_2)$ is partially ordered by inclusion of the first and second item of the pairs, respectively. Moreover, each totally ordered subset of $\mathcal{C}(M_1,M_2)$ has an upper bound. By Zorn's Lemma, there exists a maximal element of $\mathcal{C}(M_1,M_2)$.

   \textbf{ Step 2}:
   Now we give a partial order $``\leq"$ for the set $\mathcal{E}$ by defining $M_1\leq M_2$ if the isometric embedding from $M_0$ to $M_2$ extends to an isometric embedding from $M_1$ to $M_2$, here $M_1,M_2 \in\mathcal{E}$. We claim that $\mathcal{E}$ has a maximal element.

   Indeed, let 
   $(M_\alpha)_{\alpha\in\mathcal{A}}$  be a totally ordered subset of $\mathcal{E}$ and let $K=\sqcup_{\alpha}M_{\alpha}$ be the disjoint union of $M_{\alpha}$ over $\alpha\in\mathcal{A}$. We define an equivalence relation for the set $K$. We relate $p_{\alpha}\in M_{\alpha}$ to $p_{\beta}\in M_{\beta}$ if there exists $(N_{\alpha},N_{\beta})\in\mathcal{C}(M_{\alpha},M_{\beta})$ and an isometric embedding from $N_{\alpha}$ to $M_{\beta}$ which sends $p_{\alpha}$ to $p_{\beta}$, where $\alpha,\beta\in\mathcal{A}$. One can check that this relation is an equivalence relation on $K$.
   Denote by $\bar{K}$ the quotient space of $K$ under this equivalence relation. Then $\bar{K}$ is a manifold endowed with a natural differentiable structure and metric. Note that $M_{\alpha}\in\mathcal{E}$ and  $M_{\alpha}\subset\bar{K}$ for all $\alpha$, then $\bar{K}$ is a hyperbolic manifold with particles in which $M_0$ can be isometrically embedded.

   We claim that $\bar{K}$ has a concave metric boundary. This implies that $\bar{K}\in\mathcal{E}$ and $\bar{K}$ is an upper bound of 
   $(M_\alpha)$. Applying Zorn's Lemma again, there exists a maximal element of $\mathcal{E}$, say $M$.

   Now we show that $\bar{K}$ has a concave metric boundary.
   Note that any concave surface in a hyperbolic manifold with particles has sectional curvature at least $-1$. By the assumption \eqref{Topological condition} and the Gauss-Bonnet formula (see \cite[Proposition 1]{Troyanov}), the area of any concave surface has a positive lower bound. Note also that the area of a concave surface decreases exponentially with respect to the distance $r$ along the the normal flow pointing to the non-concave side of $S$. Combined with the fact that $M_{\alpha}$ has a concave metric boundary for all $\alpha\in\mathcal{A}$, then the metric completion of $\bar{K}$ is homeomorphic to $\Sigma\times\mathbb{R}_{\geq 0}$. Therefore, $\bar{K}$ has a metric boundary and it is naturally concave and orthogonal to the singular locus. The claim follows.

   \textbf{ Step 3}: We show that $M$ is 
   a concave extension of each element of $\mathcal{E}$. Let $M'\in\mathcal{E}$. We denote by $\hat{M}$ the quotient space of the disjoint union of $M'$ and $M$ under the equivalence relation defined above. It suffices to show $\hat{M}\in\mathcal{E}$, since this implies that $\hat{M}$ is 
   a concave extension of both $M$ and $M'$. Note that $M$ is a maximal element of $\mathcal{E}$, then $M$ is isometric to $\hat{M}$ and thus 
   a concave extension of $M'$. This shows the uniqueness of $M$ (up to isometry).

   Now we show that $\hat{M}\in\mathcal{E}$.  Let $(N',N)$ be a maximal element of $\mathcal{C}(M',M)$ (this is ensured by Step 1) and let $\psi$ be an isometric embedding from $M_0$ to $M$ which extends to an isometric embedding from $N'$ to $M$ sending $N'$ to $N$. Denote by $\partial N'$ the boundary of $N'$  in $\bar{M'}$ and denote by $\partial N$ the boundary of $N$  in $\bar{M}$. We claim that for each point $x\in\partial N'$, either $x\in\partial_0 M'$ or $\psi(x)\in\partial_0M$. 
   Otherwise, there exists a point $x\in\partial N'$ which is in the interior of $M'$ with the image $\psi(x)$ in the interior of $M$. Note that $M'$ and $M$ are both locally modelled on $\mathbb{H}^{3}_{\theta_i}$ for some $\theta_i\in(0,\pi)$. Whatever $x$ is a regular point or a singular point, we can choose a small neighborhood $U'$ of $x$ in $M'$ and a small neighborhood $U$ of $\psi(x)$ in $M$ such that they are isometric to each other. It is clear that $(N'\cup U', N\cup U)\in\mathcal{C}(M',M)$. Note also that $N'$ is a proper subset of $N'\cup U'$ in $M'$. This contradicts that $N'$ is the maximal element of $\mathcal{C}(M',M)$. The claim follows.

   Note that $\hat{M}=(N'\cong N)\sqcup (M'\setminus N')\sqcup (M\setminus N)$. Combined with the above claim, $\psi$ can extend to an isometric embedding from $\bar{N'}$ to $\bar{M}$ sending $\partial N'$ to $\partial N$, then $\hat{M}$ is Hausdorff. Note that the projection from $M\sqcup M'$ to $\hat{M}$ is open, every point of $\hat{M}$ has a neighbourhood homeomorphic to $\mathbb{R}^3$. This implies that $\hat{M}$ is a manifold. Similarly as Step 2, $\hat{M}$ inherits a natural hyperbolic structure with particles. Moreover, $\hat{M}$ can be endowed with a metric completion compatible with the metric completions of $M'$ and $M$, under which the metric boundary $\partial_0 \hat{M}$ is concave (one can check by using Definition \ref{def of concavity of surfaces}) and orthogonal to the singular locus. 
    Moreover, $\hat{M}$ is a hyperbolic manifold with particles in which $M_0$ can be isometrically embedded. This implies that $\hat{M}\in\mathcal{E}$.
\end{proof}

\subsection{The construction of hyperbolic ends in $\mathcal{HE}_{\theta}$ from meromorphic quadratic differentials in $T^{*}\mathcal{T}_{\Sigma,\theta}$}

\begin{proposition}\label{prop:qd to he}
Let $q\in T_{c}^{*}\mathcal{T}_{\Sigma,\theta}$ with $c\in\mathcal{T}_{\Sigma,\theta}$. Then there exists a unique hyperbolic end with particles $M\in\mathcal{HE}_{\theta}$ which admits a complex projective structure $\sigma$ on $\partial_{\infty}M$ in the conformal class $c$, such that the Schwarzian derivative $\mathcal{S}(\phi)$ of the natural conformal map 
$\phi:(\partial_{\infty} M,\sigma)\rightarrow (\partial_{\infty} M,\sigma_F)$ is $q$.
\end{proposition}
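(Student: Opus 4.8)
The plan is to factor the construction as $q\mapsto\sigma\mapsto M$: first produce a complex projective structure $\sigma\in\cCP_\theta$ with underlying conformal structure $c$ whose Schwarzian relative to $\sigma_F$ equals $q$, and then realize $\sigma$ as the boundary at infinity of a hyperbolic end with particles, extending it maximally. Uniqueness will then follow from the uniqueness of the maximal concave extension.

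\emph{Construction of $\sigma$ from $q$.} Let $\sigma_F$ be the Fuchsian structure in the class $c$, with developing map $f_F:\widetilde{\Sigma_\mathfrak{p}}\to\C P^1$ and holonomy $\rho_F$. Writing $\mathcal{S}(f_F)=u_F\,dz^2$ and $q=u\,dz^2$ in a local conformal coordinate, I would solve the Schwarzian equation $\mathcal{S}(f_\sigma)=\mathcal{S}(f_F)-q$, equivalently pass to the linear ODE $\phi''+\tfrac12(u_F-u)\phi=0$ and let $f_\sigma$ be the ratio of two independent solutions. Since $q$ lives on $\Sigma_\mathfrak{p}$, the equation is equivariant, so $f_\sigma$ has a holonomy $\rho$ and descends to a projective structure with underlying conformal structure $c$; the cocycle property then gives exactly $\mathcal{S}(id:(\Sigma_\mathfrak{p},\sigma)\to(\Sigma_\mathfrak{p},\sigma_F))=q$, matching Lemma \ref{lm:cp to qd}. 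The decisive point is the local behaviour at each marked point $p_i$: there $f_F$ develops as $z^{\theta_i/2\pi}$ up to a M\"obius map, so $u_F$ has a double pole with coefficient $\tfrac12(1-(\theta_i/2\pi)^2)$, and since $q$ has at worst a simple pole, $u_F-u$ has the \emph{same} double-pole coefficient. Hence the indicial exponents of the ODE at $p_i$ still differ by $\theta_i/2\pi$, and the monodromy around $p_i$ is conjugate to a rotation of angle $\theta_i$ (no resonance occurs because $\theta_i\in(0,\pi)$, i.e. $\theta_i/2\pi\in(0,\tfrac12)$). This shows $\sigma\in\cCP_\theta$ with cone angle $\theta_i$ at $p_i$.

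\emph{Realization of $\sigma$ and maximal extension.} On the regular part, the developing pair $(f_\sigma,\rho)$ into $\C P^1=\partial_\infty\mathbb{H}^3$ determines, by the classical correspondence between projective structures and hyperbolic end germs, a hyperbolic structure on a one-sided collar $\Sigma_\mathfrak{p}\times(0,\epsilon)$ that is complete towards infinity and induces $\sigma$ on its boundary at infinity. Near each $p_i$ I would use the projective chart $u_i:\Omega_i\to\C_{\theta_i}$ supplied by Definition \ref{def:hyperbolic ends with particles} together with Lemma \ref{lm:geometry at infinity}, which identifies a punctured neighbourhood of $p_i$ at infinity with a neighbourhood of an endpoint at infinity of $\Delta_0$ in the model $V_{\theta_i}$; this glues the singular model isometrically onto the regular germ, producing a hyperbolic manifold with particles $M_0'$ that is a collar of $\partial_\infty M$. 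Truncating $M_0'$ along a closed, strictly concave surface orthogonal to the particles (which can be found arbitrarily close to infinity) yields a hyperbolic manifold with particles $M_0$ with a concave metric boundary. Proposition \ref{existence and uniqueness of maximal extension} then produces the unique maximal concave extension $M$ of $M_0$, which is a hyperbolic end with particles, so $M\in\cHE_\theta$; since the extension only affects the side away from infinity, $\partial_\infty M$ still carries $\sigma$, and by Proposition \ref{CP1 sturctures at boundary} together with the computation of Lemma \ref{lm:cp to qd} its associated quadratic differential is $q$.

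\emph{Uniqueness and main obstacle.} If $M'\in\cHE_\theta$ also realizes $\sigma$, then near infinity $M'$ is isometric to the canonical germ $M_0'$ (the germ being determined by the developing map of $\sigma$), so $M'$ is a concave extension of a truncation $M_0$; being a hyperbolic end with particles its concave boundary $\partial_0M'$ is pleated and hence contains no extremal points, so $M'$ is itself maximal. By the uniqueness clause of Proposition \ref{existence and uniqueness of maximal extension}, $M'$ is isometric to $M$. I expect the main obstacle to be the realization step: matching the singular local model $V_{\theta_i}$ to the regular germ along each particle so that the glued metric is genuinely a hyperbolic manifold with particles with cone angle exactly $\theta_i$ and with surfaces orthogonal to the singular locus, and arranging a genuinely concave truncation so that Proposition \ref{existence and uniqueness of maximal extension} applies. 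By contrast the first step is the classical Schwarzian parametrization, whose only new ingredient is the double-pole computation pinning down the cone angle.
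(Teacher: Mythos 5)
Your factorization $q\mapsto\sigma\mapsto M$ is genuinely different from the paper's route, and the weak link is exactly where you suspect: the realization step. The paper never constructs $\sigma$ first. It takes the hyperbolic metric $I^*$ in the class $c$, sets $\II^*=\tfrac12 I^*+\Real q$, $B^*=(I^*)^{-1}\II^*$, and writes down the explicit metric $g_0=dr^2+\tfrac12(e^{2r}I^*+2\II^*+e^{-2r}\III^*)$ on $\Sigma\times[r_0,\infty)$. It then verifies Gauss and Codazzi for the induced data $(I_r,B_r)$ (using Proposition \ref{computation of connection and curvature}), checks that the eigenvalues of $B_r$ are positive for $r_0$ large (giving the concave truncation you only assert exists), and -- crucially -- shows by a direct estimate that the simple-pole condition on $q$ forces $B^*\to\tfrac12 E$ at each $p_i$, so that $I_r$ degenerates to a multiple of $I^*$ there and the total angle along $\{p_i\}\times[r_0,\infty)$ is exactly $\theta_i$. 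This computation is the entire content of ``the glued metric is genuinely a hyperbolic manifold with particles with cone angle $\theta_i$ and surfaces orthogonal to the singular locus,'' and your proposal does not supply a substitute for it. In particular, invoking Lemma \ref{lm:geometry at infinity} and the chart $u_i$ of Definition \ref{def:hyperbolic ends with particles} to perform the gluing is circular: both are statements about hyperbolic ends (or projective structures) already known to exist, not tools for producing the local isometry with $V_{\theta_i}$ from the developing pair $(f_\sigma,\rho)$. Your indicial-exponent analysis controls the monodromy of $\sigma$ around $p_i$, which pins down the conjugacy class of $\rho$ of the peripheral loop, but elliptic holonomy of angle $\theta_i$ alone does not yet give you a hyperbolic collar that completes to a cone singularity of angle $\theta_i$ along a geodesic orthogonal to concave surfaces; that requires the quantitative asymptotics of the developing map near the puncture, which is what the paper's $B^*\to\tfrac12 E$ estimate encodes.

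Two smaller points. First, once the realization step is granted, your uniqueness argument (germ at infinity determined by $\sigma$, plus maximality of ends with pleated boundary, plus the uniqueness clause of Proposition \ref{existence and uniqueness of maximal extension}) is essentially the paper's, though the implication ``pleated boundary, hence no extremal points, hence maximal'' deserves a sentence of justification rather than assertion. Second, your ODE construction of $\sigma$ from $q$ is sound and would indeed make the final identity $\mathcal{S}(\phi)=q$ automatic; the paper instead recovers it a posteriori from $\Real\mathcal{S}(\phi)=\II^*_0=\Real q$ via \cite[Lemma 8.3]{KS2}, so your first step is a legitimate (and arguably cleaner) alternative to that part. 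But as written the proposal has a genuine gap at the passage from the singular projective structure to the singular hyperbolic germ.
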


\begin{proof}
We construct a hyperbolic end with particles $M$ from the given quadratic differential $q$ on $\Sigma$ (with respect to the conformal structure $c$) in the following two steps.

\textbf{Step 1 :} First we construct a hyperbolic manifold with the prescribed particles $M_0$ which is homeomorphic to $\Sigma\times\mathbb{R}_{\geq0}$ with a concave metric boundary $\partial_0 M_0$.

Let $I^{*}$ be a hyperbolic metric with the prescribed cone singularities in the conformal class $c$. Let 
$\II_0^{*}=\Real \,q$ be the real part of $q$ and 
${\II}^{*}=\frac{1}{2}I^{*}+{\II}^{*}_0$. Let $B^{*}=(I^{*})^{-1}{\II}^{*}$ and ${\III}^{*}=I^{*}(B^{*}\bullet,B^{*}\bullet)$.

Let $M_0$ be the set $\Sigma\times [r_0,+\infty)$ with the metric
\begin{equation*}
g_0=dr^2+\frac{1}{2}(e^{2r}I^{*}+2{\II}^{*}+e^{-2r}{\III}^{*}),
\end{equation*}
where $r_0$ is to be determined. We claim that $M_0$ is a hyperbolic manifold with particles if we choose $r_0$ large enough. Denote $I_{r}=\frac{1}{2}(e^{2r}I^{*}+2\II^{*}+e^{-2r}{\III}^{*})$. Then we have
\begin{equation*}
\begin{split}
I_{r}&=\frac{1}{2}I^*((e^rE+e^{-r}B^{*})\bullet, (e^rE+e^{-r}B^{*})\bullet), \\
\II_r&=\frac{1}{2}\frac{dI_r}{dr}
=\frac{1}{2}I^{*}((e^rE+e^{-r}B^{*})\bullet,(e^rE-e^{-r}B^{*})\bullet)
\end{split}
\end{equation*}

Denote $B_r=(e^rE+e^{-r}B^{*})^{-1}(e^rE-e^{-r}B^{*})$. We show that $(I_r, B_r)$ satisfies the following conditions:
\begin{itemize}
 \item $B_r$ is self-adjoint for $I_r$: $I_r(B_r\bullet,\bullet)=I_r(\bullet,B_r\bullet)$. This follows directly from the fact that $B^*$ is self-adjoint for $I^*$ (since $\II^*_0$ is the real part of the quadratic differential $q$).
  \item $(I_r,B_r)$ satisfies the Gauss equation for surfaces embedded in ${\mathbb{H}}^3$: $K_{I_r}=-1+\det B_r$, where $K_{I_r}$ is the sectional curvature of $I_r$. Indeed, by the definition of $I_r$ and Proposition \ref{computation of connection and curvature},
      \begin{equation*}
      K_{I_r}=\frac{K_{{I}^{*}}}{\det (\frac{1}{\sqrt{2}}(e^rE+e^{-r}B^{*}))}=
      \frac{-2}{e^{2r}+\trace B^{*}+e^{-2r}\det B^{*}}.
      \end{equation*}
      Note that $B^{*}=(I^{*})^{-1}\II^{*}=\frac{1}{2} E+(I^{*})^{-1}\II^{*}_0$ and $(I^{*})^{-1}\II^{*}_0$ is traceless. We have $\trace B^{*}=1$ and
      \begin{equation*}
      -1+\det B_r=\frac{-2\trace B^{*}}{e^{2r}+\trace B^{*}+e^{-2r}\det B^{*}}=\frac{-2}{e^{2r}+\trace B^{*}+ e^{-2r}\det B^{*}}=K_{I_r}.
      \end{equation*}

  \item $(I_r,B_r)$ satisfies the Codazzi equation: $d^{\nabla^{I_r}}B_r=0$, where $\nabla^{I_r}$ is the Levi-Civita connection of $I_r$.
      Denote by $\nabla^{I^{*}}$ the Levi-Civita connection of $I^{*}$. By Proposition \ref{computation of connection and curvature}, 
      \begin{equation*}
      \nabla^{I_r}=(e^rE+e^{-r}B^{*})^{-1}{\nabla}^{I^{*}}(e^rE+e^{-r}B^{*})~.
      \end{equation*}
      It suffices to show that $d^{{\nabla}^{I^{*}}}B^{*}=0$. By the definition of ${\nabla}^{I^{*}}$, it can be checked that $d^{\nabla^{I^{*}}}I^{*}=0$. Note that 
      $\II_0^{*}= \Real \,q$ with $q$ a holomorphic quadratic differential outside the marked points. Then $d^{\nabla^{I^{*}}}\II_0^{*}=0$. Therefore, $d^{\nabla^{I^{*}}}B^{*}
      =d^{\nabla^{I^{*}}}\big(\frac{1}{2}E+(I^{*})^{-1}\II^{*}_0\big)
      =(I^{*})^{-1}d^{\nabla^{I^{*}}}{\II_0}^{*}$ = 0.
 \item ($I_r$, $B_r$) satisfies the following equality:
 \begin{equation*}
 I_{r+s}=I_r((\cosh(s)E+\sinh(s)B_r)\bullet,  (\cosh(s)E+\sinh(s)B_r)\bullet),
 \end{equation*}
 for all $r, s>0$. This follows from a direct computation.

\end{itemize}

     Denote by $\lambda^*$, $\mu^*$ (resp. $\lambda_r$, $\mu_r$) the eigenvalues of $B^{*}$ (resp. $B_r$). By computation,
     \begin{equation*}
     \lambda_r=\frac{e^r-e^{-r} \lambda^*}{e^r+e^{-r} \lambda^*},
      \quad\quad
     \mu_r=\frac{e^r-e^{-r} \mu^*}{e^r+e^{-r} \mu^*}.
     \end{equation*}

If $r_0$ is large enough, the eigenvalues $\lambda_{r_0}$, $\mu_{r_0}$ of $(\Sigma, I_{r_0})$ are both positive. Combined with the above properties of $(I_r, B_r)$, this shows that $M_{0}$ is a hyperbolic manifold with particles which has a concave metric boundary.

We now show that the total angle around the singular curve $\{p_i\}\times[r_0,+\infty)$ of $M_{0}$ is $\theta_i$. It suffices to check that $(\Sigma\times\{r\}, I_r)$  has a cone singularities of angle $\theta_i$ at the intersection with the singular line through $p_i$. Note that $I_{r} =\frac{1}{2}I^{*}((e^rE+e^{-r}B^{*})\bullet, (e^rE+e^{-r}B^{*})\bullet)$. We claim that $B^{*}$ tends to $\frac{1}{2}E$ at the cone singularities.
Indeed $I^{*}=\rho(z) |dz|^2$ with $\rho(z)=e^{2u} |z|^{2(\frac{\theta_i}{2\pi}-1)}$ near the cone singularity $p_i$, while the quadratic differential $q=f(z)dz^2$ has at most simple pole at $p_i$ (that is, $|f(z)|\leq O(1/|z|)$ near $z(p_i)=0$). A direct computation shows that
     \begin{equation*}
      (I^{*})^{-1}\II_0^{*}= \frac{1}{2} \, \rho^{-1}(z) \left(
                            \begin{array}{cc}
                                \Real f & -\Imagin f \\\\
                                -\Imagin f& -\Real f \\
                            \end{array}
                        \right).
     \end{equation*}
Combined with the observation that $\theta_i\in(0,\pi)$ and $|\Real f|, |\Imagin f|\leq |f|\leq O(1/|z|)$ near $z(p_i)=0$, we have that $(I^{*})^{-1}\II_0^{*}$ tends to the zero matrix at $p_i$. This implies that $B^{*}$ tends to $\frac{1}{2}E$ at $p_i$. Hence, $I_r$ tends to $\frac{1}{2}(e^r+\frac{1}{2}e^{-r})^2 I^{*}$ at $p_i$, which implies that $I_r$ has the cone singularities of the same angle $\theta_i$ at $p_i$ as those associated to $I^{*}$.

\textbf{Step 2 :} We construct the desired hyperbolic end $M$ with particles via $M_0$.

Indeed, by Proposition \ref{existence and uniqueness of maximal extension}, $M_0$ admits a unique maximal concave extension which is a hyperbolic end with particles, say $M$. We will show that the induced complex projective structure $\sigma$ on $\partial_\infty M$ satisfies the required condition.

A direct computation shows that $I^{*}=\frac{1}{2}e^{-2r}{G_r}_{*}(I_r+2\II_r+\III_r)$ (see e.g. \cite[Lemma 5.1]{KS2}), where $G_r$ is the Gauss map from $(\Sigma\times\{r\}, I_r)$ to $\partial_{\infty}M$. This implies that the conformal structure induced on $\partial_{\infty}M$ by the hyperbolic metric on $M$ is $c$. By \cite[Lemma 8.3]{KS2}, the real part of the Schwarzian derivative of the natural map 
$\phi:(\partial_{\infty} M,\sigma)\rightarrow (\partial_{\infty} M,\sigma_F)$ is $\II_0^{*}$ (note that the proof of this lemma is purely local, and therefore extends to the singular setting), where $\sigma$ is the complex projective structure induced on $\partial_{\infty} M$ and $\sigma_F$ is the Fuchsian complex projective structure of $\sigma$. Hence, 
$\Real \mathcal{S}(\phi)=\II^{*}_0=\Real q$.
This implies that $\mathcal{S}(\phi)=q$.
\end{proof}

\emph{Proof of Theorem \ref{tm:projective}} : Note that the hyperbolic end with particles in Proposition \ref{prop:qd to he} is unique from the construction. Combined with Proposition \ref{CP1 sturctures at boundary} and Lemma \ref{lm:cp to qd}, Theorem \ref{tm:projective} follows.



\subsection{Hyperbolic ends with particles in terms of the bending data on the metric boundary}

Now we consider the relation between $\mathcal{HE}_{\theta}$ and $\mathcal{T}_{\Sigma,\theta}\times\mathcal{ML}_{\mathfrak{p}}$.

\begin{proposition}\label{he and ml}
The map sending a hyperbolic end with particles to the induced metric and measured bending lamination on its metric boundary is a 
bijection between $\mathcal{HE}_{\theta}$ and $\mathcal{T}_{\Sigma,\theta}\times\mathcal{ML}_{\mathfrak{p}}$.
    \end{proposition}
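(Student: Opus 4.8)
The plan is to introduce the map $\Phi:\mathcal{HE}_\theta\to \mathcal{T}_{\Sigma,\theta}\times\mathcal{ML}_\mathfrak{p}$ of the statement, which sends $M$ to the pair $(m,l)$ where $m$ is the induced metric on the metric boundary $\partial_0 M$ and $l$ is its bending lamination, and to prove that $\Phi$ is well-defined and a bijection. Well-definedness is essentially contained in the earlier discussion: by Definition \ref{def of hyperbolic ends with particles} the metric boundary $\partial_0 M$ is pleated and orthogonal to the particles, so its induced metric $m$ is a hyperbolic metric with cone singularities of angle $\theta_i$ at the $p_i$, giving $m\in\mathcal{T}_{\Sigma,\theta}$; and by Remark \ref{bending locus} its pleating is recorded by a measured geodesic lamination $l\in\mathcal{ML}_\mathfrak{p}$. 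It then remains to prove injectivity and surjectivity.

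For injectivity, I would argue that a hyperbolic end is determined by an arbitrarily thin concave collar of its metric boundary together with the uniqueness of the maximal concave extension. Concretely, if $M_1,M_2\in\mathcal{HE}_\theta$ satisfy $\Phi(M_1)=\Phi(M_2)=(m,l)$, then their pleated metric boundaries are isometric, since a pleated surface with particles is determined up to isometry by its induced metric and its bending lamination (the singular analogue of the classical bending-cocycle description). Using the normal exponential map of Lemma \ref{lm:exp}, a small concave collar $M_0$ of $\partial_0 M_1$ is then isometric to the corresponding collar of $\partial_0 M_2$ by an isometry isotopic to the identity and fixing each particle. Since $M_1$ and $M_2$ are both concave extensions of this common $M_0$, the uniqueness part of Proposition \ref{existence and uniqueness of maximal extension} forces them to be isometric by a map isotopic to the identity; hence they represent the same class in $\mathcal{HE}_\theta$.

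For surjectivity, given $(m,l)\in\mathcal{T}_{\Sigma,\theta}\times\mathcal{ML}_\mathfrak{p}$, I would first realize $l$ as a geodesic measured lamination on $(\Sigma,m)$, as recalled in the introduction, and then construct a concave pleated surface with particles $P$ of induced metric $m$ and bending lamination $l$. When $l$ is a weighted multicurve this is a direct construction: develop $(\Sigma,m)$ into $\mathbb{H}^3$ and bend by the prescribed angle across each leaf; since the support of $l$ avoids the cone points, $P$ remains totally geodesic and orthogonal to each particle near the singular locus, so the cone angles $\theta_i$ are preserved. For a general lamination I would obtain $P$ as a limit of the multicurve case through the bending-cocycle (Epstein--Marden type) construction, which carries over to the singular setting because the support of $l$ is compactly contained in $\Sigma_\mathfrak{p}$. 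Taking a thin concave neighborhood $M_0$ of $P$, which is a hyperbolic manifold with particles with concave metric boundary, Proposition \ref{existence and uniqueness of maximal extension} produces its maximal concave extension $M\in\mathcal{HE}_\theta$; using Lemma \ref{lm:exp} to identify $\partial_0 M$ with $P$, one checks that $\Phi(M)=(m,l)$.

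The main obstacle is the surjectivity step, and within it the construction of the pleated surface for a general, non-discrete measured lamination in the presence of particles. One must ensure that the bending cocycle converges, that the developing map stays equivariant under a holonomy whose peripheral elements are elliptic of the prescribed angles $\theta_i$, and that the resulting surface is genuinely concave and orthogonal to the singular locus. The regularity of the metric near the cone points together with the fact that the lamination does not accumulate at the $p_i$ are exactly what make the orthogonality and the cone-angle preservation hold; checking these carefully is the technical heart of the argument.
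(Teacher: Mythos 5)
Your proposal follows essentially the same route as the paper: well-definedness from the pleated, particle-orthogonal structure of $\partial_0 M$, and surjectivity via the bending-cocycle construction of a developing map $dev_\lambda$ equivariant under a holonomy $\rho_\lambda$, using crucially that the support of the lamination stays at positive distance from the cone points so that orthogonality and the cone angles $\theta_i$ are preserved. The only real difference is organizational: the paper builds the end directly as the quotient of the normal exponential image of the developed pleated surface on its concave side (and obtains injectivity implicitly from Lemma \ref{lm:exp}, which identifies $M$ with $N\partial_0M$), whereas you route both the completion to a full end and the injectivity through the maximal concave extension of Proposition \ref{existence and uniqueness of maximal extension}. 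For your injectivity step note that two concave extensions of a common collar $M_0$ need not coincide a priori; you should add the (short) observation that each $M_i$, being complete at infinity with a pleated metric boundary containing no extremal points, is itself a maximal concave extension of $M_0$, after which the uniqueness statement applies.
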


    \begin{proof}
    Let $M$ be a hyperbolic end with particles. It follows from Remark \ref{bending locus} that $\partial_0M$ has a bending lamination, say $\lambda$.

    Note that the singular lines are orthogonal to $\partial_0 M$ and the total angles around the singular curves are less than $\pi$. The distance from the singular points in $\bar{M}$ to the support $L$ of the bending lamination is bounded away from 0. In particular, if $x\in\partial_0 M$ is a singular point, then $\partial_0 M$ has a local support plane at $x$ in $\bar{M}$, say $P$, such that $P\cap\partial_0 M$ contains a neighbourhood of $x$ in $P$.

     Using these facts, it follows that $\partial_0M$ can be locally isometrically embedded into a complete pleated surface in $\mathbb{H}^{3}$ (resp. a totally geodesic plane orthogonal to the singular line in $\mathbb{H}^{3}_{\theta_i}$ for some $\theta_i$) away from the singular points (resp.  near each singular point). Therefore, $\partial_0 M$ carries an intrinsic hyperbolic metric, say $h$, with cone singularities (at the intersections with singular locus) of angle equal to the total angle around the corresponding singular curve. Thus we obtain (up to isotopy) the pair $(h,\lambda)\in\mathcal{T}_{\Sigma,\theta}\times\mathcal{ML}_{\mathfrak{p}}$.

     Conversely, we will show that given a hyperbolic metric $h\in\mathcal{T}_{\Sigma,\theta}$ and a measured lamination $\lambda\in\mathcal{ML}_{\mathfrak{p}}$, there is a unique hyperbolic end with particles, say $M$, such that $h$ and $\lambda$ are the induced metric and bending lamination on $\partial_0M$. The argument is similar to that in \cite[Propositon 5.8]{BS} which considers the case of AdS manifolds with particles.

     Denote by $\widetilde{\Sigma_{\mathfrak{p}}}$ the universal cover of $\Sigma_{\mathfrak{p}}$. We claim that $h$ and $\lambda$ determine a local isometric embedding $dev_{\lambda}:\widetilde{\Sigma_{\mathfrak{p}}}\rightarrow\mathbb{H}^3$, which is equivariant under a homomorphism $\rho_{\lambda}:\pi_1(\Sigma_{\mathfrak{p}})\rightarrow PSL_2(\mathbb{C})$. Indeed, associated to $\lambda$ we can define a bending cocycle $\beta_{\lambda}:\widetilde{\Sigma_{\mathfrak{p}}}\times
     \widetilde{\Sigma_{\mathfrak{p}}}\rightarrow PSL_2(\mathbb{C})$  (see \cite[Chapter 4.1]{BB} and \cite[Definition II 3.5.2]{CME}), which satisfies the following two equalities:
     \begin{equation*}
     \begin{split}
     &\beta_{\lambda}(x,y)\circ\beta_{\lambda}(y,z)=\beta_{\lambda}(x,z),\\
     &\beta_{\lambda}(\gamma x,\gamma y)=\rho(\gamma)\beta_{\lambda}(x,y)\rho(\gamma)^{-1},
     \end{split}
     \end{equation*}
     where $\rho:\pi_1(\Sigma_{\mathfrak{p}})\rightarrow PSL_2(\mathbb{R})\leq PSL_2(\mathbb{C})$ is the holonomy representation of $h$.

     In particular, the map $dev_{\lambda}$ can be expressed in terms of $\beta_{\lambda}$, that is,
     \begin{equation*}
     dev_{\lambda}(x)=\beta_{\lambda}(x_0,x)I(dev(x)),
     \end{equation*}
     where $x_0\in\widetilde{\Sigma_{\mathfrak{p}}}$ is a fixed point, $dev$ is the developing map of $h$, and $I$ is the isometric embedding of $\mathbb{H}^2$ into $\mathbb{H}^3$. We define $\rho_{\lambda}:\pi_1(\Sigma_{\mathfrak{p}})\rightarrow PSL_2(\mathbb{C})$ as
     \begin{equation*}
     \rho_{\lambda}(\gamma)=\beta_{\lambda}(\gamma x_0,x_0)\circ \rho(\gamma),
      \end{equation*}
      for all $\gamma\in\pi_1(\Sigma_{\mathfrak{p}})$.

     One can check that $dev_{\lambda}$ is locally injective and it is $\rho_{\lambda}$-equivariant. Note that as the singular locus of $h$ on $\Sigma$ stay away from $\lambda$, the cocycle $\beta_{\lambda}(x_0,x)$ is trival in $\pi^{-1}(U_i)$ for a neighborhood $U_i$ of a marked point $p_i\in\mathfrak{p}$, where $\pi:\widetilde{\Sigma_{\mathfrak{p}}}\rightarrow \Sigma_{\mathfrak{p}}$ is the universal cover. This implies that the map $dev_{\lambda}$ is conjugated to $dev$ in $\pi^{-1}(U_i)$. Let $S$ be the surface equipped with the developing map $dev_{\lambda}$ and the holonomy representation $\rho_{\lambda}$. Then $S$ admits  a hyperbolic metric on $\Sigma_{\mathfrak{p}}$ with cone singularities of the same angle as $h$ at $\mathfrak{p}$, and bending along $\lambda$ (in terms of the local chart in $\mathbb{H}^3$ given by $(dev_{\lambda},\rho_{\lambda})$-data) with the bending angle equal to the corresponding transverse measure. 
Let us denote by $S_r$ the regular set of $S$ and by $\widetilde{S_r}$ the universal cover of $S_r$. Then $dev_{\lambda}:\widetilde{S_r}\to \mathbb{H}^3$ is a $\rho_{\lambda}$-equivariant developing map of $S_r$. Now we consider the normal exponential map, called $\exp$, of $dev_{\lambda}(S_r)\subset\mathbb{H}^3$.
\begin{equation*}
     \exp: N(dev_{\lambda}(S_r))\rightarrow \mathbb{H}^3,
\end{equation*}
where $N(dev_{\lambda}(S_r))$ is the set of the pairs $(x,v)$ such that $v$ is a locally concave-directed vector at a point $x\in dev_{\lambda}(S_r)$ for which the totally geodesic disk orthogonal to $v$ at its center $x$ is a support disk of the image under $dev_{\lambda}$ of  a neighborhood $\widetilde{U}_{\widetilde{x}}\subset\widetilde{S_r}$ of a point $\widetilde{x}\in dev_{\lambda}^{-1}(x)$ such that $dev_{\lambda}|_{\widetilde{U}_{\widetilde{x}}}$ is homeomorphic, and $\exp(x,v)=\exp_x(v)$. Note that $dev_{\lambda}(S_r)$ is locally concave in $\mathbb{H}^3$ and then $\exp$ is well-defined and indeed a local homeomorphism by construction. Hence $dev_{\lambda}(S_r)$ inherits a natural metric from the hyperbolic metric on $\mathbb{H}^3$.

Note also that the holonomy representation $\rho_{\lambda}$ for $S_r$ induces a natural action on  $N(dev_{\lambda}(S_r))$: for any $(x,v)\in N(dev_{\lambda}(S_r))$ and $\gamma\in \pi_1(S_r)$, we define $\rho_{\lambda}(\gamma)(x,v)=(\rho_{\lambda}(\gamma)(x), \rho_{\lambda}(\gamma)_{*}(v))$, where $\rho_{\lambda}(\gamma)_{*}(v)$ is the put-forward vector at $\rho_{\lambda}(\gamma)(x)$ by $\rho_{\lambda}(\gamma)$ of the vector $v$ at $x$. Now we define an identification on $dev_{\lambda}(S_r)$ by identifying $dev_{\lambda}(x,v)$ with $dev_{\lambda}(x',v')$ if $(x,v)$ is related to $(x',v')$ by an action induced by $\rho_{\lambda}(\gamma)$ for some $\gamma\in\pi_1(S_r)$. One can check that the quotient of $dev_{\lambda}(S_r)$ by this identification is a hyperbolic manifold homeomorphic to $S_r\times(0,+\infty)$ (since $dev_{\lambda}$ is locally homeomorphic and $\rho_{\lambda}$-equivariant, $\exp$ is locally homeomorphic, and the induced metric on $dev_{\lambda}(S_r)$ is invariant under this identification).

Let $M$ be the metric completion of this quotient manifold. Observe that for each small loop $\gamma_i\in\pi_1(S_r)$ near the marked point $p_i$, $\rho_{\lambda}(\gamma_i)$ is an elliptic element in $PSL_{2}(\mathbb{C})$ of angle $\theta_i$ up to conjugation. Note also that the distance from the support of $\lambda$ to the cone singularities of $S$ is bounded away from 0. Then the small neighborhood of the line $l_i=\{p_i\}\times(0,+\infty)$ in $M$ is locally modelled on $\mathbb{H}^3_{\theta_i}$, thus $l_i$ is a singular curve in $M$ with cone singularities of angle $\theta_i$ at each point.  Therefore,  $M$ is a hyperbolic end with particles in $\mathcal{HE}_{\theta}$, which has a concave pleated boundary (identified to $S$) with the induce metric $h$ and the bending lamination $\lambda$.

Let $f:\mathcal{T}_{\Sigma,\theta}\times\mathcal{ML}_{\mathfrak{p}} \rightarrow\mathcal{HE}_{\theta}$ be the map constructed above. It follows from the construction that  $f$ is well-defined, with the inverse as exactly the induced hyperbolic metric and bending lamination on $\partial_0M$. This completes the proof.
    \end{proof}

\subsection{Comparing parameterizations of $\mathcal{HE}_{\theta}$}

We now sum up the various parameterizations of the space of hyperbolic ends with particles, and the relations 
among them.

\begin{proposition}\label{prop: homeomorphism}
The following maps are homeomorphisms.
\begin{itemize}
\item The map $f:\mathcal{T}_{\Sigma,\theta}\times\mathcal{ML}_{\mathfrak{p}}\to \mathcal{HE}_{\theta}$ sending $(m,l)$ to the unique hyperbolic end with particles such that the induced metric and measured bending lamination on the metric boundary are $m$ and $l$, see Proposition \ref{he and ml},
\item the map $f_1:\mathcal{HE}_{\theta}\to \mathcal{CP}_{\theta}$ sending a hyperbolic end with particles to the complex projective structure at infinity, see Proposition \ref{CP1 sturctures at boundary},
\item the map $f_2:\mathcal{CP}_{\theta}\to T^{*}\mathcal{T}_{\Sigma,\theta}$ sending a complex projective structure to the Schwarzian derivative of its map to the Fuchsian complex projective structure with the same underlying complex structure, see Lemma \ref{lm:cp to qd},
\item the map $f_3:T^{*}\mathcal{T}_{\Sigma,\theta}\to \mathcal{HE}_{\theta}$ reconstructing a hyperbolic end with particles from the data of a hyperbolic metric and a traceless Codazzi tensor on the boundary at infinity, see Proposition \ref{prop:qd to he}.
\end{itemize}
Moreover, the triangle on the right-hand side of Figure \ref{fig:hE} commutes.
\end{proposition}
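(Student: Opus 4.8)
The plan is to reduce everything to the commutativity of the triangle together with the continuity of the four maps in their natural constructive directions, and then to obtain continuity of all inverses at once by a purely formal argument. First I would verify that $f_2\circ f_1\circ f_3=\mathrm{id}_{T^{*}\mathcal{T}_{\Sigma,\theta}}$, which is exactly the commutativity asserted in the statement. This is built into Proposition \ref{prop:qd to he}: for $q\in T^{*}\mathcal{T}_{\Sigma,\theta}$ the end $M=f_3(q)$ is, by construction, the unique one whose complex projective structure at infinity $\sigma=f_1(M)$ has the property that the Schwarzian derivative of the natural conformal map $\phi:(\partial_\infty M,\sigma)\to(\partial_\infty M,\sigma_F)$ is $q$; by Lemma \ref{lm:cp to qd} this Schwarzian is exactly $f_2(\sigma)$. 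Hence $f_2(f_1(f_3(q)))=q$ for every $q$.

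Next I would record the bijectivity of the maps. Proposition \ref{prop:qd to he} shows that $f_3$ is a bijection, and Theorem \ref{tm:projective} (through Proposition \ref{CP1 sturctures at boundary}) shows that $f_1$ is a bijection; since $f_1$ and $f_3$ are bijections and $f_2\circ f_1\circ f_3=\mathrm{id}$, the map $f_2=(f_1\circ f_3)^{-1}$ is a bijection as well. In particular all three cyclic relations $f_3\circ f_2\circ f_1=\mathrm{id}_{\mathcal{HE}_\theta}$, $f_1\circ f_3\circ f_2=\mathrm{id}_{\mathcal{CP}_\theta}$ and $f_2\circ f_1\circ f_3=\mathrm{id}_{T^{*}\mathcal{T}_{\Sigma,\theta}}$ hold. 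Finally $f$ is a bijection by Proposition \ref{he and ml}.

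I would then prove that each of $f_1,f_2,f_3$ and $f$ is continuous. Continuity of $f_2$ is immediate from the local formula for the Schwarzian derivative in Lemma \ref{lm:cp to qd}, which depends continuously on the complex projective structure and on the underlying conformal structure, with the controlled expansion at the cone points established there. Continuity of $f_3$ follows from the explicit construction of Proposition \ref{prop:qd to he}: the metric $g_0$ depends continuously on $q$ and on the hyperbolic representative $I^{*}$, and the passage to the maximal concave extension of Proposition \ref{existence and uniqueness of maximal extension} preserves this dependence. Continuity of $f_1$ uses that the developing map of the regular part $M_r$ and its extension to $\partial_\infty M$ vary continuously with the end, together with the local model near the endpoints of the particles given by Lemma \ref{lm:geometry at infinity}. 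Continuity of $f$ follows from the continuous dependence on $(m,l)$ of the bending cocycle $\beta_\lambda$ and the associated developing map $dev_\lambda$ built in the proof of Proposition \ref{he and ml}.

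With these ingredients the homeomorphism statements follow efficiently. Once $f_1,f_2,f_3$ are continuous bijections satisfying the three cyclic identities, their inverses are continuous for free, being composites of continuous maps: $f_1^{-1}=f_3\circ f_2$, $f_2^{-1}=f_1\circ f_3$ and $f_3^{-1}=f_2\circ f_1$. Thus $f_1,f_2,f_3$ are homeomorphisms, and this route deliberately avoids proving continuity of the delicate inverse maps (reconstructing an end from its projective structure at infinity, or extracting the quadratic differential from an end). For $f$, I would use that it is a continuous bijection between two topological manifolds of the same dimension---the source because $\mathcal{T}_{\Sigma,\theta}$ is a manifold and $\mathcal{ML}_\mathfrak{p}$ is homeomorphic to a Euclidean space of dimension $\dim\mathcal{T}_{\Sigma,\theta}$, the target because $f_3^{-1}$ identifies $\mathcal{HE}_\theta$ with $T^{*}\mathcal{T}_{\Sigma,\theta}$---so by invariance of domain $f$ is open, hence a homeomorphism; this avoids proving continuity of the bending-lamination map $f^{-1}$. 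I expect the main obstacle to be the forward continuity of $f_1$ and $f_3$: controlling, uniformly near the cone points, both the projective structure at infinity and the maximal concave extension, so that they vary continuously with the data. The continuity of $f_2$ and of $f$ should be comparatively routine, and the final upgrade to homeomorphisms is then formal.
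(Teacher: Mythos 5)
Your proposal is correct and, for the triangle $f_1,f_2,f_3$, follows essentially the same route as the paper: the paper's proof also reduces everything to forward continuity of $f_1,f_2,f_3$ (developing maps and the local model of Lemma \ref{lm:geometry at infinity} for $f_1$, convergence of derivatives of holomorphic maps for $f_2$, continuous dependence of $(I^{*},\II^{*})$ on $q$ for $f_3$) and then uses the cyclic structure to invert; you merely make explicit the identities $f_2\circ f_1\circ f_3=\mathrm{id}$, etc., and the resulting formulas $f_1^{-1}=f_3\circ f_2$, $f_2^{-1}=f_1\circ f_3$, $f_3^{-1}=f_2\circ f_1$, which the paper leaves implicit in the phrase ``it is sufficient to show the continuity of $f$, $f^{-1}$, $f_1$, $f_2$, $f_3$.'' The one genuine divergence is your treatment of $f$: the paper proves continuity of $f^{-1}$ directly, observing that the induced metric and measured bending lamination on $\partial_0M$ are determined by (and vary continuously with) the intrinsic geometry of the end, whereas you substitute invariance of domain, using that $\mathcal{ML}_{\mathfrak{p}}\cong\R^{6g-6+2n_0}$ and that $\mathcal{HE}_\theta$ is already identified with $T^{*}\mathcal{T}_{\Sigma,\theta}$ so that source and target are boundaryless manifolds of equal dimension. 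Both are valid; the paper's direct argument is shorter because the data $(m,l)$ is simply read off the boundary, while yours is more robust in that it sidesteps any continuity analysis of the lamination-extraction map, at the cost of importing the (standard, but unproved in the paper) fact about the topology of $\mathcal{ML}_{\mathfrak{p}}$ and of making the $f$-statement logically dependent on the prior homeomorphism $f_3$. Your verification of the commutativity of the triangle from the defining property in Proposition \ref{prop:qd to he} combined with Lemma \ref{lm:cp to qd} is exactly what the paper intends.
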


\begin{proof}
It is sufficent to show the continuity of the maps $f$, $f^{-1}$, $f_1$, $f_2$, $f_3$ in the following diagram.

    \begin{figure}[ht]
      $\xymatrix{
      \mathcal{T}_{\Sigma,\theta}\times\mathcal{ML}_{\mathfrak{p}}
      \ar[r]^{\quad \quad f}
      &\mathcal{HE}_{\theta} \ar[r]^{f_1} &\mathcal{CP}_{\theta}\ar[dl]^{f_2}\\
      &T^{*}\mathcal{T}_{\Sigma,\theta} \ar[u]^{f_3} }$
       \caption{\small{A diagram showing the relations among several spaces related to $\mathcal{HE}_{\theta}$.}}
      \label{fig:hE}
    \end{figure}
Note that the induced metric and the bending lamination on $\partial_0M$ of a hyperbolic end $M$ with particles are completely determined by the intrinsic  geometry of $M$. Conversely, a hyperbolic end with particles is
obtained as the image under the exponential map $\exp$ (defined in the proof of Proposition \ref{he and ml}) of the normal bundle $NS$, which depends continuously on the $(dev_{\lambda},\rho_{\lambda})$-data determined by the bending data $(h,\lambda)\in \mathcal{T}_{\Sigma,\theta}\times\mathcal{ML}_{\mathfrak{p}}$. Therefore, $f$ and $f^{-1}$ are naturally continuous.

As for the map $f_1$, observe that the 
complex projective structure induced on $\partial_{\infty}M$ is determined by the canonical 
complex projective structure on $\partial_{\infty}M_r$  (considered as an extended $\big(PSL_2(\mathbb{C}), \partial_{\infty}\mathbb{H}^3\big)$-structure  on $\partial_{\infty}M_r$, which depends continuously on the $\big(PSL_2(\mathbb{C}), \mathbb{H}^3\big)$-structure on $M_r$) and the asymptotic geometry near the endpoints at infinity of the singular curves in $M$ (see Lemma \ref{lm:geometry at infinity}, which ensures that the complex projective structure at infinity has cone singularities of angle $\theta_i$ at the endpoint at infinity of the singular curve $\{p_i\}\times(0,+\infty)$).  Hence, $f_1$ is naturally continuous.

A well-known fact in complex analysis says that uniformly convergent  holomorphic maps have uniformly convergent derivatives of arbitrary order  (on compact subsets). Note also that the natural maps from a complex projective structure with cone singularities to the corresponding Fuchsian complex projective structure extend conformally to the marked points (with respect to the complex charts) and there is a natural holomorphic local diffeomorphism from the $\mathbb{C}P^1$-chart in $\mathbb{C}_{\theta_i}$ to the complex chart in $\mathbb{C}$ at the singular point $p_i$  (see e.g. Lemma \ref{lm:cp to qd}). Therefore, the Schwarzian derivative induces a continuous map on the space of the natural conformal maps from a complex projective structure $\sigma$ with cone singularities to the corresponding Fuchsian complex projective structure $\sigma_F$. Moreover, the sequence of natural conformal maps $\varphi_n: (\Sigma, \sigma_n)\rightarrow (\Sigma, (\sigma_{n})_F)$ converges to the natural conformal map $\varphi: (\Sigma, \sigma)\rightarrow (\Sigma, \sigma_{F})$ (with respect to the $\mathbb{C}P^1$-charts) as $\sigma_n$ converges to $\sigma$ in $\mathcal{CP}_{\theta}$ (under the topology defined using development-holonomy pairs). It follows that $f_2$ is continuous.

Recall the proof of Proposition \ref{prop:qd to he} that the geometry of the obtained hyperbolic end $M$ with particles from a given quadratic differential $q\in T^{*}\mathcal{T}_{\Sigma,\theta}$ is completely determined by the first and second fundamental form $I^{*}$, $\II^{*}$ (defined by $q$) on $\partial_{\infty}M$. More precisely, $I^{*}$ is the hyperbolic metric with cone singularities of fixed angles in the conformal class of the underlying conformal structure of $q$ and 
$\II^{*}=\frac{1}{2}I^{*}+\Real \,q$. This implies that $I^{*}$ and $\II^{*}$ depend continuously on $q\in T^{*}\mathcal{T}_{\Sigma,\theta}$. As a result, we obtain the continuity of $f_3$.

Combining the above results, any two spaces in Figure \ref{fig:hE} are homeomorphic.
\end{proof}

\subsection{The grafting map on hyperbolic surfaces with prescribed cone singularities}
 In non-singular case, it was proved by Thurston that the grafting map $Gr:\mathcal{T}\times \mathcal{ML}\rightarrow\mathcal{CP}$ is a homeomorphism (see e.g. \cite{Dumas,KT}), where $\cT$ denotes the Teichm\"uller space of a closed oriented surface $S$ of genus at least 2, $\cML$ denotes the space of measured laminations on $S$ and $\mathcal{CP}$ is the space of complex projective structures on $S$, up to isotopy. Here we generalize this result to hyperbolic surfaces with cone singularities of angles less than $\pi$ by showing that the grafting map is indeed the composition of the maps $f$ and $f_1$ in Proposition \ref{prop: homeomorphism}.

 Recall that for a hyperbolic surface with cone singularities $p_i$ of angles $\theta_i\in(0,\pi)$, each $p_i$ has a neighborhood of a radius $r_i=r(\theta_i)>0$ (depending only on $\theta_i$) which is disjoint from any simple closed geodesic (see \cite[Theorem 3]{DP}). Note also that the weighted multicurves are dense in $\mathcal{ML}_{\mathfrak{p}}$. Then the distance from the support of any measured laminations in $\mathcal{ML}_{\mathfrak{p}}$ to $\{p_1,...,p_{n_0}\}$ has a uniformly positive lower bound. Therefore, the grafting operation can be naturally generalized to the case with cone singularities.

Let $S$ be a hyperbolic surface with the metric $h\in\cT_{\Sigma,\theta}$ and let $t\gamma$ be a $t$-weighted  simple closed geodesic on $S$. We perform a \emph{grafting operation}: cut $S$ open along $\gamma$ and glue a cylinder $\gamma\times[0,t]$ along the cutting on both side. For a disjoint union $\cup_i t_i\gamma_i$ of weighted simple closed geodesics, we can also perform this operation for each weighted geodesic $t_i\gamma_i$.  Note that this operation is done outside the union of the neighborhood $U_{r_i}$ of each singular point $p_i$ on $S$ with a radius $r_i$. As the non-singular case (see e.g. \cite[Section 4.1]{Dumas}), we can consider the corresponding operation in the universal cover of the regular set of $S$. It is not hard to see that the obtained surface admits a complex projective structure with prescribed cone singularities.

 For non-singular case, Thurston has shown that grafting along weighted simple closed curves extends continuously to arbitrary measured laminations. Note again that the distance from the support of any measured lamination to the cone points is bounded away from 0. Under a limit process, we can also consider the grafting along a measured lamination $\lambda\in\cML_{\mathfrak{p}}$ as the limit of the obtained complex projective structure under the grafting operation along $\cup_i t_i\gamma_i$ with $\cup_i t_i\gamma_i\rightarrow \lambda$ in $\cML_{\mathfrak{p}}$ (note that this is independent of the choice of $\cup_i t_i\gamma_i$).

\begin{definition}
Let $Gr_\theta:\cT_{\Sigma,\theta}\times \cML_\mathfrak{p}\to \cCP_\theta$ be the map associates to $(h,\lambda)$ the complex projective structure obtained by the above grafting operation on a hyperbolic surface $(\Sigma,h)$ along $\lambda$. We call it the grafting map.
\end{definition}

\begin{lemma}\label{lm:grafting}
$Gr_\theta=f_1\circ f$.
\end{lemma}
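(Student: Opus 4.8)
The plan is to prove the identity $Gr_\theta = f_1 \circ f$ by showing that both sides of the equation assign the same complex projective structure to a pair $(h,\lambda) \in \cT_{\Sigma,\theta} \times \cML_\mathfrak{p}$. The map $f$ sends $(h,\lambda)$ to the unique hyperbolic end with particles $M$ whose concave pleated boundary $\partial_0 M$ has induced metric $h$ and bending lamination $\lambda$ (Proposition \ref{he and ml}), and $f_1$ sends $M$ to the complex projective structure it induces at infinity on $\partial_\infty M$ (Proposition \ref{CP1 sturctures at boundary}). So the goal is to identify this structure-at-infinity with the one produced by the intrinsic grafting operation on $(\Sigma, h)$ along $\lambda$.

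First I would reduce to the case of a weighted simple closed geodesic, or more generally a weighted multicurve $\lambda = \sum_i t_i \gamma_i$, since weighted multicurves are dense in $\cML_\mathfrak{p}$ and both $Gr_\theta$ and $f_1 \circ f$ have already been established to be continuous (the continuity of $f$ and $f_1$ comes from Proposition \ref{prop: homeomorphism}, and the continuity of $Gr_\theta$ is built into its definition as a limit). Thus it suffices to verify the identity on this dense set and pass to the limit. The heart of the matter is therefore a purely local computation in the universal cover, away from the cone points: because the support of $\lambda$ stays a definite distance from the singular locus (as recalled before the definition of $Gr_\theta$, using \cite[Theorem 3]{DP}), the grafting operation and the developing map analysis take place entirely in the regular part, where the geometry is modeled on ordinary $\mathbb{H}^3$ and $\mathbb{C}P^1$.

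The key geometric step is to recall the standard picture of how the pleated boundary of a hyperbolic end develops at infinity. Given $(h,\lambda)$, the bending cocycle $\beta_\lambda$ constructed in the proof of Proposition \ref{he and ml} produces a developing map $dev_\lambda : \widetilde{\Sigma_\mathfrak{p}} \to \mathbb{H}^3$ of the pleated surface $S$, equivariant under $\rho_\lambda$. Each totally geodesic piece of the pleated surface, being a subset of a copy of $\mathbb{H}^2 \subset \mathbb{H}^3$, limits onto a round disk in $\partial_\infty \mathbb{H}^3 = \mathbb{C}P^1$, and the bending along a leaf of $\lambda$ with transverse measure $t$ corresponds at infinity to inserting a lune (a crescent bounded by two circular arcs meeting at the common endpoints of the bending axis) of angle exactly $t$. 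I would make this precise by showing that the developing map of $\partial_\infty M$ agrees, chart by chart on the regular part, with Thurston's description of the developing map of $Gr_\theta(h,\lambda)$: the geodesic plates of the pleated surface correspond to the images of the complementary regions of $\lambda$ in $(\Sigma,h)$ under the Fuchsian developing map, and the grafting cylinders $\gamma_i \times [0,t_i]$ correspond precisely to the inserted lunes. This is exactly the classical correspondence underlying Thurston's parametrization in the non-singular case (see \cite[Theorem 4.1]{Dumas}), and the only new content is checking it respects the cone singularity structure.

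The main obstacle I expect is the behavior near the endpoints at infinity of the particles, where I must confirm that both constructions produce the same cone singularity of angle $\theta_i$ and, more importantly, the same \emph{germ} of complex projective structure there. For this I would invoke Lemma \ref{lm:geometry at infinity}, which identifies a neighborhood of $p_i$ in $\partial_\infty M$ with a neighborhood of an endpoint of $\Delta_0$ in the model $V_{\theta_i}$, giving the complex projective chart $u_i : \Omega_i \to \mathbb{C}_{\theta_i}$. On the grafting side, the operation is the identity in the disk $U_{r_i}$ around $p_i$, so the cone structure there is simply that of the Fuchsian structure $\sigma_F$ associated to $h$, which carries the same cone angle $\theta_i$. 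Since the bending support avoids these disks, the two complex projective germs at $p_i$ coincide with the Fuchsian germ and hence with each other. Once the local agreement holds on the regular part and at the singular points, the developing maps of $f_1(f(h,\lambda))$ and $Gr_\theta(h,\lambda)$ agree up to a M\"obius transformation with matching holonomy, so the two complex projective structures are equal; continuity then extends the identity from weighted multicurves to all of $\cML_\mathfrak{p}$, completing the proof.
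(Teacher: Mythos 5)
Your proposal is correct, but it takes a genuinely different route from the paper's. Both arguments reduce to a weighted simple closed geodesic (the paper does this too, invoking density of weighted curves and continuity implicitly), but from there the paper argues via the \emph{asymptotic metric geometry of the equidistant surfaces}: it computes the induced metric $I_r=\cosh^2(r)h$ on $\exp(N^1(S\setminus\gamma)\times\{r\})$, observes $e^{-2r}I_r\to h$, and computes the boundary lengths $a_r=\cosh(r)\ell_\gamma(h)$ and width $b_r=\sinh(r)\alpha$ of the annulus $A_r$ swept out over the bending geodesic, so that its modulus converges to $\ell_\gamma(h)/\alpha=\Mod(A_\gamma)$, matching the grafting cylinder. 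You instead work directly with the developing map at infinity, matching the geodesic plates of the pleated boundary to round disks in $\mathbb{C}P^1$ and the bending of measure $t$ to an inserted lune of angle $t$, then checking the germs at the cone points via Lemma \ref{lm:geometry at infinity}. Your approach identifies the projective charts themselves (the classical Thurston correspondence), which is arguably sharper than the paper's computation, since the latter directly pins down only conformal invariants (rescaled limit metric and moduli) of the pieces; on the other hand, the paper's equidistant-surface computation is shorter and dovetails with the ``data at infinity'' formalism used throughout Sections 3--4. Both are legitimate proofs, and your handling of the singular points (bending support stays a definite distance from the cone points, so both germs reduce to the Fuchsian germ) is exactly the right supplement needed in the singular setting.
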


\begin{proof}
It suffices to show that for each hyperbolic end $M\in\cHE_{\mathfrak{p}}$, the complex projective structure induced on $\partial_{\infty}M$ can be obtained as the image of the pair $(h,\lambda)$ under the grafting map $Gr_\theta$, where $h$ and $\lambda$ are the induced hyperbolic metric and the bending lamination on $\partial_0M$, respectively. Indeed, we only need to prove this for the case that $\lambda$ is a simple closed geodesic $\gamma$ with the weight $\alpha>0$ which records the bending angle at $\gamma$.

Let $S=\partial_0M$ and consider the normal exponential map $ \exp: N^1S\times(0,+\infty)\rightarrow M$ defined in Lemma \ref{lm:geometry at infinity}. For each $r>0$, the subset $\exp(N^1(S\setminus\gamma)\times\{r\})$  of the equidistant surface $S_r$ at distance $r$ from $S$ has induced metric $I_r=\cosh^2(r)h$ for all $x\in S\setminus\lambda$. Moreover, the image $\exp(N^1(\gamma)\times\{r\})$ is an annulus $A_r$ embedded in $S_r$. By computation, $A_r$ has two boundary components of length $a_r=\cosh(r)\ell_{\gamma}(h)$ and the shortest distance between these two boundary components is $b_r=\sinh(r)\alpha$.

Therefore, for $x\in S\setminus\lambda$, the induced metric $I_r$ of the set $\exp(N^1(S\setminus\gamma)\times\{r\})$ satisfies that $e^{-2r}I_r\rightarrow h$ as $r\rightarrow+\infty$. On the other hand, the ration  (or module) of $A_r$, as $r\rightarrow +\infty$, satisfies that
\begin{equation*}
\frac{a_r}{b_r}=\frac{\cosh(r)}{\sinh(r)}\frac{\ell_{\gamma}(h)}{\alpha}\rightarrow \frac{\ell_{\gamma}(h)}{\alpha}=\Mod(A_{\gamma}),
\end{equation*}
where $A_{\gamma}=\gamma\times[0,\alpha]$ is the annulus replacing $\gamma$ in the grafting operation and $\Mod(A_{\gamma})$ is the module of $A_{\gamma}$. Therefore, the complex projective structure on $\partial_{\infty}M$ is $Gr_{\theta}(h,\lambda)$.
\end{proof}

\begin{proof}[Proof of Theorem \ref{tm:grafting}]
  This follows from Proposition \ref{prop: homeomorphism} and Lemma \ref{lm:grafting}.
  \end{proof}

    \section{De Sitter spacetimes with particles and complex projective structures with cone singularities}
\label{sc:duality}

In this section, we consider the ``dual'' manifolds of hyperbolic ends with particles, that is, future-complete convex GHM de Sitter spacetimes with particles (see Definition \ref{def:ds spacetimes}). We describe this dual relation in terms of the complex projective structures induced on the boundary at infinity of either of these two dual manifolds.

It is interesting to ask whether every future-complete GHM de Sitter spacetime contains a strictly future-convex spacelike surface. This relates closely to a question posed in \cite[Section 6]{KS} whether every future-complete GHM de Sitter spacetime with particles contains a constant mean curvature spacelike surface, and a question asked in \cite{BSeppi} whether every future-complete GHM flat spacetime with particles contains a uniformly future-convex spacelike surface.

\subsection{The complex projective structure at infinity of a de Sitter spacetime $M^d\in\mathcal{DS}_{\theta}$}

Recall that every de Sitter spacetime in $\mathcal{DS}_{\theta}$ is future-complete. We denote by $\partial_{\infty}M^d$ the boundary at infinity of a de Sitter spacetime $M^d\in\mathcal{DS}_{\theta}$ and will show that $\partial_{\infty}M^d$ admits a complex projective structure with cone singularities of the same angles as the particles.

\textbf{The model space $W_{\alpha}$}.  Let $\alpha>0$ and let $\Gamma_0$ be a fixed, future-oriented complete timelike geodesic in $\mathbb{DS}_{3}$. Denote by $U$ the universal cover of the complement of $\Gamma_0$ in $\mathbb{DS}_{3}$ and denote by $W$ the completion of $U$, such that $W\setminus U$ is canonically identified to $\Gamma_0$, which is called the \emph{singular set of $W$}. We define $W_{\alpha}$ as the quotient of $W$ by the rotation of angle $\alpha$ around $\Gamma_0$. The image of the singular set of $W$ under this quotient is called the \emph{singular set of $W_{\alpha}$}.

Let $M^d$ be a future-complete convex GHM de Sitter spacetime with particles. It is clear that each singular point $x$ of $M^d$ has a neighborhood isometric to a subset of $W_{\alpha}$ with $\alpha$ equal to the total angle around the singular curve through $x$. Now we describe the geometry property of $M^d$ near the endpoints at infinity of the singular curves in $M^d$ by using the model $W_{\alpha}$, see the following lemma. Since $M^d$ contains a strictly future-convex spacelike surface, with an alternative version of Lemma \ref{lm:exp} for the de Sitter case with particles, the argument for the hyperbolic case with particles is adapted to the de Sitter case.

\begin{lemma}\label{lm:geometry at infinity for dS}
For each point $p_i\in\partial_{\infty}M^d$ which is the endpoint at infinity of a singular curve in $M^d$, $p_i$ has a neighborhood $U_i$ in $M^d$ isometric to a neighborhood of the endpoint at infinity of $\Gamma_0$ in $W_{\theta_i}$ which lies on $\mathbb{S}^2_{+}$, where $\theta_i$ is the total angle around that singular curve.
\end{lemma}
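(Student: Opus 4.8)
The plan is to follow the proof of the hyperbolic Lemma~\ref{lm:geometry at infinity}, replacing the normal exponential map from the concave pleated boundary by the \emph{future} normal exponential map from a strictly future-convex spacelike surface, and replacing the model $V_{\theta_i}$ by $W_{\theta_i}$. The preliminary step is a de~Sitter analogue of Lemma~\ref{lm:exp}: letting $S\subset M^d$ be a strictly future-convex spacelike surface orthogonal to the singular locus (which exists by the convex GHM hypothesis), the map $\exp$ from the future unit normal bundle $N^+S$, sending $(x,tn)$ to the point at timelike distance $t$ along the future-directed normal geodesic from $x$, is a homeomorphism onto its image, which contains a neighborhood of $\partial_\infty M^d$.

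First I would establish this de~Sitter version of Lemma~\ref{lm:exp}. Strict future-convexity forces the principal curvatures with respect to the future normal to be positive, so the equidistant surfaces $\exp(N^+_t S)$ stay spacelike and strictly future-convex for small $t$; hence $\exp$ is a local diffeomorphism near the zero section, carrying the fibers over the singular points of $S$ to the singular curves of $M^d$. Global injectivity then follows from the same maximality scheme as in Lemma~\ref{lm:exp}: set $t_0$ to be the supremum of times for which $\exp$ is injective, assume $t_0<\infty$, and use compactness of $S$ to find $(x,v),(y,w)$ with $\exp(x,v)=\exp(y,w)$ and $\|v\|=\|w\|=t_0$. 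The three cases---both $x,y$ singular, both regular, or one of each---are dismissed as before, with concavity replaced by future-convexity: in the regular case the equidistant surfaces become tangent at $\exp(x,v)=\exp(y,w)$, so the two future-directed timelike segments $\exp(\{x\}\times[0,v])$ and $\exp(\{y\}\times[0,w])$ concatenate into a timelike geodesic joining $x$ to $y$, contradicting strict future-convexity of $S$; in the singular case the two particles would meet, contradicting disjointness; and the mixed case is excluded because the cone angles are less than $\pi$. As $\exp$ is proper and locally a homeomorphism near the zero section, it is a homeomorphism onto the future $I^+(S)\cup S$, which contains a neighborhood of $\partial_\infty M^d$ since $S$ is a Cauchy surface.

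Granting this, I would set up coordinates near the future endpoint $p_i$ of a singular curve exactly as in the hyperbolic case. Near a singular point $x\in S$ a neighborhood of $x$ in $M^d$ is isometric to a neighborhood of a singular point of $W_{\theta_i}$, and under this isometry the future normal geodesic from $x$ is carried to $\Gamma_0$; being a future-oriented timelike geodesic, $\Gamma_0$ limits onto a single point of $\mathbb{S}^2_+$. The de~Sitter Lemma~\ref{lm:exp} shows the future normal flow from a neighborhood of $x$ in $S$ is embedded, and passing to the limit $t\to+\infty$ identifies a neighborhood $U_i$ of $p_i$ in $\partial_\infty M^d$ with a neighborhood of the future endpoint of $\Gamma_0$ on $\mathbb{S}^2_+$ in $W_{\theta_i}$, which is the assertion.

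The main obstacle is the de~Sitter version of Lemma~\ref{lm:exp}, specifically the global injectivity of the future normal exponential map in the Lorentzian setting. One must verify that the equidistant surfaces stay spacelike up to the first self-intersection---so that the timelike distance function and the convexity argument remain valid---and the case analysis near the singular locus relies crucially on $\theta_i<\pi$ to prevent the flow from meeting the particles prematurely; this is exactly where the angle restriction and the non-interacting nature of the particles are used.
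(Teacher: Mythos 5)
Your proposal follows essentially the same route as the paper: the paper's proof also proceeds by establishing a de Sitter analogue of Lemma \ref{lm:exp} for the future normal exponential map from a strictly future-convex spacelike (Cauchy) surface, then extends it to the boundary at infinity and uses the local model $W_{\theta_i}$ near the singular curve to identify a neighborhood of $p_i$ with a neighborhood of the endpoint of $\Gamma_0$ on $\mathbb{S}^2_+$. The only cosmetic difference is that the paper dispatches global injectivity more quickly by invoking the Cauchy-surface property (no timelike geodesic in the future of $S^d$ can join two of its points along future normals), whereas you redo the full three-case analysis; both are sound.
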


\begin{proof}
Now we prove the lemma in the following four steps:

\textbf{Step 1 :} Let $S^d\subset M^d$ be a strictly future-convex spacelike surface and let $NS^d$ be the space of future-directed vectors normal to $S^d$ (note that at a singular point $x\in S^d$, the``normal" vector is directed along the singular curve through $x$). Given $v=(x,n)\in NS^d$, we denote by $\exp(v)\in M^d$ the point $\gamma(1)$, where $\gamma:[0,1]\to M^d$ is the geodesic such that $\gamma(0)=x$ and $\gamma'(0)=n$, if it exists. This defines a map $\exp$ from a subset of $NS^d$ to $M^d$.

\textbf{Step 2 :} We claim that the map $\exp: NS^d\to M^d$ is well-defined on $NS^d$ and it is a homeomorphism onto its image. Note that $S^d$ is a Cauchy surface in $M^d$ and every geodesic starting in the direction of $NS^d$ is timelike, then there is no geodesic segment in the future of $S^d$ connecting two points of $S^d$ in the directions of $NS^d$. Applying an analogous argument used in Lemma \ref{lm:exp} for hyperbolic case, we have the claim.

\textbf{Step 3 :} The exponential map $\exp_{\infty}:NS^d\rightarrow \partial_{\infty} M^d$ is a homeomorphism, where $\exp_{\infty}$ is defined as the equivalence class of the geodesic ray which is the fiber of $NS^d$ over $x\in S^d$. This follows directly from Step 2.

\textbf{Step 4 :} By Step 3, for each point $p_i\in\partial_{\infty}M^d$ which is the endpoint at infinity of a singular curve in $M^d$, the singular curve is unique and we denote it by $l_i$. Assume that this singular curve $l_i$ intersects $S^d$ at $x_i$. Let $F_i$ be the fiber of $NS^d$ over $x_i$ and let $H_i$ be a small neighborhood of $F_i$ in $NS^d$. Consider $U_i=\exp(H_i)$. Note that $M^d$ is locally modelled on $W_{\theta_i}$ near the singular curve $l_i$ (where $l_i$ is identified as the singular set $\Gamma_0$ in $W_{\theta_i}$ and $\theta_i$ is the total angle around $l_i$). By the definition of de Sitter metrics with particles, $U_i$ contains a cylinder of exponentially expanding radius around $l_i$ along the future-direction. This implies the desired result.
\end{proof}

Note that the regular set $M^d_r$ of $M^d$ has a $(PSL_2(\mathbb{C}), \mathbb{DS}_3)$-structure and it is future-complete, we can define the boundary at infinity of $M^d_r$, denoted by $\partial_{\infty}M_r^d$, as for the hyperbolic case in Proposition \ref{CP1 sturctures at boundary}. Moreover, $\partial_{\infty}M_r^d$ carries a canonical complex projective structure. Combined with the geometric property of $M^d$ near the endpoints at infinity of the singular curves, as presented in Lemma \ref{lm:geometry at infinity for dS}, we have the following proposition.

\begin{proposition}\label{prop: dS to complex}
Let $M^d\in \cDS_\theta$ be a future-complete convex GHM de Sitter spacetime with particles. Then the boundary at infinity $\partial_\infty M^d$ is endowed with a complex projective structure with cone singularities of angle $\theta_i$ at the $p_i$.
\end{proposition}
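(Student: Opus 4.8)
The plan is to follow the proof of Proposition \ref{CP1 sturctures at boundary} essentially line by line, replacing $\mathbb{H}^3$ by the Klein model $\mathbb{DS}_3$ and the visual boundary $\partial_\infty\mathbb{H}^3$ by the future boundary sphere $\mathbb{S}^2_+$. First I would pass to the regular set $M^d_r$, whose $(PSL_2(\mathbb{C}),\mathbb{DS}_3)$-structure provides a developing map $dev:\widetilde{M^d_r}\to\mathbb{DS}_3$ from the universal cover, unique up to post-composition by an isometry. The boundary at infinity $\partial_\infty\widetilde{M^d_r}$ is defined, as in the hyperbolic case, as the set of equivalence classes of future-directed timelike geodesic rays, two rays being equivalent when they are asymptotic. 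The essential algebraic input is that the future boundary $\mathbb{S}^2_+=\pi(\{x\in Q:x_4>0\})$ of $\mathbb{DS}_3$ is naturally identified with $\mathbb{C}P^1$, on which $\mathrm{Isom}_0(dS_3)\cong PSL_2(\mathbb{C})$ acts by M\"obius transformations; this is what furnishes the projective charts.

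Next I would extend $dev$ to a local homeomorphism $\widetilde{M^d_r}\cup\partial_\infty\widetilde{M^d_r}\to\mathbb{DS}_3\cup\mathbb{S}^2_+$. Here the content of Lemma \ref{lm:geometry at infinity for dS} is exactly what makes this possible: its Steps 2 and 3 establish that the normal exponential map from a strictly future-convex Cauchy surface $S^d$ yields homeomorphisms $\exp:NS^d\to M^d$ and $\exp_\infty:NS^d\to\partial_\infty M^d$, so that geodesic rays emanating normally from $S^d$ reach distinct points of the boundary sphere and the extension is continuous and locally injective. Since the holonomy $\rho:\pi_1(M^d_r)\to PSL_2(\mathbb{C})$ acts on $\partial_\infty\widetilde{M^d_r}$ by M\"obius transformations, the quotient $\partial_\infty M^d_r$ inherits a canonical $\mathbb{C}P^1$-structure, exactly as in the hyperbolic case.

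It then remains to analyse the behaviour near the endpoints at infinity of the singular curves, and this is where Lemma \ref{lm:geometry at infinity for dS} enters directly: for each such endpoint $p_i$ it produces a neighborhood $U_i$ isometric to a neighborhood of the endpoint of $\Gamma_0$ in $W_{\theta_i}$ lying on $\mathbb{S}^2_+$. Composing this identification with the model chart on $W_{\theta_i}$ gives a complex projective map $u_i:U_i\to\mathbb{C}_{\theta_i}$ sending $p_i$ to $0$ and restricting to a diffeomorphism from $U_i\setminus\{p_i\}$ onto its image. Together with the $\mathbb{C}P^1$-structure on the regular part, Definition \ref{def:hyperbolic ends with particles} then exhibits $\partial_\infty M^d$ as a complex projective structure with cone singularities of angle $\theta_i$ at the $p_i$, which is the assertion.

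The main obstacle I anticipate is not in the projective structure on the regular part, which transfers almost verbatim from Proposition \ref{CP1 sturctures at boundary} once the identification $\mathbb{S}^2_+\cong\mathbb{C}P^1$ is in place, but in ensuring that the boundary extension of $dev$ is a genuine local homeomorphism onto an open subset of $\mathbb{S}^2_+$ rather than onto a degenerate set. In the Lorentzian setting the future boundary is reached along timelike rays, and one must rule out rays asymptoting to lightlike directions; this is precisely why future-completeness and the Cauchy property built into Definition \ref{def:ds spacetimes}, and exploited through the homeomorphism $\exp_\infty$ of Lemma \ref{lm:geometry at infinity for dS}, are indispensable. Once those hypotheses are invoked the remaining verifications are routine and parallel the hyperbolic argument.
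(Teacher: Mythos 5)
Your proposal is correct and follows essentially the same route as the paper: the $\mathbb{C}P^1$-structure on $\partial_\infty M^d_r$ is obtained from the $(PSL_2(\mathbb{C}),\mathbb{DS}_3)$-structure by mimicking Proposition \ref{CP1 sturctures at boundary} (with $\mathbb{S}^2_+$ in place of $\partial_\infty\mathbb{H}^3$), and Lemma \ref{lm:geometry at infinity for dS} supplies the model charts $u_i:U_i\to\mathbb{C}_{\theta_i}$ at the endpoints of the singular curves. The only difference is that you spell out the details (timelike-ray boundary, the role of $\exp_\infty$, ruling out lightlike degeneration) that the paper leaves implicit in its two-line deduction.
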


\subsection{The construction of de Sitter spacetimes in $\mathcal{DS}_{\theta}$ from complex projective structures in $\mathcal{CP}_{\theta}$ }

To construct a convex GHM de Sitter spacetime with particles from a complex projective structure with cone singularities, we give the following result which ensures the existence and the uniqueness (up to isometry) of the maximal extension of a convex GH de Sitter spacetime with particles. This can be proved by adapting verbatim the argument given for the anti-de Sitter case in \cite[Proposition 6.24]{BBS11}.

\begin{proposition}\label{prop:maximal extension of dS}
            Let $M_0^d$ be a convex GH de Sitter spacetime with particles. Then there exists a unique (up to isometry) maximal extension of $M_0^d$, called $M^d$, in which $M_0^d$ can be isometrically embedded.
\end{proposition}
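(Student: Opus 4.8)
The plan is to transcribe, step by step, the Zorn's Lemma argument that proves Proposition \ref{existence and uniqueness of maximal extension} in the hyperbolic setting, following the anti-de Sitter template of \cite[Proposition 6.24]{BBS11}. The essential dictionary is that the role played by the \emph{concavity} of the metric boundary $\partial_0 M$ in the hyperbolic case is now played by \emph{global hyperbolicity}: by Definition \ref{def:ds spacetimes} a convex GH de Sitter spacetime with particles contains a future-convex spacelike Cauchy surface $S^d$, orthogonal to the singular locus, meeting every inextensible timelike curve exactly once. The local property replacing ``no geodesic segment in the interior of $\Omega$ with endpoints in $\partial\Omega$'' is the causal statement established in Step 2 of the proof of Lemma \ref{lm:geometry at infinity for dS}, namely that no future-directed causal geodesic segment connects two points of $S^d$ along the directions of $NS^d$.

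First I would let $\mathcal{E}$ be the set of all convex GH de Sitter spacetimes with particles into which $M_0^d$ embeds isometrically as a convex GH (Cauchy) sub-spacetime, so that $M_0^d\in\mathcal{E}$. For a pair $M_1^d,M_2^d\in\mathcal{E}$, I would introduce the set $\mathcal{C}(M_1^d,M_2^d)$ of compatible pairs $(N_1,N_2)$ of subsets carrying an isometric embedding that extends the embedding of $M_0^d$, partially ordered by inclusion; every totally ordered subset admits an upper bound, so Zorn's Lemma produces a maximal such pair, exactly as in Step 1 of Proposition \ref{existence and uniqueness of maximal extension}.

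Next I would equip $\mathcal{E}$ with the partial order $M_1^d\leq M_2^d$ whenever the embedding of $M_0^d$ into $M_2^d$ extends to an isometric embedding of $M_1^d$, and show that every chain $(M_\alpha^d)_{\alpha\in\mathcal{A}}$ has an upper bound. Setting $K=\sqcup_\alpha M_\alpha^d$, the relation identifying points matched by the isometric embeddings of maximal compatible pairs is an equivalence relation, and the quotient $\bar K$ inherits a de Sitter metric with particles in which $M_0^d$ embeds, with every point admitting a neighborhood modelled on $W_{\theta_i}$. The crucial point---the analogue of the Gauss--Bonnet/normal-flow estimate that forced $\bar K$ to carry a concave boundary in the hyperbolic case---is to verify that $\bar K$ is \emph{globally hyperbolic}: the common image of the Cauchy surface $S^d$ of $M_0^d$ must remain spacelike, future-convex, and met exactly once by every inextensible timelike curve of $\bar K$. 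Granting this, $\bar K\in\mathcal{E}$ is an upper bound for the chain, and Zorn's Lemma yields a maximal element $M^d$.

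Finally, for uniqueness I would run the analogue of Step 3: given any $M'^d\in\mathcal{E}$, form the quotient $\hat M$ of $M'^d\sqcup M^d$ along a maximal compatible pair and show $\hat M\in\mathcal{E}$ by the same boundary-point analysis, where ``interior versus the Cauchy surface'' now replaces ``interior versus $\partial_0 M$''; then $\hat M$ extends both $M'^d$ and $M^d$, and maximality of $M^d$ forces $M^d\cong\hat M$, so $M^d$ extends $M'^d$ and is unique up to isometry. The main obstacle is precisely the preservation of global hyperbolicity in the chain step---checking that the limiting Cauchy surface still intersects every inextensible timelike curve of $\bar K$ exactly once---which is the de Sitter incarnation of the classical Geroch-type maximal-extension theorem and the one place where the de Sitter causal structure, rather than a purely formal transcription of the hyperbolic proof, has to be invoked.
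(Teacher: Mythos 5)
Your proposal is correct and takes essentially the same approach as the paper: the paper gives no argument here beyond the remark that the proof is obtained by adapting verbatim the Zorn's-lemma (Choquet-Bruhat--Geroch) argument of \cite[Proposition 6.24]{BBS11}, which is precisely the scheme you transcribe and which also underlies Proposition \ref{existence and uniqueness of maximal extension} in the hyperbolic setting. You correctly isolate the only substantive verification --- that the common Cauchy surface remains achronal and is met by every inextensible timelike curve in the union over a chain --- as the point where the causal structure replaces the concavity estimate.
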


\begin{proposition}\label{prop:complex to dS}
Let  $\sigma\in \cCP_\theta$ be a complex projective structure with cone singularities. Then there is a unique future-complete convex GHM de Sitter spacetime with particles $M^d\in \cDS_\theta$, such that $\partial_{\infty}M^d$ is endowed with the complex projective structure $\sigma$.
\end{proposition}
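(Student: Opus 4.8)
The plan is to transcribe the construction of Proposition \ref{prop:qd to he} into the Lorentzian, constant-curvature-$+1$ setting, adjusting the signs dictated by the de Sitter Gauss equation, and then to invoke Proposition \ref{prop:maximal extension of dS} in place of Proposition \ref{existence and uniqueness of maximal extension}. The only genuinely new features are that the local model near a particle is now the \emph{timelike} singular line of $dS^3_{\theta_i}$, and that the convex GH hypothesis must be produced explicitly by exhibiting a strictly future-convex Cauchy surface.

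\textbf{Step 1 (extracting the data).} Given $\sigma\in\cCP_\theta$, let $c=\pi(\sigma)$ be the underlying conformal structure and let $q=\mathcal{S}(\phi)\in T^*_c\mathcal{T}_{\Sigma,\theta}$ be the Schwarzian derivative of the natural conformal map to the Fuchsian structure, which by Lemma \ref{lm:cp to qd} is a meromorphic quadratic differential with at worst simple poles at the $p_i$. Let $I^*$ be the hyperbolic metric with cone singularities in the class $c$, and set $\II_0^*=\Real\,q$. \textbf{Step 2 (the foliated collar).} In contrast with the hyperbolic construction, I would take $\II^*=-\frac12 I^*+\II_0^*$, $B^*=(I^*)^{-1}\II^*$ (so that $\trace B^*=-1$), and $\III^*=I^*(B^*\bullet,B^*\bullet)$, and consider on $\Sigma\times[t_0,+\infty)$ the Lorentzian metric
\[
g_0^d=-dt^2+\tfrac12\bigl(e^{2t}I^*+2\II^*+e^{-2t}\III^*\bigr)=-dt^2+I^*(A_t\bullet,A_t\bullet),
\]
where $A_t=\frac{1}{\sqrt2}(e^tE+e^{-t}B^*)$. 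Writing $J_t=I^*(A_t\bullet,A_t\bullet)$ and $B_t=A_t^{-1}\dot A_t=(e^tE+e^{-t}B^*)^{-1}(e^tE-e^{-t}B^*)$ for the shape operator of the slice, I would verify, exactly as in Proposition \ref{prop:qd to he} and using Proposition \ref{computation of connection and curvature}, that $B_t$ is $J_t$-self-adjoint and Codazzi (the identity $d^{\nabla^{J_t}}B_t=0$ again reduces to $d^{\nabla^{I^*}}\II_0^*=0$, which holds since $q$ is holomorphic off the $p_i$), and that the \emph{de Sitter} Gauss equation $K_{J_t}=1-\det B_t$ holds. This last point is exactly where the sign change is forced: one computes $K_{J_t}=-2/(e^{2t}+\trace B^*+e^{-2t}\det B^*)$ and $1-\det B_t=2\,\trace B^*/(e^{2t}+\trace B^*+e^{-2t}\det B^*)$, so the two agree iff $\trace B^*=-1$. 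For $t_0$ large the eigenvalues of $B_t$ are positive, so the slices are spacelike and strictly future-convex and $g_0^d$ has constant curvature $+1$ off the singular set. Since $(I^*)^{-1}\II_0^*\to 0$ at each $p_i$ ($q$ having at worst a simple pole), $B^*\to -\frac12 E$ and $J_t\to\frac12(e^t-\frac12 e^{-t})^2 I^*$ there, so each particle $\{p_i\}\times[t_0,+\infty)$ is a timelike singular line of cone angle $\theta_i$, matching the model $dS^3_{\theta_i}$. Thus $M_0^d:=(\Sigma\times[t_0,+\infty),g_0^d)$ is a convex GH de Sitter spacetime with particles, with $\Sigma\times\{t_0\}$ a strictly future-convex Cauchy surface.

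\textbf{Step 3 (maximal extension and boundary data).} By Proposition \ref{prop:maximal extension of dS}, $M_0^d$ admits a unique maximal extension $M^d\in\cDS_\theta$. As $t\to+\infty$ one has $A_t\sim\frac{1}{\sqrt2}e^tE$, hence $e^{-2t}J_t\to\frac12 I^*$, so the conformal structure induced at future infinity is $c$; and by the de Sitter analogue of the purely local computation of \cite[Lemma 8.3]{KS2}, the real part of the Schwarzian of the natural map $\phi^d:(\partial_\infty M^d,\sigma^d)\to(\partial_\infty M^d,\sigma_F)$ equals $\II_0^*=\Real\,q$, whence $\mathcal{S}(\phi^d)=q$ and, combining Proposition \ref{prop: dS to complex} with Lemma \ref{lm:cp to qd}, the complex projective structure $\sigma^d$ at infinity coincides with $\sigma$. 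For uniqueness, any $M^d\in\cDS_\theta$ inducing $\sigma$ determines the same pair $(c,q)$ and hence the same collar near infinity (the germ of a constant-curvature Lorentzian metric at the conformal boundary being fixed by $c$ and $q$, via the same asymptotic analysis); two maximal extensions of this common collar are isometric by Proposition \ref{prop:maximal extension of dS}.

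The main obstacle I expect lies in Step 2 together with the first half of Step 3: verifying the Lorentzian Gauss--Codazzi system and, above all, controlling the asymptotics and the Schwarzian at future infinity \emph{uniformly up to the cone points}, where one must check both that the local geometry limits onto $dS^3_{\theta_i}$ and that the local Schwarzian expansion survives the branched coordinate change $u\mapsto u^{2\pi/\theta_i}$ precisely as in Lemma \ref{lm:cp to qd}. The remaining ingredients --- the presence of a future-convex Cauchy surface and the uniqueness of the maximal extension --- are furnished directly by the construction and by Proposition \ref{prop:maximal extension of dS}.
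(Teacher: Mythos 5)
Your overall strategy (build a foliated collar from the data $(c,q)$, verify the Lorentzian Gauss--Codazzi system, take the maximal extension via Proposition \ref{prop:maximal extension of dS}, and read off the Schwarzian at infinity) is exactly the paper's, but there is a sign error in Step 2 that makes the key assertion of Step 3 false. The paper keeps the \emph{same} data at infinity as in the hyperbolic case, $\II^*=\tfrac12 I^*+\Real q$ (so $\trace B^*=+1$), and changes the metric ansatz to $-dt^2+\tfrac12(e^{2t}I^*-2\II^*+e^{-2t}\III^*)$, i.e.\ to $e^tE-e^{-t}B^*$; you instead keep $e^tE+e^{-t}B^*$ and flip the trace, setting $\II^*=-\tfrac12I^*+\Real q$. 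Both choices satisfy the de Sitter Gauss--Codazzi equations (your arithmetic forcing $\trace B^*=-1$ is correct), but they produce \emph{different} spacetimes: writing your collar as $e^tE-e^{-t}\hat B$ with $\hat B=\tfrac12E-(I^*)^{-1}\Real q$, its second fundamental form at infinity in the paper's normalization is $\tfrac12(\III^d_t-I^d_t)=I^*\hat B=\tfrac12I^*-\Real q$, whose traceless part is $-\Real q$. By the very extension of \cite[Lemma 8.3]{KS2} you invoke, the natural map to the Fuchsian structure then has Schwarzian $-q$, not $q$, so your $M^d$ carries the projective structure $f_2^{-1}(-q)$ rather than $\sigma$ (these differ whenever $q\neq 0$). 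Equivalently: your construction is the paper's applied to $-q$.

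This is not merely a labelling issue one can ignore. The correct sign is pinned down by the duality: the de Sitter spacetime realizing $\sigma$ must share the data at infinity $(I^*,\tfrac12I^*+\Real q)$ with the hyperbolic end realizing $\sigma$ (this is what makes Theorem \ref{tm:dual-surfaces} work --- one checks directly that the dual data $I^d=\III$, $B^d=B^{-1}$ reproduces the same $(I^*,\II^*)$ under the conventions $\II^*=\tfrac12(I_r-\III_r)$ and $\II^{d*}=\tfrac12(\III^d_t-I^d_t)$), and your choice breaks this. The statement of the proposition could still be extracted from your construction by precomposing with $q\mapsto-q$, but as written the claim at the end of Step 3 that $\Real\mathcal{S}(\phi^d)=\Real q$ is false for your metric. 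The fix is to revert to $\II^*=\tfrac12I^*+\Real q$ and put the sign change in the collar metric, as the paper does. Your uniqueness argument is also looser than the paper's (which goes through the injectivity of $\sigma\mapsto q$ from Lemma \ref{lm:cp to qd} together with the parametrization by data at infinity), but that part is repairable within your framework.
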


\begin{proof}
By Lemma \ref{lm:cp to qd}, the Schwarzian derivative of the conformal map $id:(\Sigma_{\mathfrak{p}},\sigma)\rightarrow (\Sigma_{\mathfrak{p}},\sigma_F)$ is a meromorphic quadratic differential $q$ in $T_{c}^{*}\mathcal{T}_{\Sigma,\theta}$,  where $c$ is the common underlying conformal structure of $\sigma_F$ and $\sigma$.

Now we use $q$ to construct a future-complete convex GHM de Sitter spacetime $M^d$ with particles in the following two steps, as in Proposition \ref{prop:qd to he} for the hyperbolic case.

\textbf{Step 1 :} First we construct a future-complete GH de Sitter spacetime $M^d_0$ with the prescribed particles which is homeomorphic to $\Sigma\times\mathbb{R}_{\geq0}$.

As in the hyperbolic case (see the proof of Proposition \ref{prop:qd to he}), we use the same data at infinity. Let $I^{*}$ be a hyperbolic metric with the prescribed cone singularities in the conformal class $c$. Recall the notations that $\II_0^{*}=\Real \,q$, ${\II}^{*}=\frac{1}{2}I^{*}+{\II}^{*}_0$, $B^{*}=(I^{*})^{-1}{\II}^{*}$ and ${\III}^{*}=I^{*}(B^{*}\bullet,B^{*}\bullet)$.

Let $M^d_0$ be the set $\Sigma\times [t_0,+\infty)$ with the metric
\begin{equation*}
g^d_0=-dt^2+\frac{1}{2}(e^{2t}I^{*}-2{\II}^{*}+e^{-2t}{\III}^{*}),
\end{equation*}
where $t_0$ is to be determined. We claim that $M^d_0$ is a convex GH de Sitter spacetime with particles if we choose $t_0$ large enough. Denote $I^d_{t}=\frac{1}{2}(e^{2t}I^{*}-2\II^{*}+e^{-2t}{\III}^{*})$. Then we have
\begin{equation*}
\begin{split}
I^d_{t}&=\frac{1}{2}I^*((e^tE-e^{-t}B^{*})\bullet, (e^tE-e^{-t}B^{*})\bullet), \\
\II^d_t&=\frac{1}{2}\frac{dI^d_t}{dt}
=\frac{1}{2}I^{*}((e^tE-e^{-t}B^{*})\bullet,(e^tE+e^{-t}B^{*})\bullet).
\end{split}
\end{equation*}

Denote $B^d_t=(e^tE-e^{-t}B^{*})^{-1}(e^tE+e^{-t}B^{*})$. Similarly as the hyperbolic case (see Proposition \ref{prop:qd to he}), one can check that $(I^d_t, B^d_t)$ satisfies the following conditions:
\begin{itemize}
\item $B^d_t$ is self-adjoint for $I^d_t$: $I^d_t(B^d_t\bullet,\bullet)=I^d_t(\bullet,B^d_t\bullet)$. This follows from the fact that $\II^*$ is self-adjoint for $I^*$ (since $\II^*_0$ is the real part of a quadratic differential $q$).
  \item $(I^d_t,B^d_t)$ satisfies the Gauss equation for surfaces in $dS_3$: $K_{I^d_t}=1-\det B^d_t$, where $K_{I^d_t}$ is the sectional curvature of $I^d_t$. 

  \item $(I^d_t,B^d_t)$ satisfies the Codazzi equation: $d^{\nabla^{I^d_t}}B^d_t=0$, where $\nabla^{I^d_t}$ is the Levi-Civita connection of $I^d_t$.
 \item ($I^d_t$, $B^d_t$) satisfies the following equality:
 \begin{equation*}
 I^d_{t+s}=I^d_t((\cosh(s)E+\sinh(s)B^d_t)\bullet,  (\cosh(s)E+\sinh(s)B^d_t)\bullet),
 \end{equation*}
 for all $t, s>0$. This follows from a direct computation.

\end{itemize}

     Denote by $\lambda^{*}$, $\mu^{*}$ (resp. $\lambda^d_t$, $\mu^d_t$) the eigenvalues of $B^{*}$ (resp. $B^d_t$). By computation,
     \begin{equation*}
     \lambda^d_t=\frac{e^t+e^{-t} \lambda^{*}}{e^t-e^{-t} \lambda^{*}},
      \quad\quad
     \mu^d_t=\frac{e^t+e^{-t} \mu^{*}}{e^t-e^{-t} \mu^{*}}.
     \end{equation*}

If $t_0$ is large enough, the eigenvalues $\lambda^d_{t_0}$, $\mu^d_{t_0}$ of $(\Sigma\times\{t_0\}, I^d_{t_0})$ are both positive. Let the positive direction of $t$ be the future direction. Combined with the above properties of $(I^d_t, B^d_t)$, this shows that $M^d_{0}$ is a future-complete convex GH de Sitter spacetime with particles.

From the argument in Proposition \ref{prop:qd to he}, we have that $B^*$ tends to $\frac{1}{2}E$ at each cone singularity $p_i$. Therefore, $I^d_t$ tends to $\frac{1}{2}(e^t-\frac{1}{2}e^{-t})^2I^*$ at $p_i$. This shows that the total angle around the singular curve $\{p_i\}\times[t_0,+\infty)$ of $M^d_{0}$ is $\theta_i$.

\textbf{Step 2 :} We construct the desired de Sitter spacetime $M^d$ with particles via $M^d_0$.

Indeed, by Proposition \ref{prop:maximal extension of dS}, $M^d_0$ admits a unique maximal extension, say $M^d$. We will show that the induced complex projective structure $\sigma$ on $\partial_\infty M^d$ satisfies the required condition.

A direct computation shows that $I^{*}=\frac{1}{2}e^{-2t}{G^d_t}_{*}(I^d_t+2\II^d_t+\III^d_t)$, where $G^d_t$ is the Gauss map from $(\Sigma\times\{t\}, I^d_t)$ to $\partial_{\infty}M^d$. This implies that the conformal structure induced on $\partial_{\infty}M^d$ by the de Sitter metric on $M^d$ is $c$. Note that the expressions of the first, second and third fundamental forms of the surfaces $\Sigma_t$ in the foliation near the boundary at infinity of $M^d$ can be obtained by replacing the shape operator $B^*$ in Proposition \ref{prop:qd to he} by $-B^*$. An adaption of the argument for the hyperbolic case (see \cite[Lemma 8.3]{KS2}) shows that the real part of the Schwarzian derivative of the natural map $\phi:(\partial_{\infty} M^d,\sigma^d)\rightarrow (\partial_{\infty} M^d,\sigma^d_F)$ is $\II_0^{*}$, where $\sigma^d$ is the complex projective structure induced on $\partial_{\infty} M^d$ and $\sigma^d_F$ is the Fuchsian complex projective structure of $\sigma^d$. Hence, $\Real \mathcal{S}(\phi)=\II^{*}_0=\Real q$.
This implies that $\mathcal{S}(\phi)=q$. Note also that $\sigma^d_F=I^{*}=\sigma_F$, then $\sigma^d=\sigma$. This implies that the complex projective structure induced on $\partial_{\infty} M^d$ is exactly $\sigma$.
\end{proof}

For convenience, we give the commutative diagram in Figure \ref{fig:dS}, which shows the relations among the spaces related to $\mathcal{DS}_{\theta}$ and the following maps $g_1$, $g_2$, $g_3$ are all homeomorphisms (see e.g. Proposition \ref{prop: homeomorphism}).
\begin{itemize}
\item the map $g_1:\mathcal{DS}_{\theta}\to \mathcal{CP}_{\theta}$ sending a de Sitter spacetime with particles to the its complex projective structure at infinity, see Proposition \ref{prop: dS to complex},
\item the map $g_2$ defined to be the map $f_2$ in Proposition \ref{prop: homeomorphism},
\item the map $g_3:T^{*}\mathcal{T}_{\Sigma,\theta}\to \mathcal{DS}_{\theta}$ reconstructing a de Sitter spacetime with particles from the data of a hyperbolic 
metric and a traceless Codazzi tensor on the boundary at infinity, see Proposition \ref{prop:complex to dS}.
\end{itemize}

\emph{Proof of Theorem 1.4} This follows from Proposition \ref{prop: dS to complex} and Theorem \ref{prop:complex to dS}.

\begin{figure}
$\xymatrix{
& \mathcal{DS}_{\theta} \ar[dr]^{g_1}             \\
  T^*\mathcal{T}_{\Sigma,\theta}  \ar[ur]^{g_3} & &  \mathcal{CP}_{\theta}         \ar[ll]_{g_2}}$
  \caption{\small{A diagram showing the relations among the spaces related to $\mathcal{DS}_{\theta}$.}}
     \label{fig:dS}
\end{figure}
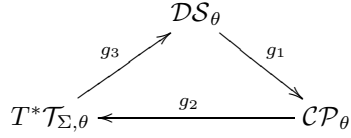

\subsection{The duality between $\cHE_{\theta}$ and $\cDS_{\theta}$}
\label{subsection: dual relation between hE and dS}

 Combining Theorem \ref{tm:projective} and Theorem \ref{tm:projective-ds}, we can define a natural map, say $\delta$, which is a homeomorphism from $\cHE_\theta$ to $\cDS_\theta$ sending a hyperbolic end with particles to the unique future-complete convex GHM de Sitter spacetime with the same complex projective structure at infinity.

Let $M\in \cHE_\theta$ be a non-degenerate hyperbolic end with particles, and let $M^d\in \cDS_\theta$ be the dual future-complete convex GHM de Sitter spacetime with particles.
We also describe a duality between closed strictly concave surfaces in $M$ and closed strictly future-convex surfaces $S^d$ in $M^d$.

Let $S\subset M$ be a closed, strictly concave surface. We define a dual surface $S^d\subset M^d$ of $S$, as the surface satisfying the following properties:
\begin{itemize}
  \item $S^d$ is a strictly future-convex spacelike surface in $M^d$.
  \item There is a unique diffeomorphism $u:S\to S^d$ such that $u^*I_d=\III$ and $u^*\III_d=I$, where $I,\III$ are the induced metric and third fundamental form on $S$, and $I_d$ and $\III_d$ are the induced metric and third fundamental form on $S^d$.
\end{itemize}

Conversely, given a closed, strictly future-convex spacelike surface $S^d\subset M^d$, we can also define a dual surface $S\subset M$ of $S^d$, in an analogous way as above. It remains to show that the definition of the duality for surfaces is well-defined. By observation, it suffices to show the existence and uniqueness of the dual surface $S^d\subset M^d$ of a closed, strictly concave surface $S$ in a hyperbolic end $M$ with particles defined above. Equivalently, it suffices to show Theorem \ref{tm:dual-surfaces}.\\

To show Theorem \ref{tm:dual-surfaces}, it is convenient to clarify the relation between hyperbolic ends with particles  (resp. convex GHM de Sitter spacetimes with particles)  and the data at infinity. We will then see in the next subsection that the same description applies in the de Sitter case.



\subsection{Hyperbolic ends with particles and the data at infinity}
\label{subsection for hyperbolic case}
\subsubsection{The data at infinity obtained from an equidistant foliation near the boundary at infinity of $M\in\mathcal{HE}_{\theta}$.}\label{subsubsection:1 for hyperbolic case}

Let $M$ be a hyperbolic end with particles and let $S$ be a strictly concave surface in $M$, with the induced metric $I$, the shape operator $B$, and the second fundamental form $\II$. Consider an equidistant foliation $(S_r)_{r>0}$, with $S_r$  the equidistant surface obtained at distance $r$ from $S$ along the orthogonal geodesics on the concave side of $S$. Define the data at infinity $(I^*, \II^*)$ as follows:
 \begin{equation}\label{eq:data at infinity for hE}
\begin{split}
I^*&=\frac{1}{2}e^{-2r}{G_r}_*(I_r+2\II_r+\III_r),\\
\II^*&=\frac{1}{2}e^{-2r}{G_r}_*(I_r-\III_r),
\end{split}
\end{equation}
where $I_r$, $\II_r$, $\III_r$  are respectively the induced metric, second and third fundamental forms on $S_r$ in $M$, while $G_r$ is the Gauss map from $S_r$ to the boundary at infinity $\partial_{\infty}M$ of $M$. One can check by direct computation that the data $(I^*, \II^*)$ defined above is independent of $r$. 

It is not hard to check that (see e.g. \cite[Remark 5.4 and Remark 5.5]{KS2}) the data $(I^*, \II^*)$ satisfies the Codazzi equation and a modified version of the Gauss equation for surfaces embedded in ${\mathbb{H}}^3$:
 \begin{equation}\label{eq:Codazzi and Gauss for hyperbolic ends}
\begin{split}
d^{{\nabla}^{I^*}}\II^*&=0, \\
\trace_{I^*}\II^*&=-K_{I^*}~,
\end{split}
\end{equation}
where $\nabla^{I^*}$ is the Levi-Civita connection of $I^*$ and $K_{I^*}$ is the Gauss curvature of $I^*$.

Conversely,  $I_r$, $\II_r$, and the shape operator $B_r$ of $S_r$ can be rewritten by using the data at infinity $(I^*, \II^*)$ in the following way (see \cite[Lemma 5.6]{KS2}).
\begin{equation}\label{eq:equidistant and infinity}
\begin{split}
I_{r}&=\frac{1}{2}I^*((e^rE+e^{-r}B^{*})\bullet, (e^rE+e^{-r}B^{*})\bullet), \\
\II_r&=\frac{1}{2}I^{*}((e^rE+e^{-r}B^{*})
\bullet,(e^rE-e^{-r}B^{*})\bullet),\\
B_r&=(e^rE+e^{-r}B^{*})^{-1}(e^rE-e^{-r}B^{*}),
\end{split}
\end{equation}
where $B^*=(I^*)^{-1}\II^*$.

\subsubsection{The hyperbolic end with particles determined by a particular couple on $\Sigma$. }
\label{subsubsection:2 for hyperbolic case}

Let $(I'^*, \II'^*)$ be a couple with $I'^*$ a Riemannian metric on $\Sigma$, and $\II'^*$ a bilinear symmetric form on $T\Sigma$ (defined outside the singular locus) satisfying the following conditions, called \emph{Condition $(\star)$} for convenience.
\begin{itemize}
  \item $(I'^*, \II'^*)$ assumes the two equations in \eqref{eq:Codazzi and Gauss for hyperbolic ends} by replacing $(I^*,\II^*)$ with $(I'^*,\II'^*)$.
  \item The determinant of $\II'^*$ with respect to $I'^*$ remains bounded.
\end{itemize}

In particular, the previous data at infinity $(I^*, \II^*)$ obtained from $(S_r)_{r>0}$ in Section \ref{subsubsection:1 for hyperbolic case} satisfies Condition $(\star)$.
Denote $B'^*=(I'^{*})^{-1}\II'^*$. Consider the manifold $\Sigma\times[0,+\infty)$ with the following metric
\begin{equation*}
g_0=dr^2+I'_r,
\end{equation*}
where $I'_r$ is defined as the formula for $I_r$ in \eqref{eq:equidistant and infinity} by replacing $(I^*,\II^*, B^*)$ with $(I'^*,\II'^*, B'^*)$. By Condition $(\star)$, it can be checked as Step 1 in the proof of Proposition \ref{prop:qd to he} that $(I'_r, B'_r)$ determines a hyperbolic end with particles, denoted by $M'$, with $(\Sigma\times\{r\})_{r>0}$ an equidistant foliation near the boundary at infinity of $M'$. Moreover, the data at infinity obtained from  $(\Sigma\times\{r\})_{r>0}$ as in Section \ref{subsubsection:1 for hyperbolic case} is exactly the given couple $(I'^*,\II'^*)$. This shows that the prescribed couple $(I'^*,\II'^*)$ completely determines a hyperbolic end with particles.

To verify Theorem \ref{tm:dual-surfaces}, we also need the following proposition, which follows from a particular case (i.e. the case of 2+1 dimensional Poincar\'{e}-Einstein manifold) of Theorem 1.5 in \cite{Schlenker1}.

\begin{proposition}\label{prop:hyperbolic ends and infinite data}
Let $(I_1^*,\II_1^*)$ and $(I_2^*,\II_2^*)$ be two couples satisfying Condition $(\star)$. Then $(I_1^*,\II_1^*)$ and $(I_2^*,\II_2^*)$ characterize the same hyperbolic end with particles if and only if they satisfy the following relation:
\begin{equation}\label{eq:relation}
\begin{split}
I_2^*&=e^{2u}I_1^*,\\
\II_2^*&=\II_1^*+\Hess(u)-du\otimes du+\frac{1}{2}||du||^2_{I_1^*}I_1^*,
\end{split}
\end{equation}
where $u$ is a continuous function on $\Sigma$ and $\mathcal{C}^2$ function on $\Sigma_{\mathfrak{p}}$. Moreover, $\mathcal{HE}_{\theta}$ is parameterized by the space of the couples satisfying Condition $(\star)$ identified by the relation \eqref{eq:relation}.
\end{proposition}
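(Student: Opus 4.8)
The plan is to recognize this as the three-dimensional (``2+1'') case of the correspondence between asymptotically hyperbolic Einstein manifolds and their conformal data at infinity from Theorem 1.5 of \cite{Schlenker1}, applied on $\Sigma_\mathfrak{p}$ and then glued across the particles. Away from its singular curves a hyperbolic end with particles is a conformally compact hyperbolic $3$-manifold, and I would read a couple $(I^*,\II^*)$ satisfying Condition $(\star)$ as exactly the induced data at infinity: the conformal class of $I^*$ is the conformal infinity, while the trace-free part $\II_0^*=\II^*+\frac12 K_{I^*}I^*$ plays the role of (the real part of) the holographic stress tensor, the two identities in \eqref{eq:Codazzi and Gauss for hyperbolic ends} being precisely the Codazzi and trace constraints. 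Schlenker's theorem says that two such data sets fill in to isometric manifolds if and only if they differ by the gauge freedom of changing the geodesic defining function; the content of the proposition is that this gauge is exactly \eqref{eq:relation}.

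For the forward implication I would suppose that $(I_1^*,\II_1^*)$ and $(I_2^*,\II_2^*)$ arise, as in Section \ref{subsubsection:1 for hyperbolic case}, from equidistant foliations of the same end $M$ based at two strictly concave surfaces. Both foliations induce the same conformal structure on $\partial_\infty M$, so $I_2^*=e^{2u}I_1^*$; passing from one foliation to the other amounts to rescaling the defining function by $e^u$. The induced change in $\II^*$ is the standard conformal transformation of the second fundamental form at infinity, namely the second line of \eqref{eq:relation}, and I would verify directly from \eqref{eq:equidistant and infinity} that this is the unique correction returning $(I_2^*,\II_2^*)$ to Condition $(\star)$. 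As a consistency check, using $\trace_{I_1^*}\Hess(u)=\Delta_{I_1^*}u$, $\trace_{I_1^*}(du\otimes du)=\|du\|_{I_1^*}^2$, and the $2$-dimensional formula $K_{I_2^*}=e^{-2u}(K_{I_1^*}-\Delta_{I_1^*}u)$, one finds that \eqref{eq:relation} preserves the trace constraint $\trace_{I^*}\II^*=-K_{I^*}$.

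For the converse, assuming \eqref{eq:relation}, I would reconstruct from each couple a hyperbolic end as in Section \ref{subsubsection:2 for hyperbolic case}, via the metric $g_0=dr^2+I'_r$. Here \eqref{eq:relation} is precisely the change of data under a change of geodesic defining function, equivalently a change of the initial surface of the equidistant foliation, so I would use the explicit reconstruction to build an isometry between the two reconstructed ends fixing the boundary at infinity, whence the two couples determine the same element of $\mathcal{HE}_\theta$. The parameterization statement then follows by combining this equivalence with the surjectivity established in Section \ref{subsubsection:2 for hyperbolic case} (every couple satisfying Condition $(\star)$ determines an end) and with Proposition \ref{prop:qd to he} (every end arises this way).

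The hard part will be the behavior at the particles, since \cite{Schlenker1} is stated in the smooth setting. The argument above is purely local and applies verbatim on $\Sigma_\mathfrak{p}$; what remains is to check that $u$ has the claimed regularity (continuous on $\Sigma$ and $\mathcal{C}^2$ on $\Sigma_\mathfrak{p}$) and that the matching isometry extends across each singular curve with the cone angle $\theta_i$ preserved. This is where I expect the second bullet of Condition $(\star)$, boundedness of $\det_{I^*}\II^*$, to be essential: as in the proof of Proposition \ref{prop:qd to he} it forces $\II_0^*=\Real q$ with $q$ having at worst a simple pole at $p_i$, hence $B^*\to\frac12 E$ at the cone points. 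Since $I_1^*$ and $I_2^*$ must carry cone singularities of the same angle $\theta_i$, the relation $I_2^*=e^{2u}I_1^*$ forces $u$ to extend to a finite value at each $p_i$, and the reconstructed isometry then extends across the singular locus respecting the particle structure.
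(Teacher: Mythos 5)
Your approach matches the paper's: the paper establishes this proposition simply by citing it as a particular case (the $2+1$-dimensional Poincar\'e--Einstein case) of Theorem 1.5 in \cite{Schlenker1}, which is exactly the reference and route you identify, including reading \eqref{eq:relation} as the gauge freedom in the choice of geodesic defining function. Your further discussion --- the consistency check on the trace constraint and the analysis of how $u$ and the matching isometry extend across the cone points using the boundedness of $\det_{I^*}\II^*$ --- supplies details that the paper omits entirely, and correctly isolates the only point where the singular setting requires an argument beyond the smooth case.
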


\subsection{De Sitter spacetimes with particles and the data at infinity}
\label{subsection for dS case}

\subsubsection{The data at infinity obtained from an equidistant foliation near the boundary at infinity of $M^d\in\mathcal{DS}_{\theta}$.}
\label{subsubsection:1 for dS case}

Similarly, we can define the data at infinity, called $(I^{d*}, \II^{d*})$, of a future-complete convex GHM de Sitter spacetime with particles $M^d$ by an equidistant foliation $(S^d_{t})_{t>0}$, where $S^d_t$  is the equidistant surface obtained at distance $t$ from $S^d$ along the orthogonal geodesics on the convex side of a strictly future-convex spacelike surface $S$ in $M^d$. Define the data at infinity $(I^{d*}, \II^{d*})$ as follows:
 \begin{equation}\label{eq:data at infinity for dS}
\begin{split}
I^{d*}&=\frac{1}{2}e^{-2t}{G^d_t}_*(I^d_t+2\II^d_t+\III^d_t),\\
\II^{d*}&=\frac{1}{2}e^{-2t}{G^d_t}_*(\III^d_t-I^d_t),
\end{split}
\end{equation}
here $I^d_t$, $\II^d_t$, $\III^d_t$  are respectively the induced metric, second and third fundamental forms on $S^d_t$ in $M^d$, while $G^d_t$ is the Gauss map from $S^d_t$ to the boundary at infinity $\partial_{\infty}M^d$ of $M^d$. One can check by direct computation that the data $(I^{d*}, \II^{d*})$ defined above is independent of $t$.

It is not hard to check that the data $(I^{d*}, \II^{d*})$ satisfies the Codazzi equation and a modified version of the Gauss equation for surfaces embedded in $dS_3$ (indeed, these equations turn out to be the same as those in \eqref{eq:Codazzi and Gauss for hyperbolic ends} for the hyperbolic case):
 \begin{equation}\label{eq:Codazzi and Gauss}
\begin{split}
d^{{\nabla}^{I^{d*}}}\II^{d*}&=0, \\
\trace_{I^{d*}}\II^{d*}&=-K_{I^{d*}}~,
\end{split}
\end{equation}
where $\nabla^{I^{d*}}$ is the Levi-Civita connection of $I^{d*}$ and $K_{I^{d*}}$ is the Gauss curvature of $I^{d*}$.

Conversely,  $I^d_t$, $\II^d_t$, and the shape operator $B^d_t$ of $S^d_t$ can be rewritten by using the data at infinity $(I^{d*}, \II^{d*})$ in the following way (one can check this by direct computation).
\begin{equation}\label{eq:equidistant and infinity for dS}
\begin{split}
I^d_{t}&=\frac{1}{2}I^{d*}((e^tE-e^{-t}B^{d*})\bullet, (e^tE-e^{-t}B^{d*})\bullet), \\
\II^d_t&=\frac{1}{2}I^{d*}((e^tE-e^{-t}B^{d*})
\bullet,(e^tE+e^{-t}B^{d*})\bullet),\\
B^d_t&=(e^tE-e^{-t}B^{d*})^{-1}(e^tE+e^{-t}B^{d*}),
\end{split}
\end{equation}
where $B^{d*}=(I^{d*})^{-1}\II^{d*}$.

\subsubsection{The de Sitter spacetime with particles determined by  a particular couple on $\Sigma$. }
\label{subsubsection:2 for dS case}

Let $(I'^*, \II'^*)$ be a couple satisfying Condition $(\star)$. In particular, the previous data at infinity $(I^{d*}, \II^{d*})$ obtained from $(S^d_t)_{t>0}$ in Section \ref{subsubsection:1 for dS case} satisfies Condition $(\star)$.

Denote $B'^*=(I'*)^{-1}\II'^*$. Consider the manifold $\Sigma\times[0,+\infty)$ with the following metric
\begin{equation*}
g^d_0=-dt^2+I'_t,
\end{equation*}
where $I'_t$ is defined as the formula for $I^d_t$ in \eqref{eq:equidistant and infinity for dS} by replacing $(I^{d*},\II^{d*}, B^{d*})$ with $(I'^*,\II'^*, B'^*)$. By Condition $(\star)$, it can be checked as Step 1 in the proof of Proposition \ref{prop:complex to dS} that $(I'_t, B'_t)$ determines a future-complete convex GHM de Sitter spacetime with particles, denoted by $M'^d$, with $(\Sigma\times\{t\})_{t>0}$ an equidistant foliation near the boundary at infinity of $M'^d$. Moreover, the data at infinity obtained from  $(\Sigma\times\{t\})_{t>0}$ as in Section \ref{subsubsection:1 for dS case} is exactly the given couple $(I'^*,\II'^*)$. This shows that the prescribed couple $(I'^*,\II'^*)$ completely determines a future-complete convex GHM de Sitter spacetime with particles.

As a consequence, we have a result for the de Sitter case analogous to Proposition \ref{prop:hyperbolic ends and infinite data} for the hyperbolic case.

\begin{proposition}\label{prop:dS and infinite data}
Let $(I_1^*,\II_1^*)$ and $(I_2^*,\II_2^*)$ be two couples satisfying Condition $(\star)$. Then $(I_1^*,\II_1^*)$ and $(I_2^*,\II_2^*)$ characterize the same future-complete convex GHM de Sitter spacetime with particles if and only if they satisfy the relation \eqref{eq:relation}. Moreover,
$\mathcal{DS}_{\theta}$ is parameterized by the space of the couples satisfying Condition $(\star)$ identified by the relation \eqref{eq:relation}.
\end{proposition}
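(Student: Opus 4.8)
The plan is to mirror the proof of Proposition \ref{prop:hyperbolic ends and infinite data}, relying on the explicit construction of Section \ref{subsubsection:2 for dS case}. That construction already yields a \emph{surjective} map $\Psi$ from the set of couples $(I^*,\II^*)$ satisfying Condition $(\star)$ onto $\cDS_\theta$: it sends a couple to the de Sitter spacetime $M'^d$ built from the metric $g^d_0=-dt^2+I'_t$ and recovers the couple as the data at infinity of $M'^d$ via \eqref{eq:data at infinity for dS}. It therefore remains only to identify the fibers of $\Psi$ with the equivalence classes of the relation \eqref{eq:relation}; the ``Moreover'' statement follows at once, since $\Psi$ then descends to a bijection on the quotient. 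I would first check that \eqref{eq:relation} preserves Condition $(\star)$ and defines an equivalence relation. The modified Gauss (trace) equation is preserved because, with $I_2^*=e^{2u}I_1^*$, one has $K_{I_2^*}=e^{-2u}(K_{I_1^*}-\Delta_{I_1^*}u)$, while a direct trace computation gives $\trace_{I_1^*}\II_2^*=\trace_{I_1^*}\II_1^*+\Delta_{I_1^*}u$, whence $\trace_{I_2^*}\II_2^*=-K_{I_2^*}$; the Codazzi equation is preserved by the standard Schouten-type computation showing that the correction term $\Hess(u)-du\otimes du+\tfrac12\|du\|^2_{I_1^*}I_1^*$ is $\nabla^{I_1^*}$-Codazzi. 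Transitivity follows by composing conformal factors. These verifications are identical to the hyperbolic ones, since the systems \eqref{eq:Codazzi and Gauss} and \eqref{eq:Codazzi and Gauss for hyperbolic ends} coincide.

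The second step is to reduce an arbitrary couple to a \emph{canonical} one. Given $(I^*,\II^*)$ satisfying Condition $(\star)$, Troyanov--McOwen uniformization \cite{Troyanov,McOwen} provides a unique $u$ for which $I_0^*:=e^{2u}I^*$ is hyperbolic with the prescribed cone singularities; applying \eqref{eq:relation} produces $\II_0^*$ with $\trace_{I_0^*}\II_0^*=1$, so that $\II_0^*-\tfrac12 I_0^*$ is traceless and Codazzi, hence equal to $\Real\,q$ for a meromorphic quadratic differential $q$ with at worst simple poles at $\mathfrak{p}$ (the boundedness clause of Condition $(\star)$ forcing the poles to be at most simple). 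Thus every couple is \eqref{eq:relation}-equivalent to a canonical couple $(I_0^*,\II_0^*)$, which corresponds to a pair $(c,q)\in T^*\cT_{\Sigma,\theta}$, and this canonical couple is the unique one in its class because a hyperbolic metric is unique in its conformal class. By construction $\Psi(I_0^*,\II_0^*)$ is exactly the spacetime produced in Proposition \ref{prop:complex to dS}, that is $g_3(c,q)$; since $g_3$ is a homeomorphism (Proposition \ref{prop: homeomorphism}, Figure \ref{fig:dS}), the restriction of $\Psi$ to canonical couples is a bijection onto $\cDS_\theta$.

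The crux, and the main obstacle, is the key lemma that $\Psi$ is \emph{constant} along \eqref{eq:relation}: if $(I_2^*,\II_2^*)$ is obtained from $(I_1^*,\II_1^*)$ by \eqref{eq:relation}, then $\Psi(I_1^*,\II_1^*)=\Psi(I_2^*,\II_2^*)$. Geometrically this asserts that \eqref{eq:relation} is precisely the transformation law of the data at infinity under a change of conformal representative at $\partial_\infty M^d$, so that the two couples arise from two equidistant foliations \eqref{eq:equidistant and infinity for dS} of one and the same de Sitter spacetime. This is a purely local, asymptotic statement near the boundary at infinity, structurally identical to the hyperbolic case, where it is the particular case of \cite[Theorem 1.5]{Schlenker1} invoked in Proposition \ref{prop:hyperbolic ends and infinite data}; since the de Sitter expansions \eqref{eq:equidistant and infinity for dS} differ from the hyperbolic expansions \eqref{eq:equidistant and infinity} only by $B^*\mapsto -B^*$, the same argument applies verbatim. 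Alternatively, I would deduce it from the hyperbolic statement through the duality $\delta$ of Section \ref{subsection: dual relation between hE and dS}, which preserves the canonical data at infinity, since a hyperbolic end and its dual de Sitter spacetime are built from the same $(c,q)$ in Propositions \ref{prop:qd to he} and \ref{prop:complex to dS}.

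Granting this lemma, both implications follow. For the ``if'' direction, two \eqref{eq:relation}-equivalent couples share the same canonical representative and hence, by the lemma, the same image under $\Psi$, so they characterize the same $M^d$. For the ``only if'' direction, if $\Psi(I_1^*,\II_1^*)=\Psi(I_2^*,\II_2^*)$, then applying the lemma to reduce each couple to its canonical representative and using the injectivity of $\Psi$ on canonical couples forces the two canonical couples to coincide; by transitivity of \eqref{eq:relation}, the original couples are themselves \eqref{eq:relation}-equivalent. This establishes the fiber characterization, and the parametrization of $\cDS_\theta$ by the couples satisfying Condition $(\star)$ modulo \eqref{eq:relation} is immediate.
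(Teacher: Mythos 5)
Your proposal is correct and follows essentially the same route as the paper, which states this proposition as an immediate analogue of Proposition \ref{prop:hyperbolic ends and infinite data} using the construction of Section \ref{subsubsection:2 for dS case}, the coincidence of the systems \eqref{eq:Codazzi and Gauss} and \eqref{eq:Codazzi and Gauss for hyperbolic ends}, and the (de Sitter adaptation of the) conformal transformation law from \cite{Schlenker1}. Your write-up simply makes explicit the reduction to canonical representatives in $T^*\mathcal{T}_{\Sigma,\theta}$ and the invariance of $\Psi$ along \eqref{eq:relation}, which the paper leaves implicit.
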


\emph{Proof of Theorem \ref{tm:dual-surfaces}} : By the definition of the dual relation between $M$ and $M^d$ (see Section \ref{subsection: dual relation between hE and dS}) and combining Proposition \ref{prop:hyperbolic ends and infinite data} and Proposition \ref{prop:dS and infinite data}, $M$ and $M^d$ are indeed parameterized by the same data at infinity, denoted by $(I'^*, \II'^*)$, which is obtained from the same complex projective structure with cone singularities induced at infinity of $M$ and $M^d$ (see e.g. Proposition \ref{prop:qd to he} and Proposition \ref{prop:complex to dS}).

Note that from the given embedded strictly concave surface $S\subset M$ we can construct an equidistant foliation $(S_r)_{r>0}$ near $\partial_{\infty}M$. Hence, $M$ is also characterized by the couple $(I^*,\II^*)$, which is the data at infinity obtained from the foliation $(S_r)_{r>0}$, as shown in Section \ref{subsubsection:1 for hyperbolic case}. Proposition \ref{prop:hyperbolic ends and infinite data} implies that $(I^*,\II^*)$ and $(I'^*, \II'^*)$ satisfy the relation \eqref{eq:relation}.

Now we construct a future-complete convex GHM de Sitter spacetime with particles, called $M_1^d$, by using an adapted embedding data, denoted by $(I^d, B^d)$, obtained from $(S, I, B)$, where $B$ is the shape operator of $S$ in $M$, $(I^d, B^d)$ is defined as follows:
\begin{center}
$I^d:=\III$, \quad $B^d:=B^{-1}$.
\end{center}

It is not difficult to check that $(I^d, B^d)$ satisfies the Codazzi-Gauss equations for surfaces embedded in $dS_3$ (this follows from a computation using Proposition \ref{computation of connection and curvature} and the fact that $(I, B)$ satisfies the Codazzi-Gauss equations for surfaces embedded in ${\mathbb{H}}^3$). Moreover, $B^d$ is self-disjoint for $I^d$ with positive eigenvalues. 
Now we consider the manifold $\Sigma\times [0,+\infty)$, called $M_0^d$, with the following metric:
        \begin{equation*}
            g^d_{0}=-dt^{2}+I^d((\cosh(t)E+\sinh(t)B^d)\bullet,(\cosh(t)E+\sinh(t)B^d)\bullet)~,
        \end{equation*}
        where $E$ is the identity isomorphism on $T\Sigma$ and $t\in[0,+\infty)$. Combined with the above properties of $(I^d, B^d)$, it follows that $M_0^d$ is a future-complete convex GH dS spacetime with particles. Let $M^d_1$ be the (unique) maximal extension of $M_0^d$ (this is ensured by Proposition \ref{prop:maximal extension of dS}). Moreover, $\Sigma\times\{t\}$, called $S^d_t$, is the equidistant surface in $M^d_1$ at a distance $t$ on the convex side from the strictly future-convex surface $\Sigma\times\{0\}$, called $S^d$, with the induced metric $I^d$ and shape operator $B^d$.

        Therefore, $M^d_1$ has the data at infinity, called $(I^{d*}, \II^{d*})$, which is obtained from the foliation $(S^d_t)_{t>0}$ near $\partial_{\infty}M_1^d$. One can check by using the formulas \eqref{eq:data at infinity for hE} and \eqref{eq:data at infinity for dS} that $(I^{d*}, \II^{d*})=(I^{*}, \II^{*})$. Therefore, $(I^{d*}, \II^{d*})$ and $(I'^*, \II'^*)$ satisfy the relation \eqref{eq:relation}. Using Proposition \ref{prop:dS and infinite data} again, the manifold $M_1^d$ characterized by $(I^{d*}, \II^{d*})$ is the same as the manifold $M^d$ characterized by $(I'^{*}, \II'^{*})$.

        Therefore, $S^d$ is a strictly future-convex spacelike surface in $M^d$. Since the boundary at infinity $\partial_{\infty}M$ (resp. $\partial_{\infty}M^d$) of $M$ (resp. $M^d$) can be identified as a complex projective surface with prescribed cone singularities. There is a natural correspondence between the points on $\partial_{\infty}M$ and the points on $\partial_{\infty}M^d$ through the Gauss normal flow starting from $S\subset M$ (resp. $S^d\subset M^d$). Let $u:=({G^d})^{-1}\circ G$, where $G^d$ (resp. $G$) is the Gauss map from $S^d$ (resp. $S$) to $\partial_{\infty}M^d$ (resp. $\partial_{\infty}M$). Then $u: S\to S^d$ is a diffeomorphism (outside the singular locus) such that $u^*I^d=\III$ and $u^*\III^d=I$. This completes the proof of Theorem \ref{tm:dual-surfaces}.\\

\emph{Proof of Proposition \ref{pr:dual}} :
Denote by $K$ the Gauss curvature of a strictly concave surface $S\subset M$ in Theorem \ref{tm:dual-surfaces} and denote by $K^d$ the Gauss curvature of the dual strictly future-convex surface $S^d\subset M^d$. It follows from the argument of  Theorem \ref{tm:dual-surfaces} that $K^d$ is equal to the Gauss curvature of the third fundamental form on $S$,
that is, $K^d=K/(K+1)$. Conversely, $K$ is equal to the Gauss curvature of the third fundamental form on $S^d$, that is,  $K=K^d/(1-K^d)$.
Therefore, $K$ is a constant in $(-1,0)$ if and only if $K^d$ is a constant in $(-\infty,0)$, related by an equality $K^d=K/(K+1)$. This shows Proposition \ref{pr:dual}.

    \section{Parametrization of $\mathcal{HE}_{\theta}$ by $\mathcal{T}_{\Sigma,\theta}\times\mathcal{T}_{\Sigma,\theta}$ in terms of constant curvature surfaces}

In this section, we will prove Theorem \ref{tm:foliation} by parameterizing $\mathcal{HE}_{\theta}$ in terms of constant curvature surfaces. We 
consider hyperbolic manifolds with particles homeomorphic to $\Sigma\times \mathbb{R}_{>0}$, with a metric boundary orthogonal to the singular locus. Moreover, the surfaces we consider in a hyperbolic manifold with particles are assumed to be embedded closed surfaces isotopic 
to $\Sigma\times \{t\}$ for some $t>0$ and orthogonal to the singular curves. In order to define the parameterization map, we first give the following lemma.


    \subsection{The definition of the map $\phi_{K}$}

    \begin{lemma}
        \label{definition of the map}
            Let $K\in(-1, 0)$ and let $(h,h')\in\mathfrak{M}^{\theta}_{-1}\times\mathfrak{M}^{\theta}_{-1}$ be a pair of normalized metrics. Then there exists a unique hyperbolic end $M$ with particles which contains a surface of constant curvature $K$, with the induced metric $I=(1/|K|)h$ and the third fundamental form $\III=(1/|K^{*}|)h'$, where $K^{*}=K/(1+K)$.
    \end{lemma}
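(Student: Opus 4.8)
The plan is to repackage the prescribed data $(I,\III)$ of the sought $K$-surface as \emph{embedding data} $(I,B)$ — a metric together with a self-adjoint shape operator solving the Gauss--Codazzi equations for a surface in $\mathbb{H}^3$ — then to grow a hyperbolic end off such a surface by the normal flow, and finally to pass to the maximal concave extension furnished by Proposition \ref{existence and uniqueness of maximal extension}. First I would unpack the hypothesis that $(h,h')$ is normalized: by Definition \ref{defintion of normalized representatives} and Proposition \ref{Toulisse2} there is a bundle morphism $b:T\Sigma\to T\Sigma$, defined off the singular locus, self-adjoint for $h$, with $\det b=1$ and $d^{\nabla^{h}}b=0$, such that $h'=h(b\bullet,b\bullet)$ and whose eigenvalues tend to $1$ at each cone point. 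Setting $I:=(1/|K|)h$ and $B:=\sqrt{1+K}\,b$ (legitimate since $1+K\in(0,1)$), I would verify the four structural facts: $B$ is self-adjoint for $I$ (constant rescaling preserves self-adjointness); $\det B=1+K$; the Gauss equation $K_I=-1+\det B$ holds because $K_I=-|K|=K$ and $-1+\det B=K$; and the Codazzi equation $d^{\nabla^{I}}B=0$ holds because $\nabla^{I}=\nabla^{h}$ (constant rescaling) and $b$ is Codazzi for $h$. A one-line computation then gives $\III:=I(B\bullet,B\bullet)=\big((1+K)/|K|\big)h'=(1/|K^{*}|)h'$, exactly the prescribed third fundamental form, while $b\to E$ at the cone points forces $B\to\sqrt{1+K}\,E$, so the surface is asymptotically umbilic along the singular locus.

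Next I would build the manifold directly from $(I,B)$. On $\Sigma\times[0,+\infty)$ I put
\[
g_0=dr^2+I\big((\cosh(r)E+\sinh(r)B)\bullet,\ (\cosh(r)E+\sinh(r)B)\bullet\big),
\]
so that the slice $\Sigma\times\{0\}=:S$ carries induced metric $I$, shape operator $B$, and third fundamental form $\III$. Exactly as in Step 1 of the proof of Proposition \ref{prop:qd to he} (using Proposition \ref{computation of connection and curvature} together with the Gauss--Codazzi identities just checked), $g_0$ has constant curvature $-1$; and since near $p_i$ the factor $\cosh(r)E+\sinh(r)B$ degenerates to the scalar $\cosh(r)+\sqrt{1+K}\sinh(r)$, each slice $I_r$ is conformal to $I$ there with the same cone angle, so $\{p_i\}\times[0,+\infty)$ is a particle of angle $\theta_i$ to which the slices are orthogonal. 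As $B$ has positive eigenvalues, $S$ is strictly concave, so $(\Sigma\times[0,+\infty),g_0)$ is a hyperbolic manifold with particles with concave metric boundary. By Proposition \ref{existence and uniqueness of maximal extension} it has a unique maximal concave extension $M\in\mathcal{HE}_\theta$, in which $S$ is a closed surface of constant curvature $K$ with induced metric $(1/|K|)h$ and third fundamental form $(1/|K^{*}|)h'$; this gives existence.

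For uniqueness, suppose $M'\in\mathcal{HE}_\theta$ also contains such a $K$-surface $S'$ with the prescribed $I$ and $\III$. Writing $\III=I(B'\bullet,B'\bullet)$ means the $I$-self-adjoint endomorphism $I^{-1}\III$ equals $(B')^2$; strict concavity forces positive principal curvatures, so $B'$ is the unique positive self-adjoint square root of $I^{-1}\III$, whence $B'=B$. Thus $S'$ has embedding data $(I,B)$ identical to that of $S$, and by the fundamental theorem of surface theory the hyperbolic metric is determined in a neighborhood of $S'$; the concave-side flow off $S'$ therefore reproduces an isometric copy of $(\Sigma\times[0,+\infty),g_0)$. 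Since $M'$ is a concave extension of this copy and its pleated boundary admits no extremal points (cf.\ the remark after Definition \ref{def of extremal points}), $M'$ is itself a maximal concave extension, so $M'\cong M$ by the uniqueness in Proposition \ref{existence and uniqueness of maximal extension}.

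The step I expect to be the genuine obstacle is the analysis along the particles: confirming that $g_0$ is locally modelled on $\mathbb{H}^3_{\theta_i}$ near each singular curve, with total angle exactly $\theta_i$ and with the slices (in particular $S$) orthogonal to the singular locus. This rests entirely on the asymptotically umbilic behaviour $b\to E$ coming from the minimal Lagrangian structure (Proposition \ref{Toulisse2}) and on the weighted-H\"older regularity built into $\mathfrak{M}^{\theta}_{-1}$, which together control the rate at which $B$ approaches $\sqrt{1+K}\,E$. By comparison, the algebraic Gauss--Codazzi verifications and the appeal to the maximal concave extension are routine.
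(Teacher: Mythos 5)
Your proposal is correct and follows essentially the same route as the paper: converting the normalized pair into embedding data $(I,B)=((1/|K|)h,\sqrt{1+K}\,b)$, verifying the Gauss--Codazzi conditions and the umbilic behaviour at the cone points, growing the end by the normal flow metric $g_0$, and invoking Proposition \ref{existence and uniqueness of maximal extension}. Your uniqueness argument (recovering $B$ as the positive square root of $I^{-1}\III$ and using maximality of the concave extension) is in fact more explicit than the paper's, which simply asserts that uniqueness follows from the defining conditions of a hyperbolic end with particles.
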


    \begin{proof}
        Let $b:T\Sigma\rightarrow T\Sigma$ be the bundle morphism associated to $h$ and $h'$ by Definition \ref{defintion of normalized representatives}, so that $h'=h(b\bullet, b\bullet)$. Let $I=(1/|K|)h$. We equip $\Sigma$ with the metric $I$ and consider a bundle morphism $B:T\Sigma\rightarrow T\Sigma$, which is defined by $B=\sqrt{1+K}b$. By the properties of $h$ and $b$, it follows that
        \begin{itemize}
            \item $(\Sigma, I)$ has constant curvature $K$.
            \item $B$ is self-adjoint for $I$ with positive eigenvalues.
            \item $B$ satisfies the Codazzi equation: $d^{\nabla^{I}}B=0$, where $\nabla^{I}$ is the Levi-Civita connection of $I$.
            \item $B$ satisfies the Gauss equation: $K=-1+\det(B)$.
        \end{itemize}

        Consider the manifold $\Sigma\times (-\varepsilon,+\infty)$ with the following metric (here $\varepsilon>0$ is a sufficiently small number):
        \begin{equation*}
            g_{0}=dt^{2}+I((\cosh(t)E+\sinh(t)B)\bullet,(\cosh(t)E+\sinh(t)B)\bullet)~,
        \end{equation*}
        where $E$ is the identity isomorphism on $T\Sigma$ and $t\in(-\varepsilon,+\infty)$. One can check that $\Sigma\times (-\varepsilon,+\infty)$ endowed with the metric $g_0$ is a hyperbolic manifold with particles, denoted by $M_0$, which has a concave metric boundary (note that $B$ has positive eigenvalues, then $\Sigma\times\{0\}$ with the induced metric is strictly concave and we can construct such a manifold by taking $\varepsilon$ small enough), and each line $\{p_{i}\}\times (-\varepsilon,+\infty)$ corresponds to a singular curve, around which the total angle is $\theta_i$. Furthermore, for each $t\in (-\varepsilon,+\infty)$, the surface $\Sigma\times \{t\}$ is the equidistant surface at an oriented distance $t$ from $\Sigma\times\{0\}$, where $t>0$ corresponds to 
        the concave side of $\Sigma\times\{0\}$.

        By Proposition \ref{existence and uniqueness of maximal extension}, there exists a unique maximal concave extension $M$ of $M_0$, which is a hyperbolic end with particles, such that the metric on $M$ restricted to the subset $\Sigma\times (-\varepsilon,+\infty)$ is exactly $g_{0}$. In particular, $M$ contains a concave surface of constant curvature $K$ (which is orthogonal to the singular curves) at $\Sigma\times\{0\}$, with the induced metric $I=(1/|K|)h$ and the third fundamental form
        \begin{equation*}
            \III=I(B\bullet,B\bullet)
            =\frac{1}{|K|}h(\sqrt{1+K}b\bullet,\sqrt{1+K}b\bullet)
            =\frac{1}{|K^{*}|}h',
        \end{equation*}
        where $|K^{*}|=K/(1+K)$. This shows the existence of the required manifold $M$. The uniqueness follows directly from the constrain conditions of the hyperbolic end with particles.
    \end{proof}

    It can be checked as Lemma 3.3 in  \cite{CS} that for any $(\tau,\tau')\in\mathcal{T}_{\Sigma,\theta}\times\mathcal{T}_{\Sigma,\theta}$, if $(h,h')$ and $(h_1,h'_1)$ are two normalized representatives of $(\tau,\tau')$, then the hyperbolic end with particles associated to $(h,h')$ and $(h_1,h'_1)$, as described in Lemma \ref{definition of the map}, are isotopic. Now we are ready to give the definition of the parametrization map $\phi_K$.

    \begin{definition}
    For any $K\in(-1,0)$, define the map $\phi_K:\mathcal{T}_{\Sigma, \theta}\times\mathcal{T}_{\Sigma,\theta}\rightarrow \mathcal{HE}_{\theta}$ by assigning to an element $(\tau,\tau')\in\mathcal{T}_{\Sigma, \theta}\times\mathcal{T}_{\Sigma,\theta}$ the isotopy class of the hyperbolic end with particles satisfying the property prescribed in Lemma \ref{definition of the map}.
    \end{definition}

    We show that the map $\phi_K$ is a homeomorphism, as stated in the following proposition.

      \begin{proposition}\label{prop:parametrization map}
    For any $K\in(-1,0)$ and $\theta=(\theta_1,...,\theta_{n_0})\in(0,\pi)^{n_0}$, the map $\phi_K: \mathcal{T}_{\Sigma,\theta}\times\mathcal{T}_{\Sigma,\theta}\rightarrow \mathcal{HE}_{\theta}$ is a homeomorphism.
    \end{proposition}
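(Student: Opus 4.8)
The plan is to prove that $\phi_K$ is a continuous, proper injection, and then to promote this to a homeomorphism by invariance of domain together with the connectedness of $\mathcal{HE}_{\theta}$. The two spaces have the same dimension: by Proposition \ref{prop: homeomorphism}, $\mathcal{HE}_{\theta}$ is homeomorphic to $T^{*}\mathcal{T}_{\Sigma,\theta}$, a manifold of the same dimension as $\mathcal{T}_{\Sigma,\theta}\times\mathcal{T}_{\Sigma,\theta}$, and it is connected since $\mathcal{T}_{\Sigma,\theta}$ is. The well-definedness of $\phi_K$ (independence of the chosen normalized representative) has already been recorded before the definition, so it remains to handle continuity, injectivity, and properness.

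The first step is continuity of $\phi_K$. By Theorem \ref{Toulisse1} the normalized representative $(h,h')$ of a pair $(\tau,\tau')$, and hence the Codazzi bundle morphism $b$ of Definition \ref{defintion of normalized representatives}, depend continuously on $(\tau,\tau')$; this is precisely the content of \cite[Lemma 3.19]{CS}, which applies thanks to the regularity hypothesis imposed on the cone metrics near the particles. Consequently the embedding data $I=(1/|K|)h$ and $B=\sqrt{1+K}\,b$ of Lemma \ref{definition of the map} vary continuously, and the metric $g_0$ built from $(I,B)$, together with its maximal concave extension furnished by Proposition \ref{existence and uniqueness of maximal extension}, depends continuously on $(I,B)$. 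Hence $\phi_K$ is continuous.

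The second step is injectivity, which I would reduce to the uniqueness of the closed surface of constant curvature $K$ inside a fixed hyperbolic end. Suppose $\phi_K(\tau_1,\tau_1')=\phi_K(\tau_2,\tau_2')=M$; then $M$ contains two closed constant-curvature-$K$ surfaces $S_1,S_2$, orthogonal to the particles. I would show $S_1=S_2$ by a maximum principle: the prescribed Gauss curvature operator is elliptic, so if $S_1\neq S_2$ one pushes one surface towards the other along the equidistant flow until a first point of tangency, where the comparison of shape operators contradicts $K_{S_1}=K_{S_2}=K$. Particular care is needed at the intersections with the singular locus, where both surfaces meet the particles orthogonally and the local model is $\mathbb{H}^3_{\theta_i}$ with $\theta_i<\pi$. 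Once $S_1=S_2$, the induced metric $I$ and third fundamental form $\III$ agree, so $h_1=|K|I=h_2$ and $h_1'=|K^{*}|\III=h_2'$, giving $(\tau_1,\tau_1')=(\tau_2,\tau_2')$. This same uniqueness identifies the inverse map, sending $M$ to the pair $(|K|I,|K^{*}|\III)$ read off its unique constant-$K$ surface.

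The third step is properness: if $\phi_K(\tau_n,\tau_n')$ converges in $\mathcal{HE}_{\theta}$, then $(\tau_n,\tau_n')$ subconverges in $\mathcal{T}_{\Sigma,\theta}\times\mathcal{T}_{\Sigma,\theta}$. Using the identification of $\mathcal{HE}_{\theta}$ with couples at infinity satisfying Condition $(\star)$ (Proposition \ref{prop:hyperbolic ends and infinite data}), a convergent sequence of ends has uniformly controlled geometry, which would yield a priori bounds on the induced metrics and third fundamental forms of the constant-$K$ surfaces $S_n$, hence on $(h_n,h_n')$ and on $(\tau_n,\tau_n')$. Granting continuity, injectivity and properness, invariance of domain shows $\phi_K$ is open, so its image is open; properness makes the image closed; connectedness of $\mathcal{HE}_{\theta}$ then forces $\phi_K$ to be onto, hence a continuous open bijection, that is, a homeomorphism (and surjectivity delivers the existence of a $K$-surface in every end, for free). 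I expect the main obstacle to be the analysis of $K$-surfaces at the particles: both the maximum-principle uniqueness and the a priori estimates behind properness hinge on controlling the orthogonal intersection with the singular locus in the model $\mathbb{H}^3_{\theta_i}$, and it is exactly the condition $\theta_i\in(0,\pi)$ that keeps these surfaces a definite distance from the cone tips and makes the estimates work.
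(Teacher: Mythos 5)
Your overall strategy is the same as the paper's: establish continuity, injectivity and properness, then conclude by invariance of domain together with the fact that $\mathcal{T}_{\Sigma,\theta}\times\mathcal{T}_{\Sigma,\theta}$ and $\mathcal{HE}_{\theta}$ are connected manifolds of the same dimension without boundary (the paper uses its Proposition \ref{prop: homeomorphism} to identify $\mathcal{HE}_{\theta}$ with $T^*\mathcal{T}_{\Sigma,\theta}$ for exactly this purpose). Your continuity step (convergence of the minimal Lagrangian maps and hence of the embedding data $(I,B)$, via \cite[Lemma 3.19]{CS}) and your injectivity step (uniqueness of the closed constant curvature $K$ surface by pushing along the equidistant flow to a first tangency and applying the maximum principle, with separate treatment of tangencies on the singular locus where both surfaces are asymptotically umbilic) coincide with Propositions \ref{Continuity of the map} and \ref{injectivity of the map}.

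The genuine gap is in properness, which is the step carrying most of the weight and which you leave as an assertion. Saying that a convergent sequence of ends has ``uniformly controlled geometry'' and that this ``would yield a priori bounds'' on the $K$-surfaces $S_n$ does not follow: control of the ambient ends (equivalently, of their data at infinity satisfying Condition $(\star)$) does not by itself prevent the surfaces $S_n$ from drifting off toward $\partial_\infty M_n$ or their second fundamental forms from degenerating. The paper's Proposition \ref{properness of the map} needs three separate inputs, none of which appears in your sketch: (i) the induced metrics $h_n=|K|I_n$ cannot degenerate because the nearest-point projection onto the concave boundary $\partial_0M_n$ is $1$-Lipschitz (Lemma \ref{comparison between lengths}), so a short geodesic on $S_n$ would force a short geodesic on $\partial_0M_n$, contradicting the convergence of the boundary data $(m_n,l_n)$ (Lemma \ref{lm:infinity}); (ii) the surfaces $S_n$ stay within a uniform distance $r$ of $\partial_0M_n$, because the Gauss--Bonnet formula fixes their area at $(2\pi/K)\chi(\Sigma,\theta)$ while the area of equidistant concave surfaces grows exponentially with the distance to the boundary; (iii) the third fundamental forms $\III_n$ (hence $h'_n=|K^*|\III_n$) are bounded \emph{above} by comparison with the equidistant surface $S_{r,n}$, and this comparison is proved by passing to the dual de Sitter spacetime, where $\III_n$ becomes the induced metric on the dual surface and the induced metrics on the leaves equidistant from the initial singularity are monotone increasing in the time function. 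Without (ii) and (iii) there is no control whatsoever on the second factor $h'_n$, so properness --- and with it surjectivity, i.e.\ the existence of a $K$-surface in every end --- is not established by your argument as written.
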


The proof will be given below, after some preliminary lemmas.

        \subsection{The injectivity of the map $\phi_K$}

            We prove this property by applying the Maximum Principle outside the singular locus and a specialized analysis near cone singularities. The idea is similar to that given in \cite[Section 3.2]{CS} for the case of AdS manifold with particles. Indeed, this argument is applicable to two concave surfaces which behave ``umbilically" (i.e. the limits of the principal curvatures tend to be the same) at singular points and satisfy the property that the supremum of the Gauss curvature over all the points of one surface is less than the infimum of those of the other surface (see Lemma \ref{lm:injective} for more details).

            Let $M\in\mathcal{HE}_{\theta}$ be a hyperbolic end with particles. Let $S\subset M$ be a concave surface of constant curvature $K\in(-1,0)$. Consider the minimal Lagrangian map (see Corollary \ref{hyperbolic metrics and bundle morphism}) associated to two hyperbolic metrics $|K|I, |K^{*}|\III \in \mathfrak{M}^{\theta}_{-1}$,  where $|K^{*}|=K/(1+K)$, and $I$ (resp. $\III$)  is the first (resp. third) fundamental form of $S$.  By the last statement of Proposition \ref{Toulisse2},  both principal curvatures on $S$ tend to $k=\sqrt{1+K}$ at the intersection $p_i$ with the singular curve $l_{i}$ in $M$ for $i=1,...,n_0$.

    The following theorem is an alternative version of the Maximum Principle Theorem (see e.g. \cite[Lemma 2.3]{BBZ1}, \cite[Theorem 3.10]{CS}) for the case of hyperbolic ends with particles.

    \begin{theorem}[Maximum Principle]
        \label{Maximum principle for regular points}
            Let $M$ be a hyperbolic end with particles. Let $S$ and $S'$ be two concave surfaces in $M$. Assume that $S$ and $S'$ intersect at a regular point $x$, and assume that $S'$ is contained on the concave side of $S$ in $M$. Then the product of the principal curvatures of $S'$ at $x$ is smaller than or equal to that of $S$.
    \end{theorem}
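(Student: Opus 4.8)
The plan is to reduce the statement to the classical geometric maximum principle at the regular point $x$, carried out in geodesic normal coordinates of the ambient hyperbolic manifold; since the point is regular the cone singularities play no role, and the content is purely local. First I would observe that the hypotheses force $S$ and $S'$ to be tangent at $x$. Indeed, near $x$ the surface $S'$ lies in the closed region $\Omega$ cut out by $S$ on its concave side, and $x$ lies on the boundary $\partial\Omega$ (which is $S$ near $x$); were the tangent planes $T_xS$ and $T_xS'$ distinct, points of $S'$ arbitrarily close to $x$ would lie on both sides of $S$, contradicting $S'\subset\Omega$. Hence $T_xS=T_xS'$, so the induced metrics agree at $x$, $I(x)=I'(x)$, and by the common orientation (the positive normal of a concave surface points into its concave side, and the concave sides of $S$ and $S'$ coincide near $x$) the positive unit normals agree, $n(x)=n'(x)$.

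Next I would choose geodesic normal coordinates for $M$ centered at $x$ with the third coordinate axis along $n(x)$, so that near $x$ both surfaces are graphs $z=f(u)$ and $z=f'(u)$ over the common tangent plane $T_xS$, with $f(0)=f'(0)=0$ and $\nabla f(0)=\nabla f'(0)=0$. With the convention $\II(u,v)=I(Bu,v)$, $Bu=\nabla_un$, a direct computation at the base point of normal coordinates (where the ambient Christoffel symbols vanish, so the formula reduces to the Euclidean one) gives $\II(x)=-\Hess f(0)$ and $\II'(x)=-\Hess f'(0)$; in particular the concavity hypothesis says precisely that $\Hess f(0)$ and $\Hess f'(0)$ are negative definite. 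Because $S'$ lies on the concave side of $S$, which in these coordinates is the $+n$ side $\{z\geq f\}$, we have $f'\geq f$ on a neighborhood of $0$ with equality at $0$; thus $f'-f$ attains a local minimum at $0$, so $\Hess(f'-f)(0)\geq 0$, i.e. $\Hess f'(0)\geq\Hess f(0)$. Applying the minus sign yields the inequality of quadratic forms
\begin{equation*}
\II'(x)\leq \II(x).
\end{equation*}

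Finally I would convert this into the desired inequality of determinants. Since $I(x)=I'(x)$, the shape operators at $x$ are $B(x)=I(x)^{-1}\II(x)$ and $B'(x)=I(x)^{-1}\II'(x)$, whence $\det B(x)=\det\II(x)/\det I(x)$ and $\det B'(x)=\det\II'(x)/\det I(x)$. Both $\II(x)$ and $\II'(x)$ are positive definite by concavity, and $\II'(x)\leq\II(x)$ in the Loewner order; the monotonicity of the determinant on positive definite symmetric forms (for $0<A\leq B$ one has $\det(B^{-1/2}AB^{-1/2})\leq 1$, hence $\det A\leq\det B$) then gives $\det\II'(x)\leq\det\II(x)$, and therefore $\det B'(x)\leq\det B(x)$. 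As $\det B$ is exactly the product of the principal curvatures, this is the claimed inequality.

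The hard part will be purely the bookkeeping with conventions: one must pin down that, with $n$ chosen as in the definition of $B$, the concave side of $S$ is the side into which $n$ points, so that the inclusion $S'\subset\{\text{concave side of }S\}$ translates into $f'\geq f$ and not the reverse, and that the factor $-1$ relating $\II$ to the Hessian flips the inequality exactly once — so that the \emph{inner} surface ends up with the \emph{smaller} Gauss--Kronecker curvature. Once the orientations are fixed correctly, everything else is the standard tangency-plus-maximum-principle argument at a single regular point.
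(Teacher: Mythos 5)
Your argument is correct and is exactly the standard tangency-plus-Hessian-comparison proof carried out in normal coordinates at the regular touching point; the paper itself gives no proof of this theorem, merely citing \cite{BBZ1} and \cite{CS}, where essentially this local computation appears, and your sign bookkeeping (normal pointing into the concave side, $\II=-\Hess f(0)$, $f'\geq f$ giving $\II'\leq \II$) lands the inequality in the right direction. The only nitpick is that concavity makes $\II$ and $\II'$ positive \emph{semi}-definite rather than definite, but the determinant monotonicity $0\leq A\leq B\Rightarrow\det A\leq\det B$ still holds in the semi-definite case, so nothing breaks.
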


    To show the injectivity of  $\phi_K$, we first 
    state the following two lemmas, which follow from a direct computation.

    \begin{lemma}
        \label{the property of the pushing surface}
            Let $M$ be a hyperbolic end with particles and let $S$ be a concave surface in $M$. Consider a map $\psi^{t}:S\rightarrow M$ defined by $\psi^{t}(x)=\exp_{x}(t\cdot n_x)$, where $n_x$ is the $\partial_{\infty }M$-directed unit normal vector at $x$ of $S$ in $M$. Then for each regular point $x\in S$, we have

            \begin{enumerate}[(1)]
                \item $\psi^{t}$ is an embedding in a neighbourhood of $x$ for all $t>0$.
                \item  The principal curvatures of $\psi^{t}(S)$ at the point $\psi^{t}(x)$ are given by
                    \begin{equation*}
                        \lambda^{t}(\psi^{t}(x))
                        =\frac{\lambda(x)+\tanh (t)}{1+\lambda(x)\tanh (t)},\qquad
                        \mu^{t}(\psi^{t}(x))=\frac{\mu(x)+\tanh (t)}{1+\mu(x)\tanh (t)},
                    \end{equation*}
                    where $\lambda(x)$ and $\mu(x)$ are the principal curvatures of $S$ at $x$.
                \item Fix $x\in S$, if $\lambda(x)\mu(x)\in(0,1)$, then $F(t)=\lambda^{t}(\psi^{t}(x))\cdot\mu^{t}(\psi^{t}(x))$ is strictly increasing in $(0,+\infty)$.
            \end{enumerate}
    \end{lemma}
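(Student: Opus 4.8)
The plan is to reduce all three assertions to an explicit analysis of the normal flow, carried out locally in $\mathbb{H}^3$. Since $x$ is a regular point, a neighborhood of $x$ in $M$ is isometric to an open subset of $\mathbb{H}^3$, and on a small neighborhood of $x$ the map $\psi^t$ is exactly the time-$t$ normal exponential flow of $S$ inside $\mathbb{H}^3$; the singular locus plays no role. So first I would fix a frame diagonalizing the shape operator $B$ of $S$ near $x$, with principal curvatures $\lambda,\mu$, and study the geodesic variation $\Phi(s,t)=\exp_{c(s)}(t\,n_{c(s)})$, where $c$ is a curve in $S$ with $c(0)=x$.

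Next I would compute the Jacobi field $J(t)=\partial_s\Phi(0,t)$ along the normal geodesic $\gamma_x$. Because $\mathbb{H}^3$ has constant curvature $-1$, the normal part of $J$ satisfies $J''-J=0$; with initial data $J(0)=u$ and $J'(0)=Bu$ (equal to $\lambda u$ in a principal direction), this integrates to $J(t)=(\cosh t+\lambda\sinh t)\,P_t u$, where $P_t$ is parallel transport. Hence $d\psi^t_x=P_t\circ(\cosh t\,E+\sinh t\,B)$, and the induced metric is $I_t=I((\cosh t\,E+\sinh t\,B)\bullet,(\cosh t\,E+\sinh t\,B)\bullet)$. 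For part (1): concavity of $S$ gives $\lambda,\mu\geq 0$, so for $t>0$ the eigenvalues $\cosh t+\lambda\sinh t$ and $\cosh t+\mu\sinh t$ are $\geq\cosh t>0$, whence $d\psi^t_x$ is invertible and the inverse function theorem makes $\psi^t$ an embedding on a neighborhood of $x$. For part (2): using the Riccati relation $\II_t=\tfrac12\,dI_t/dt$ for equidistant foliations (as in the proof of Proposition \ref{prop:qd to he}), the shape operator $B_t=I_t^{-1}\II_t$ is diagonal in the same frame with eigenvalues $\frac{\sinh t+\lambda\cosh t}{\cosh t+\lambda\sinh t}=\frac{\lambda+\tanh t}{1+\lambda\tanh t}$, and similarly for $\mu$, which is the asserted formula.

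For part (3) I would substitute $s=\tanh t$, which maps $(0,+\infty)$ increasingly onto $(0,1)$, reducing the claim to showing that $G(s):=\frac{(s+\lambda)(s+\mu)}{(1+\lambda s)(1+\mu s)}$ is strictly increasing. Writing $p=\lambda+\mu$ and $q=\lambda\mu$, a direct differentiation gives $G'(s)=(1-q)\,[\,p s^2+2(1+q)s+p\,]/\big((1+\lambda s)(1+\mu s)\big)^2$. The hypothesis $q=\lambda\mu\in(0,1)$ forces $1-q>0$ and, together with concavity ($\lambda,\mu\geq 0$), forces $\lambda,\mu>0$ and hence $p>0$; therefore the bracket is positive for all $s\geq 0$, so $G'>0$ on $[0,1)$. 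Composing with the strictly increasing function $\tanh$ yields that $F(t)=G(\tanh t)$ is strictly increasing on $(0,+\infty)$.

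The computations are elementary and purely local, so there is no genuine geometric obstacle; the only delicate point is the sign bookkeeping in (3). The crucial observation is that the hypothesis $\lambda\mu\in(0,1)$ is exactly calibrated: the factor $1-q$ records the sign coming from $\lambda\mu<1$, while $p>0$ (which uses both $\lambda\mu>0$ and the concavity of $S$) controls the remaining terms, so the quadratic in the numerator of $G'$ has no root in $[0,1)$. I would take care to verify the orientation conventions, namely that the flow is along the $\partial_\infty M$-directed normal and that $\II_t=\tfrac12\,dI_t/dt$ carries the correct sign, since an error there would reverse the monotonicity and flip the $+\tanh t$ in the principal-curvature formula.
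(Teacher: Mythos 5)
Your computation is correct and is exactly the ``direct computation'' that the paper invokes without writing out: the Jacobi-field analysis along the normal flow in $\mathbb{H}^3$ gives $d\psi^t=P_t\circ(\cosh t\,E+\sinh t\,B)$, hence the local embedding, the shape operator $B_t=(\cosh t\,E+\sinh t\,B)^{-1}(\sinh t\,E+\cosh t\,B)$ with the stated eigenvalues, and the monotonicity of $G(s)$ via the factorization $G'(s)=(1-q)[ps^2+2(1+q)s+p]/D(s)^2$. Your remark that concavity is genuinely needed in (3) to force $p=\lambda+\mu>0$ (rather than merely $\lambda\mu>0$) is the one point worth recording.
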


    \begin{lemma}
         \label{comparison between principal curvatures at singularities}
            Let $M$ be a hyperbolic end with particles. Let $S$, $S'$ be two  concave surfaces in $M$. Assume that $S$ and $S'$ intersect at a singular point $x$ such that the limits of both principal curvatures of $S$ at $x$ are equal to $k>0$, and the limits of both principal curvatures of $S'$ at $x$ are equal to $k'>0$. If there exists a neighbourhood $U$ of $x$ in $S$ and a neighbourhood $U'$ of $x$ in $S'$ such that $U'$ is on the concave side of $U$, then $k'\leq k$.
    \end{lemma}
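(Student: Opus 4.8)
The plan is to work in the local model $\mathbb{H}^3_{\theta_0}$ near the common point $x$, to write both surfaces as graphs over the totally geodesic plane orthogonal to the singular line, and to read off the two limiting principal curvatures from the second-order behaviour of the two graph functions at the cone apex.

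First I would fix coordinates. By the definition of a hyperbolic manifold with particles, a neighbourhood of $x$ is isometric to a neighbourhood of a singular point of $\mathbb{H}^3_{\theta_0}$, where $\theta_0\in(0,\pi)$ is the angle of the singular curve through $x$; in the coordinates $(\rho,r,\alpha)$ with metric $d\rho^2+\cosh^2(\rho)(dr^2+\sinh^2(r)\,d\alpha^2)$ I place $x$ at $\rho=r=0$, so that the singular line is $\{r=0\}$ and the totally geodesic plane orthogonal to it at $x$ is $P=\{\rho=0\}$, a hyperbolic cone of angle $\theta_0$ with metric $g_P=dr^2+\sinh^2(r)\,d\alpha^2$. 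Since $S$ and $S'$ are concave and orthogonal to the singular curve at $x$, the totally geodesic support disk of each at $x$ is $P$ (as recalled after Definition \ref{def of concavity of surfaces}); hence near $x$ each surface is a graph $\rho=f(r,\alpha)$, resp. $\rho=f'(r,\alpha)$, lying on the non-concave side $\{\rho\le 0\}$ of $P$, with $f(x)=f'(x)=0$. That $P$ is the support plane forces the tangent plane of each surface at $x$ to be $P$, so $df\to 0$ and $df'\to 0$ at $x$; the orthogonality condition of Definition \ref{def of orthogonality} gives in addition $f=o(r)$.

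The core is a direct computation of the shape operator of such a graph. Computing the second fundamental form in the tangent frame of $P$, one finds that as $r\to 0$, where $f\to 0$ and $df\to 0$, the shape operator converges to $-\Hess_{g_P}(f)$ with respect to $g_P$. Thus the hypothesis that both principal curvatures of $S$ tend to $k$ becomes $\Hess_{g_P}(f)\to -k\,g_P$ at $x$. Integrating this Hessian identity with the boundary conditions $f(x)=0$, $df(x)=0$ (using that $\Hess_{g_P}(\cosh r-1)=\cosh r\,g_P$ is asymptotic to $g_P$ as $r\to 0$) yields the expansion $f=-k(\cosh r-1)+o(r^2)=-\tfrac{k}{2}r^2+o(r^2)$, where $r$ is the geodesic distance to $x$ in $P$; the same computation gives $f'=-\tfrac{k'}{2}r^2+o(r^2)$.

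Finally I would invoke the geometric hypothesis: that $U'$ lies on the concave side of $U$ means exactly that $f'\ge f$ on a punctured neighbourhood of $x$, since the concave side of $S$ is the side $\{\rho>f\}$ into which its unit normal points. Comparing leading terms, $-\tfrac{k'}{2}r^2+o(r^2)\ge -\tfrac{k}{2}r^2+o(r^2)$ for all small $r>0$ forces $k'\le k$, which is the claim. The delicate point is precisely the passage from the convergence of the principal curvatures to a genuine quadratic expansion of $f$ at the cone apex: because the apex is singular and the polar chart degenerates there, one must control the $o(r^2)$ remainder uniformly, not merely in an averaged sense, and I expect this asymptotic analysis at the cone point to be the main obstacle. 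It can be carried out by exploiting the rotational symmetry of $\mathbb{H}^3_{\theta_0}$ to reduce the Hessian identity to an ODE in $r$. Alternatively, one may avoid expanding $f$ and $f'$ directly by constructing, for each $\kappa>0$, a rotationally symmetric barrier surface $\Sigma_\kappa\subset\mathbb{H}^3_{\theta_0}$ through $x$, tangent to $P$, with principal curvatures tending to $\kappa$, and applying the Maximum Principle (Theorem \ref{Maximum principle for regular points}) at regular points to sandwich $S$ and $S'$: if $k'>k$, choosing $\kappa\in(k,k')$ places $S'$ strictly below $\Sigma_\kappa$ and $S$ strictly above it near $x$, contradicting that $S'$ is on the concave side of $S$.
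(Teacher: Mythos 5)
Your argument is correct, and it is precisely the ``direct computation'' that the paper invokes without writing out: the paper states this lemma (together with Lemma \ref{the property of the pushing surface}) with no proof beyond the remark that both ``follow from a direct computation''. Your reduction to the local model, the identification of the support disk at the apex with the orthogonal totally geodesic cone $P$, the sign conventions (the concave surfaces lie in $\{\rho\le 0\}$, and ``$U'$ on the concave side of $U$'' becomes $f'\ge f$), and the extraction of $k$ from the expansion $f=-\tfrac{k}{2}r^2+o(r^2)$ all check out against the metric $d\rho^2+\cosh^2(\rho)(dr^2+\sinh^2(r)\,d\alpha^2)$, where the radial normal curvature of a graph $\rho=f$ at the apex is indeed $-\partial_r^2 f$ to leading order. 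The one imprecision is the suggestion to control the $o(r^2)$ remainder ``by exploiting the rotational symmetry of $\mathbb{H}^3_{\theta_0}$ to reduce the Hessian identity to an ODE in $r$'': the surfaces $S,S'$ themselves are not rotationally symmetric, so what one actually does is integrate the radial normal curvature twice along each radial geodesic from the apex; the uniform convergence of the shape operator to $kE$ (which is what ``both principal curvatures tend to $k$'' means), together with the orthogonality condition of Definition \ref{def of orthogonality} giving $f=o(r)$ and hence the vanishing of the first radial derivative at the apex, yields the quadratic expansion uniformly in the angular variable, after which the comparison of leading coefficients forces $k'\le k$ as you say.
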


    Let $S$ be a concave surface in a hyperbolic end $M$ with particles. Define the\emph{ principal curvatures at a singular point} $x\in S$ as the limit of the principal curvatures as the regular point converges to $x$. Now we give the following result by applying the maximum principle and the above two lemmas.

    \begin{lemma}\label{lm:injective}
    Let $M$ be a hyperbolic end with particles. Assume that $S_1$ and $S_2$ are two strictly concave surfaces in $M$ such that the supremum of the Gauss curvature over all the points on $S_1$ is less than the infimum of the Gauss curvature over all the points on $S_2$, and the limits of both principal curvatures at singular points on $S_1$ (resp. $S_2$) are the same. Then $S_2$ is strictly on the concave side of $S_1$.
    \end{lemma}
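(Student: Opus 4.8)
The plan is to argue by contradiction: assuming $S_2$ is \emph{not} strictly on the concave side of $S_1$, I would push $S_2$ towards the concave side until it last meets $S_1$, and then play off the maximum principle at the contact point against the monotonicity of the principal curvatures under the push to contradict the gap $\sup_{S_1}K<\inf_{S_2}K$. First I would fix the geometry. Since $S_1$ and $S_2$ are closed surfaces isotopic to $\Sigma\times\{t\}$ and orthogonal to the singular locus, each separates $M$ into a concave side (towards $\partial_\infty M$) and a non-concave side (towards $\partial_0 M$). Because $S_2$ is strictly concave, the argument of Lemma \ref{lm:exp}, applied to $S_2$ in place of $\partial_0 M$ and using that all cone angles are less than $\pi$, shows that the normal exponential map is a homeomorphism from the concave-pointing normal bundle of $S_2$ onto the closed concave region of $S_2$. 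Hence the pushed surfaces $S_2^t:=\psi^t(S_2)$ of Lemma \ref{the property of the pushing surface} are embedded for every $t\ge 0$, foliate that region, and converge to $\partial_\infty M$ as $t\to+\infty$.

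Next I would locate the extremal contact. Under the contradiction hypothesis, $S_1$ and $S_2$ either meet or $S_2$ lies on the non-concave side of $S_1$; in either case, since $S_2^t\to\partial_\infty M$, the set $\{t\ge 0:S_2^t\cap S_1\neq\varnothing\}$ is non-empty and bounded, so its supremum $t_1$ is attained. For $t>t_1$ the surface $S_2^t$ is disjoint from $S_1$ and strictly on its concave side, so $S_2^{t_1}$ lies on the closed concave side of $S_1$ and must be \emph{tangent} to $S_1$ at some contact point $x$ (a transversal crossing would persist for $t$ slightly larger than $t_1$, contradicting maximality). Write $y:=(\psi^{t_1})^{-1}(x)\in S_2$. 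Because the relevant surfaces have Gauss curvature in $(-1,0)$, the product of the principal curvatures of $S_2$ at $y$ lies in $(0,1)$, so Lemma \ref{the property of the pushing surface}(3) gives that $t\mapsto\det B_{S_2^t}$ is increasing along the push, whence $\det B_{S_2^{t_1}}(x)\ge\det B_{S_2}(y)$.

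I would then read off the curvature inequality at $x$ and close the loop. If $x$ is a regular point, the Maximum Principle (Theorem \ref{Maximum principle for regular points}) with $S=S_1$ and $S'=S_2^{t_1}$, applied on the concave side, gives $\det B_{S_2^{t_1}}(x)\le\det B_{S_1}(x)$. If $x$ is a singular point, the assumption that the principal curvatures have a common limit at each cone point makes both $S_1$ and $S_2^{t_1}$ umbilic there (the push preserves umbilicity, by the formulas of Lemma \ref{the property of the pushing surface}(2)), and Lemma \ref{comparison between principal curvatures at singularities} yields the same inequality in the limit. Combining with the monotonicity bound and the Gauss equation $\det B=1+K$ for surfaces in $\mathbb{H}^3$, I obtain $1+K_{S_2}(y)\le\det B_{S_2^{t_1}}(x)\le\det B_{S_1}(x)=1+K_{S_1}(x)$, that is $K_{S_2}(y)\le K_{S_1}(x)$. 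This gives $\inf_{S_2}K\le K_{S_2}(y)\le K_{S_1}(x)\le\sup_{S_1}K$, contradicting $\sup_{S_1}K<\inf_{S_2}K$. Hence $S_2$ lies strictly on the concave side of $S_1$.

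I expect the main obstacle to be the contact point lying on a particle: the regular maximum principle does not apply there, so one must instead invoke the singular comparison Lemma \ref{comparison between principal curvatures at singularities}, whose hypotheses are precisely guaranteed by the assumption that the principal curvatures of $S_1$ and of $S_2$ each have a common limit at every cone point. The supporting technical points are that the push $\psi^t$ remains a global embedding on the concave side (the Lemma \ref{lm:exp} argument, which genuinely uses the cone angles being less than $\pi$), and that the product of principal curvatures of $S_2$ sits in $(0,1)$ so that Lemma \ref{the property of the pushing surface}(3) applies in the direction compatible with the maximum principle; by Gauss--Bonnet and the topological assumption \eqref{Topological condition} one has $\int_{S_2}K<0$, so this range is the one occurring for the $K$-surfaces to which the lemma is applied.
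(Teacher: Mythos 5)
Your argument follows essentially the same route as the paper's own proof: push $S_2$ along the concave-directed normal flow to the last contact time with $S_1$, then play the monotonicity of $\det B$ under the push against the maximum principle at a regular contact point (or Lemma \ref{comparison between principal curvatures at singularities} at a cone point, using the umbilical behaviour there) to contradict $\sup_{S_1}K<\inf_{S_2}K$. The only slight imprecision --- shared with the paper, which likewise only records $\inf_{S_2}\lambda_2\mu_2<1$ rather than a pointwise bound --- is the claim that $\det B_{S_2}\in(0,1)$ at the contact point; this is harmless, since if $\det B_{S_2}(y)\ge 1$ then the pushed determinant remains $\ge 1>1+\sup_{S_1}K$ and the contradiction with the maximum principle is immediate anyway.
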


    \begin{proof}
    Denote by $\lambda_i$, $\mu_i$ the principal curvatures of $S_i$ for $i=1, 2$. Denote $C_{1}=\sup_{x\in S_1}\lambda_1(x)\mu_1(x)$ and $C_{2}=\inf_{x\in S_2}\lambda_2(x)\mu_2(x)$.  By assumption, we have $C_1<C_2$, and the Gauss-Bonnet formula shows that $C_2<1$.

  Suppose that $S_{2}$ is not strictly on the concave side of $S_{1}$. Note that $S_{1}$ and $S_{2}$ are both concave, therefore there exist point of $S_2$ where the $\partial_{\infty}M$-directed orthogonal geodesic rays from $S_{2}$ intersect the part of $S_{1}$ on the concave side exactly once. Consider $\psi^{t}:S_{2}\rightarrow M$ defined by $\psi^{t}(x)=\exp_{x}(t\cdot n_x)$, where $n_x$ is the $\partial_{\infty }M$-directed unit normal vector at $x$ of $S_{2}$ in $M$. Let $t_0=\sup\{t>0: \psi^{t}(x)\in S_1$ for some $x\in S_{2}\}$ and let $S^{t_0}_{2}=\psi^{t_0}(S_{2})$. Since $S_1$ and $S_2$ are both compact, then $t_0$ is attained at a point $x_0\in S_2$.  
It follows from Lemma \ref{the property of the pushing surface} that $S^{t_0}_2$ is a concave surface which intersects $S_{1}$ at a point $y_0=\psi^{t_0}(x_0)$, and it stays on the concave side of $S_{1}$. Denote by $\lambda^{t_0}_2$, $\mu^{t_0}_2$ the principal curvatures of $S^{t_0}_{2}$.

   If $y_0$ is a regular point, combining Theorem \ref{Maximum principle for regular points} and Lemma \ref{the property of the pushing surface}, we have
  \begin{equation}\label{ineq:injective}
 C_2\leq (\lambda_2\mu_2)(x_0)
  \leq(\lambda^{t_0}_2\mu^{t_0}_2)(y_0)\leq(\lambda_1\mu_1)(y_0)
  \leq C_{1}.
  \end{equation}
  This contradicts that $C_1<C_2$.

  If $y_0$ is a singular point, note that $S_1$ and $S_2$ behave  ``umbilically'' at singular points, and it follows from Statement (2) of Lemma \ref{the property of the pushing surface} that $S^{t_0}_{2}$ has an ``umbilical" point at $y_0$. Applying Lemma \ref{the property of the pushing surface} and Lemma \ref{comparison between principal curvatures at singularities} we have the same inequality \eqref{ineq:injective}.
  This contradicts again that $C_1<C_2$. Therefore, $S_{2}$ is strictly on the concave side of $S_{1}$.
  \end{proof}

     Using a similar argument as Lemma \ref{lm:injective}, we have the following proposition.

    \begin{proposition}
    \label{curvatures and positions of surfaces}
        Let $S_{i}, i=1,2$ be  concave surfaces of constant curvature $K_{i}\in(-1,0)$ in a hyperbolic end $M$ with particles for $i=1,2$. Then we have the following statements:
        \begin{enumerate}[(1)]
            \item $K_{1}<K_{2}$ if and only if $S_{2}$ is strictly on the concave side of $S_{1}$.
            \item $K_{1}=K_{2}$ if and only if $S_{1}$ coincides with $S_{2}$.
        \end{enumerate}
        \end{proposition}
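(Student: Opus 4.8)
The plan is to establish the two equivalences by proving the forward implication of~(1), the forward implication of~(2), and then deducing the two reverse implications by elementary logic. The geometric input needed is exactly the two facts recalled just before Lemma~\ref{lm:injective}: a concave surface of constant curvature $K$ satisfies the Gauss equation $\lambda\mu=1+K$ at every regular point, and, by the last statement of Proposition~\ref{Toulisse2}, its two principal curvatures both tend to $\sqrt{1+K}$ at each intersection with the singular locus, so that such a surface behaves umbilically there.

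For the forward direction of~(1) I would simply invoke Lemma~\ref{lm:injective}. Since each $K_i$ is constant we have $\sup_{S_1}K=K_1$ and $\inf_{S_2}K=K_2$, so the assumption $K_1<K_2$ is precisely the curvature-separation hypothesis of that lemma; combined with the umbilic behaviour at the singular points, it yields that $S_2$ lies strictly on the concave side of $S_1$.

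The core of the argument, and the step I expect to be the main obstacle, is the forward direction of~(2): if $K_1=K_2=K$ then $S_1=S_2$. Here the proof of Lemma~\ref{lm:injective} cannot be used verbatim, since now the two products of principal curvatures coincide, $C_1=C_2=1+K$, and the strict inequality that produced the contradiction there is no longer available from the Maximum Principle alone. The idea is to recover strictness from the normal flow itself. Suppose $S_2$ is not contained in the closed concave side of $S_1$; pushing $S_2$ toward $\partial_\infty M$ by the flow $\psi^{t}$ of Lemma~\ref{the property of the pushing surface} and taking the last contact time $t_0$ (attained by compactness), one obtains a point $y_0=\psi^{t_0}(x_0)$ at which $\psi^{t_0}(S_2)$ is tangent to $S_1$ from the concave side, with $t_0>0$ because some point of $S_2$ started strictly on the non-concave side. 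When $y_0$ is regular, the Maximum Principle (Theorem~\ref{Maximum principle for regular points}) together with the \emph{strict} monotonicity in Statement~(3) of Lemma~\ref{the property of the pushing surface} gives
\begin{equation*}
1+K=(\lambda_2\mu_2)(x_0)<(\lambda^{t_0}_2\mu^{t_0}_2)(y_0)\le(\lambda_1\mu_1)(y_0)=1+K,
\end{equation*}
a contradiction; when $y_0$ is singular the same chain holds with Lemma~\ref{comparison between principal curvatures at singularities} in place of the Maximum Principle, exactly as in Lemma~\ref{lm:injective}. Hence $S_2$ lies on the closed concave side of $S_1$, and by the symmetric argument $S_1$ lies on the closed concave side of $S_2$; since the concave side is a globally coherent notion (toward $\partial_\infty M$), these two inclusions force $S_1=S_2$.

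Finally the reverse implications are formal. The reverse of~(2) is immediate. For the reverse of~(1), suppose $S_2$ is strictly on the concave side of $S_1$: if $K_1=K_2$ then~(2) would give $S_1=S_2$, while if $K_1>K_2$ then the forward direction of~(1) with the roles of $S_1$ and $S_2$ exchanged would place $S_1$ strictly on the concave side of $S_2$, each contradicting the hypothesis; therefore $K_1<K_2$. The only genuinely delicate points throughout are securing the strict inequality in the equal-curvature case via the strict monotonicity of the flow and verifying $t_0>0$, and keeping the bookkeeping of the ``concave side'' consistent; the remainder is a direct transcription of the proof of Lemma~\ref{lm:injective}.
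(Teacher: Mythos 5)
Your proof is correct and follows essentially the same route as the paper: both arguments rest on the Maximum Principle, the strict monotonicity of the product of principal curvatures under the normal flow (Lemma \ref{the property of the pushing surface}), and the umbilic behaviour at the cone points (Lemma \ref{comparison between principal curvatures at singularities}), applied at a tangency point obtained by pushing one surface onto the other. The only differences are organizational: the paper proves the reverse implication of (1) directly (pushing $S_1$ by the penetration depth $\delta_0$) and then recycles that computation for (2), whereas you prove the forward implication of (2) first via the last-contact time and deduce the reverse of (1) by elimination --- both are valid.
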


        \begin{proof}
         \emph{Proof of Statement (1)}: First we show that $K_{1}<K_{2}$ implies that $S_{2}$ is strictly on the concave side of $S_{1}$.  Note that $K_{1}<K_{2}$ and the constant curvature surfaces $S_1$, $S_2$ behave ``umbilically" at singular points. This statement follows directly from Lemma \ref{lm:injective}.

         Now we prove the sufficiency, that is, if $S_{2}$ is strictly on the concave side of $S_{1}$, then $K_{2}>K_{1}$. Denote $S_{1}^{t}=\psi^{t}(S_{1})$. Set
         $\delta_{0}=\sup\{d(z,S_{2}): z\in S_{1}\}$. Obviously, $\delta_{0}>0$. Assume that $\delta_0$ is attained at a point $z_0\in S_1$ and denote $w_0=\psi^{\delta_0}(z_0)\in S_2\cap S_1^{\delta_0}$. Discussing $w_0$ in two cases (as a regular or singular point) as Lemma \ref{lm:injective} again, we have
         \begin{equation*}
            \begin{split}
                &\lambda_{1}^{\delta_{0}}(w_{0})\mu_{1}^{\delta_{0}}(w_{0})
                >\lambda_{1}(z_{0})\mu_{1}(z_{0})
                =1+K_{1},\\
               &\lambda_{1}^{\delta_{0}}(w_{0})\mu_{1}^{\delta_{0}}(w_{0})\leq \lambda_{2}(w_{0})\mu_{2}(w_{0})
               =1+K_{2}.
            \end{split}
         \end{equation*}
         Thus $K_{2}>K_{1}$.

        \emph{Proof of Statement (2)}: The sufficiency is obvious. Now we show the necessity.
        By assumption, $K_{1}=K_{2}$. Set $d_{1}=\sup\{d(x,S_{1}):x\in S_{2}$ is on the concave side of $S_{1}$ (including $S_1)\}$ and $d_{2}=\sup\{d(x,S_{2}):x\in S_{1}$ is on the concave side of $S_{2}$ (including $S_2)\}$. Note that $S_{1}=S_{2}$ if and only if $d_{1}=d_{2}=0$.

        If $d_{1}>0$, consider the surface $S_{1}^{d_{1}}$ obtained by pushing $S_{1}$ along orthogonal geodesics in a distance $d_{1}$ in the positive direction. Using the argument as above, we obtain the contradiction that $K_{1}<K_{2}$. This implies that $d_{1}=0$.

        If $d_{2}>0$,  consider the surface $S_{2}^{d_{2}}$ obtained by pushing $S_{2}$ along orthogonal geodesics in a distance $d_{2}$ in the positive direction. Using the same argument as above, we obtain the contradiction that $K_{1}>K_{2}$. This implies that $d_{2}=0$. Therefore, $S_{1}=S_{2}$.
    \end{proof}

    \begin{proposition}\label{injectivity of the map}
        For any $K\in(-1,0)$, the map $\phi_{K}:\mathcal{T}_{\Sigma,\theta}\times \mathcal{T}_{\Sigma,\theta}\rightarrow \mathcal{HE}_{\theta}$ is injective.
    \end{proposition}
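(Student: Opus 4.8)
The plan is to deduce injectivity directly from the uniqueness of constant curvature surfaces established in Proposition \ref{curvatures and positions of surfaces}. Suppose that $(\tau_1,\tau_1')$ and $(\tau_2,\tau_2')$ in $\mathcal{T}_{\Sigma,\theta}\times\mathcal{T}_{\Sigma,\theta}$ satisfy $\phi_{K}(\tau_1,\tau_1')=\phi_{K}(\tau_2,\tau_2')$. By the definition of $\phi_{K}$ this means that the two hyperbolic ends with particles furnished by Lemma \ref{definition of the map} are isotopic; I would fix an isometry $f$ between representatives $M_1$ and $M_2$ which is isotopic to the identity among maps fixing each singular curve. Let $(h_i,h_i')$ be normalized representatives of $(\tau_i,\tau_i')$ and let $S_i\subset M_i$ be the constant curvature $K$ surface produced in Lemma \ref{definition of the map}, with induced metric $I_i=(1/|K|)h_i$ and third fundamental form $\III_i=(1/|K^{*}|)h_i'$.

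First I would record that each $S_i$ is a strictly concave surface of constant curvature $K\in(-1,0)$, orthogonal to the singular locus, and that (by the minimal Lagrangian description recalled just before Theorem \ref{Maximum principle for regular points}, using the last item of Proposition \ref{Toulisse2}) both principal curvatures of $S_i$ tend to $\sqrt{1+K}$ at the cone points, so that $S_i$ behaves ``umbilically'' at the singularities. Thus both $f(S_1)$ and $S_2$ are constant curvature $K$ surfaces inside the single hyperbolic end $M_2$, each satisfying the umbilic behavior at the singular locus required to apply Proposition \ref{curvatures and positions of surfaces}.

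Next I would apply Statement (2) of Proposition \ref{curvatures and positions of surfaces}: since $f(S_1)$ and $S_2$ both have constant curvature $K$, they coincide as subsets of $M_2$. Consequently $f$ restricts to a diffeomorphism $S_1\to S_2$ which, being the restriction of an isometry of $M_1$ onto $M_2$, carries the induced metric $I_1$ to $I_2$ and the third fundamental form $\III_1$ to $\III_2$. Since $f$ is isotopic to the identity fixing the singular curves, this restriction is isotopic to the identity fixing the marked points, so $I_1$ and $I_2$ (hence $h_1=|K|I_1$ and $h_2=|K|I_2$) represent the same point of $\mathcal{T}_{\Sigma,\theta}$, i.e. $\tau_1=\tau_2$; likewise $\III_1$ and $\III_2$ (hence $h_1'$ and $h_2'$) give $\tau_1'=\tau_2'$. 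This yields $(\tau_1,\tau_1')=(\tau_2,\tau_2')$ and proves injectivity.

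I expect the only delicate point to be the bookkeeping of isotopy classes: one must verify that the isometry $f$ restricts to a homeomorphism of the underlying surfaces that is isotopic to the identity and fixes the marked points, so that equality of induced metrics and of third fundamental forms really descends to equality in $\mathcal{T}_{\Sigma,\theta}$ rather than merely up to an element of the mapping class group. All the geometric substance — the existence and uniqueness of the constant curvature $K$ surface and its umbilic behavior at the particles — has already been supplied by Lemma \ref{definition of the map} and Proposition \ref{curvatures and positions of surfaces}, so the argument is essentially a formal consequence of those statements.
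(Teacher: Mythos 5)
Your proposal is correct and follows essentially the same route as the paper: both arguments reduce injectivity to the uniqueness of the constant curvature $K$ surface in a given hyperbolic end, supplied by Statement (2) of Proposition \ref{curvatures and positions of surfaces}, and then read off the pair $(h,h')$ from the induced metric and third fundamental form of that surface. The only difference is that you spell out the isotopy bookkeeping (working with an explicit isometry between representatives) which the paper's shorter proof leaves implicit.
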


    \begin{proof}
         Assume that $(h,h')$, $(h_{1},h_{1}')\in\mathcal{T}_{\Sigma,\theta}\times \mathcal{T}_{\Sigma,\theta}$ satisfy that $\phi_{K}(h,h')=\phi_{K}(h_{1},h_{1}'):=M$. Then $M$ contains a concave surface $S$ of constant curvature $K$, with the induced metric $I=(1/|K|)h$ and the third fundamental form $\III=(1/|K^{*}|)h'$, and also contains a concave surface $S_{1}$ of constant curvature $K$, with the induced metric $I_{1}=(1/|K|)h_{1}$ and the third fundamental form $\III=(1/|K^{*}|)h_{1}'$.
         By Proposition \ref{curvatures and positions of surfaces}, we have $S=S_{1}$. Then $h=h_{1}$ and $h'=h_{1}'$, which implies that $(h,h')=(h_{1},h_{1}')$.
    \end{proof}

    \subsection{The continuity of the map $\phi_K$} The map $\phi_K$ relates deeply to the minimal Lagrangian maps between two hyperbolic surfaces with cone singularities in $\mathcal{T}_{\Sigma,\theta}$, which provides the embedding data to construct a hyperbolic end with particles. With the result in \cite[Lemma 3.19]{CS}, we have the following proposition.
  \begin{proposition}\label{Continuity of the map}
        For any $K\in(-1,0)$, the map $\phi_{K}:\mathcal{T}_{\Sigma,\theta}\times \mathcal{T}_{\Sigma,\theta}\rightarrow \mathcal{HE}_{\theta}$ is continuous.
    \end{proposition}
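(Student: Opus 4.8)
The plan is to factor $\phi_K$ through the embedding data $(I,B)$ on the $K$-surface and to verify continuity of each factor. Recall from Lemma \ref{definition of the map} that $\phi_K(\tau,\tau')$ is the maximal concave extension of the collar $\big(\Sigma\times(-\varepsilon,+\infty),\, g_0\big)$, with $g_0=dt^2+I((\cosh(t)E+\sinh(t)B)\bullet,(\cosh(t)E+\sinh(t)B)\bullet)$, where $I=(1/|K|)h$, $B=\sqrt{1+K}\,b$, and $(h,h')$ is a normalized representative of $(\tau,\tau')$ with associated self-adjoint Codazzi bundle morphism $b$ ($\det b=1$) given by Corollary \ref{hyperbolic metrics and bundle morphism}. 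Thus it suffices to prove: (i) the data $(I,B)$ depends continuously on $(\tau,\tau')$ in a suitable function-space topology; and (ii) the assignment sending such embedding data to the isotopy class of the resulting end in $\mathcal{HE}_\theta$ is continuous.

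For step (i), the content is the continuous dependence of the bundle morphism $b$, equivalently of the minimal Lagrangian map $m:(\Sigma,h)\to(\Sigma,h')$ of Theorem \ref{Toulisse1}, on the pair of hyperbolic metrics with cone singularities. This is exactly the statement established in \cite[Lemma 3.19]{CS}: through the correspondence between minimal Lagrangian maps and harmonic maps, whose existence and regularity near the cone points are guaranteed by the weighted H\"older framework and \cite[Theorem 2]{GR}, one obtains that $b$, and hence $I=(1/|K|)h$ and $B=\sqrt{1+K}\,b$, vary continuously with $(\tau,\tau')$ in the $\mathcal{C}^2$ topology on compact subsets of $\Sigma_\mathfrak{p}$ with controlled asymptotics at the marked points. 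One should also invoke Remark \ref{rk: normalized representatives}, so that the non-uniqueness of the normalized representative, which is only up to diagonal isotopy, does not affect the isotopy class of the output and the map is well-defined.

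For step (ii), the collar metric $g_0$ has coefficients built algebraically, and hence continuously, from $(I,B)$, so a $(PSL_2(\mathbb{C}),\mathbb{H}^3)$-developing map $\mathrm{dev}$ and holonomy representation $\rho:\pi_1(\Sigma_\mathfrak{p})\to PSL_2(\mathbb{C})$ of the collar can be produced by integrating the associated flat connection; the standard continuous dependence of solutions of the structure equations on their coefficients then shows that the $(\mathrm{dev},\rho)$-pair of the collar varies continuously with $(I,B)$. The maximal concave extension of Proposition \ref{existence and uniqueness of maximal extension} amounts to extending $\mathrm{dev}$ over the entire regular part of the end while keeping $\rho$ fixed; since the point of $\mathcal{HE}_\theta$ is recorded precisely by its development-holonomy data in the topology used in the proof of Proposition \ref{prop: homeomorphism}, continuity of the collar data propagates to continuity of $\phi_K(\tau,\tau')$. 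Composing (i) and (ii) yields the proposition.

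The main obstacle will be the behavior near the cone singularities in step (i): one must ensure that the continuity of $b$ holds in a topology strong enough to control the hyperbolic end uniformly up to the singular locus, and that the total angles remain fixed equal to $\theta_i$ throughout the deformation. Away from the marked points this reduces to the classical continuity of developing-holonomy data, but near each $p_i$ the delicate issue is uniformity of convergence; this is exactly what the weighted H\"older analysis of \cite{GR} and \cite[Lemma 3.19]{CS} supply, guaranteeing that singular points are carried to singular points with the correct umbilic asymptotics, both eigenvalues of $B$ tending to $\sqrt{1+K}$, so that the cone angles are preserved in the limit.
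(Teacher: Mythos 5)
Your proposal is correct and follows essentially the same route as the paper: reduce continuity of $\phi_K$ to continuous dependence of the minimal Lagrangian map (hence of the bundle morphism $b$ and the embedding data $(I,B)$) on the pair of metrics, via \cite[Lemma 3.19]{CS}, and then observe that the resulting hyperbolic end depends continuously on $(I,B)$. The paper states this last step more tersely than you do, but your added detail on the developing-holonomy data and the behavior at the cone points is consistent with the argument as given.
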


    \begin{proof}
         It suffices to prove that if the sequence $(h_{k},h_{k}')_{k\in \N}$ converges to $(h,h')\in\mathcal{T}_{\Sigma,\theta}\times \mathcal{T}_{\Sigma,\theta}$, then the sequence $(\phi_{K}(h_{k},h_{k}'))_{k\in \N}$ converges to $\phi_{K}(h,h')\in\mathcal{HE}_{\theta}$. Denote by $m_{k}$ the unique minimal Lagrangian map between $(\Sigma, h_{k})$ and $(\Sigma, h_{k}')$ isotopic to the identity and by $m$ the unique minimal Lagrangian map between $(\Sigma, h)$ and $(\Sigma, h')$ isotopic to the identity.

        By the proof in \cite[Lemma 3.19]{CS},  the sequence $(m_{k})_{k\in \N}$ converges $m$. Let $b_{k}:T\Sigma\rightarrow T\Sigma$ be the bundle morphism defined outside the singular locus which is described in Proposition \ref{Toulisse2} with the property  $m_{k}^{*}(h_{k}')=h_{k}(b_{k}\bullet,b_{k}\bullet)$. Then $b_{k}$ converges to a bundle morphism from $T\Sigma$ to $T\Sigma$, say $b$.

        Let $I_{k}=(1/|K|)h_k$ and $B_{k}=\sqrt{1+K}b_k$. Then $(\Sigma, I_{k},B_{k})_{k\in \N}$ converges to $(\Sigma,I,B)$, in the sense that $I_k$ and $B_k$ converge to $I=(1/|K|)h$ and $B=\sqrt{1+K}b$, respectively. This implies that $(\phi_{K}(h_{k},h_{k}'))_{k\in \N}$ converges to $\phi_{K}(h,h')$ in $\mathcal{HE}_{\theta}$. The lemma follows.
    \end{proof}

    \subsection{The properness of the map $\phi_K$}
    To prove this property of $\phi_K$, we first give a comparison between the lengths of closed geodesics in the same isotopy class on the metric boundary $\partial_{0}M$ and on a strictly concave surface in a hyperbolic end $M$ with particles.

    \begin{lemma}
        \label{comparison between lengths}
            Let $M$ be a hyperbolic end with particles. Let $S$ be a strictly concave surface in $M$. Then for any closed geodesic $\gamma$ on $\partial_0M$, the length of $\gamma$ is smaller than the length of any closed minimizing geodesic $\gamma'$ on $S$ isotopic to $\gamma$ in $M$.
    \end{lemma}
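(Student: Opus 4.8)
The plan is to exploit the normal projection from $M$ onto its concave boundary $\partial_0M$, and to show that this projection strictly decreases lengths. Then the projection of $\gamma'$ becomes a competitor that is shorter than $\gamma'$ but still lies in the free homotopy class of $\gamma$, and comparing with the fact that $\gamma$ minimizes length in its class on $\partial_0M$ will close the argument. Concretely, by Lemma \ref{lm:exp} the exponential map $\exp\colon N\partial_0M\to M$ is a homeomorphism, so every point of $M$ lies on a unique geodesic issuing orthogonally from $\partial_0M$. This defines a projection $p\colon M\to\partial_0M$ sending $\exp(x,n)$ to $x$, together with a positive function $r\colon M\to\mathbb{R}_{>0}$ recording the length of the orthogonal segment.

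First I would establish that $p$ is $1$-Lipschitz and \emph{strictly} length-decreasing away from $\partial_0M$. Passing to the universal cover, $\partial_0M$ develops to a concave pleated surface bounding a convex domain $C$ on the side opposite to $M$ (this is the convex-core picture, with $M$ on the concave side as in Definition \ref{def of concavity of surfaces}), and $p$ is exactly the nearest-point projection of the lift of $M$ onto $\partial C$. In the $\mathrm{CAT}(-1)$ space $\mathbb{H}^3$ such a projection onto a convex set is $1$-Lipschitz, and it is a strict contraction on points lying at positive distance from $C$, because negatively curved geodesics orthogonal to $\partial C$ diverge. On the totally geodesic pieces of $\partial_0M$ this is the elementary statement that the equidistant metric $\cosh^2(r)\,I_0$ dominates $I_0$ (compare the growth encoded in \eqref{eq:equidistant and infinity}), while over the bending locus the projection merely collapses the grafted wedges, which only shortens curves further. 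Since $S\subset M$ is compact and contained in the open manifold, $r$ is bounded below by a positive constant on $S$; hence for any rectifiable curve $c$ on $S$ one obtains the strict inequality $\ell(p\circ c)<\ell(c)$.

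Applying this with $c=\gamma'$ yields $\ell(p(\gamma'))<\ell(\gamma')$, and it remains to compare $p(\gamma')$ with $\gamma$ on the boundary. The $\exp$-flow contracts $M$ onto the zero section, so $p$ is a deformation retraction and in particular a homotopy equivalence; consequently $p(\gamma')$ is freely homotopic on $\partial_0M$ to $\gamma'$, hence to $\gamma$. Finally, on the hyperbolic cone surface $\partial_0M$ with all cone angles less than $\pi$, the geodesic representative $\gamma$ is the length minimizer in its free homotopy class: the geodesic avoids the cone points (cf.\ \cite{DP}) and length-minimizers in a hyperbolic metric with cone angles less than $\pi$ are geodesics, so $\ell(\gamma)\le\ell(p(\gamma'))$. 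Chaining the two inequalities gives
\begin{equation*}
\ell(\gamma)\le\ell(p(\gamma'))<\ell(\gamma'),
\end{equation*}
which is the claim.

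The main obstacle is making the $1$-Lipschitz property of $p$ rigorous in the presence of pleating and of cone singularities, rather than the homotopy bookkeeping. One must check that the nearest-point-projection estimate survives at the bending locus, where the fibers of $N\partial_0M$ are angular sectors, and in tubular neighborhoods of the particles, where $\partial_0M$ meets the singular locus orthogonally in totally geodesic regions so that locally the picture is the smooth one inside $\mathbb{H}^3_{\theta_i}$. A clean way to package this is to verify, using the formulas \eqref{eq:equidistant and infinity} on the smooth equidistant surfaces together with the concavity of $\partial_0M$, that the pullback $p^*I_0$ is dominated as a quadratic form by the metric along the normal flow for every $r>0$; this delivers the strict length estimate uniformly and absorbs the singular behavior.
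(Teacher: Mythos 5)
Your proposal is correct and follows essentially the same route as the paper: the paper's proof is exactly the observation that the closest-point projection onto the concave boundary $\partial_0M$ is $1$-Lipschitz (along the singular curves at singular points), so the marked length spectrum of $\partial_0M$ is bounded by that of $S$. In fact you are more careful than the paper on the strictness of the inequality, which the paper's two-line argument leaves implicit; your observation that $S$ lies at positive distance from $\partial_0M$ so the projection strictly contracts is a worthwhile addition.
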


    \begin{proof}
  Let $r: M\rightarrow \partial_0M$ be the closest point projection of $M$ to the metric boundary $\partial_0M$ (this is well-defined since $\partial_0M$ is concave). Note that if $x\in M$ is a singular point, then the closet point projection is along the singular curve through $x$. Then $r$ is 1-Lipschitz with respect to the hyperbolic metric on $M$ and the induced metric on $\partial_0M$. Therefore, the marked length spectrum of $\partial_0 M$ is bounded by the marked length spectrum of $S$. This completes the proof.
    \end{proof}

    Let $X$ be a topological space and let $(x_n)_{n\in\mathbb{N}}$ be a sequence of elements in $X$. We say that  $(x_n)_{n\in\mathbb{N}}$ \emph{tends to infinity} if $(x_n)_{n\in\mathbb{N}}$ is not contained in any compact subset of $X$. 

    Now we recall a result in Teichm\"uller spaces of hyperbolic surfaces with cone singularities of prescribed angles less than $\pi$. This follows from an analysis on the parametrization of $\mathcal{T}_{\Sigma,\theta}$ by Fenchel-Nielsen coordinates associated to a fixed pants decomposition and the Collar lemma for hyperbolic cone-surfaces (see \cite[Theorem 3]{DP}).

  \begin{lemma}\label{lm:infinity}
 Let $(h_n)_{n\in\mathbb{N}}$ be a sequence of elements in $\mathcal{T}_{\Sigma,\theta}$.  Then the following two statements are equivalent:
  \begin{enumerate}[(1)]
    \item $(h_n)_{n\in\mathbb{N}}$ tends to infinity.
    \item For any $k\in\mathbb{N^{+}}$, there exists a simple closed curve $\gamma_{k}$ on $\Sigma$ and an integer $N>0$ (depending on $k$ and $\gamma_{k}$), such that $\ell_{\gamma_{k}}(h_N)<(1/k)\,\ell_{\gamma_{k}}(h_0)$.
  \end{enumerate}
  \end{lemma}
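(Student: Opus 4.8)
The plan is to recognize that condition (2) is exactly the assertion that
\[
\inf_{\gamma}\ \inf_{N}\ \frac{\ell_{\gamma}(h_N)}{\ell_{\gamma}(h_0)}=0,
\]
where $\gamma$ ranges over simple closed curves on $\Sigma_{\mathfrak p}$; that is, the marked length spectrum of $(h_N)$ is contracted by an arbitrarily large factor relative to $h_0$ along \emph{some} sequence of curves and indices. So the lemma says that leaving every compact set is equivalent to this unbounded contraction of the length spectrum, and the two implications are proved separately.

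For (2)$\Rightarrow$(1) I would argue by contraposition. If $(h_n)$ does not tend to infinity it lies in a compact set $C\subset\mathcal T_{\Sigma,\theta}$. The key input is that on $C$ the hyperbolic cone metrics are uniformly bi-Lipschitz: for $h,h'\in C$ the identity map $(\Sigma,h)\to(\Sigma,h')$ has bi-Lipschitz constant depending continuously on $(h,h')$, hence bounded by some $L=L(C)$; here the hypothesis that the cone angles are fixed and less than $\pi$ is exactly what controls the comparison in the fixed-size neighbourhoods of the cone points. Since this map preserves the isotopy class of every curve, it yields $L^{-1}\le \ell_\gamma(h_m)/\ell_\gamma(h_n)\le L$ for all $\gamma$ and all $m,n$, uniformly in $\gamma$. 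In particular $\ell_\gamma(h_N)\ge L^{-1}\ell_\gamma(h_0)$, so the infimum above is $\ge L^{-1}>0$ and (2) fails for every $k>L$.

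For (1)$\Rightarrow$(2) I would fix a pants decomposition $\{\gamma_1,\dots,\gamma_m\}$, $m=3g-3+n_0$, and use the associated Fenchel--Nielsen coordinates $(\ell_i,\tau_i)$, which identify $\mathcal T_{\Sigma,\theta}$ with $\mathbb R_{>0}^m\times\mathbb R^m$. If $(h_n)\to\infty$, then after passing to a subsequence at least one coordinate escapes: some $\ell_i\to0$, some $\ell_i\to\infty$, or some $|\tau_i|\to\infty$ with the lengths staying bounded. I would then split according to the systole $\mathrm{sys}(h_n)$ (the length of the shortest closed geodesic). If $\mathrm{sys}(h_n)\to0$ along the subsequence — which happens automatically when $\ell_i\to0$, and also when $\ell_i\to\infty$, since by the Collar lemma for cone surfaces \cite{DP} together with the fixed-angle hypothesis a uniformly thick cone surface of fixed area (the area being fixed by Gauss--Bonnet) has bounded diameter and hence uniformly bounded closed geodesics — then the conclusion is immediate: choosing $N$ with $\mathrm{sys}(h_N)$ small and letting $\gamma_k$ be the systole geodesic of $h_N$ gives $\ell_{\gamma_k}(h_N)=\mathrm{sys}(h_N)$ while $\ell_{\gamma_k}(h_0)\ge \mathrm{sys}(h_0)>0$, so the ratio tends to $0$.

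The remaining, and genuinely delicate, case is when the systole stays bounded below while $(h_n)$ still escapes: then all $\ell_i$ are bounded above and below, and the divergence is carried by a twist coordinate $|\tau_i|\to\infty$. Here no single curve shrinks in absolute terms, and one must exploit that in (2) the curve $\gamma_k$ may depend on $k$. I would pick a simple closed curve $\sigma$ crossing $\gamma_i$ and disjoint from the other pants curves, so that $\ell_\sigma$ depends only on the bounded geometry of the pants adjacent to $\gamma_i$ and on $\tau_i$, and set $\gamma_k=T_{\gamma_i}^{\,j_N}\sigma$, where $T_{\gamma_i}$ is the Dehn twist and $j_N$ is the integer best cancelling the geometric twist $\tau_i(h_N)$. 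By the standard convexity and growth of length under twisting, $\ell_{\gamma_k}(h_N)$ then stays bounded, whereas at the fixed metric $h_0$ the curve $T_{\gamma_i}^{\,j_N}\sigma$ is wound $|j_N|\to\infty$ times, so $\ell_{\gamma_k}(h_0)\to\infty$ and the ratio again tends to $0$. Equivalently, in this thick regime Mumford compactness for cone surfaces places the unmarked surfaces in a compact family, $h_n=\phi_n^*\tilde h_n$ with $\phi_n\to\infty$ in the mapping class group, and one takes $\gamma_k=\phi_N^{-1}(\sigma)$. I expect this twist regime to be the main obstacle, both because the freedom to let $\gamma_k$ vary with $k$ is essential there, and because one must verify that the twist-length estimates and the thickness/collar arguments remain valid uniformly near the cone points, which is precisely where the restriction $\theta_i<\pi$ is used.
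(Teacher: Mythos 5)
The paper does not actually write out a proof of this lemma; it only points to Fenchel--Nielsen coordinates and the collar lemma of Dryden--Parlier, which is exactly the toolkit you use, and your direction (2)$\Rightarrow$(1) via uniform bi-Lipschitz comparison on a compact set is correct. However, in (1)$\Rightarrow$(2) there is one genuinely false step: you claim that if a pants-curve length $\ell_i(h_n)\to\infty$ then the systole must tend to $0$, justified by ``bounded diameter and hence uniformly bounded closed geodesics.'' Bounded diameter does not bound the length of a closed geodesic in a fixed isotopy class -- a geodesic of bounded diameter can wrap many times. Indeed the implication is false: take $h_n=\phi_n^*h_0$ for $\phi_n$ high powers of a Dehn twist about a curve meeting $\gamma_i$; the surfaces stay in the thick part (systole constant), yet $\ell_{\gamma_i}(h_n)=\ell_{\phi_n(\gamma_i)}(h_0)\to\infty$. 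Consequently your case division is incomplete: ``systole bounded below'' does not imply ``all $\ell_i$ bounded above with divergence carried only by a twist coordinate.''

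The good news is that the repair is already contained in your last sentence. Split only into two cases: (a) $\mathrm{sys}(h_n)\to 0$ along a subsequence, where your systole-curve argument works verbatim; and (b) $\mathrm{sys}(h_n)\geq\varepsilon>0$ for all $n$, where Mumford compactness for cone surfaces with fixed angles less than $\pi$ (which follows from the Dryden--Parlier collar lemma together with Gauss--Bonnet) gives $h_n=\phi_n^*\tilde h_n$ with $(\tilde h_n)$ in a compact set and, since $(h_n)$ leaves every compact set, $\phi_n\to\infty$ in the mapping class group along a subsequence. Choosing $\sigma$ from a filling collection so that $\phi_N^{-1}(\sigma)$ runs through infinitely many isotopy classes, one gets $\ell_{\phi_N^{-1}(\sigma)}(h_N)=\ell_\sigma(\tilde h_N)$ bounded while $\ell_{\phi_N^{-1}(\sigma)}(h_0)\to\infty$ (only finitely many simple closed geodesics of $h_0$ lie below any length bound), so the ratio tends to $0$. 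This covers both the pure twist regime and the regime where some $\ell_i\to\infty$ in the thick part, and it makes the explicit Dehn-twist/convexity estimate unnecessary. With that restructuring your argument is complete and consistent with what the paper intends.
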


 \begin{proposition}\label{properness of the map}
         For any $K\in(-1,0)$, the map $\phi_{K}:\mathcal{T}_{\Sigma,\theta}\times \mathcal{T}_{\Sigma,\theta}\rightarrow  \mathcal{HE}_{\theta}$ is proper.
    \end{proposition}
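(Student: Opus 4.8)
The plan is to establish properness in its sequential form. Since all the spaces involved are metrizable and locally compact, it suffices to show that whenever $(h_n,h_n')$ tends to infinity in $\mathcal{T}_{\Sigma,\theta}\times\mathcal{T}_{\Sigma,\theta}$, the sequence $M_n:=\phi_K(h_n,h_n')$ tends to infinity in $\mathcal{HE}_{\theta}$. Indeed, if some compact $C\subset\mathcal{HE}_{\theta}$ had non-compact (closed) preimage, that preimage would contain a sequence with no convergent subsequence, hence not contained in any compact set, while its image would lie in $C$ and therefore could not tend to infinity. Denote by $(m_n,\lambda_n)$ the induced metric and bending lamination on $\partial_0M_n$, so that $M_n=f(m_n,\lambda_n)$; by Proposition \ref{prop: homeomorphism} the map $f$ is a homeomorphism, whence $M_n\to\infty$ as soon as $m_n\to\infty$ in $\mathcal{T}_{\Sigma,\theta}$. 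Since a sequence in a product escapes every compact set if and only if one of its factors does, I may assume that either $h_n\to\infty$ or $h_n'\to\infty$.

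Suppose first $h_n\to\infty$. The constant curvature surface $S_n\subset M_n$ produced by $\phi_K$ is closed, strictly concave (its shape operator has positive eigenvalues), and carries the induced metric $I_n=(1/|K|)h_n$; hence for every simple closed curve $\gamma$ Lemma \ref{comparison between lengths} gives $\ell_\gamma(m_n)\le\ell_\gamma(I_n)=|K|^{-1/2}\,\ell_\gamma(h_n)$. By Lemma \ref{lm:infinity}, for each $k$ there are a curve $\gamma_k$ and an index $N_k$ with $\ell_{\gamma_k}(h_{N_k})<(1/k)\,\ell_{\gamma_k}(h_0)$. As $h_0$ and $m_0$ are two fixed hyperbolic metrics with the same cone angles, the identity is bi-Lipschitz away from the cone points, so their length spectra are uniformly comparable, say $\ell_\gamma(h_0)\le C_1\,\ell_\gamma(m_0)$ for all $\gamma$. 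Combining the three estimates yields $\ell_{\gamma_k}(m_{N_k})<|K|^{-1/2}C_1\,(1/k)\,\ell_{\gamma_k}(m_0)$, so the criterion of Lemma \ref{lm:infinity} is satisfied by $(m_n)$ and therefore $m_n\to\infty$, giving $M_n\to\infty$.

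Suppose now $h_n'\to\infty$. Here I would pass to the dual de Sitter spacetime $M_n^d=\delta(M_n)$, using that $\delta:\mathcal{HE}_{\theta}\to\mathcal{DS}_{\theta}$ is a homeomorphism, so that $M_n\to\infty$ if and only if $M_n^d\to\infty$. By Theorem \ref{tm:dual-surfaces} and Proposition \ref{pr:dual}, the dual of $S_n$ is a closed strictly future-convex surface $S_n^d\subset M_n^d$ of constant curvature $K^d=K/(K+1)\in(-\infty,0)$ whose induced metric pulls back under $u$ to $\III_n=(1/|K^{*}|)h_n'$; thus $h_n'\to\infty$ is precisely the statement that the induced metric on the $K^d$-surface escapes to infinity. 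I would then run the de Sitter analogue of the previous paragraph: a convex GHM de Sitter spacetime with particles retracts onto its initial singularity by a $1$-Lipschitz (cosmological-time) map, so lengths of closed geodesics on the future-convex surface $S_n^d$ dominate the lengths of the corresponding curves on the initial singularity, and short curves on $S_n^d$ then force the de Sitter structure to degenerate, i.e. $M_n^d\to\infty$.

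The main obstacle is exactly the case $h_n'\to\infty$: the pleated boundary $\partial_0M$ only controls the induced metric $I\sim h$ of the constant curvature surface and says nothing about its third fundamental form $\III\sim h'$, so a genuinely new ingredient is required. The cleanest route is the de Sitter comparison sketched above, whose crux is to establish the $1$-Lipschitz retraction onto the initial singularity of a convex GHM de Sitter spacetime with particles — the de Sitter counterpart of Lemma \ref{comparison between lengths} — directly from the local model $dS^3_{\theta_0}$, independently of the parametrization results, so as to avoid any circularity. A secondary point demanding care is the uniformity, over the varying curves $\gamma_k$, of the bi-Lipschitz comparison between fixed cone-hyperbolic metrics, together with the behaviour of all these length comparisons near the cone points; there the hypothesis $\theta_i<\pi$ is what guarantees that the supports of the geodesics under consideration stay a definite distance away from the singular locus.
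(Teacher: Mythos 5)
Your first case ($h_n\to\infty$) is essentially the paper's argument: the $1$-Lipschitz nearest-point retraction onto the concave pleated boundary (Lemma \ref{comparison between lengths}) transfers short curves from $I_n=(1/|K|)h_n$ to $m_n$, and Lemma \ref{lm:infinity} then forces $m_n\to\infty$, contradicting the convergence of $M_n$; your explicit bi-Lipschitz comparison of the base points $h_0$ and $m_0$ makes precise a step the paper glosses over.

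The gap is in the second case, and it is not only the missing lemma you flag. Even granting a $1$-Lipschitz retraction of $M_n^d$ onto its initial singularity, the resulting inequality $\ell_{\gamma}(\mathrm{init})\le \ell_\gamma(S_n^d)$ is a \emph{lower} bound on lengths on $S_n^d$ in terms of lengths on the initial singularity --- and the initial singularity of a convex GHM de Sitter spacetime is not a hyperbolic surface but the degenerate dual of the pleated boundary $(m_n,l_n)$ (a real tree), on which the ``length'' of a closed curve is essentially its intersection number with the bending lamination and vanishes for every curve disjoint from its support. So short curves on $S_n^d$ yield no contradiction with the convergence of $M_n^d$: the comparison is vacuous exactly where you need it. What is actually required is an \emph{upper} bound on the length spectrum of the induced metric on $S_n^d$, equivalently of $\III_n$ and hence of $h_n'=|K^*|\III_n$, which suffices for precompactness in $\mathcal{T}_{\Sigma,\theta}$. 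The paper obtains this in two steps that have no counterpart in your sketch: first, an area/Gauss--Bonnet argument showing that the $K$-surfaces $\Sigma_{K,n}$ stay within a uniform distance $r$ of $\partial_0 M_n$; second, a comparison of $\III_n$ with the third fundamental form $\III_{r,n}$ of the equidistant surface at distance $r$ from $\partial_0 M_n$ --- whose data converges because $(m_n,l_n)$ converges --- carried out on the dual side by writing the de Sitter metric on the past of $S^d_{r,n}$ as $-dt^2+I^d_{t,n}$ and using that $I^d_{t,n}$ is increasing in $t$ by future-convexity, so that $I^d_n\le I^d_{r,n}$. You would need to supply both steps to close the argument.
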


\begin{proof}

Denote $\phi_K(h_n, h'_n)=(M_n, g_n)$ for $n\in\mathbb{N}$. We suppose that $(M_n,g_n)_{n\in \N}$ converges to a limit $(M,g)$, and will prove that $(h_n)_{n\in \N}$ and $(h'_n)_{n\in \N}$ must remain bounded.

It follows from the hypothesis that $(m_n)_{n\in \N}$ and $(l_n)_{n\in \N}$ remain bounded, where $m_n$ and $l_n$ are the induced metric and measured bending lamination on 
$\partial_0M_n$ for $g_n$. After extracting a subsequence, we can suppose that $(m_n)_{n\in \N}$ converges to a limit $m$, and $(l_n)_{n\in \N}$ converges to a limit $l$  (see Proposition \ref{prop: homeomorphism}), where $m$ and $l$ are the induced metric and measured bending lamination on $\partial_0M$ for $g$.

Note that the concave surface $\Sigma_{K,n}$ of constant curvature $K$ in $M_n$ has the induced metric $I_n=(1/|K|)h_n$. It follows from Lemma \ref{comparison between lengths} that 
$\ell_{\gamma}(m_n)<\ell_{\gamma}(I_n)=(1/\sqrt{|K|})\ell_{\gamma}(h_n)$
 for all simple closed curves $\gamma$ on $\Sigma$. Suppose that $(h_n)_{n\in\mathbb{N}}$ is not bounded. Combined with Lemma \ref{lm:infinity}, this shows that, for any $k>0$, there exists a simple closed curve $\gamma_{k}$ on $\Sigma$ and an integer $N>0$ (depending on $k$ and $\gamma_{k}$), such that $\ell_{\gamma_{k}}(m_N)<(k\sqrt{|K|})^{-1}\ell_{\gamma_{k}}(h_0)$. 
Applying Lemma \ref{lm:infinity} again, we find that $(m_n)_{n\in\mathbb{N}}$ tends to infinity, which leads to a contradiction.

We first note that there exists $r>0$ such that for all $x\in \Sigma_{K,n}$, the distance from $x$ to 
$\partial_0 M_n$ is at most $r$. Otherwise, there would be a sequence $(x_n)_{n\in \N}$ with $x_n\in \Sigma_{K,n}$ and 
$d_{g_n}(x_n,\partial_0M_n)\to \infty$, and this would contradict the fact that 
$I_n$, $m_n$ are converging to metrics of constant curvature, $l_n$ is converging to $l$, and the area of a concave surface in $M_n$ expands exponentially with respect to the distance $r$ along the normal flow starting from $\partial_0 M_n$.

Let $S_{r,n}$ be the set of points at distance $r$ from 
$\partial_0M_n$  for $g_n$, with the induced metric $I_{r,n}$. For all $n$, $S_{r,n}$ is a 
smooth (outside the singular locus), strictly 
 concave surface. Let $\III_{r,n}$ be 
 the third fundamental form of $I_{r,n}$. Notice that since $m_n\to m$ and $l_n\to l$, $\III_{r,n}$ must also converge to a limit $\III_{r}$.

We claim that the length spectrum of 
the third fundamental form $\III_n$ of $\Sigma_{K,n}$ is smaller than the length spectrum of 
$\III_{r,n}$ of $S_{r,n}$. This is equivalent to proving that the length spectrum of the induced metric on the dual surface $\Sigma^d_{K,n}$ in $M^d_n$, the GHM de Sitter spacetime with particles dual to $M_n$ as seen in Section \ref{sc:duality},
is smaller than the length spectrum of the induced metric on the surface $S^d_{r,n}$ dual to $S_{r,n}$. To prove this dual statement, note that the definition of the duality
shows that $S^d_{r,n}$ is the set of points a distance $r$ from the initial singularity $(\partial_0M_n)^d$ of $M_n^d$. As a consequence, the open segments of length $r$ orthogonal to $S_{r,n}^d$ in the past foliate the past of $S_{r,n}^d$, and the de Sitter metric on the past of $S_{r,n}^d$ can be written as
$$ -dt^2 + I^d_{t,n}, ~t\in (0,r)~, $$
where $I^d_{t,n}$ is the induced metric on $S_{t,n}^d$ and therefore isometric to $\III_{t,n}$.

Since the $S_{t,n}^d$ are future-convex, $I^d_{t,n}$ is increasing in $t$, and therefore $I^d_{t,n}\leq I^d_{r,n}$ for all $t\leq r$. It follows that the induced metric on the surface $\Sigma^d_{K,n}$ can be written as
$$ I^d_{n} = -dt^2+I^d_{t,n} \leq I^d_{t,n}\leq I^d_{r,n}~, $$
where we are using the identification between $\Sigma_{K,n}^d$, $S^d_{t,n}$ and $S^d_{r,n}$ through the normal flow of the $(S^d_{t,n})_{t\in (0,r)}$. Here $t$ is the function defined on $\Sigma^d_{K,n}$ as the distance to the initial singularity of $M^d_n$.

We have now established that the length spectrum of $\III_{n}$ is smaller than that of $\III_{r,n}$, and so uniformly bounded. This shows that, after extracting a subsequence, $(\III_n)_{n\in \N}$ converges to a limit. 
Recall that in Lemma \ref{definition of the map} we showed that $h'_n=|K^*|\III_n$, where $K^*=K/(1+K)$. Therefore, $(h'_n)_{n\in \N}$ also converges to a limit.
 \end{proof}

\emph{Proof of Proposition \ref{prop:parametrization map}}.
By Proposition \ref{prop: homeomorphism}, $\mathcal{HE}_{\theta}$ is homeomorphic to $T^{*}\mathcal{T}_{\Sigma,\theta}$. Therefore $\mathcal{T}_{\Sigma,\theta}\times\mathcal{T}_{\Sigma,\theta}$ and $\mathcal{HE}_{\theta}$ are both simply connected. Note that $\mathcal{T}_{\Sigma,\theta}\times\mathcal{T}_{\Sigma,\theta}$ and $\mathcal{HE}_{\theta}$ have the same dimension and have no boundary. Combined with Proposition \ref{injectivity of the map}, Proposition \ref{Continuity of the map}, and Proposition \ref{properness of the map}, it follows that $\phi_{K}$ is a homeomorphism.

  \subsection{The convergence of K-surfaces}
  Fix a hyperbolic end $M$ with particles. By Proposition \ref{prop:parametrization map}, $M$ contains a locally concave surface $S_K$ of constant curvature $K$ for all $K\in(-1,0)$ (since $\phi_K$ is surjective). Furthermore, the constant curvature $K$-surface in $M$ is unique (since $\phi_k$ is injective) and distinct constant curvature $K$-surfaces are disjoint from each other (this follows from Proposition \ref{curvatures and positions of surfaces}).

  To show that $M$ admits a foliation by locally concave constant curvature surfaces, it suffices to prove that the union of constant curvature $K$-surfaces $S_K$ over all $K\in(-1,0)$ is exactly $M$. In particular, we show that the sequence $(S_{K_n})_{n\in\mathbb{N}}$ of constant curvature $K_n$-surfaces in $M$ converges to $S_{K}$ in the $C^2$-topology if $K_n\rightarrow K\in(-1,0)$.

  Note that the singularities on a constant curvature surface in $M$ behave like ``umbilical'' points and the cone angles are less than $\pi$, the theorem given by F. Labourie  \cite[Theorem D]{Lab89} (which describes a degenerating phenomenon of a sequence of isometric embedding of a surface with the determinants of second fundamental form bounded below by $\varepsilon>0$ in a Riemannian 3-manifold with sectional curvature less than $K_0$ for a real number $K_0$) can be generalized to the following case of hyperbolic ends with cone singularities.

\begin{theorem}\label{thm:degenerate}
Let $M$ be a hyperbolic end with particles and let $S_{n}$ be a sequence of surfaces in $M$ with the determinants of second fundamental forms bounded below by $\varepsilon>0$, with the induced metric $g_n$. Let $f_n$ be an embedding of the prescribed surface $\Sigma$ into $M$ with the image $f_n(\Sigma)=S_n$. Assume that $f_n^{*}(g_n)$ converges to a Riemannian metric $g_{\infty}$ in the $\mathcal{C}^2$-topology, and $f_n$ converges to an embedding $f_{\infty}:\Sigma\rightarrow M$ in the $\mathcal{C}^{0}$-topology but not in the $\mathcal{C}^{3}$-topology (outside the singular locus), then there exists a complete geodesic $\gamma$ of $(\Sigma, g_{\infty})$ such that $f_{\infty}|_{\gamma}$ is an isometry from $\gamma$ into a geodesic of $M$.
\end{theorem}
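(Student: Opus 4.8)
The plan is to deduce the statement from Labourie's compactness and rigidity theorem \cite[Theorem D]{Lab89} applied on the regular part $M_r$, where the ambient metric is smooth and of constant sectional curvature $-1$, and to rule out that the loss of $\mathcal{C}^3$ convergence is concentrated at the particles. Recall that in the non-singular setting Labourie's result asserts precisely that, for a sequence of immersions with product of principal curvatures bounded below by $\varepsilon$ whose induced metrics converge in $\mathcal{C}^2$ and which converge in $\mathcal{C}^0$ but not in $\mathcal{C}^3$, the set where $\mathcal{C}^3$ convergence fails is nonempty and is locally foliated by geodesic segments of the limiting metric, each mapped isometrically by $f_\infty$ onto a geodesic segment of the ambient manifold; the ellipticity of the Gauss--Codazzi system (guaranteed by the lower bound $\varepsilon>0$) forces the degeneration to occur precisely along such geodesics.

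First I would apply this directly on $\Sigma_{\mathfrak{p}}=\Sigma\setminus\{p_1,\dots,p_{n_0}\}$. Since the hypotheses of Theorem \ref{thm:degenerate} (convergence of $f_n^*(g_n)$ in $\mathcal{C}^2$, of $f_n$ in $\mathcal{C}^0$, and the lower bound $\varepsilon$ on the determinants of the second fundamental forms) are local, and $M_r$ is a smooth Riemannian $3$-manifold with sectional curvature $-1\le K_0$, Labourie's theorem applies verbatim on every relatively compact open subset of $\Sigma_{\mathfrak{p}}$. Denoting by $\mathcal{F}\subset\Sigma_{\mathfrak{p}}$ the set of points at which $(f_n)$ does not converge in $\mathcal{C}^3$, we obtain that $\mathcal{F}$ is, locally, a union of geodesic segments of $(\Sigma,g_\infty)$ on which $f_\infty$ restricts to an isometry onto a geodesic of $M_r$, and that no such segment terminates in the interior of $\Sigma_{\mathfrak{p}}$.

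The heart of the argument, and the point where the cone angle condition $\theta_i<\pi$ enters, is to show that $\mathcal{F}$ stays at a definite distance from each marked point, i.e. that near the particles $(f_n)$ converges in $\mathcal{C}^3$. Here I would work in the local model $\mathbb{H}^3_{\theta_i}$ and use that each $S_n$ meets the singular curve $l_i$ orthogonally. Because the rotation of angle $\theta_i$ around the axis is an ambient isometry and $\theta_i<\pi$, an embedded surface orthogonal to the axis with bounded geometry is asymptotically umbilic at its intersection with $l_i$: the two principal curvatures have a common limit there. Combined with the lower bound $\varepsilon$ on the product $\det B_n$ of the principal curvatures and the Gauss equation $\det B_n=1+K_{g_n}$ (which, since $g_n$ converges in $\mathcal{C}^2$, bounds this product from above as well), this yields uniform two-sided bounds on the principal curvatures of $S_n$ on a fixed punctured neighbourhood $V_i\setminus\{p_i\}$ of each $p_i$, independent of $n$. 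In particular the ``ridge'' degeneration of Labourie's theorem --- where one principal curvature blows up while the product stays bounded below --- cannot occur near $p_i$. Standard interior elliptic estimates for the embedding equation then upgrade the resulting uniform $\mathcal{C}^2$ bounds to $\mathcal{C}^3$ convergence on $V_i\setminus\{p_i\}$, so that $\mathcal{F}\cap V_i=\varnothing$. I expect this uniform umbilicity near the particles, and its quantitative control in terms of $\theta_i<\pi$, to be the main obstacle.

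Finally I would combine the two steps. By hypothesis $(f_n)$ does not converge in $\mathcal{C}^3$, so $\mathcal{F}\neq\varnothing$; by the previous step, $\mathcal{F}$ is contained in the complement of the neighbourhoods $V_i$, hence bounded away from all cone points. Choosing a point of $\mathcal{F}$ and the geodesic segment of $(\Sigma,g_\infty)$ through it provided by Labourie's local structure, this geodesic can neither end in the interior of $\Sigma_{\mathfrak{p}}$ nor reach a cone point; since $(\Sigma,g_\infty)$ is a closed cone surface with all angles less than $\pi$ (so that geodesics missing the cone points extend indefinitely), it extends to a complete geodesic $\gamma$ of $(\Sigma,g_\infty)$. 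By construction $f_\infty|_\gamma$ is an isometry onto a geodesic of $M$, which is the desired conclusion.
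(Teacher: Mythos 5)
Your proposal follows essentially the same route as the paper: the paper gives no detailed proof of this theorem, only the remark preceding it that Labourie's Theorem D generalizes because the surfaces behave umbilically at the cone points and the angles are less than $\pi$ --- which is exactly the two-step structure (Labourie's local structure theorem on the regular part, plus exclusion of the degeneration locus from neighbourhoods of the particles) that you spell out. The one point to watch is that the paper derives the asymptotic umbilicity from the constant-curvature surfaces to which the theorem is actually applied (via the last item of Proposition \ref{Toulisse2}), rather than claiming, as you do, that orthogonality to the singular axis alone forces it; the uniformity in $n$ of the principal-curvature bounds near each $p_i$, which you yourself flag as the main obstacle, is likewise left unaddressed in the paper.
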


  \begin{lemma}\label{complete geodesics}
    Let $M$ be a hyperbolic manifold with particles which has a concave metric boundary. Assume that $\bar{M}$ contains a complete geodesic $\gamma$ which stays in a bounded distance from $\partial_0M$, then $\gamma$ lies on the metric boundary $\partial_0M$.
    \end{lemma}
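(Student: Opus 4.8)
The plan is to study the function $\rho = d(\cdot, \partial_0 M)$ measuring the distance to the metric boundary, to show that it is convex along geodesics of $M$, and then to invoke the elementary fact that a bounded convex function along a complete geodesic must be constant; the case of a positive constant will be ruled out by strict convexity. First I would recall from Lemma \ref{lm:exp} that $\exp : N\partial_0 M \to M$ is a homeomorphism, so that $\rho$ is well-defined on all of $\bar M$ and its level sets $S_r = \{\rho = r\}$ are exactly the equidistant surfaces obtained by pushing $\partial_0 M$ a distance $r$ along the normal geodesics toward infinity. Next I would establish that $\rho$ is convex along every geodesic of $M$. Where $\partial_0 M$ is totally geodesic its principal curvatures vanish, so by Lemma \ref{the property of the pushing surface} (equivalently by the normal-flow formulas \eqref{eq:equidistant and infinity}) each $S_r$ with $r>0$ has principal curvatures $\tanh r \in (0,1)$ with respect to the normal $n$ pointing toward infinity; more generally, since $\partial_0 M$ is concave its principal curvatures are non-negative (Definition \ref{def of concavity of surfaces}), and pushing forward gives eigenvalues $(\lambda + \tanh r)/(1 + \lambda \tanh r) \in (0,1)$, so every $S_r$ with $r>0$ is strictly concave. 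Because $\nabla \rho = n$ and the restriction of $\Hess\rho$ to $TS_r$ equals the shape operator $B_r$, which is positive definite, the function $\rho$ is convex along geodesics and strictly convex on the region $\{\rho > 0\}$.

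Granting this convexity, the restriction $\rho\circ\gamma : \R \to \R_{\ge 0}$ is a convex function which is bounded by hypothesis. A convex function on all of $\R$ that is bounded above is necessarily constant, so $\rho\circ\gamma \equiv c$ for some $c\ge 0$. If $c > 0$, then $\gamma$ lies in the strictly concave surface $S_c$, so its velocity $\gamma'$ is everywhere tangent to $S_c$ (its normal component vanishes, as $d\rho(\gamma') = (\rho\circ\gamma)' = 0$); since $\gamma$ is a geodesic, $(\rho\circ\gamma)'' = \Hess\rho(\gamma',\gamma')$, and the right-hand side is strictly positive by strict convexity on $\{\rho>0\}$, contradicting that $\rho\circ\gamma$ is constant. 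Hence $c = 0$, which is precisely the assertion that $\gamma \subset \partial_0 M$.

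The main obstacle is regularity: $\rho$ fails to be $C^2$ along the bending locus of the pleated boundary $\partial_0 M$, where the equidistant surfaces develop rounded ridges, and one must also control the behaviour near the particles. I would handle the bending locus by interpreting convexity in the support (barrier) sense: over a bending line the equidistant surface is a convex limit of the two adjacent totally geodesic sheets and is therefore even ``more convex,'' so $\rho$ admits convex lower barriers everywhere and the conclusion ``bounded convex on $\R$ implies constant'' is unaffected; likewise the strict inequality $\Hess\rho(\gamma',\gamma')>0$ persists as a barrier inequality and still contradicts constancy. For the singular locus I would use that the cone angles are less than $\pi$ and that the particles meet $\partial_0 M$ orthogonally: in the local model $\mathbb{H}^3_{\theta_0}$ the equidistant surfaces remain strictly convex across the particle, the cone angle only concentrating additional convexity, so convexity of $\rho$ survives. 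Finally, a complete geodesic staying at bounded distance from $\partial_0 M$ cannot run along a particle, since the particles escape to infinity. These local verifications, rather than any global difficulty, are where the real work lies.
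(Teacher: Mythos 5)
Your proof is correct and follows essentially the same route as the paper: the paper also exploits Lemma \ref{lm:exp} and the concavity of $\partial_0M$ to get a convexity inequality for the distance to the boundary (it works with $u=\sinh d(\cdot,\partial_0M)$, which satisfies $\Hess(u)\geq ug$ in the distributional sense by \cite[Lemma A.12]{MoS}, so that $(u\circ\gamma)''\geq u\circ\gamma$ and the maximum principle forces $u\circ\gamma\equiv 0$), whereas you use $d$ itself together with the strict concavity of the equidistant surfaces to rule out a positive constant value. The only caveat is that your phrase ``strictly convex on $\{\rho>0\}$'' is literally false ($\Hess\rho$ vanishes in the normal direction), but your application only uses strict positivity of $\Hess\rho(\gamma',\gamma')$ for $\gamma'$ tangent to a level set, which is what actually holds.
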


     \begin{proof}
    Consider a function $u:\gamma\rightarrow \mathbb{R}_{\geq 0}$ defined by
    \begin{equation*}
    u(x)=\sinh d(x,\partial_0M).
    \end{equation*}
    Denote by $g$ the metric on $\bar{M}$. It is known that $u$ satisfies the equality $\Hess(u)\geq ug$ in the distributional sense (see e.g. \cite[Lemma A.12]{MoS}), since $\partial_0M$ is concave and the map 
    $\exp:  N\partial_0M\rightarrow M$  is a homeomorphism (see Lemma \ref{lm:exp}). Assume that $\gamma$ is a geodesic parameterized by arclength, then $(u\circ \gamma)''\geq u\circ\gamma$. Note that $\gamma$ stays at bounded distance from $\partial_0M$.  Applying the maximum principle, we obtain that $u\circ\gamma=0$ for all $t\in\mathbb{R}$. Therefore, the complete geodesic $\gamma$ lies on the metric boundary $\partial_0M$.
    \end{proof}

\begin{lemma}\label{lm:convergence of k-surfaces}
Let $(M,g)$ be a hyperbolic end with particles. Let $(S_{K_n})_{n\in\mathbb{N}}$ be a sequence of locally concave surfaces in $M$ of constant curvature $K_n\in(-1,0)$. Then the following statements hold.

\begin{enumerate}[(1)]
  \item If $K_n\rightarrow K\in[-1,0)$ with $K_n\not =K$ for any $n\in\mathbb{N}$, then the sequence $(S_{K_n})_{n\in\mathbb{N}}$ converges to $S_K$ in the compact-open topology (or $C^0$-topology). Moreover, if $K\in(-1,0)$, then the sequence $(S_{K_n})_{n\in\mathbb{N}}$ converges to $S_K$ in the $\mathcal{C}^2$-topology (outside the singular locus).
  \item If $K_n\rightarrow0$, then the (least) distance from the surface $S_{K_n}$ to the metric boundary $\partial_0M$ tends to infinity as $n\rightarrow \infty$.
\end{enumerate}
\end{lemma}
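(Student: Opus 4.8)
The plan is to treat the two statements separately, with the monotone nesting of constant-curvature surfaces from Proposition \ref{curvatures and positions of surfaces} as the backbone. Via the homeomorphism $\exp\colon N\partial_0M\to M$ of Lemma \ref{lm:exp}, every closed concave surface orthogonal to the singular locus is a graph $x\mapsto\exp(x,u(x)n_x)$ over $\partial_0M$, and by Proposition \ref{curvatures and positions of surfaces} the graph functions $u_{K_n}$ are strictly monotone in $K_n$. For the $C^0$-convergence in $(1)$ I would first pass to a monotone subsequence, so that the $u_{K_n}$ are monotone in $n$. When $K\in(-1,0)$ they are squeezed, for large $n$, between $u_{K-\varepsilon}$ and $u_{K+\varepsilon}$, hence uniformly bounded and convergent pointwise to some $u_\infty$; once $u_\infty$ is identified as the continuous graph of $S_K$, Dini's theorem upgrades this to uniform convergence. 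To identify the limit I would show that the data $(\tau_n,\tau_n')=\phi_{K_n}^{-1}(M)$ converges to $\phi_K^{-1}(M)$. The induced metrics $I_n=(1/|K_n|)h_n$ satisfy $\ell_\gamma(I_n)>\ell_\gamma(m)$ by Lemma \ref{comparison between lengths} ($m$ being the fixed induced metric on $\partial_0M$), so the systole of $h_n$ is bounded below and $(\tau_n)$ stays in a compact subset of $\mathcal T_{\Sigma,\theta}$ by Lemma \ref{lm:infinity}; an analogous comparison in the dual de Sitter spacetime $M^d$ of Section \ref{sc:duality}, where $\III_n$ is the induced metric of the dual surface of constant curvature $K_n^d=K_n/(K_n+1)$ — bounded away from $0$ and $-\infty$ by Proposition \ref{pr:dual} — bounds $(\tau_n')$ as well. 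Passing to a convergent subsequence $(\tau_n,\tau_n')\to(\sigma,\sigma')$, and using that the construction of Lemma \ref{definition of the map} depends continuously on $(K,\tau,\tau')$ (the proof of Proposition \ref{Continuity of the map} applies verbatim, jointly in $K$), the relation $\phi_{K_n}(\tau_n,\tau_n')=M$ passes to $\phi_K(\sigma,\sigma')=M$, whence $(\sigma,\sigma')=\phi_K^{-1}(M)$ by injectivity (Proposition \ref{injectivity of the map}). Since every subsequential limit is the same, $(\tau_n,\tau_n')\to\phi_K^{-1}(M)$, so the limiting graph is that of $S_K$.

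For the $C^2$-refinement when $K\in(-1,0)$, the previous step already yields $f_n^\ast g=I_n\to I_K$ in $C^2$ off the cone points (convergence in $\mathcal T_{\Sigma,\theta}$ gives smooth convergence of the hyperbolic representatives), together with $f_n\to f_\infty$ in $C^0$, where $f_\infty$ parametrizes $S_K$. Since the determinant $\det(\II_n)=1+K_n$ of the second fundamental form is bounded below by a positive constant near $K$, Theorem \ref{thm:degenerate} applies and gives either $C^3$-convergence — which is what is wanted — or a complete geodesic $\gamma$ of $(\Sigma,I_K)$ such that $f_\infty|_\gamma$ is an isometry onto a geodesic of $M$. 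In the latter case $f_\infty(\gamma)\subset S_K$ is compact, hence stays at bounded distance from $\partial_0M$, so Lemma \ref{complete geodesics} forces $f_\infty(\gamma)\subset\partial_0M$; this contradicts the fact that $S_K$ is strictly concave and lies strictly on the concave side of $\partial_0M$, hence at positive distance from it. The degenerate alternative is thus excluded and the convergence is $C^2$.

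For the remaining case $K=-1$, Theorem \ref{thm:degenerate} is unavailable because $\det(\II_n)=1+K_n\to0$, which is exactly why only $C^0$-convergence is asserted; here I would argue by areas. By the Gauss--Bonnet formula for cone surfaces (\cite[Proposition 1]{Troyanov}) together with \eqref{Topological condition}, one has $\area(I_n)=2\pi|\chi_{\mathrm{eff}}|/|K_n|\to 2\pi|\chi_{\mathrm{eff}}|=\area(m)$ from above. If the monotone (decreasing) limit $S_\infty$ were at positive distance from $\partial_0M$ on some open set, then the nearest-point projection $r\colon M\to\partial_0M$, being $1$-Lipschitz and strictly area-decreasing at positive distance (areas of concave surfaces contract exponentially toward $\partial_0M$), would give $\area(S_\infty)>\area(m)$, whereas lower semicontinuity of area under this monotone graph convergence gives $\area(S_\infty)\le\liminf\area(I_n)=\area(m)$, a contradiction. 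Hence $u_\infty\equiv0$, i.e. $S_{K_n}\to\partial_0M$ in $C^0$, and Dini's theorem again yields uniformity.

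Finally, for $(2)$ I would produce a direct lower bound on the least distance $\delta_n=d(S_{K_n},\partial_0M)$. Let $x_n\in S_{K_n}$ realize this minimum, let $\bar x_n\in\partial_0M$ be its foot point, and let $P_n$ be a totally geodesic support plane of $\partial_0M$ at $\bar x_n$ orthogonal to the normal geodesic $[x_n,\bar x_n]$. Since $\partial_0M$ separates $P_n$ from the side containing $S_{K_n}$, one has $d(\cdot,P_n)\ge\delta_n$ on $S_{K_n}$, with equality at $x_n$; thus the equidistant surface to $P_n$ at distance $\delta_n$, which is umbilic with principal curvatures $\tanh\delta_n$, is tangent to $S_{K_n}$ at $x_n$ and lies on its non-concave side. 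The maximum principle (Theorem \ref{Maximum principle for regular points}) then gives $1+K_n=\det(\II_{S_{K_n}})(x_n)\le\tanh^2\delta_n$, so $\tanh^2\delta_n\ge 1+K_n\to1$, forcing $\delta_n\to\infty$ as $K_n\to0$ (the case where $x_n$ is a cone point being treated, as in Lemma \ref{lm:injective}, through the umbilic behaviour at singular points and Lemma \ref{comparison between principal curvatures at singularities}). I expect the main obstacle to be the $C^2$-step: securing genuine $C^2$-convergence of the pulled-back metrics so that Theorem \ref{thm:degenerate} is applicable, and then excluding the degenerate limiting geodesic; by contrast the comparison arguments for $K=-1$ and for $(2)$ are essentially soft once the maximum principle is set up against totally geodesic barriers rather than against the equidistant surfaces of the pleated boundary.
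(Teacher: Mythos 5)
The one step that does not hold as written is the identification of the $C^0$ limit in statement (1). From $(\tau_n,\tau_n')=\phi_{K_n}^{-1}(M)\to\phi_K^{-1}(M)$ you conclude ``so the limiting graph is that of $S_K$'', but convergence of the intrinsic data (induced metrics and third fundamental forms, up to isotopy) says nothing a priori about where the surfaces $S_{K_n}$ sit inside the \emph{fixed} end $M$: the pointwise limit $S_\infty$ of the graph functions is not yet known to be a surface of curvature $K$ (you only have pointwise graph convergence at that stage), so you cannot invoke uniqueness of $K$-surfaces to equate it with $S_K$. The squeeze $u_{K-\varepsilon}\le u_\infty\le u_{K+\varepsilon}$ cannot close this either, since $\sup_\varepsilon u_{K-\varepsilon}=\inf_\varepsilon u_{K+\varepsilon}$ is precisely the continuity in $K$ you are trying to establish. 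The gap is fillable in two ways: by the paper's argument --- Gauss--Bonnet gives $\area(S_{K_n})=(2\pi/K_n)\,\chi(\Sigma,\theta)\to\area(S_K)$, and strict monotonicity of area along the normal flow then pins the limit graph to that of $S_K$ --- or by reordering your own steps: run the degeneration argument with $f_\infty$ parametrizing the \emph{unidentified} $S_\infty$ (which is at positive distance from $\partial_0M$ because $u_\infty\ge u_{K-\varepsilon}>0$), exclude the degenerate alternative of Theorem \ref{thm:degenerate} via Lemma \ref{complete geodesics}, get $\mathcal{C}^3$ convergence, deduce that $S_\infty$ has constant curvature $K$, and only then apply uniqueness. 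As written, you identify the limit before the regularity step, which is the wrong logical order.

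Apart from this, your route is genuinely different from the paper's in two places, and correct in substance. For the $C^0$ part the paper treats all $K\in[-1,0)$ at once by the area argument above, whereas your monotone-graph-plus-Dini scheme (with the separate projection/area argument at $K=-1$) reaches the same place with heavier machinery (joint continuity of $\phi_K$ in $K$, properness-type bounds via the dual de Sitter spacetime). For statement (2) the paper compares $S_{K_n}$ with the globally defined pushed surface $\psi^T(S_{K_1})$ using Lemma \ref{the property of the pushing surface} and Lemma \ref{lm:injective}; your barrier --- the umbilic equidistant surface of a totally geodesic support plane at the foot point of the minimal distance --- yields the cleaner inequality $1+K_n\le\tanh^2\delta_n$, but it needs a localization you gloss over: the support plane is only a local object, so the comparison $d(\cdot,P_n)\ge\delta_n$ near $x_n$ must be carried out in a developed image of the normal fence over a neighbourhood of $\bar x_n$ (using Lemma \ref{lm:exp} and the concavity of $\partial_0M$ to keep the developed boundary on one side of the developed plane along a segment of length $\delta_n\to\infty$). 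The $\mathcal{C}^2$ step coincides with the paper's.
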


\begin{proof}
\emph{Proof of Statement (1)}:
Denote by $\Phi$ the $\partial_{\infty}M$-directed normal flow,  given by the exponential map $\exp: N\partial_0M\rightarrow M$  (see the map $\exp$ in Lemma \ref{lm:exp}). By the  Gauss-Bonnet formula for surfaces with cone singularities (see e.g. \cite[Propositon 1]{Troyanov}), the area of $S_{K_n}$ is equal to $(2\pi/K_{n})\,\chi(\Sigma,\theta)$, where
 \begin{equation*}
 \chi(\Sigma,\theta)=\chi(\Sigma)+\sum\limits_{i=1}^{n_0}(\theta_{i}/2\pi-1)<0.
 \end{equation*}
 Therefore, $Area(S_{K_n})\rightarrow (2\pi/K)\,\chi(\Sigma,\theta)=Area(S_K)$ as $n\rightarrow \infty$, where $K\in[-1,0)$.

 We claim that $S_{K_n}$ converges to $S_K$ in the compact-open topology as $n\rightarrow \infty$.  Indeed, we first fix an embedding map $f_{\infty}:\Sigma\rightarrow M$ such that $f_{\infty}(\Sigma)=S_{K}$. Then let $f_n:\Sigma\rightarrow M$ be the embedding map compatible with the flow $\Phi$, that is, the map $f_{n}\circ f_{\infty}^{-1}: S_K\rightarrow S_{K_n}$ coincides with the homeomorphism from $S_K$ to $S_{K_n}$ induced by the flow $\Phi$ for all $n\in\mathbb{N}$.  Suppose that there exists a compact subset $U\subset\Sigma$, such that the sequence $(f_n(U))_{n\in\mathbb{N}}$ does not converge to $f_{\infty}(U)$ in $M$. Then there exists a neighborhood $V$ of $f_{\infty}(U)$ in $M$ such that we can find a subsequence $(f_{n_k})_{k\in\mathbb{N}}$ with $f_{n_k}(U)$ disjoint from $V$ for all $k\in\mathbb{N}$. By Proposition \ref{curvatures and positions of surfaces}, there exists an integer $N>0$, such that $f_{n}(U)$ is disjoint from $V$ for $n\geq N$, and $S_{K_n}$ is disjoint from $S_K$ for all $K_n\not =K$. Combined with the construction of $f_n$,  the distance from $f_{\infty}(U)\subset S_K$ to $f_n(U)\subset S_{K_n}$ along the flow $\Phi$ is bigger than a positive number $r_0$ for all $n\geq N$. Note that the induced metric by $M$ is strictly increasing along the normal flow $\Phi$. This implies that the sequence $(|Area(S_K)-Area(S_{K_n})|)_{n\in\mathbb{N}^+}$ does not converge to zero, which leads to a contradiction.

 Now we show that $(S_{K_n})_{n\in\mathbb{N}}$ converges to $S_K$ in the $C^2$-topology for all $K\in(-1,0)$. Denote by $g_n$ the induced metric on $S_{K_n}$ for all $n\in\mathbb{N}$. Note that $S_{K_n}$ is orthogonal to the singular lines $l_{k}$ (which are homeomorphic to $\{p_{k}\}\times \mathbb{R}$) and the angle of the singularity on $S_{K_n}$ at the intersection with $l_{k}$ is $\theta_{k}\in(0,\pi)$ for $k=1,...,n_0$.
        Therefore, the metrics $g_{n}$ can be written as follows:
        \begin{equation*}
        g_{n}=(1/|K_{n}|)\widehat{g_{n}},
        \end{equation*}
        where $\widehat{g_{n}}\in\mathfrak{M}^{\theta}_{-1}$ for all $n\in\mathbb{N}^+$.

         For convenience, we assume that $S_{K_0}=\partial_0M$, that is, $K_0=-1$. By Lemma \ref{comparison between lengths},
         for any simple closed curve $\gamma$ on $\Sigma$,
         we have
         \begin{equation*}
         \ell_{f_{n}(\gamma)}(g_{i})\geq\ell_{f_{0}(\gamma)}(g_{0}),
         \end{equation*}
         for all $n\in\mathbb{N}$.
         Note that $K_{n}$ converges to $K\in(-1, 0)$. Then
         \begin{equation*}
            \ell_{f_{n}(\gamma)}(\widehat{g_{n}})
         =\ell_{f_{n}(\gamma)}(|K_{n}|g_{n})
         =\sqrt{|K_{n}|}\ell_{f_{n}(\gamma)}(g_{n})
         \geq\sqrt{|K|}\,\ell_{f_{0}(\gamma)}(g_{0})
         =\sqrt{K/K_{0}}\,\ell_{f_{0}(\gamma)}(\widehat{g_{0}}),
         \end{equation*}
         for all $n\in\mathbb{N}$. Here $K/K_{0}<1$.

         Denote by $f_{n}^{*}(\widehat{g_{n}})$ the pull-back metric on $\Sigma$
         of $\widehat{g_{n}}$ under $f_{n}$ and still denote by $f_{n}^{*}(\widehat{g_{n}})$ its isotopy class in $\mathcal{T}_{\Sigma,\theta}$ for all $n\in\mathbb{N}$.
         For any simple closed curve $\gamma$ on $\Sigma$, we get
         \begin{equation*}
         \ell_{\gamma}(f_{n}^{*}(\widehat{g_{n}})) \geq\sqrt{K/K_{0}}\,\ell_{\gamma}({f_{0}}^{*}(\widehat{g_{0}})).
         \end{equation*}

         By Lemma \ref{lm:infinity}, the set $\{f_{n}^{*}(\widehat{g_{n}}):n\in\mathbb{N}\}$ is compact in $\mathcal{T}_{\Sigma,\theta}$. Therefore, up to extracting a subsequence, $(f_{n}^{*}(\widehat{g_{n}}))_{n\in \N}$ converges in $\mathcal{T}_{\Sigma,\theta}$. Note that $(f_n)_{n\in\mathbb{N}}$ is compatible with the flow $\Phi$. $(f_{n}^{*}(\widehat{g_{n}}))_{n\in \N}$ converges to
         $f_{\infty}^{*}(\widehat{g_K})$ in the $\mathcal{C}^{2}$-topology (outside the singular locus), where $\widehat{g_K}=|K|\,g_K$ and $g_K$ is the induced metric on $S_K$ in $M$. In particular, $f_{n}^{*}(g_{n})$ converges to $g_{\infty}=f_{\infty}^{*}(g_K)$ in the $\mathcal{C}^{2}$-topology (outside the singular locus). Note that $\Sigma$ is compact and by the above result we have $f_n$ converges to $f_{\infty}$ in the $\mathcal{C}^0$-topology.

        We claim that $f_n$ converges to $f_{\infty}$ in the $\mathcal{C}^{3}$-topology. Otherwise, it follows from Theorem \ref{thm:degenerate} that there exists a complete geodesic $\gamma$ of $(\Sigma, g_{\infty})$ such that $f_{\infty}|_{\gamma}$ is an isometry from $\gamma$ into a geodesic of $(M,g)$. Note that the geodesic 
        $f_{\infty}(\gamma)$ lies on $S_K$ and thus stays in a bounded distance from $\partial_0M$. Combined with Lemma \ref{complete geodesics}, 
        $f_{\infty}(\gamma)$ is contained in $\partial_0M$ which is disjoint from $S_K$. This leads to a contradiction. Therefore, Statement (1) follows.

 \emph{Proof of Statement (2)}: We first fix the surface $S_{K_1}$ and denote $S=S_{K_1}$. Consider a map $\psi^{t}:S\rightarrow M$ defined by $\psi^{t}(x)=\exp_{x}(t\cdot n_x)$, where $n_x$ is the $\partial_{\infty }M$-directed unit normal vector at $x$ of $S$ in $M$. For any $T>0$, we denote $S^{T}=\psi^T(S)$ and denote by $\lambda^{T}$, $\mu^{T}$ the principal curvatures of $S^{T}$. By Lemma \ref{the property of the pushing surface}, the principal curvatures of $S^{T}$ are
  \begin{equation*}
  \lambda^{T}(\psi^{T}(x))
  =\frac{\lambda(x)+\tanh (T)}{1+\lambda(x)\tanh (T)},\qquad
  \mu^{T}(\psi^{T}(x))
  =\frac{\mu(x)+\tanh (T)}{1+\mu(x)\tanh (T)},
  \end{equation*}
  where $\lambda(x)$ and $\mu(x)$ are the principal curvatures of $S$ at $x$.

  Let $C^{T}=\sup_{y\in S^{T}}\lambda^{T}(y)\mu^{T}(y)$. Then $\lambda^{t}(\psi^{t}(x))\mu^{t}(\psi^{t}(x))$ increasingly tends to 1 as $t\rightarrow +\infty$ for all $x\in S$. Note that $S^{T}$ is also locally concave and compact, so $C^{T}\in(0,1)$.

  By assumption, $K_n\rightarrow 0$. Therefore there exists $N_{T}>0$ (depending only on $T$) such that for all $n\geq N_{T}$, we have
  \begin{equation*}
  -1+C^T<K_n<0.
  \end{equation*}

  Note that $S^{T}$ and $S_{K_n}$ have constant curvature and behave ``umbilically" at singular points. It follows from  Lemma \ref{lm:injective} that $S_{K_n}$ is on the concave side of $S^{T}$ for all $n\geq N_T$. Observe that $C_T\rightarrow 1$ as $T\rightarrow +\infty$, and the distance from $S^T$ to $\partial_0 M$ tends to infinity as $T\rightarrow +\infty$. Combined with the result above, the distance from $S_{K_n}$ to $\partial_0 M$ tends to infinity as $n\rightarrow \infty$.
\end{proof}

The following corollary is a direct consequence of Proposition \ref{curvatures and positions of surfaces} and Lemma \ref{lm:convergence of k-surfaces}.

\begin{corollary}\label{k-foliation}
Let $M$ be a hyperbolic end with particles. Then the union of the constant curvature $K$-surfaces $S_K$ in $M$ over all $K\in(-1,0)$ provides a $\mathcal{C}^2$-foliation of the regular part of $M$.
\end{corollary}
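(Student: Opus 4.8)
The plan is to combine the three structural facts already established: for each $K\in(-1,0)$ the hyperbolic end $M$ carries a \emph{unique} constant curvature $K$-surface $S_K$ (surjectivity and injectivity of $\phi_K$, Proposition \ref{prop:parametrization map}); these surfaces are totally ordered by their position (Proposition \ref{curvatures and positions of surfaces}); and they vary $\mathcal{C}^2$ with $K$ while degenerating to $\partial_0 M$ as $K\to -1$ and escaping to infinity as $K\to 0$ (Lemma \ref{lm:convergence of k-surfaces}). By Proposition \ref{curvatures and positions of surfaces}(2) distinct values of $K$ give disjoint surfaces, so the $S_K$ already form a partition of whatever subset of $M_r$ they cover. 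The substantive points are therefore to show that this subset is \emph{all} of the regular part $M_r$, and that the resulting partition is a $\mathcal{C}^2$-foliation.

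First I would prove that every regular point lies on exactly one leaf. Each $S_K$ is a closed embedded separating surface, splitting $\bar M$ into a bounded side (containing $\partial_0 M$) and a concave side (toward $\partial_\infty M$). Fix $x\in M_r$ and set $A^-=\{K:\ x\text{ is strictly on the concave side of }S_K\}$ and $A^+=\{K:\ x\text{ is strictly on the bounded side of }S_K\}$. By the ordering in Proposition \ref{curvatures and positions of surfaces}(1), $A^-$ is downward closed and $A^+$ is upward closed, so $A^-=(-1,a)$ and $A^+=(b,0)$ for some $a,b$; continuity of $K\mapsto S_K$ (Lemma \ref{lm:convergence of k-surfaces}(1)) shows these intervals are open, i.e. their endpoints are omitted. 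The two limiting regimes of Lemma \ref{lm:convergence of k-surfaces} make both nonempty: since $S_K\to\partial_0 M$ in the compact-open topology as $K\to-1$ and $x$ is interior at positive distance from $\partial_0 M$, we get $K\in A^-$ for $K$ near $-1$; since the distance from $S_K$ to $\partial_0 M$ tends to $+\infty$ as $K\to 0$ by Lemma \ref{lm:convergence of k-surfaces}(2), the fixed point $x$ lies on the bounded side of $S_K$ for $K$ near $0$, so $A^+\neq\varnothing$. An elementary dichotomy forces $a=b$: if $a<b$ then every $K\in(a,b)$ would satisfy $x\in S_K$, impossible on a whole interval since the $S_K$ are disjoint; if $a>b$ then every $K\in(b,a)$ would put $x$ strictly on \emph{both} sides. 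As $a=b$ belongs to neither open interval $A^-$ nor $A^+$, the point $x$ is not strictly on either side of $S_a$, hence $x\in S_a$. This defines a map $\kappa:M_r\to(-1,0)$ with $\kappa^{-1}(K)=S_K\cap M_r$.

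It remains to upgrade this partition into a $\mathcal{C}^2$-foliation. Here I would use the normal exponential map $\exp:N\partial_0 M\to M$, a homeomorphism by Lemma \ref{lm:exp}, as a global transversal: each normal geodesic ray meets the nested surfaces $S_K$ in increasing order of $K$, so $\kappa$ is monotone along rays, and its continuity on $M_r$ follows from the compact-open continuity of $K\mapsto S_K$ together with the total ordering. The $\mathcal{C}^2$-convergence statement in Lemma \ref{lm:convergence of k-surfaces}(1) gives that the leaves and their dependence on $K$ are $\mathcal{C}^2$ away from the singular locus; composing $\exp^{-1}$ with a local $\mathcal{C}^2$ parametrization of the family $K\mapsto S_K$ then produces foliation charts in which the leaves are the level sets of the last coordinate. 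Since we only claim a foliation of the regular part $M_r$, and the $S_K$ are orthogonal to the particles and behave ``umbilically'' there, the singular locus causes no difficulty.

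I expect the main obstacle to be the covering step — showing that the nested family sweeps out all of $M_r$ with no gaps, i.e. that at the sandwich value $a=b$ the point $x$ genuinely lands on the leaf $S_a$ rather than in a gap between leaves. This is precisely where the compact-open convergence of Lemma \ref{lm:convergence of k-surfaces}(1) is indispensable, since it is what makes $A^-$ and $A^+$ \emph{open}; without it the total order and the two limiting regimes would only locate $x$ in the closure of a gap. The second delicate input is the escape-to-infinity statement Lemma \ref{lm:convergence of k-surfaces}(2), needed to guarantee that no interior point is missed on the $\partial_\infty M$ side. With both in hand, together with Proposition \ref{curvatures and positions of surfaces}, the corollary follows, completing the proof of Theorem \ref{tm:foliation}.
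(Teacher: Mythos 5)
Your proposal is correct and follows exactly the route the paper intends: the paper dismisses this corollary as a ``direct consequence'' of Proposition \ref{curvatures and positions of surfaces} and Lemma \ref{lm:convergence of k-surfaces}, and your sandwich argument (the sets $A^\pm$ being open, nonempty, and complementary up to a single value $a=b$ at which $x\in S_a$) is precisely the content that citation leaves implicit. The only part you supply beyond the paper is the explicit construction of foliation charts via the normal exponential map, which is reasonable and consistent with the $\mathcal{C}^2$-convergence statement.
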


\emph{Proof of Theorem \ref{tm:foliation}}. As discussed in the beginning of Section 4.5, it follows directly from Proposition \ref{prop:parametrization map} and Corollary \ref{k-foliation}.

\subsection{Applications to smooth grafting}
\label{ssc:SGr}
In the non-singular case, the landslide flow is defined in \cite{BMS1} as a map $L:S^1\times \cT\times \cT\to \cT\times \cT$, sending $(e^{i\alpha}, h, h^*)$ to the left and right metrics of the unique GHM AdS spacetime containing a constant curvature surface with induced metric $\cos^2(\alpha/2)h$ and third fundamental $\sin^2(\alpha/2)h^*$.

It is also proved there that the landslide map, composed with the canonical projection on the first factor, has a complex extension as the ``smooth grafting'' map $sgr:(0,1)\times \cT\times \cT\to \cT$, sending $(r,h,h^*)$ to the conformal metric at infinity of the unique hyperbolic end containing a constant curvature surface with induced metric $\frac{(1+r)^2}{4r}h$ and third fundamental form $\frac{(1-r)^2}{4r}h^*$. This surface has constant curvature $-4r/(1+r)^2$. The map $sgr$ is obtained from another grafting map $SGr:(0,1)\times \cT\times \cT\to \cCP$ by composition on the left with the forgetful map from $\cCP$ to $\cT$.

The landslide map limits in a precise sense to the earthquake map $\cT\times \cML\to \cT$, while the smooth 
grafting map limits in a precise sense to the grafting map $\cT\times \cML\to \cT$.

The results of \cite{CS} on constant Gauss foliations in convex GHM AdS spacetimes with particles lead to an extension of the landslide flow to hyperbolic surfaces with cone singularities of angles less than $\pi$. In the same manner, the results presented here on constant curvature foliations of hyperbolic ends with particles lead directly, by extending the arguments of \cite{BMS1} without any serious change, to the definition of the smooth grafting maps $sgr_{\theta}:(0,1)\times \cT_{\Sigma,\theta}\times \cT_{\Sigma,\theta}\to \cT_{\Sigma,\theta}$ and $SGr_{\theta}:(0,1)\times \cT_{\Sigma,\theta}\times \cT_{\Sigma,\theta}\to \cCP_{\theta}$.

It can be proved, using the same arguments as in \cite{BMS1}, that:
\begin{enumerate}
\item The smooth grafting map $sgr$ provides a complex extension of the landslide map. More precisely, if $L^1:S^1\times \cT_{\Sigma,\theta}\times \cT_{\Sigma,\theta}\to \cT_{\Sigma,\theta}$ is the landslide map followed by projection on the first factor, then the ``complex landslide'' map:
  \begin{eqnarray*}
    D \times \cT_{\Sigma,\theta}\times \cT_{\Sigma,\theta} & \to & \cT_{\Sigma,\theta} \\
    (re^{i\alpha}, h, h^*) & \mapsto & sgr(r,L_{e^{i\alpha}}(h,h^*)) \\
  \end{eqnarray*}
defines a holomorphic map from the unit disk $D$ to $\cT_{\Sigma,\theta}$ extending $L^1$ to the unit disk, for any fixed $h$ and $h^*$.
\item The smooth grafting maps $sgr_{\theta}$ and $SGr_{\theta}$ limit, in the same suitable sense as in \cite{BMS1}, to the grafting maps $gr_{\theta}:\cT_{\Sigma,\theta}\times \cML_{\mathfrak{p}}\to \cT_{\Sigma,\theta}$ and $Gr_{\theta}:\cT_{\Sigma,\theta}\times \cML_{\mathfrak{p}}\to \cCP_{\theta}$.
\end{enumerate}


    \section{Foliations of de Sitter spacetimes with particles by constant curvature surfaces}

In this last section, we prove that convex GHM de Sitter spacetimes with particles admit a foliation by constant Gauss curvature surfaces orthogonal to the particles. As a consequence, for each $K^d\in (-\infty,0)$, the space of convex GHM de Sitter spacetimes with particles can be parameterized by the product of two copies of $\mathcal{T}_{\Sigma, \theta}$ in terms of constant curvature $K^d$-surface.

\subsection{Foliation of de Sitter spacetimes with particles by $K$-surfaces}

As a consequence of Proposition \ref{pr:dual}, each foliation of a non-degenerate hyperbolic end with particles has a dual foliation of the dual future-complete convex GHM de Sitter space-time with particles.

Observe that the curvature $K^d$ varies from $-\infty$ to $0$ in Proposition \ref{pr:dual}, combined with Theorem \ref{tm:foliation}, we therefore obtain Corollary \ref{corollary:k-foliation of dS}, which states that every future-complete convex GHM de Sitter spacetime $M^d$ with particles admits a unique foliation by surfaces of constant curvature $K^d$, with $K^d$ varying from $-\infty$ near the initial singularity to $0$ near the boundary at infinity. In particular, for each $K^d\in (-\infty,0)$, $M^d$ contains a unique closed surface of constant curvature $K^d$. Combined with Theorem \ref{tm:dual-surfaces} and Corollary \ref{k-foliation},  the union of the constant curvature $K^d$-surfaces in $M^d$ over all $K^d\in(-\infty,0)$ provides a $\mathcal{C}^2$-foliation of the regular part of $M^d$.

\subsection{A parametrization of $\cDS_{\theta}$ by $\mathcal{T}_{\Sigma, \theta}\times\mathcal{T}_{\Sigma,\theta}$}

We can also give a parametrization of $\cDS_{\theta}$ in terms of constant curvature surfaces.

Let $K^d\in(-\infty, 0)$ and let $(h,h')\in\mathfrak{M}^{\theta}_{-1}\times\mathfrak{M}^{\theta}_{-1}$ be a pair of normalized metrics. Using a similar argument as in Lemma \ref{definition of the map},   there exists a unique convex GHM de Sitter spacetime $M^d$ with particles which contains a surface of constant curvature $K^d$, with the induced metric $I^d=(1/|K^d|)h'$ and the third fundamental form $\III^d=(1/|K^{d*}|)h$, where $K^{d*}=K^d/(1-K^d)$.

For any $K^d\in(-\infty, 0)$, define the map $\psi_{K^d}:\mathcal{T}_{\Sigma, \theta}\times\mathcal{T}_{\Sigma,\theta}\rightarrow \mathcal{DS}_{\theta}$ by assigning to an element $(\tau,\tau')\in\mathcal{T}_{\Sigma, \theta}\times\mathcal{T}_{\Sigma,\theta}$ the isotopy class of the de Sitter spacetime with particles satisfying the above property. Combining Proposition \ref{prop:parametrization map} and 
the duality between strictly concave surfaces in a hyperbolic end $M$ with particles and strictly future-convex spacelike surfaces in the dual de Sitter spacetimes $M^d$ with particles (see Theorem \ref{tm:dual-surfaces}), it follows that the parametrization $\psi_{K^d}$ is 
equal to the composition map $\delta\circ\phi_K$, and therefore a homeomorphism (as shown in Figure \ref{fig:parametrization}).

\begin{figure}
$\xymatrix{
                & \mathcal{T}_{\Sigma, \theta}\times\mathcal{T}_{\Sigma,\theta} \ar[dr]^{\psi_{K^d}}      \ar[dl]_{\phi_{K}}       \\
 \cHE_{\theta}  \ar[rr]^{\delta} & &     \cDS_{\theta}       }$
  \caption{\small{A diagram showing the parametrizations of $\cHE_{\theta}$ and $\cDS_{\theta}$ by $\mathcal{T}_{\Sigma, \theta}\times\mathcal{T}_{\Sigma,\theta}$, respectively.}}
     \label{fig:parametrization}
\end{figure}
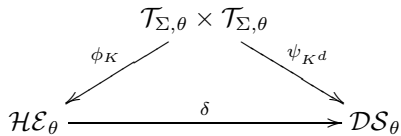


\end{document}